\documentclass[11pt]{amsart}
\usepackage{amsmath,amsfonts,amssymb} 
\usepackage{multirow}
\usepackage{amsthm} 
\usepackage{enumerate}
\usepackage{ulem}[normalem]
\usepackage{tikz}
\usepackage{xcolor,verbatim,colortbl}
\usepackage{graphicx,caption,subcaption} 
\usepackage{fullpage}
\usepackage[shortlabels]{enumitem}
\allowdisplaybreaks 

\begingroup
    \makeatletter
    \@for\theoremstyle:=definition,remark,plain\do{%
        \expandafter\g@addto@macro\csname th@\theoremstyle\endcsname{%
            \addtolength\thm@preskip\parskip
            }%
        }
\endgroup

\newtheorem{thm}{Theorem}[section]
\newtheorem{cor}[thm]{Corollary}

\newtheorem{lemma}[thm]{Lemma}

\newtheorem{definition}[thm]{Definition}

\newtheorem{claim}{Claim}[thm]

\newtheorem{obs}[thm]{Observation}

\def\wt{\widetilde}
\newcommand{\N}{\mathbb{N}}
 \newcommand{\ol}{\overline}
\newcommand{\powerset}[1]{\operatorname{Pow}(#1)}

\newcommand{\cDP}{\chi_\mathrm{DP}}
\newcommand{\fDP}{f_\mathrm{DP}}
\DeclareMathOperator{\supp}{Supp}
\newcommand{\mad}{\mathrm{mad}}
\newcommand{\va}{\mathrm{va}}
\newcommand{\lva}{\mathrm{lva}}
\newcommand{\dpva}{\mathrm{dpva}}
\newcommand{\LL}{\mathbf{L}}

\title{\vspace{-0.5in} DP vertex-arboricity of sparse graphs}

\author{Peter Bradshaw}
\address{Department of Mathematics, University of Illinois Urbana-Champaign--Champaign, Urbana, IL 61801, USA}
\email{pb38@illinois.edu}
\thanks{Peter Bradshaw received support from NSF RTG grant DMS-1937241 and an AMS Simons Travel Grant.}

\author{Alexandr Kostochka}
\address{Department of Mathematics, University of Illinois Urbana-Champaign--Champaign, Urbana, IL 61801, USA}
\email{kostochk@illinois.edu}
\thanks{Alexandr Kostochka 
is supported in part  by NSF RTG Grant DMS-1937241.}

\author{Zimu Xiang}
\address{Department of Mathematics, University of Illinois Urbana-Champaign--Champaign, Urbana, IL 61801, USA}
\email{zimux2@illinois.edu}
\thanks{Zimu Xiang 
is supported in part  by NSF RTG Grant DMS-1937241.}

\begin{document}

\vspace{-0.3in}

\begin{abstract}
The \emph{vertex arboricity} $\va(G)$ of a  multigraph $G$ is the minimum number $k$ for which 
$V(G)$ can be partitioned into $k$ subsets, each of which induces an acyclic subgraph of $G$.
By definition, if $\va(G)= k$, then the chromatic number, $\chi(G)$, satisfies 
$k\leq \chi(G)\leq 2k$. Fundamental results by Borodin from 1976 and Bollob\' as and Manvel from 1979
imply an analog of Gallai's lower bound on the number of edges in a $(2k-1)$-critical graph. 
We consider a slight generalization of vertex arboricity in the setting of DP-coloring. 
Using this framework,
we derive lower bounds on the number of edges in graphs critical for vertex arboricity and for list arboricity that are better than Gallai's bound, along with similar bounds in our DP-setting.

\noindent
{\bf{Mathematics Subject Classification:}}  05C07, 05C15, 05C35.\\
{\bf{Keywords:}}  Color-critical graphs, DP-coloring, sparse graphs.
\end{abstract}
\maketitle
\section{Introduction}

 \emph{Graphs} in this paper cannot  have multiple edges or loops, and
 \emph{multigraphs} may have multiple edges, but no loops. 
We write $V(G)$ and $E(G)$ for
the vertex set and edge set of a (multi)graph $G$, respectively,
and
we
let $|G|:=|V(G)|$ and $\|G\|:=|E(G)|$.
For subsets $A,B\subseteq V(G)$, let $E_G(A,B)$ be the set of edges $uv$ such that $u\in A$ and $v\in B$.
We write $\|A,B\|:=|E_G(A,B)|$.
For convenience, if $A$ or $B$ contains exactly one vertex $v$, then we may write $E_G(v,B)$ or $E_G(A,v)$ and similarly $\|v,B\|$ or $\|A,v\|$.

 \subsection{Proper coloring and vertex arboricity}

 A \emph{(proper) $k$-coloring} of a multigraph $G$ is a mapping $\varphi\,: \,V(G)\to [1,k]$ such that $\varphi(u)\neq \varphi(v)$ for each $uv\in E(G)$.
The \emph{chromatic number} of $G$,  denoted by $\chi(G)$, is the minimum positive integer $k$ for which $G$ has a proper $k$-coloring.
A graph $G$ is \emph{$k$-colorable} if $\chi(G)\leq k$. 
 For a positive integer $k$, a graph $G$ is \emph{$k$-critical} if $\chi(G)=k$, but every
proper subgraph of $G$ is $(k-1)$-colorable.

Naturally, it is easier to color a (multi)graph when it is ``sparse", i.e., is ``close" to an edgeless graph.
Reasonable measures of 
the
sparsity of a (multi)graph $G$ include maximum degree, \emph{maximum average degree}, 
$\mad(G)=\max_{H\subseteq G}\frac{2|E(H)|}{|V(H)|}$, and its refinement, 
$(a,b)$-sparseness.
  We say that
a multigraph $G$  is {\em$(a,b)$-sparse} for $a>0$ and $b \in \mathbb R$
if  for every $A\subseteq V(G)$
with $|A|\geq 2$, the induced subgraph $G[A]$ has at most $a|A|+b$ edges.
By definition, forests are exactly $(1,-1)$-sparse multigraphs.

Starting from Dirac seventy years ago,  the minimum number $f(n,k)$ of edges in an $n$-vertex $k$-critical graph has been studied. One application
of this study is that if we prove that $f(n,k)> an+b$ for all $n\geq k$, then every $(a,b)$-sparse graph
is $(k-1)$-colorable. Significant results
in the study of $f(n,k)$ were obtained by Dirac~\cite{1957DiracAtheorem,1974Dirac},
Gallai~\cite{1963Gallai1,1963Gallai2}, Krivelevich~\cite{1997Krivelevich,1998Krivelevich}, Kostochka and Stiebitz~\cite{1999KoSt,2003KS} and Kostochka and Yancey~\cite{2014KoYa,2018KoYa}.
In particular, Gallai~\cite{1963Gallai2} proved the following structural result,
which was used later by many others.

Recall that a \emph{Gallai tree} is a connected graph in which every block is a complete graph or an odd cycle, and a~\emph{Gallai forest} is a graph in which every connected component is a Gallai tree.

\begin{thm}[Gallai~\cite{1963Gallai2}]\label{Ga2}
Let $k\geq 4$, and let $G$ be a $k$-critical graph and $B$ be the set of $(k-1)$-vertices in $G$.
Then, $G[B]$ is a Gallai forest.
\end{thm}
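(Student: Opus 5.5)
The plan is to reduce the statement to the classical characterization of degree-choosable graphs (Borodin; Erdős–Rubin–Taylor). A connected graph $H$ is not degree-choosable exactly when $H$ is a Gallai tree. Here degree-choosable means $H$ is $L$-colorable for every list assignment with $|L(v)|\ge d_H(v)$. So it suffices to show that every component $H$ of $G[B]$ fails to be degree-choosable, and to exhibit the relevant bad list assignment from criticality.

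Fix a component $H$ of $G[B]$. Since $G$ is $k$-critical, the graph $G-V(H)$ is a proper subgraph (if $H$ is nonempty) and hence has a proper $(k-1)$-coloring $\varphi$. For each $v\in V(H)$, define
\[
L(v)=[1,k-1]\setminus\{\varphi(u): u\in N_G(v)\setminus V(H)\}.
\]
Every vertex of $B$ has degree $k-1$, and the neighbors of $v$ inside $H$ are exactly its neighbors in $G[B]$ that lie in $V(H)$, because $H$ is a component. Hence
\[
|L(v)|\ge (k-1)-(d_G(v)-d_H(v))=d_H(v).
\]
If $H$ had an $L$-coloring, combining it with $\varphi$ would give a proper $(k-1)$-coloring of $G$, contradicting $\chi(G)=k$. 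So $H$ is not $L$-colorable, and therefore not degree-choosable. By the characterization, $H$ is a Gallai tree. Since this holds for every component, $G[B]$ is a Gallai forest. The hypothesis $k\ge4$ only excludes small degenerate cases and is not otherwise needed in this argument.

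The main obstacle is the characterization itself, if it cannot be cited. A self-contained route would argue by contradiction, taking a minimal non-$L$-colorable connected $H$ with $|L(v)|\ge d_H(v)$; this gives three steps.

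\emph{Step 1.} Show that $|L(v)|=d_H(v)$ for every $v$. Otherwise, color greedily toward a vertex with surplus, ordering the vertices by decreasing distance from it.

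\emph{Step 2.} Show that adjacent vertices in a $2$-connected piece have equal lists. If $L(u)\ne L(v)$ for some edge $uv$, color $u$ with a color not in $L(v)$ and finish greedily toward $v$.

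\emph{Step 3.} Show that a $2$-connected graph that is neither complete nor an odd cycle contains an induced path $xyz$ with $x,z$ nonadjacent and $H-\{x,z\}$ connected, as in the proof of Brooks' theorem. Coloring $x$ and $z$ alike then saves a color at $y$; cycles are handled separately, and even cycles are $2$-choosable from equal lists.

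Extending this from blocks to the whole Gallai tree uses induction on leaf blocks: uncolorability propagates through cut vertices with lists splitting additively.
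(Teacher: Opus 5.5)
Your argument is correct. Coloring $G-V(H)$ by criticality gives residual lists $L(v)$ with $|L(v)|\ge d_H(v)$, because $G$ is simple and $H$ is an induced component of $G[B]$, and this is exactly the standard derivation of Theorem~\ref{Ga2} from Theorem~\ref{theo:list_Brooks} that the paper alludes to when it calls the latter a generalization of the former. The paper gives no proof of Gallai's theorem itself, but it carries out the same reduction in its DP setting in Lemma~\ref{lem:ERT}, so citing Theorem~\ref{theo:list_Brooks} suffices and your self-contained sketch is optional.
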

Gallai~\cite{1963Gallai1} showed that his 
Theorem~\ref{Ga2}  implies the following lower bound on $f(n,k)$:

\begin{equation}\label{in3}
f(n,k)\geq \left({k-1}+\frac{k-3}{k^2-3}\right)\frac{n}{2}\qquad\mbox{if $k \geq 4$ and $n\geq k+2$}.
\end{equation}

The \emph{vertex arboricity} $\va(G)$ of a  multigraph $G$ is the minimum number $k'$ for which  there is a vertex partition $(V_1,\ldots,V_{k'})$ of $V(G)$ such that the subgraph $G[V_i]$ of $G$ induced by $V_i$
is acyclic for each $1\leq i\leq k'$.   Chartrand, Kronk, and Wall~\cite{1968ChKrWa}  introduced and studied this notion in 1968 under the name \emph{point-arboricity}.
Hakimi and Schmeichel~\cite{1989HaSc} proved that already the decision problem
of
whether a given planar graph has vertex arboricity at most $2$ is NP-complete.
Since each forest is 2-colorable, one can view a $k'$-partition of a multigraph $G$ into forests as a proper coloring of $G$ with color classes $C_1,\ldots,C_{2k'}$, with the additional restriction that for each $1\leq i\leq k'$,
$C_{2i-1}\cup C_{2i}$ induces a forest in $G$.
In particular, 
$\frac 12 \chi(G)\leq \va(G)\leq \chi(G)$ for every multigraph $G$.
Both bounds are tight, as
$\va(K_n)=\lceil\frac{n}{2}\rceil=\lceil\frac{\chi(K_n)}{2}\rceil$ for any $n$, and  the complete $4$-partite graph $K_{5,5,5,5}$ satisfies
$\va(K_{5,5,5,5})=4=\chi(K_{5,5,5,5})$.

Similarly to color-critical graphs, we can define a \emph{va-$k'$-critical multigraph} as a
multigraph $G$ with $\va(G)=k'$ such that every proper subgraph $G'$ of $G$ satisfies $\va(G')\leq k'-1$. This definition leads naturally to the function $f'(n,k')$,
defined as
the minimum number of edges in an $n$-vertex va-$k'$-critical (simple) graph. An easy observation is that
$f'(n,2)=n$ and
that
the only $n$-vertex va-$2$-critical multigraph is the cycle $C_n$. 

A generalization of Brooks' Theorem to degenerate subgraphs due to 
Borodin~\cite{1976Borodin} from 1976 and Bollob\' as and Manvel~\cite{1979BoMa} from 1979 implies
a Gallai-type lower bound for $f'(n,k')$:
\begin{equation}\label{in3'}
f'(n,k')\geq \left({2k'-2}+\frac{2k'-4}{(2k'-1)^2-3}\right)\frac{n}{2}\qquad\mbox{if $k' \geq 3$ and $n\geq 2k'+1$}.
\end{equation}

 \subsection{List coloring and variable degeneracy}  
 An interesting generalization of coloring is \emph{list coloring}.
For a set $X$, let $\powerset{X}$ denote the power set of  $X$, and
let
$f(X)$ denote $\bigcup_{v\in X}f(v)$.
 For a multigraph $G$ and a set of colors $Y$, a \emph{list assignment}
is a function $L \colon V(G) \to \powerset{Y}$.
For each $u \in V(G)$, the set $L(u)$ is called
the \emph{list} of $u$.
A proper coloring $\varphi \colon V(G) \to Y$ is called an \emph{$L$-coloring} if  $\varphi(u) \in L(u)$ for each $u \in V(G)$. 
A multigraph $G$ with a list assignment $L$ is  \emph{$L$-colorable} if it admits an $L$-coloring. 
The \emph{list chromatic number} $\chi_\ell(G)$ of $G$  is the least positive integer $k$ such that $G$ is $L$-colorable whenever $L$ is a list assignment for $G$ with $|L(u)| \geq k$ for all $u \in V(G)$. 
If $L(u) = [1,k]$  for all $u \in V(G)$, then $G$ is $L$-colorable if and only if $G$ is $k$-colorable. Thus, list coloring generalizes proper coloring. In particular, $\chi_\ell(G) \geq \chi(G)$ for all graphs $G$.

We call a multigraph $G$  \emph{list $k$-critical} if $\chi_\ell(G) = k$ and
$\chi_\ell(G') \leq k-1$ for each proper subgraph $G'$ of $G$.
Let $f_\ell(n,k)$   be the minimum number of edges in an $n$-vertex list $k$-critical graph.

 A list assignment $L$ for a graph $G$ is  a \emph{degree-list assignment} if $|L(u)| \geq d_G(u)$ for all $u \in V(G)$.  Borodin~\cite{1979Borodin} and Erd\H os, Rubin, and Taylor~\cite{1980ErRuTa} provided a complete characterization of all graphs that are not $L$-colorable with respect to some degree-list assignment $L$,
 giving a generalization of Theorem~\ref{Ga2}.
	
\begin{thm}[Borodin~\cite{1979Borodin}; Erd\H os, Rubin, and Taylor~\cite{1980ErRuTa}]\label{theo:list_Brooks}
Let $G$ be a connected graph and $L$ be a degree-list assignment for $G$. If $G$ is not $L$-colorable, then $G$ is a Gallai tree; furthermore, $|L(u)| = d_G(u)$ for all $u \in V(G)$ and if $u, v \in V(G)$ are two adjacent non-cut vertices, then $L(u) = L(v)$.
	\end{thm}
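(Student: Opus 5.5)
We argue by induction on $|V(G)|$, in the form of a minimal counterexample. Fix a connected graph $G$ and a degree-list assignment $L$ such that $G$ is not $L$-colorable. Note first that $|L(u)|=d_G(u)$ for every $u$. Otherwise some $u$ has $|L(u)|>d_G(u)$, and we can greedily color $V(G)\setminus\{u\}$ in order of nonincreasing distance from $u$. Each vertex other than $u$ then has an uncolored neighbor when it is reached, so it has an available color. Finally $u$ has an available color because $|L(u)|>d_G(u)$. The same ordering argument shows: whenever a vertex $v$ of a connected subgraph can be colored last with a spare color, the whole subgraph is colorable.

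Next we handle the $2$-connected case. Suppose $G$ is $2$-connected and is neither a complete graph nor an odd cycle. We claim $G$ is $L$-colorable.

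First suppose two adjacent vertices $u,v$ have $L(u)\neq L(v)$. Since $|L(u)|=d(u)$, pick $c\in L(u)\setminus L(v)$ and color $u$ with $c$. Delete $c$ from the lists of the neighbors of $u$. In $G-u$, which is connected, every vertex keeps $|L'(w)|\ge d_{G-u}(w)$. Moreover $v$ has strict inequality, since it lost an edge but no color. The greedy argument ordered toward $v$ then colors $G-u$.

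So we may assume all lists are equal to a common $L$, hence $G$ is $k$-regular with $k=|L|$. If $k\le 2$, then $G$ is a cycle, and even cycles are $2$-colorable. For $k\ge3$, Brooks' theorem gives the result. Concretely, one finds the standard Lovász triple $x,y,z$ with $xy,xz\in E$, $yz\notin E$, and $G-y-z$ connected. Color $y$ and $z$ the same color, then greedily color toward $x$; the vertex $x$ keeps a spare color. This Brooks step is the main technical point, and the standard Lovász argument handles it.

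Now the general case. Suppose $G$ is not $2$-connected, and let $B$ be an end block with cut vertex $w$. Let $G'=G-(V(B)\setminus\{w\})$.

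First we show $B$ must be a complete graph or an odd cycle with equal lists on its non-cut vertices. Otherwise, the $2$-connected case applied to $B$ with lists $L'(w)=L(w)$ truncated appropriately shows the following: after any coloring of $G'$, we could color $B$. More directly, we color $B-w$ last. Every vertex of $B-w$ has degree in $G$ equal to its degree in $B$. Consider first coloring $G'$ where $w$ is given the reduced list $L(w)$ minus some set. Alternatively, handle it as follows. If $B$ is not a Gallai block (or the lists on $B$ are not all equal), then $B$ alone has a proper coloring from $L$ restricted to $B$, for every choice of color forced at $w$ except possibly a set $S$ of colors. We show $|S|\le d_B(w)$-ish and shrink $L(w)$ to avoid $S$. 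Then $G'$, with the reduced list at $w$, still has a degree-list assignment, because $d_{G'}(w)=d_G(w)-d_B(w)$.

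The cleanest execution is the other direction. Remove from $L(w)$ the colors used… Concretely, I would define $L'(w)$ to be a sublist of $L(w)$ of size $d_{G'}(w)$. We need that, for each $c\in L'(w)$, the block $B$ is colorable with $w$ precolored $c$. Then induction on $G'$ either colors $G'$, which extends to $G$, or shows that $G'$ is a Gallai tree with $|L'(w)|=d_{G'}(w)$. The block-by-block recursion then yields that every block is complete or an odd cycle, together with the list-equality conditions. The bookkeeping of which colors at $w$ extend to $B$ is the main obstacle. I would resolve it by showing that a non-Gallai block (or a Gallai block with unequal adjacent non-cut lists) admits extensions for all but at most $d_B(w)-1$ colors of $L(w)$, via the $2$-connected analysis above.
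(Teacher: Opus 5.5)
The paper only cites this theorem and gives no proof of it, so your argument has to stand on its own. Your slack lemma and your $2$-connected case are sound. In the edge step, take $u$ to be the endpoint whose list is not contained in the other's; that step already shows that all lists of a non-colorable $2$-connected graph coincide, and Brooks' theorem then applies. The gap is the non-$2$-connected case, which is only sketched, with several abandoned starts. Recursing on $G'$ with an arbitrary sublist $L'(w)$ of size $d_{G'}(w)$ says nothing about the end block $B$: the conclusion $|L'(w)|=d_{G'}(w)$ is then automatic, and the induction only speaks about blocks of $G'$. So everything about $B$ must come from counting the colors at $w$ that fail to extend into $B$. You call that count the main obstacle, and it is asserted (``I would resolve it by showing\dots'', ``$|S|\le d_B(w)$-ish'') rather than proved.

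The missing idea is to apply your two earlier results to $B$ itself, with the list of $w$ replaced by a set of bad colors. Let $S\subseteq L(w)$ consist of the colors $c$ such that coloring $w$ with $c$ does not extend to $B-w$. Recall that $|L(x)|=d_B(x)$ for $x\in V(B)\setminus\{w\}$.
\begin{enumerate}
\item Suppose $|S|\ge d_B(w)+1$. Give $w$ a list of $d_B(w)+1$ colors from $S$. The slack lemma then colors $B$, so some color of $S$ extends, a contradiction.
\item Hence $L'(w):=L(w)\setminus S$ satisfies $|L'(w)|\ge d_{G'}(w)$. Every $L'$-coloring of $G'$ extends to $G$, so $G'$ is not $L'$-colorable. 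The slack lemma therefore forces $|L'(w)|=d_{G'}(w)$, i.e.\ $|S|=d_B(w)$.
\item Giving $w$ the list $S$ now yields a degree-list assignment of $B$ with no coloring. Your $2$-connected case (together with the trivial case $B=K_2$) shows that $B$ is complete or an odd cycle and that all lists on $B$ equal $S$.
\item Induction on $(G',L')$ handles the remaining blocks. To transfer the list-equality condition, note that a vertex $u\neq w$ of $G'$ is a cut vertex of $G'$ if and only if it is one of $G$. Also, $B-w$ has no neighbors in $G'-w$.
\end{enumerate}
With this inserted, your outline becomes a complete proof.
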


Exactly as Theorem~\ref{Ga2} implies~\eqref{in3}, Theorem~\ref{theo:list_Brooks} yields the same lower bound on~$f_\ell(n,k)$. Better lower bounds on~$f_\ell(n,k)$
have been proved by
Kostochka and Stiebitz~\cite{2003KS},  Kierstead and Rabern~\cite{2020KiRa}, 
Cranston and Rabern~\cite{2018CrRa},  Rabern~\cite{2016Rabern,2018Rabern} and
Bradshaw, Choi, Kostochka and Xu~\cite{kgeq5,k=4}. But all these bounds are weaker than the best known bounds on~$f(n,k)$. 

 Naturally, the \emph{list vertex arboricity}, $\lva(G)$, of a  multigraph $G$ is the minimum number $k'$ such that for each list assignment $L$ with $|L(u)|\geq k'$ for all $u\in V(G)$ there
 is a function $\varphi \colon V(G) \to Y=\bigcup_{u\in V(G)}L(u)$ such that  $\varphi(u) \in L(u)$ for each $u \in V(G)$ and $G[\varphi^{-1}(y)]$ is a forest for each $y\in Y$. Then, a multigraph $G$  is lva-\emph{$k'$-critical} if $\lva(G) = k'$ and
$\lva(G') \leq k'-1$ for each proper subgraph $G'$ of $G$.
 Let  $f'_\ell(n,k')$ denote
the minimum number of edges in an $n$-vertex lva-$k'$-critical graph. 
Borodin, Kostochka and Toft~\cite{2000BKT} proved the analog of~\eqref{in3'} for list vertex arboricity:
 \begin{equation}\label{in3''}
f'_\ell(n,k')\geq \left({2k'-1}+\frac{2k'-4}{(2k'-1)^2-3}\right)\frac{n}{2}\qquad\mbox{if $k' \geq 3$ and $n\geq 2k'+1$}.
\end{equation}

This was a partial case of a  result on variable degeneracy of (multi)graphs. 
To define it, we introduce some notions. Given an ordering $\sigma=(v_1,\ldots,v_n)$ of vertices of a multigraph $G$, we denote $d^-_\sigma(v_i):=\|\{v_1,\ldots,v_{i-1}\},v_i\|$ and
 $d^+_\sigma(v_i):=\|\{v_{i+1},\ldots,v_{n}\},v_i\|$.

For a positive integer $d$, one of the definitions of a \emph{$d$-degenerate} (respectively, a \emph{strictly 
$d$-degenerate}) multigraph is that this is a multigraph $G$ for which there is an ordering $\sigma=(v_1,\ldots,v_n)$ of vertices of  $G$ such that $d^-_{\sigma}(v_i)\leq d$
(respectively, $d^-_{\sigma}(v_i)< d$) for every $1\leq i\leq n$. In particular, forests are strictly $2$-degenerate graphs. 

 In these terms, vertex arboricity of $G$ is the smallest $k$ such that $V(G)$ can be partitioned into $k$ subsets inducing strictly $2$-degenerate 
multigraphs. Mitchem~\cite{Mitchem} and others considered partitions into $d$-degenerate subgraphs for general $d$. Borodin, Kostochka and Toft~\cite{2000BKT}
considered a more general setting.

Given a function $\ell: V(G)\to \N$, we say that 
a multigraph $G$ is \emph{strictly $\ell$-degenerate} if there is an ordering $\sigma=(v_1,\ldots,v_n)$ of vertices of  $G$ such that $d^-_{\sigma}(v_i)< \ell(v_i)$
 for every $1\leq i\leq n$. 
Let ${\bf L}=(\ell_1,\ldots,\ell_t)$ where each $\ell_i$ is a function from $V(G)$ to the non-negative integers. 
We call this ${\bf L}$ the \emph{list} for $G$ and denote
${\bf \ell}(v)={\bf \ell}_{\bf L}(v)=\sum_{i=1}^t\ell_i(v)$ for all $v\in V(G)$.
We say that $G$ is ${\bf L}$-\emph{partitionable} or $(\ell_1,\ldots,\ell_t)$-\emph{partitionable} if we can partition $V(G)$ into sets $V_1,\ldots,V_t$ so that for every $1\leq i\leq t$ the subgraph $G[V_i]$ is strictly $\ell_i$-degenerate.
List ${\bf L}$ is a \emph{degree-list} for $G$, if 
${\bf \ell}_{\bf L}(v)\geq d(v)$ for all $v\in V(G)$. Borodin et al.~\cite{2000BKT} described all graphs for which there is a degree-list ${\bf L}$ such that $G$ is not 
${\bf L}$-{partitionable}. 
 Schweser,    Stiebitz and Toft~\cite{2024bookStScTo} extended the description to multigraphs.
A partial case of a corollary from these results is~\eqref{in3''}.

\subsection{DP coloring and DP vertex arboricity}
In order to attack an open problem on list colorings of planar graphs,
 Dvo\v r\' ak and Postle~\cite{2018DvPo}  introduced the more general notion of \emph{DP-coloring}
(they called it \emph{correspondence   coloring}).

\begin{definition}\label{def1}
For a multigraph $G$, a \emph{(DP-)cover} of $G$ is a multigraph $H$ whose vertex set is the disjoint union of sets $H(v)$ for $v\in V(G)$  such that 
\begin{itemize}
    \item For each $u \in V(G)$, the set $H(u)$
     is independent  in $H$.
    \item For each $u,v \in V(G)$, if $|E_G(u,v)|=t$, then $E_H(H(u), H(v))$ is the union of $t$ matchings (where each matching is not necessarily perfect and possibly empty). 
\end{itemize}
The vertices of $H$  are called \emph{nodes}
in order to distinguish them from the vertices of $G$. 
\end{definition}

{If $G' \subseteq G$, then we write $H[G']$ for the subgraph of $H$ with vertex set $\bigcup_{u \in V(G')} H(u)$ and edge set formed by the matchings corresponding to the edges of $G'$. Similarly, if $U \subseteq V(G)$, then we write $H[U]$ for the subgraph of $H$ induced by $\bigcup_{u \in U} H(u)$.

An $H$\emph{-coloring} of $G$ is a function
$\phi:V(G)\rightarrow \bigcup_{v\in V(G)}H(v)$
with $\phi(v)\in H(v)$ such that $\{\phi(v) :v\in V(G)\}$ is independent in $H$. 
The \emph{DP chromatic number} $\cDP(G)$ of a multigraph $G$  is the least positive integer $k$ such that $G$ has an $H$-coloring
whenever $H$ is a cover of $G$ with $|H(u)| \geq k$ for all $u \in V(G)$.
Every list coloring problem can be represented as a DP-coloring problem.
In particular, $\cDP(G) \geq \chi_\ell(G)$ for all multigraphs $G$.

A cover $H$ of a multigraph $G$ is  a \emph{degree-cover} if $|H(u)| \geq d_G(u)$ for all $u \in V(G)$. 
We say a multigraph $G$ is \emph{DP degree-colorable} if $G$ has an $H$-coloring whenever $H$ is a degree-cover.

A multigraph $G$ is \emph{DP $k$-critical} if $\cDP(G) = k$ and $\cDP(G') \leq k-1$ for every proper subgraph $G'$ of $G$.
Let $\fDP(n,k)$ denote the minimum number of edges in an $n$-vertex DP $k$-critical  graph. 

Dvo\v r\' ak and Postle~\cite{2018DvPo} described the graphs (and Bernshteyn, Kostochka, and Pron’~\cite{2017BeKoPr} described the multigraphs)
that have no $H$-coloring for some degree cover $H$.
 These descriptions imply that the bound~\eqref{in3} holds also for $\fDP(n,k)$.
Recently, Bradshaw et al.~\cite{kgeq5,k=4} proved somewhat better bounds for 
$\fDP(n,k)$. For $k\geq 5$, their result is as follows.

\begin{thm}[\cite{kgeq5}]\label{thethm} 
    Let $k\geq 5$ and $\lambda  = \left \lceil \frac{k^2 - 7}{2k-7}\right \rceil$.
    If $G$ is an $n$-vertex  DP $k$-critical { (simple)} graph, then $G=K_k$, or
  $|E(G)| \geq \left(k-1+\frac{1}{\lambda }\right)\frac{n}{2}
        +\frac{1}{\lambda }.$  
\end{thm}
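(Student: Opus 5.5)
This theorem is quoted from~\cite{kgeq5}, so the plan follows the usual discharging strategy for critical graphs and uses the characterization of non-DP-degree-colorable multigraphs due to Dvo\v r\'ak and Postle (and Bernshteyn, Kostochka, and Pron'). Let $G\neq K_k$ be DP $k$-critical with cover $H$, $|H(v)|=k-1$ for all $v$, such that $G$ has no $H$-coloring while every proper subgraph does.

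The first step is the standard structure. We have $\delta(G)\ge k-1$. Let $B$ be the set of $(k-1)$-vertices ("low" vertices) and $A=V(G)\setminus B$. Precolor $G-C$ for a component $C$ of $G[B]$ and extend. Then every component of $G[B]$ must be a DP degree-non-colorable graph in the residual cover, hence a "DP Gallai tree": each block is a clique or a cycle. This is the DP analog of Theorem~\ref{Ga2}, and cycles of any length are allowed in the DP setting.

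The second step goes beyond Gallai's counting and is where the $1/\lambda$ gain comes from. I would prove extra reducible configurations. For example, a low component $T$ that is a single $K_{k-1}$ block with only few edges leaving to $A$ cannot occur. Likewise, leaf blocks of $T$ that are small cliques or cycles must send many edges to $A$. These come from recoloring arguments: delete $T$ (or a block), color the rest by criticality, then recolor a vertex of $A$ using its surplus of available colors so that the remaining residual cover of $T$ is no longer "tight". Tightness means every vertex has exactly degree-many colors and adjacent non-cut vertices have matched covers.

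The third step is discharging. Give each vertex charge $d(v)$ and aim to show that the final charge is at least $k-1+\frac1\lambda$ everywhere, with a bit of global slack giving the additive $\frac1\lambda$. Each high vertex ($d\ge k$) sends charge to its low neighbors. Each low component $T$ needs total incoming charge $|T|/\lambda$, and we bound from below $\|T,A\|$ in terms of $|T|$ using the Gallai-tree structure: a Gallai tree with blocks of size at most $k-1$ and all degrees $k-1$ in $G$ has many edges leaving it unless it is $K_{k-1}$. High vertices of degree $d\ge k$ have surplus $d-(k-1)-\frac1\lambda$. The value $\lambda=\lceil (k^2-7)/(2k-7)\rceil$ should come out of optimizing the worst case, presumably a $k$-vertex sending to about $k$ low neighbors versus $K_{k-1}$-blocks needing charge from few edges.

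The main obstacle is the second step: showing that the worst low components, such as $K_{k-1}$ attached to one high vertex, or long chains of cycles, receive enough charge. I would first try a "flexible pair" argument. Find a high vertex with two neighbors in $T$ whose covers can be made to conflict, and exploit that the DP Gallai-tree characterization forces specific matchings, which can be broken by choosing the high vertex's color appropriately.
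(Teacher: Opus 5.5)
Your first step is fine. Your guess about where $\lambda$ comes from is also right in spirit: in the paper's normalization a vertex of degree $k$ sends $\nu=\frac{k-2}{2k-7}$ along each edge to a low vertex, and the condition $1-\lambda+k\nu\le 0$ is exactly $\lambda\ge\frac{k^2-7}{2k-7}$.

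The genuine gap is in your second step, which is where the whole improvement over Gallai's bound has to come from. The mechanism you propose cannot supply it.

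\begin{itemize}
\item \textbf{Which configurations matter.} The configurations that must be handled are low $(k-2)$-regular blocks, above all low $K_{k-1}$-blocks. ``Many edges leaving'' is not the right dichotomy: every vertex of such a block sends exactly one edge out of it.
\item \textbf{They must be excluded outright.} In your own charge scheme, a low $K_{k-1}$-component needs $\frac{k-1}{\lambda}$ in total, i.e.\ $\frac1\lambda$ per outgoing edge. A vertex of degree $k$ can spare only $1-\frac1\lambda$ for up to $k$ such edges. This forces $\lambda\ge k+1$, whereas $\lambda=\lceil\frac{k^2-7}{2k-7}\rceil\le k$ for all $k\ge 6$. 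So low $K_{k-1}$-blocks must be ruled out entirely; this is condition (iii) of Lemma~\ref{GDP'}, and \eqref{nok-1} in the paper.
\item \textbf{They occur in genuine critical graphs.} The Haj\'os join of two copies of $K_k$ is $k$-chromatic and $(k-1)$-degenerate, and deleting any edge leaves a $(k-2)$-degenerate graph. Hence it is DP $k$-critical, and its low vertices form two $K_{k-1}$-blocks joined by an edge. So no reducibility argument carried out inside an arbitrary DP $k$-critical $G$ can exclude them.
\item \textbf{Your tools are unavailable.} With $|H(v)|=k-1$, every vertex of $A$ has $d(v)\ge k>|H(v)|$, so there are no spare colors to recolor with. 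DP-coloring also has no Kempe-chain substitute. Your ``flexible pair'' step would need colorability of graphs that are not subgraphs of $G$, and criticality gives no such information.
\end{itemize}

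What is missing is the reformulation that makes such exclusions possible. The paper only quotes this theorem from \cite{kgeq5}. The proof there, like the proof of Theorem~\ref{thm:v3} here (which follows it closely), works as follows.

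\begin{itemize}
\item It proves a potential inequality $\rho(G)\le -2$ for every minimal non-colorable \emph{multigraph} whose cover sizes may vary (at most $k-1$), apart from a few explicit exceptions such as $K_k$. It argues by minimal counterexample.
\item Applying induction to $G[\overline S]$, with lists reduced by a coloring of $G[S]$, yields the gap lemma $\rho(S)\ge j(\lambda-2)+1$ (Lemma~\ref{lem:j(k-2)}).
\item Applying induction to $G-U+x_ix_{i'}$, which is not a subgraph of $G$, shows that a low $K_{k-1}$-block cannot exist \emph{in the counterexample} (Lemmas~\ref{2a} and~\ref{nl2}), even though such blocks exist in critical graphs in general.
\item Only then does the discharging close, via the GDP-tree bound $\Phi_k(T)>1+\nu$. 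The additive $\frac1\lambda$ comes from integrality: the total charge is $<-1$, hence $\le -2$.
\end{itemize}

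Without a statement of this inductive form, you have no hypothesis to apply to these auxiliary graphs, so your second step cannot be carried out as planned.
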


 Similarly to the { list vertex arboricity}, the \emph{DP vertex arboricity}, $\dpva(G)$,
  of a  multigraph $G$ is the minimum number $k'$ such that for each cover $H$ with $|H(u)|\geq k'$ for all $u\in V(G)$ there
 is a function $\varphi \colon V(G) \to \bigcup_{u\in V(G)}H(u)$ such that  $\varphi(u) \in H(u)$ for each $u \in V(G)$ and $\{\phi(v): v\in V(G)\}$ induces a forest  in $H$. Then  a multigraph $G$  is dpva\emph{-$k'$-critical} if $\dpva(G) = k'$ and
$\dpva(G') \leq k'-1$ for each proper subgraph $G'$ of $G$.
 Let  $f'_{DP}(n,k')$ denote
the minimum number of edges in an $n$-vertex dpva-$k'$-critical graph. The same lower bound as~\eqref{in3''} for $f'_{DP}(n,k')$ follows from results on variable degeneracy in terms of DP-coloring proved by Lu, Wang and Wang~\cite{LuWW} (for graphs) and by Kostochka, Schweser and Stiebitz~\cite{KSSt} (for multigraphs).

First, this notion was introduced by Nakparasit and Sittitrai~\cite{NakSit}. It extends (and is more complicated than) the notion of ${\bf L}$-{partitionable} multigraphs.

\begin{definition}\label{def2}
A {\em variable cover} for a multigraph $G$, is a pair $(H,{\bf L})$    
such that
\begin{enumerate}[(a)]
\item  $H$ is a DP-cover of $G$ as in Definition~\ref{def1};
     \item 
    For each $v\in V(G)$, there is an associated function $\LL(v): H(v)\to \N_0$ 
    and  $\LL:=\{\LL(v): v\in V(G)\}$.  
   Given a node $(v,c) \in H(v)$, we write $\ell(v,c)$ for the entry of $\LL(v)$ associated with $(v,c)$.  
    \item For each $v \in V(G)$, $\ell(v)=\ell_\LL(v):=\sum_{x\in H(v)}\LL(x)$,
    $\supp(v) =\supp_{\LL}(v):= \{i \in \{1, \dots, k-1\}:\ell(v,i) > 0\}$, and 
     $s(v)=s_{\LL}(v)=|\supp(v)|$.
\end{enumerate}
\end{definition}

We often say that the value $\ell(v,c)$ is the \emph{capacity} of the node $(v,c)$.
Thus, $\supp(v)$ is the set of indices $i$ for which $(v,i)$ has positive capacity.

\begin{definition}\label{def3} For a   variable cover
$(H,\LL)$ of a multigraph $G$, an $(H,\LL)$-{\em coloring} of $G$ is a pair $(\phi,\sigma)$ where $\phi: V(G)\to V(H)$ is such that $\phi(v)\in H(v)$ for each $v\in V(G)$, and 
$\sigma$ is an ordering $\phi(v_1),\ldots,\phi(v_{|G|})$ of $\phi(V(G))$ such that 
 $d_\sigma^-(\phi(v_i))<\ell(v_i)$ for all $v\in V(G)$.
 \end{definition}
 
 A variable cover
$(H,\LL)$ is
a \emph{degree-cover for $G$} if $\ell(v)\geq d(v)$ for all $v\in V(G)$.
A multigraph $G$ is \emph{DPV degree-colorable} if $G$ has an $(H,\LL)$-coloring whenever $(H,\LL)$ is a degree-cover. The descriptions of connected graphs and multigraphs that are not 
DPV degree-colorable  given in~\cite{LuWW} and~\cite{KSSt} 
 imply the bound~\eqref{in3''} for $f'_{DP}(n,k)$.

\subsection{Our results and structure of the paper}
Our main result is improving the bound in~\eqref{in3''} for $f'_{DP}(n,k')$ which would help to also improve  the bound in~\eqref{in3''} for $f'_{\ell}(n,k')$ and 
the bound~\eqref{in3'} for the ordinary vertex arboricity.

\begin{thm}\label{maint} 
    Let $k'\geq 3$ and $\lambda  = \left \lceil \frac{(2k'-1)^2 - 7}{4k'-9}\right \rceil$.
    If $G$ is an $n$-vertex  dpva $k'$-critical { (simple)} graph, then $G=K_{2k'-1}$, or
\begin{equation}\label{t11}
 |E(G)| \geq \left(2k'-2+\frac{1}{\lambda }\right)\frac{n}{2}
        +\frac{1}{\lambda }.    
\end{equation}  
\end{thm}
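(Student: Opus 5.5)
We follow the strategy of Bradshaw et al.~\cite{kgeq5} for Theorem~\ref{thethm}, transferred to the variable-cover setting with $k=2k'-1$. Set $k=2k'-1\ge 5$, so that $\lambda=\lceil (k^2-7)/(2k-7)\rceil$ is exactly the parameter of Theorem~\ref{thethm}. We will derive \eqref{t11} from a stronger statement about $(H,\LL)$-colorings: if $G$ is a minimal non-colorable graph for a variable cover $(H,\LL)$ with $\ell(v)\ge 2k'-2=k-1$ for all $v$, then either $G=K_k$ or $\|G\|$ satisfies \eqref{t11}.

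To reduce Theorem~\ref{maint} to this statement, take a dpva-$k'$-critical $G$ with a cover $H$ having $|H(v)|= k'-1$ for all $v$ and no forest-inducing selection. Assign capacity $2$ to every node. A strictly $2$-degenerate selection is exactly a selection inducing a forest, so $G$ has no $(H,\LL)$-coloring, and $\ell(v)=2k'-2$. By criticality, every proper subgraph is colorable with respect to the restricted cover. So $G$ is $(H,\LL)$-critical with $\ell\equiv k-1$. In particular $\delta(G)\ge k-1$, since a vertex $v$ with $d(v)<\ell(v)$ can always be added last in the ordering $\sigma$.

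\textbf{Low vertices.} Let $B$ be the set of vertices of degree $k-1$ and $A=V(G)\setminus B$. For any connected $S\subseteq B$, color $G-S$ by minimality. The residual variable cover on $G[S]$, obtained by decreasing capacities according to already-colored neighbors, is a degree-cover of $G[S]$ that admits no coloring. The structural descriptions of Lu--Wang--Wang~\cite{LuWW} and Kostochka--Schweser--Stiebitz~\cite{KSSt} then say that each component of $G[B]$ is a "DPV Gallai tree": a tree of blocks, each a complete graph, a cycle, or a small exceptional configuration, with rigid capacity patterns.

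\textbf{Discharging.} Give each vertex charge $d(v)$ and aim to show a final charge of at least $k-1+1/\lambda$ on average, plus the additive $2/\lambda$ term. Each $A$-vertex of degree $\ge k$ sends charge $\frac{1}{\lambda}$-sized amounts to adjacent $B$-vertices, with amounts depending on the number of $B$-neighbors. $B$-vertices with few neighbors in $B$, such as leaf blocks or cut vertices, then receive enough. The value of $\lambda$ comes from balancing a vertex of degree $k$ adjacent to $k$ low vertices, which explains the $(k^2-7)/(2k-7)$ shape.

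\textbf{Main obstacle.} The crux is proving that $G[B]$ cannot contain large dense pieces. We must exclude, for example, a $K_{k-1}$ in $B$ attached to few $A$-vertices, or a $K_k$ minus an edge. Excluding these requires recoloring arguments. We pick two nonadjacent $A$-neighbors $u,w$ of a $B$-clique, merge or identify them in an auxiliary graph, use minimality to color it, and then exploit the freedom in capacity-$2$ nodes to extend the coloring. In the variable-cover setting one must additionally track nodes' capacities, since a vertex may be "used" once without forbidding its color. I would first establish the lemmas of~\cite{kgeq5} in this generality, namely that each $A$-vertex has bounded $B$-degree to a single clique and that there are no $K_{k-1}$-blocks with two attachments. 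The counting $2\|G\|\ge (k-1+1/\lambda)n+2/\lambda$ then follows as in~\cite{kgeq5}, where the additive constant uses that $G\neq K_k$ forces at least one vertex outside $B$ or a non-clique block.
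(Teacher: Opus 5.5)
There is a genuine gap, and it sits in your induction. The reduction itself is fine: capacity $2$ on every node gives $\ell\equiv k-1$ with $k=2k'-1$. Your account of where $\lambda$ comes from is also right, since the paper's case (N2) needs exactly $1+\nu k\le\lambda$ with $\nu=\frac{k-2}{2k-7}$.

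The failing step is ``merge or identify them in an auxiliary graph, use minimality to color it.'' The auxiliary graph (for example $G-U$ plus an edge $x_ix_{i'}$) is not a subgraph of $G$, so criticality of $G$ says nothing about whether it is colorable. The whole difficulty is the case where it is \emph{not} colorable. Then it contains a minimal non-colorable $F'$ through the new edge, and you need a contradiction inside $G$. Your inductive statement (capacities $\equiv k-1$, simple graphs, conclusion an edge count) supplies none: a dense $F'$ contradicts nothing unless you also know that proper vertex subsets of $G$ with few boundary edges are sparse.

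That fact is the gap lemma, Lemma~\ref{lem:j(k-2)}. Its proof colors $G[S]$, passes to $G[\overline S]$ with the residual capacities $\LL_\phi$, and applies induction there. Those residual capacities fall below $k-1$, and capacity-$2$ nodes can drop to $1$. Moreover, the auxiliary graphs the paper uses ($G[S]+uv$, $G'+u'v'$, $F(B,x_i,x_{i'})$) may carry parallel edges. So the hypothesis must cover $(k-1)^-$-covers of multigraphs, and that is exactly where the exceptions $4K_2$ and $2C_n$ for $k=5$ come from.

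The missing idea is the potential-based strengthening, Theorem~\ref{thm:v3}. It says that for every $2$-bounded variable $(k-1)^-$-cover, an $(H,\LL)$-minimal multigraph is exceptional or satisfies $\rho_{G,\ell}(G)\le -2$. The vertex weights $+1,-1,-2$ are tuned so that lowering a capacity by one boundary edge costs at most $\lambda+2$ in potential. Theorem~\ref{maint} is the case $\ell\equiv k-1$, and the additive $\frac1\lambda$ then comes from integrality of $\rho$ rather than a separate structural argument.

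Without this strengthening you cannot prove \eqref{nok-1} (no $K_{k-1}$ among low vertices), Lemma~\ref{lem:KK-4}, or Lemma~\ref{all}. Your discharging breaks down exactly there. For a low $K_{k-1}$ component with $\ell\equiv k-1$, each vertex ends with charge $1-\nu$, so the component has total $(k-1)(1-\nu)=-\Phi_k(K_{k-1})\ge 0$ instead of $<-1$. Shrinking the transfer to compensate only recovers a Gallai-type bound such as \eqref{in3''}.
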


 For induction purposes, we will prove a refined version of Theorem~\ref{maint} (see the next section) and derive from this version that the bound~\eqref{t11} holds also for  va-$k'$-critical and lva-$k'$-critical graphs:

{
\begin{thm}\label{t-lva}
    Let $k' \geq 3$, and let $G$ be a graph that is 
    { $\lva$-$k'$-critical}. Then, either $G$ is a $K_{2k'-1}$, or 
   $\quad |E(G)| \geq (2k'-1+\frac{1}{\lambda'} ) \frac{|V(G)|}{2} + \frac 1{\lambda'},\quad$
    where $\lambda' = \left \lceil \frac{(2k'-1)^2 - 7}{4k'-9} \right \rceil $.
\end{thm}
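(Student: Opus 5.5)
The plan is to reduce Theorem~\ref{t-lva} to the refined, cover-relative version of Theorem~\ref{maint} announced for the next section, by encoding a bad list assignment as a variable cover. I am assuming that the refined statement applies to a graph $G$ together with a variable cover $(H,\LL)$ in which every vertex has total capacity $\ell(v)=2k'-2$, such that $G$ has no $(H,\LL)$-coloring but every proper subgraph of $G$ does.

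\emph{Encoding.} Let $G$ be $\lva$-$k'$-critical. Then there is a list assignment $L$ with $|L(u)|\ge k'-1$ for every $u$ for which $G$ has no forest-coloring, and we may trim every list to size exactly $k'-1$. Since every proper subgraph $G'$ of $G$ has $\lva(G')\le k'-1$, each such $G'$ is forest-colorable from $L$ restricted to $V(G')$.

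I would build the cover $H$ as follows:
\begin{itemize}
\item set $H(u)=\{(u,c):c\in L(u)\}$;
\item for every edge $uv$ of $G$, join $(u,c)$ to $(v,c)$ whenever $c\in L(u)\cap L(v)$, which gives a matching;
\item give every node capacity $2$.
\end{itemize}
Then $\ell(v)=2(k'-1)=2k'-2$ for all $v$. A forest is exactly a strictly $2$-degenerate graph, so $(H,\LL)$-colorings of any subgraph of $G$ correspond exactly to $L$-forest-colorings of that subgraph. Hence $G$ is critical with respect to $(H,\LL)$ in the sense required by the refined theorem.

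\emph{Applying the refined theorem and excluding exceptions.} Applying the refined version yields the displayed edge bound, with $\lambda'=\lambda$, unless $G$ belongs to the exceptional family of the refined statement. The exceptions should be $K_{2k'-1}$ together with whatever degree-cover obstructions the refined theorem has to allow. These obstructions are Gallai-type trees whose blocks are complete graphs or cycles carrying specific DP matchings, as in the descriptions of~\cite{LuWW,KSSt}. I would show that every exception other than $K_{2k'-1}$ is impossible for list-derived covers. The point is that the matchings in our $H$ are \emph{identity} matchings on shared colors, so all cycles in $H$ are consistent. This excludes the "twisted" DP obstructions, for example cycles whose matchings compose to a nontrivial permutation, which exist only in the DP setting.

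\emph{Main obstacle and final computation.} I expect the main obstacle to be this verification: checking that the refined theorem's exceptional structures or low-edge configurations cannot arise once the cover comes from lists. This needs the list analogue of the degree-choosability characterization, namely the partition characterization of Borodin, Kostochka and Toft~\cite{2000BKT} and Schweser, Stiebitz and Toft~\cite{2024bookStScTo}, in place of the DP characterization. That analogue would confirm that, for list covers, the only non-colorable blocks are the genuine ones (complete graphs and the "odd" cycle-type blocks with equal capacities), so the refined counting goes through unchanged.

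Finally, I would compare the coefficient obtained from the refined theorem with the one displayed in Theorem~\ref{t-lva}, and then derive the stated inequality with $\lambda'=\lambda$. If the refined bound yields only the $2k'-2$ leading term, I would revisit the discharging constants: list covers allow an extra unit of surplus per vertex, because an $\lva$-critical vertex with degree exactly $2k'-2$ forces all of its neighbors' lists to coincide. I would redistribute that surplus in the counting to reach the stated coefficient.
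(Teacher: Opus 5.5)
Your encoding of the bad list assignment is exactly the paper's: every node has capacity $2$, nodes with equal colors are joined, and $k=2k'-1$, so $\ell\equiv k-1$ and $\lambda=\lambda'$. However, the step ``hence $G$ is critical with respect to $(H,\LL)$ in the sense required by the refined theorem'' is a genuine gap. Theorem~\ref{thm:v3} assumes $(H,\LL)$-minimality, which asks that every proper subgraph be colorable for \emph{every} $2$-bounded variable cover with the same capacity function $\ell$. $\lva$-criticality only gives this for list-derived covers with all capacities equal to $2$. Applied directly, the theorem therefore only yields $\rho\le -2$ on some possibly proper subgraph, not on $G$. The paper closes this with a maximality argument:
\begin{enumerate}[(1)]
\item Take a largest $U$ with $\rho_{G,\ell}(U)\le -2$, with $U=\emptyset$ if there is none, and suppose $U\ne V(G)$.
\item Criticality gives an $L$-forest-coloring $\phi$ of $G[U]$, so $G[\overline U]$ has no $(H,\LL_\phi)$-coloring.
\item Theorem~\ref{thm:v3}, applied to a minimal non-colorable subgraph of $G[\overline U]$, gives $U'\subseteq\overline U$ with $\rho_{G,\ell_\phi}(U')\le -2$.
\item Hence $\rho_{G,\ell}(U')\le -2+j(\lambda+2)$ for $j=\|U,U'\|$, and $\rho_{G,\ell}(U\cup U')\le -2$, contradicting maximality.
\end{enumerate}
Conversely, the step you flag as the main obstacle is not one. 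The only exceptions in Theorem~\ref{thm:v3} are $K_k$, $4K_2$ and double cycles. The last two are excluded because $G$ is simple, and $K_{2k'-1}\subseteq G$ forces $G=K_{2k'-1}$ by criticality. No list analogue of the degree-choosability characterization is needed.

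Your last paragraph cannot be carried out. The conclusion $\rho_{G,\ell}(G)\le -2$ with $\ell\equiv 2k'-2$ is equivalent to $|E(G)|\ge(2k'-2+\frac1{\lambda'})\frac{|V(G)|}2+\frac1{\lambda'}$, and this is what the paper's argument actually proves. The $2k'-1$ in the displayed statement is a misprint; compare Theorem~\ref{maint} and the remark that for $k'=3$ the bound reads $(2+\frac1{12})n+\frac16$. No surplus argument can reach $2k'-1$, because that version is false. Take the Haj\'os join of two copies of $K_5$: delete $xy_1$ and $x'y_2$, identify $x$ with $x'$, and add $y_1y_2$.
\begin{enumerate}[(1)]
\item It has $9$ vertices and $19$ edges.
\item Its vertex arboricity is $3$, since each side forces $x$ and $y_i$ into a common forest class joined by a path.
\item It is $4$-degenerate, and it becomes $3$-degenerate after deleting any edge. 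Since a $d$-degenerate graph has $\lva\le\lceil (d+1)/2\rceil$, it is $\lva$-$3$-critical.
\item Yet $19<\frac{31}6\cdot\frac92+\frac16$.
\end{enumerate}
Aim for the $2k'-2$ coefficient and drop the redistribution step.
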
   

\begin{thm}\label{t-va}
    Let $k' \geq 3$, and let $G$ be a graph that is 
    { $\va$-$k'$-critical}. Then, either $G$ is a $K_{2k'-1}$, or 
   $\quad |E(G)| \geq (2k'-1+\frac{1}{\lambda'} ) \frac{|V(G)|}{2} + \frac 1{\lambda'},\quad$
    where $\lambda' = \left \lceil \frac{(2k'-1)^2 - 7}{4k'-9} \right \rceil $.
\end{thm}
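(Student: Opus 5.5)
The plan is to reduce Theorem~\ref{t-va} to the refined, cover-based version of Theorem~\ref{maint} that the next section will prove. We do this by encoding ordinary vertex arboricity as a particular variable cover, rather than passing through list arboricity. We cannot simply quote Theorem~\ref{t-lva}, because a va-$k'$-critical graph need not be lva-$k'$-critical. We can, however, work with a fixed cover directly.

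\textbf{Step 1: the cover.} Let $G$ be va-$k'$-critical. Define a variable cover $(H,\LL)$ as follows.
\begin{itemize}
\item For each $v\in V(G)$, let $H(v)=\{(v,1),\ldots,(v,k'-1)\}$, with $\ell(v,c)=2$ for every $c$. Thus $\ell(v)=2k'-2$.
\item For each edge $uv$ of $G$, let the matching be the identity matching $(u,c)(v,c)$.
\end{itemize}
An $(H,\LL)$-coloring of a subgraph $G'$ is exactly a partition of $V(G')$ into $k'-1$ classes, each inducing a strictly $2$-degenerate graph, that is, a forest. Hence $\va(G)=k'$ means $G$ has no $(H,\LL)$-coloring. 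Also, $\va(G')\le k'-1$ for every proper subgraph $G'$ means $H[G']$ always admits such a coloring. So $G$ is critical with respect to $(H,\LL)$ in exactly the sense used in the refined theorem.

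\textbf{Step 2: degree facts.} Criticality gives $d(v)\ge \ell(v)=2k'-2$ for every $v$. Indeed, colour $G-v$; if $v$ had at most one neighbour in some class, we could add $v$ to that class. We also record that vertices of degree exactly $2k'-2$ are precisely those for which $(H,\LL)$ restricts to a degree-cover. By the descriptions in~\cite{LuWW,KSSt}, the low-degree part then has the known ``Gallai-tree-like'' structure.

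\textbf{Step 3: apply the refined theorem.} The refined statement should give, for a graph critical for a variable cover with $\ell\equiv 2k'-2$, the inequality $2|E(G)|\ge \sum_v(\text{degree lower bound}) + \frac{1}{\lambda}(n+2)$, unless $G$ lies in an explicit exceptional family. We then plug in our uniform cover and read off the stated constant $\lambda'=\lambda$.

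\textbf{Step 4: the exceptions.} We check that, because all matchings are identity matchings and all capacities are equal, every exceptional configuration allowed by the refined theorem collapses to $K_{2k'-1}$ when $G$ is simple. Other candidates, such as blocks that are cycles or complete graphs of the wrong order with a twisted cover, are colorable once the cover is uniform.

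I expect the main obstacle to be twofold.
\begin{itemize}
\item \emph{Matching the leading constant.} I will verify that the refined version charges each vertex against $\ell(v)+1$ where the high-degree vertices are concerned, so that the constant printed in Theorem~\ref{t-va} (stated as $2k'-1$, versus $2k'-2$ in~\eqref{t11}) is what actually comes out of it.
\item \emph{Ruling out the exceptions.} This is the analysis of Step~4. There I would first try the argument that any non-uniform twist needed to block a coloring of a Gallai-type block cannot occur under identity matchings.
\end{itemize}
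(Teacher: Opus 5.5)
Your reduction is the paper's own. The paper proves the statement by rerunning the proof of Theorem~\ref{t-lva'} with $L'(v)=\{1,\dots,k'-1\}$, which is exactly your identity cover with $k'-1$ nodes of capacity $2$, and then invokes Theorem~\ref{thm:v3} with $k=2k'-1$. The paper routes this through a maximal set $U$ with $\rho_{G,\ell}(U)\le -2$, but applying Theorem~\ref{thm:v3} directly to $G$, as you do, amounts to the same thing. Your Step~4 is then immediate and needs no analysis of twisted covers: the only exceptions in Theorem~\ref{thm:v3} besides $K_k$ are $4K_2$ and double cycles, and these are not simple. Step~2 is not needed.

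The step that would fail is your plan for the leading constant. Theorem~\ref{thm:v3} yields only $\rho_{G,\ell}(G)\le -2$. With $\ell\equiv 2k'-2$, each vertex contributes $(2k'-2)\lambda'+1$ and each edge contributes $-2\lambda'$. This unpacks to $|E(G)|\ge (2k'-2+\frac1{\lambda'})\frac{|V(G)|}{2}+\frac1{\lambda'}$, which is what your Step~3 gives with degree bound $2k'-2$. Nothing in the potential charges a vertex against $\ell(v)+1$: the surplus degree of non-low vertices only pays for the $\frac1{\lambda'}$ term, and low vertices can make up almost all of $G$.

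The $2k'-1$ in the printed statement is a typo for $2k'-2$; compare \eqref{t11} and the paper's remark that $f'(n,3)\ge(2+\frac1{12})n+\frac16$. Moreover, the printed version is false. For $k'=3$, take two copies of $K_5^-$ missing the edges $xy_1$ and $xy_2$, identify the two copies of $x$, and add the edge $y_1y_2$.
\begin{itemize}
\item In any partition of $K_5^-$ (missing $xy$) into two forests, the larger class is $\{x,y,z\}$ for a common neighbour $z$.
\item Hence $x,y_1,y_2$ lie in one class, and $y_1y_2$ closes a $5$-cycle, so $\va=3$.
\item Every vertex other than $x$ has degree $4$, and the graph minus $x$ is connected. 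So any $\va$-$3$-critical subgraph (which has minimum degree at least $4$) spans all $9$ vertices, is not $K_5$, and has at most $19$ edges.
\item But $(5+\frac16)\frac92+\frac16>23$.
\end{itemize}
So do not try to extract $2k'-1$. State and prove the $2k'-2$ bound, which is exactly what your Steps~1 and~3, and the paper's argument, deliver.
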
  }

In particular, for $n\geq 7$, each of $f'_{DP}(n,3)$, $f'_{\ell}(n,3)$ and 
$f'(n,3)$ is at least $\left(2+\frac{1}{12}\right)n+\frac{1}{6}$, while~\eqref{in3'} implies only $f'(n,3)\geq \left(2+\frac{1}{22}\right)n$.

Although Theorem~\ref{maint} is a claim on simple graphs, for induction reasons we will need a more sophisticated result on multigraphs in a somewhat more general form. The proof uses many ideas from~\cite{kgeq5} and~\cite{KSSt}.

In the next section we introduce more definitions and state the more sophisticated claims implying Theorem~\ref{maint}.  In Section 3, we 
cite the known results that will be used in our arguments 
and set up the proof of the main result.
In Section \ref{sec:propG} we derive helpful properties of a minimum counter-example $G$ to our theorem, including a couple of so-called gap lemmas. In Section \ref{sec:eliminating} we prove that our $G$ does not contain some specific complete subgraphs. In Section \ref{sec:discharging} we use discharging to prove that our counter-example does not exist and in Section \ref{sec:list-arboricity}, we show how a refined version of
Theorem~\ref{maint} implies  Theorem~\ref{t-lva} and~\ref{t-va}.

\section{Restating the main theorem}\label{sec:setup}

We will prove a  statement more general than Theorem~\ref{maint}.
Call a
variable cover $(H,\LL)$ of a multigraph $G$ in the sense of Definition~\ref{def2} $t$-\emph{bounded} if $\ell(\alpha)\leq t$ for every $v\in V(G)$ and every $\alpha\in H(v)$. 
 We also say that this $(H,\LL)$ is a {\em $k$-cover} 
(respectively, a {\em $k^-$-cover}) if
$\ell(v)= k$ (respectively,
$\ell(v)\leq k$) for every $v\in V(G)$.

A multigraph $G$ is $(H,\LL)$-minimal of $G$ has no $(H,\LL)$-coloring, but for every proper subgraph $G'$ of $G$ and $2$-bounded cover $(H,\LL)$ of $G'$ that satisfies $\ell'(v) = \ell(v)$ for each $v \in V(G')$,
$G'$ has an $(H',\LL')$-coloring.

Our measure of sparsity of a multigraph will be a potential function. 
Let $\lambda  = \left \lceil \frac{k^2 - 7}{2k-7}\right 
\rceil$.
For each $v \in V(G)$,  the \emph{potential} of $v$ is 
$$
\rho_{G,\ell }(v)=
\begin{cases}
\ell (v)\lambda +1 & \textrm{if } \ell (v) = k-1, \\

\ell (v)\lambda -1 & \textrm{if } \ell (v)\in\{2,\ldots, k-2\},\\ 

\ell (v)\lambda -2 & \textrm{if } \ell (v) \in \{0,1\},
\end {cases}
$$
and for each pair $xy\in {V(G)\choose 2}$, the {\em potential} is
\[
\rho_{G,\ell}(xy) = 
\begin{cases}
0 & \textrm{if } 

xy\notin E(G),\\
1 - (2\lambda +1)|E_G(x,y)| & 

\textrm{otherwise.}
\end{cases}
\hfill
\]
In other words, if $x$ and $y$ are joined by a single edge in $G$, then $\rho_{G,\ell }(xy) = -2\lambda $, and each additional edge joining $x$ and $y$ adds $-(2\lambda +1)$ to $\rho_{G,\ell }(xy)$. 
For every $A \subseteq V(G)$, 
the \emph{potential} of $A$ is 
\[\rho_{G,\ell }(A) = \sum_{x \in V(A) } \rho_{G,\ell }(x) + \sum_{xy \in  \binom{A}{2} } \rho_{G,\ell }(xy) .\]

In these terms, a new version of our theorem is:

\begin{thm}
\label{thm:v2}
    Let $k \geq 5$.
    Let $G$ be a loopless multigraph and let $(H,\LL)$ be a $2$-bounded variable $(k-1)^-$-cover for $G$.    
     If $G$ is $(H,\LL)$-minimal, then one of the following holds:
    \begin{enumerate}[(a)]
        \item $G=K_{k}$, 
        
        \item $k=5$, and either $G =4K_2$ or $G$ is a double cycle, 
        \item $\rho_{G,\ell} (G) \leq -2$.
    \end{enumerate}
\end{thm}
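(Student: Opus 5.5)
We argue by contradiction and study a minimum counterexample. Let $(G,H,\LL)$ be a counterexample to Theorem~\ref{thm:v2} with $|G|+\|G\|$ minimum: $G$ is $(H,\LL)$-minimal for a $2$-bounded variable $(k-1)^-$-cover, $G$ is none of the exceptional graphs in (a) or (b), and $\rho_{G,\ell}(G)\ge -1$.

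First we collect basic structure.

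1. \emph{Degrees.} Minimality gives $d(v)\ge \ell(v)$ for every $v$. Otherwise we color $G-v$ and then add $\phi(v)$ last in the ordering $\sigma$; some node of $H(v)$ of positive capacity has back-degree below its capacity by pigeonhole. In particular, $\ell(v)\ge 1$ is forced where needed, and $G$ is connected.

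2. \emph{Low vertices.} Let $B$ be the set of vertices with $d(v)=\ell(v)$. The component structure of $G[B]$ is controlled by the characterization of connected multigraphs that are not DPV degree-colorable (Lu--Wang--Wang and Kostochka--Schweser--Stiebitz). Each component is a ``DP Gallai tree'' whose blocks are complete graphs, cycles, or their multiplied versions. The cover restricted to a block is essentially forced to be a $K_{k}$-type or double-cycle configuration.

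3. \emph{Gap lemma.} This is the main tool. For every $A\subsetneq V(G)$ with $|A|\ge 2$ we aim to show $\rho_{G,\ell}(A)\ge \rho_{G,\ell}(G)+c$ for an explicit constant, and a stronger bound when $G[A]$ is not close to $K_{k-1}$ or $K_k$. The proof colors $G[V\setminus A]$ (or $G[A]$) by minimality and then builds an auxiliary multigraph. We contract $A$ to a single vertex, or to a small clique of low-capacity vertices, and define a new $2$-bounded cover on $G'=G[V\setminus A]\cup\{x_A\}$. The capacities reduce according to how the fixed coloring of $A$ uses the nodes adjacent in $H$. If $G'$ has a coloring, then combining orderings (coloring $A$ first) yields a coloring of $G$. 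So $G'$ contains an $(H',\LL')$-minimal subgraph, which by induction is exceptional or has potential $\le -2$. Comparing potentials via submodularity of $\rho$ gives the gap. The potential weights, namely $+1$ at $\ell=k-1$, $-1$ in the middle range, $-2$ at $\ell\le1$, and $-(2\lambda+1)$ per extra parallel edge, are tuned exactly so that these computations close. The exceptional cases $K_k$, $4K_2$, and the double cycle must be handled separately; they account for the choice of the terms $\pm1$ and $\pm2$.

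4. \emph{Forbidden subgraphs.} Using the gap lemma, we next exclude dense configurations: copies of $K_{k-1}$ or $K_k-e$ made of low vertices, and multi-edges between low vertices. When such a structure appears, we either recolor locally by swapping capacities between the two nodes of a vertex (exploiting $2$-boundedness), or we find a set $A$ whose potential contradicts the gap lemma.

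5. \emph{Discharging.} Give each vertex charge $\rho(v)$ plus its share $-\lambda$ of each incident edge. The hypothesis $\rho(G)\ge -1$ says the total charge is at least $-1$. Vertices of degree $\ge \ell(v)+1$ have charge $\le -\lambda$, which is negative by an amount we can spend. Low vertices in a Gallai-tree component receive charge from high neighbors. Since $\lambda=\lceil (k^2-7)/(2k-7)\rceil$, each high vertex can afford to send enough to its low neighbors, because by Step 2 low neighbors in a $K_{k-1}$ block are limited. We conclude that the total charge is $\le -2$, a contradiction.

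We expect the main obstacle to be the gap lemma in Step 3. The difficulty is constructing the reduced cover so that it remains $2$-bounded, and $(k-1)^-$, while tracking how the exceptional graphs can arise as the minimal subgraph of $G'$. Each such arrival needs separate casework.
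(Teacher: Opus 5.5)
Your outline has the right architecture (minimal counterexample, gap lemma, reducible configurations, discharging), but the step that actually carries the proof is missing. In Step 5 the easy half is that non-low vertices can pay. If every low vertex ships $\nu=\frac{k-2}{2k-7}$ along each edge to a non-low neighbour, then a non-low vertex with $\ell(v)=k-1$ and $d(v)=k$ ends with at most $-\lambda+1+\nu k\le 0$. Its charge before the transfer is $-\lambda+1-m(v)$, not $\le -\lambda$ as you state. This inequality is exactly where $\lambda=\lceil\frac{k^2-7}{2k-7}\rceil$ comes from.

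The hard half is showing that every component $T$ of the low subgraph ends strictly below $-1$. A low vertex with $\ell(v)=k-1$ starts at $1-m(v)$, which is $+1$ when it has no parallel edges. So a component built from such vertices with few edges to non-low vertices is positive. The paper handles this with the GDP-tree inequality $\Phi_k(T)>1+\nu$ (Lemma~\ref{GDP'}, imported from earlier work). It applies it only inside a smallest pendant edge-block $S^*$ (or all of $G$ if there is no cut edge). Outside $S^*$ the total charge is at most $\rho(\overline{S^*})-\lambda\le 0$ by Lemma~\ref{lem:k-1k}, and the extra $\nu$ pays for the cut-edge endpoint $x^*$. Your sentence ``low neighbours in a $K_{k-1}$ block are limited'' is no substitute; without such a counting lemma nothing shows the total charge is $\le -2$.

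Second, Step 4 targets the wrong configurations, so you could not verify the hypotheses of such a lemma. It needs $\ell(v)\ge 3$ on $T$ and no $(k-1)$-regular or $(k-2)$-regular blocks in $T$. So you must exclude the following.
\begin{enumerate}
\item Vertices with $\ell(v)\le 2$. The case $\ell(v)=2$ goes through the 2-edge-cut analysis of Lemma~\ref{lem:boundary-2}.
\item The $(k-1)$-regular case, handled by Lemma~\ref{lem:k-1-reg}.
\item Every low $(k-2)$-regular block:
\begin{itemize}
\item $K_{k-1}$, via Lemmas~\ref{nl3}, \ref{2a}, \ref{nl2}. These add an edge between two attachment vertices and rely on $K_k^-$ and $2C_t^-$ subgraphs having exactly $4$, resp.\ $3$, boundary edges.
\item $(k-2)K_2$ for $k\in\{5,6\}$, via Lemma~\ref{l42-2}.
\item $2C_t$, $3C_t$ and $2K_t$, via Lemma~\ref{lem:cycle+clique}.
\item The remaining multiples $qC_t$ and $tK_r$, via Lemma~\ref{all}.
\end{itemize}
\end{enumerate}

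Excluding multi-edges between low vertices, as you propose, is not needed, since parallel edges only help through the $-m(v)$ term. You also give no mechanism for it, and the paper does not do it. For example, the induced $2C_t^-$ of Lemma~\ref{lem:U-2Ct-subgraph} generally contains doubly joined low vertices.

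Finally, all these reductions run on a boundary-sensitive gap lemma: $|E_G(S,\overline S)|\ge j$ implies $\rho(S)\ge j(\lambda-2)+1$, with $3$ more if a parallel pair crosses the cut. It is proved without any contraction:
\begin{enumerate}
\item colour $G[S]$;
\item take a minimal subgraph of the complement under the reduced capacities;
\item use $\rho_\ell(v)-\rho_{\ell_\phi}(v)\le(\lambda+2)|E_G(v,S)|$.
\end{enumerate}
A gap of your form $\rho(A)\ge\rho(G)+c$ does not give the boundary-edge counts these arguments use. Examples are $\ell(v)=2\Rightarrow d(v)=2$, and $\|K,\overline K\|\le 4$ for a $K_k^-$ with all $\ell=k-1$.
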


This result implies Theorem~\ref{maint} as follows. Suppose $G$ is an $n$-vertex  dpva $k'$-critical { (simple)} graph. This means $G$ has a cover $H$ with $|H(u)|=k'-1$ for each $u\in V(G)$ such that there is no $\varphi \colon V(G) \to \bigcup_{u\in V(G)}H(u)$ such that  $\varphi(u) \in H(u)$ for each $u \in V(G)$ and $\{\phi(v): v\in V(G)\}$ induces a forest  in $H$, but for every proper subgraph $G'$ of $G$, such a function $\phi$ exists. Since a graph is a forest exactly when it is strictly $2$-degenerate, this $H$ can be viewed as a variable cover $(H,\LL)$ such that for every node $\alpha\in V(H)$,  $\LL(\alpha) = 2$. Thus  $(H,\LL)$ is a $2$-bounded variable $2(k'-1)$-cover for $G$, and $G$ is $(H,\LL)$-minimal. Since $2(k'-1)+1\geq 5$ and $G$ has no multiple edges, by Theorem~\ref{thm:v2}, either $G=K_{2k'-1}$ or $\rho_{G,\ell} (G) \leq -2$.
The latter means that $n (1+\lambda(2k'-2))-2\lambda |E(G)|\leq -2$, which is equivalent 
to~\eqref{t11}. Thus Theorem~\ref{thm:v2} is more general than Theorem~\ref{maint}.

We refine some definitions and the claim of Theorem~\ref{thm:v2} for a easier proof.
Since we are looking at a $2$-bounded variable $(k-1)^-$-cover, it is convenient for every $v\in V(G)$ to denote each node $\alpha\in H(v)$ as a pair $(v,c)$, where $c\in [k-1]$, and view $\LL(v)$ as a vector $(\ell_1(v),\ldots,\ell_{k-1}(v))$. With this notation, we call the second coordinate, $c$, of $(v,c)$ \emph{a color}. We will prove the following slightly refined form of Theorem~\ref{thm:v2}:

\begin{thm}
\label{thm:v3}
    Let $k \geq 5$.
    Let $G$ be a loopless multigraph and let $(H,\LL)$ be a $2$-bounded variable $(k-1)^-$-cover for $G$.    
     If $G$ is $(H,\LL)$-minimal, then one of the following holds:
    \begin{enumerate}[(a)]
        \item $G=K_{k}$,  and $\ell (v) \in \{k-1, k-2\}$ for each $v \in V(G)$,
        with at most one vertex $w \in V(G)$ satisfying $\ell (w) = k-2$,

        \item $k=5$, and either $G =4K_2$ or $G$ is a double cycle, and $\ell (v) = 4$ for each $v \in V(G)$, or
        \item $\rho_{G,\ell} (G) \leq -2$.
    \end{enumerate}
\end{thm}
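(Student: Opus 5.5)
We argue by minimal counterexample, following the potential method of \cite{kgeq5} adapted to variable covers as in \cite{KSSt}. Fix $k\ge 5$ and choose $(G,H,\LL)$ with $G$ $(H,\LL)$-minimal and none of (a)--(c) holding, minimizing $|G|+\|G\|$, and then maximizing $\sum_v \ell(v)$ or the number of pairs of nodes joined in $H$. First we record basic properties. Every vertex satisfies $d(v)\ge \ell(v)$, since otherwise we colour $G-v$ by minimality and place $\phi(v)$ last; the node $v$ then has fewer than $\ell(v)$ back-neighbours in total, so some node of positive capacity works. Let $B$ be the set of vertices with $d(v)=\ell(v)$. Each component of $G[B]$ carries a degree-cover restricted from $(H,\LL)$, after precolouring $G-B$. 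The characterisation from \cite{LuWW,KSSt} of connected multigraphs that are not DPV degree-colourable therefore tells us that every block of $G[B]$ is a complete graph, a cycle or multi-cycle, or a blown-up version thereof, with a rigid cover structure. We also show that $G$ is $2$-connected and that $\ell(v)\ge 2$ for most vertices.

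The core of the argument is a pair of gap lemmas bounding the potential of proper subsets. The first states that for $\emptyset\ne A\subsetneq V(G)$ we have $\rho(A)\ge$ some positive constant, roughly $\lambda(k-1)$ or $2\lambda$ depending on the case, unless $A$ is close to $V(G)$ or $G[A]$ is one of the exceptional graphs $K_k$, $4K_2$, or a double cycle. We prove it by the standard construction. Colour $G[A]$ minimally using a cover $H'$. Then build $G'$ from $G-A$ by adding a small gadget: a vertex, or a $K_{k-1}$ or $K_{k-2}$, with suitable capacities, joined to $N(A)$ and encoding how the colouring of $A$ restricts $N(A)$. If $G'$ had a colouring it would extend to $G$. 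Otherwise, by minimality, $G'$ contains a minimal subgraph of potential at most $-2$ or an exceptional subgraph, and pulling this back yields a set of potential at most $-2$ in $G$, contradicting the assumption on $\rho(G)$. The choice $\lambda=\lceil (k^2-7)/(2k-7)\rceil$ enters exactly here and in discharging. The second gap lemma treats sets inducing $K_{k-1}$ or near-complete structures.

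We then exclude specific configurations. There is no $K_k$ in $G$ other than $G$ itself, no $K_{k-1}$ in which all vertices are low, and no two adjacent low vertices sharing many neighbours. Each is handled by identifying vertices or by recolouring inside $H$, using the capacity-$2$ nodes to allow forest-like reuse of a colour. Finally we discharge. Give each vertex the charge $\rho(v)-\tfrac12\sum_{u}\rho(uv)$, so that the total equals $\rho(G)\ge -1$. Low vertices (those in $B$) need charge from high-degree neighbours. The Gallai-tree-type structure of $G[B]$ limits how many low neighbours a vertex has, and the excluded cliques bound the remaining deficit. We then show the final total is at most $-2$, which is a contradiction.

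The main obstacle is the first gap lemma. The gadget must respect the $2$-bounded $(k-1)^-$ constraint and the variable-degeneracy ordering, not just independence. The exceptional cases $4K_2$ and the double cycle at $k=5$ must also be tracked through every pullback. I would first try a single-vertex gadget whose capacities equal the residual capacities on $N(A)$, and fall back to a $K_{k-1}$ gadget when $\|A,V\setminus A\|$ is small.
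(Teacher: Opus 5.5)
Your opening steps ($\ell(v)\le d(v)$, and the GDP-tree structure of the low subgraph via the Kostochka--Schweser--Stiebitz characterization) match the paper. The proof breaks at your first gap lemma.

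A lower bound on $\rho(A)$ by a constant of order $2\lambda$ or $\lambda(k-1)$, independent of $\|A,\overline A\|$, is a Kostochka--Yancey-type statement. The gadget argument behind it rests on colour-permutation symmetry: a colouring of $G-A$ plus a $K_{k-1}$ gadget extends to $G$ only because the colour classes of a fixed colouring of $G[A]$ can be permuted to match the gadget. A DP cover has no such symmetry, and neither does a variable cover with capacities and orderings, since the matchings between $H(u)$, $u\in A$, and $H(w)$, $w\notin A$, are arbitrary. So the step ``if $G'$ had a colouring it would extend to $G$'' is unjustified, and you do not exhibit any gadget for which it holds.

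What survives is your fallback: fix one colouring $\phi$ of $G[A]$ and pass to the residual capacities $\LL_\phi$ on $G-A$. Lowering $\ell(v)$ by one lowers $\rho(v)$ by at most $\lambda+2$, while each boundary edge contributes $-2\lambda$. This gives only the edge-dependent bound $\rho(S)\ge j(\lambda-2)+1$ with $j=\|S,\overline S\|$, which is exactly what the paper proves (Lemma~\ref{lem:j(k-2)}). No constant bound is available. A side $S$ of a cut edge only gets $\rho(S)\in\{\lambda-1,\lambda\}$ (Lemma~\ref{lem:k-1k}), which is consistent with $\rho(G)\ge -1$. Your claim that $G$ is $2$-connected is asserted without proof, and the paper never establishes it. Instead it runs the discharging inside a smallest pendant edge-block $S^*$ and discards $\overline{S^*}$, whose total charge is at most $\rho(\overline{S^*})-\lambda\le 0$.

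With only the weak gap lemma, the configurations must be eliminated one by one, and your discharging sketch omits what this actually requires. Note first that the initial charge must be $\rho(v)+\frac12\sum_u\rho(uv)$ for the total to equal $\rho(G)$.

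A low vertex with $\ell(v)=k-1$ starts with positive charge $1-m(v)$, so charge must flow from low to non-low vertices. Each low vertex sends $\nu=\frac{k-2}{2k-7}$ along each edge to a non-low neighbour. The condition $\lambda\ge 1+\nu k=\frac{k^2-7}{2k-7}$ is exactly what keeps a non-low vertex of degree $k$ nonpositive. A component $T$ of the low subgraph then ends with charge at most $-\Phi_k(T)+\nu$. To finish you need the quantitative bound $\Phi_k(T)>1+\nu$ of Lemma~\ref{GDP'}, whose hypotheses are $3\le\ell(v)\le k-1$ on all of $T$ and no $(k-1)$- or $(k-2)$-regular blocks.

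Hence you must prove the following:
\begin{itemize}
\item $\ell(v)\ge 3$ for every vertex (Corollary~\ref{cor:h-geq3}, which needs the boundary-$2$ analysis); ``$\ell(v)\ge 2$ for most vertices'' is not enough.
\item There are no $(k-1)$-regular blocks (Lemma~\ref{lem:k-1-reg}).
\item There is no low $(k-2)$-regular block of any type---$K_{k-1}$, $qC_t$, or $tK_r$ (Lemma~\ref{all}). This in turn uses the lemmas on $K_k^-$, $(k-2)K_2$ and $2C_t^-$ subgraphs for $k\in\{5,6\}$.
\end{itemize}
Your remark that ``the excluded cliques bound the remaining deficit'' does not replace this work.
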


{ If the pair $(G,\ell)$ satisfies  (a) or (b), then we refer to $(G,\ell)$ as \textit{exceptional}.}

\section{Useful facts}

Let  $F$ be a multigraph, $\emptyset\neq W\subset V(F)$ and $f$ be an $(H,\LL)$-coloring of 
$F-W$. Define the associated function $\LL_f$ for $F[W]$ as follows: For each $v\in W$ and each $c\in {\rm Supp}_\LL(v)$, if the node $(v,c)$ is connected by exactly $d_f(v,c)$ edges to  $f(V(F)\setminus W)$, then we let
$\ell_{f}(v,c)=\ell(v,c)-d_f(v,c)$ and $\LL_f=(\ell_{f}(v,1),\dots,\ell_{f}(v,k-1))$.

The following two facts are folklore claims on degeneracy of multigraphs.

\begin{lemma}
\label{lem:acyclic}
Let $F$ be a multigraph, and let $(H,\LL)$ be a variable DP-cover of $F$.
If $F$ has a vertex ordering $\tau$ such that 
\begin{equation}\label{order}
 \mbox{$d^-_\tau(v) < \ell(v)$ for each $v \in V(F)$, }   
\end{equation}
 then $F$ has an $(H ,\LL )$-coloring.
\end{lemma}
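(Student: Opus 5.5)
The plan is a greedy construction along $\tau$. Write $\tau=(v_1,\ldots,v_n)$ and process the vertices in this order. At step $i$ we choose a node $\phi(v_i)\in H(v_i)$ and put it at the end of the ordering $\sigma$ built so far. Note that $d^-_\sigma(\phi(v_i))$ is the number of edges of $H$ from $\phi(v_i)$ to the previously chosen nodes. The requirement of Definition~\ref{def3} is that this number is smaller than the capacity of the node we choose. So we read the inequality ``$d^-_\sigma(\phi(v_i))<\ell(v_i)$'' as concerning the capacity $\ell(v_i,c)$ of the chosen node $(v_i,c)$; this is the meaning consistent with the later use of $\LL_f$.

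The key step is a counting argument. By Definition~\ref{def1}, each edge of $F$ joining $v_i$ to some $v_j$ with $j<i$ corresponds to a matching in $H$ between $H(v_i)$ and $H(v_j)$. Such a matching contributes at most one edge from $\phi(v_j)$ to $H(v_i)$. Summing over these edges, the number of $H$-edges from $\{\phi(v_1),\ldots,\phi(v_{i-1})\}$ into $H(v_i)$ is at most $d^-_\tau(v_i)$. In the notation of this section, with $f$ the partial coloring, we get $\sum_{c} d_f(v_i,c)\le d^-_\tau(v_i)<\ell(v_i)=\sum_c \ell(v_i,c)$. By pigeonhole, some color $c$ satisfies $d_f(v_i,c)<\ell(v_i,c)$. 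We set $\phi(v_i)=(v_i,c)$. Then $d^-_\sigma(\phi(v_i))=d_f(v_i,c)<\ell(v_i,c)$, as required.

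After all $n$ steps, $(\phi,\sigma)$ is an $(H,\LL)$-coloring. Each node received its back-degree at the moment it was inserted, and inserting later nodes does not change the back-degree of earlier ones. There is no real obstacle here. The only delicate points are the reading of the capacity condition, which I noted above, and the observation that parallel edges of $F$ correspond to distinct matchings, so they are counted correctly by $d^-_\tau$.
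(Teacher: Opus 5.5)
Your proof is correct and is essentially the paper's argument. The paper phrases the same greedy procedure as a minimal-counterexample induction: it colors the first vertex of $\tau$ with a node of positive capacity, passes to $F-v$ with the reduced capacities $\LL_\phi$, and prepends that node. You unroll this into an explicit left-to-right greedy with a pigeonhole step, and your count $\sum_c d_f(v_i,c)\le d^-_\tau(v_i)$ handles parallel edges cleanly, whereas the paper's write-up subtracts only $1$ per neighbor.
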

\begin{proof}
    Suppose that the lemma is false, and let $(F,H ,\LL )$ be a counterexample for which $|F|$ is minimum.
    Consider an ordering $\tau$ of $V(F)$ satisfying~\eqref{order}, and let $v \in V(F)$ be
    the first in $\tau$.
    Since $\ell (v) > d^-_\tau(v) = 0$,
    there exists a node $(v,c) \in H(v)$ with $\ell(v,c) \geq 1$.
    We let $\phi:\{v\} \rightarrow \{c\}$
    be a partial $(H ,\LL )$-coloring of $F$.

    Now, we observe that for each $w \in N(v)$,
    $d^-_{\tau - v}(w) = d^-_\tau(w) - 1 < \ell (w) - 1 \leq \ell_{\phi}(w)$. 
    Similarly, for each $u \in V(F) \setminus N[v]$, $d^-_{\tau-v}(u) = d^-_\tau(u) < \ell (u) = \ell _{\phi}(u)$.
    Therefore, by the minimality of $F$,
    $F-v$ has an $(H ,\LL_{\phi})$-coloring $(\psi,\sigma)$, which can be combined with $\phi$ by placing $\phi$ first to obtain an $(H ,\LL )$-coloring of $F$. This contradicts the initial assumption that $(F,H ,\LL )$ is a counterexample.
\end{proof}

\begin{lemma}
\label{lem:acyc-cor}
Let $F$ be a connected multigraph, and let $(H ,\LL )$ be a variable DP-cover of $F$.
If $\ell(v) \geq d(v)$ for each $v \in V(F)$ and there exists $w \in V(F)$ for which $\ell (w) > d(w)$, then $F$ has an $(H ,\LL )$-coloring.
\end{lemma}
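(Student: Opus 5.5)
The plan is to reduce the statement to Lemma~\ref{lem:acyclic} by building a vertex ordering $\tau$ of $F$ that satisfies~\eqref{order}. The natural choice is a reverse breadth-first (or depth-first) search ordering rooted at the special vertex $w$. Then every vertex other than $w$ will have at least one neighbor later in the ordering, while $w$ comes last.

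\textbf{Step 1: the ordering.} Since $F$ is connected, fix a spanning tree $T$ of $F$ and root it at $w$. List the vertices in non-increasing order of their distance from $w$ in $T$, breaking ties arbitrarily, so that $w$ is placed last. Call this ordering $\tau$. Each vertex $v\neq w$ has its parent $p(v)$ in $T$ at distance one less from $w$, so $p(v)$ appears after $v$ in $\tau$. Hence at least one edge of $F$ at $v$, namely the tree edge $vp(v)$, goes to a later vertex. Multiple edges only help, since parallel edges to later vertices also do not count in $d^-_\tau(v)$.

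\textbf{Step 2: the degree check.} For $v\neq w$ this gives $d^-_\tau(v)\le d(v)-d^+_\tau(v)\le d(v)-1<d(v)\le \ell(v)$. For $w$, which is last, $d^-_\tau(w)=d(w)<\ell(w)$ by hypothesis. So~\eqref{order} holds for every vertex, and Lemma~\ref{lem:acyclic} yields an $(H,\LL)$-coloring of $F$.

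There is no real obstacle in this argument. The only points to handle with care are that $d^-_\tau$ counts edges with multiplicity, which is harmless because $d^-_\tau(v)+d^+_\tau(v)=d(v)$ in a multigraph, and that connectivity is what guarantees every $v\ne w$ has a later neighbor. The single-vertex case $F=\{w\}$ is also covered: there $d(w)=0<\ell(w)$, so $w$ gets a node of positive capacity.
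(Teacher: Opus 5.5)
Your proof is correct and follows essentially the same route as the paper: both reduce to Lemma~\ref{lem:acyclic} through an ordering that ends with $w$ and in which every other vertex has a neighbor later in the ordering. The paper builds this ordering recursively over the components of $F-w$, taking a neighbor $w_i$ of $w$ as the new special vertex in each component, whereas your spanning-tree ordering produces it directly in one step.
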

\begin{proof} { Suppose that the lemma is false, and let $(F,H ,\LL )$ be a counterexample for which $|F|$ is minimum.
By Lemma \ref{lem:acyclic}, it is enough to find a vertex ordering $\tau$
 satisfying~\eqref{order}. Let $F_1,\ldots,F_t$ be the components of $F-w$ and for $1\leq i\leq t$ let $w_i$ be a neighbor of $w$ in $F_i$. Then each $F_i$ satisfies the conditions of our lemma with $w_i$ in place of $w$. So by the minimality of $|F|$, each $F_i$ has a vertex ordering $\tau_i$ satisfying~\eqref{order}. Then concatenating $\tau_1,\ldots,\tau_t$ and adding $w$ at the end forms an ordering $\tau$ of $V(F)$ satisfying~\eqref{order}.}
\end{proof}

For a graph $G$ and a positive integer $s$, the {\em multiple} $s G$ of $G$ is the multigraph obtained from $G$ by replacing each edge  $e \in E(G)$
with $s$  edges joining the endpoints of $e$. 
In particular, $1 G=G$.
A \emph{GDP-forest} is a multigraph such that for every block $B$, there exist $n$ and $t$ such that
$B$ is either a $t K_n$ or a $t C_n$.
 (A {\em double cycle} is a multigraph $2 C_n$.)
A {\em GDP-tree} is a connected GDP-forest. 
Note that every Gallai tree is also a GDP-tree.

For positive integers $a,q$ with  $a | q$, a multigraph $G$ and a cover $H$ of $G$, the \emph{$(q,a)$-blowup} of $H$ is a cover of $q G$ obtained by replacing each vertex $u \in V(H)$ with an independent set $I_u$ of size $q/a$, replacing each edge $uw \in E(H)$ with a graph 
$aK_{q/a,q/a}$ joining $I_u$ and $I_w$, and replacing each  $H(v)$ with the set $\bigcup_{u \in H(v)} I_u$. 
 We often write \emph{$q$-blowup} as shorthand for \emph{$(q,1)$-blowup}. If $B$ is a block in a multigraph $G$ with cover $H$ or variable cover $(H,\LL)$, then let $H_B$  denote the multigraph obtained from the subgraph of $H$ induced by $\bigcup_{v\in V(B)}H(v)$
by deleting isolated nodes.

We will need the following theorem proved by Dvo\v r\' ak and Postle~\cite{2018DvPo} for graphs, and by Bernshteyn, Kostochka, and Pron’~\cite{2017BeKoPr} for  multigraphs. The ``moreover" part of the theorem was proved by
Kim and Ozeki~\cite{KimO}.

\begin{thm}[\cite{2018DvPo},~\cite{2017BeKoPr},~\cite{KimO}]
\label{Kim-O}
If a connected multigraph $G$ has no $H$-coloring for a degree-cover $H$, then each block of $G$ is a multiple of a complete graph or a cycle.

Moreover, for every $v\in V(G)$, if $ \mathcal B(v)$ is the set of blocks of $G$
containing $v$, then $H(v)$ can be written as the disjoint union $\bigcup_{B\in  \mathcal B(v)}H_B(v)$
so that $d_B(v)=|H_B(v)|$ for all $B\in  \mathcal B(v)$ and
 the following hold:

(a) If $B=q K_{t}$, then  $H_B$ is a $q$-blowup of the $(t-1)$-cover $H'$ of 
$K_t$ where $H'$ forms 
 $t-1$ disjoint copies 
of $K_t$.

(b) If $B=q C_{2t}$, then  $H_B$ is  a $q$-blowup of a $2$-cover $H'$ of $C_{2t}$ where $H'$ is $C_{4t}$.

(c) If $B=q C_{2t+1}$, then $H_B$ is 
a $q$-blowup of a $2$-cover $H'$ of $C_{2t+1}$ where $H'$ forms
 two disjoint copies of $C_{2t+1}$.  
\end{thm}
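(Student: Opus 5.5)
Since this is a cited result, I would reprove it by induction on the number of blocks of $G$, taking a counterexample $(G,H)$ with $|G|+\|G\|$ minimum.

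\emph{Step 1: tightness.} Let $H$ be a degree-cover of a connected $G$ with no $H$-coloring. I would first show that $|H(v)|=d_G(v)$ for every $v$, and that for every edge $uv$ the corresponding matching in $H$ is perfect on $H(u)\cup H(v)$. If some node $\alpha\in H(v)$ has fewer than $d_G(v)$ neighbours in $H$, choose $\phi(v)=\alpha$. Then order $V(G)-v$ by decreasing distance from $v$ and colour greedily, as in Lemma~\ref{lem:acyclic} and Lemma~\ref{lem:acyc-cor}. Every vertex other than $v$ still has a later neighbour, so it has a free node when its turn comes. At the end, $\alpha$ is still available to $v$, which gives an $H$-colouring and a contradiction.

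\emph{Step 2: splitting at a cut vertex.} Let $v$ be a cut vertex, so that $G=G_1\cup G_2$ with $V(G_1)\cap V(G_2)=\{v\}$. Each $G_i-v$ is $H$-colourable by the degree argument, because neighbours of $v$ have slack. Let $T_i\subseteq H(v)$ be the set of nodes that are forbidden for $v$ by \emph{every} $H$-colouring of $G_i-v$. A single colouring of $G_i-v$ forbids at most $d_{G_i}(v)$ nodes of $H(v)$, so $|T_i|\le d_{G_i}(v)$. Suppose some $\alpha\notin T_1\cup T_2$. Then there are colourings of $G_1-v$ and of $G_2-v$ that both leave $\alpha$ free, and together with $\phi(v)=\alpha$ they colour $G$, which is impossible. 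Hence $T_1\cup T_2=H(v)$, and since $|H(v)|=d_{G_1}(v)+d_{G_2}(v)$, the sets $T_1,T_2$ partition $H(v)$ with $|T_i|=d_{G_i}(v)$. Restricting $H$ to $G_i$ with $H(v)$ replaced by $T_i$ gives a degree-cover of $G_i$ with no colouring. Induction then yields both the block structure and the decomposition $H(v)=\bigcup_{B\in\mathcal B(v)}H_B(v)$ with $|H_B(v)|=d_B(v)$. Deleting the isolated nodes of $H_B$ is compatible with this decomposition, since by construction the nodes in $H_{B'}(v)$ for $B'\ne B$ are not used by the block $B$.

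\emph{Step 3: the 2-connected case.} This is the main obstacle. The first task is to show that a 2-connected $G$ with a non-colourable degree-cover is $qK_t$ or $qC_t$. I would follow the Dvořák--Postle / Bernshteyn--Kostochka--Pron' approach. If $G$ is not of this form, find a vertex $w$ and two neighbours $x,y$ such that either $xy\notin E(G)$ or the edge multiplicities differ, and such that $G-\{x,y\}$ is connected. Then choose nodes for $x$ and $y$ that block at most $d(w)-1$ nodes of $H(w)$. Such a choice exists because $x$ and $y$ are not joined in $H$ by the relevant matching, or because the multiplicities are unequal. Finally, colour the rest greedily towards $w$.

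For the ``moreover'' part (Kim--Ozeki), I would use a recolouring argument. Fix an edge $uv$ in the block $B$. For any node $\alpha\in H(u)$, colour $B-u$ so that $\alpha$ stays free; by non-colourability this is impossible. By varying which vertex is coloured last and exploiting the perfect matchings from Step 1, I would show that any two nodes of $H(u)$ adjacent to a common node of $H(v)$ have identical $H$-neighbourhoods. This forces the $q$-blowup structure, in which the $q$ parallel edges realize a complete bipartite $K_{q,q}$ between the blown-up classes. The reduced $1$-cover is then a tight degree-cover of $K_t$ or $C_t$. A direct check of these reduced covers gives the three forms: $t-1$ disjoint copies of $K_t$, or for a cycle, either a single $C_{4t}$ (even length $2t$) or two disjoint copies of $C_{2t+1}$ (odd length). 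I expect this last blow-up identification to be the most delicate part, especially when $q\ge2$.
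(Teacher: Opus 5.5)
The paper does not prove this theorem; it is quoted from \cite{2018DvPo}, \cite{2017BeKoPr} and \cite{KimO}. Your outline therefore has to stand on its own, and it has genuine gaps.

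\emph{Step~1 is false as stated, and Step~2 overclaims.} Take $G$ to be the path $a\,v\,b$ with $H(a)=\{a_1\}$, $H(b)=\{b_1\}$, $H(v)=\{v_1,v_2\}$ and $E(H)=\{a_1v_1,b_1v_2\}$. This degree-cover has no $H$-colouring. Yet $v_1$ has one neighbour while $d_G(v)=2$, and the $av$-matching cannot be perfect.

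Your argument fails in either reading. If $v$ is precoloured with $\alpha$, the last neighbour of $v$ to be coloured has all its neighbours coloured and may have no free node. If $G-v$ is coloured first, nothing stops the greedy choices from hitting every neighbour of $\alpha$.

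Tightness does hold for $2$-connected $G$. Colour $v$ with $\alpha$, pick $u$ such that $\alpha$ has fewer than $|E_G(u,v)|$ neighbours in $H(u)$, and colour the connected graph $G-v$ greedily towards $u$. This forces $d(u)=d(v)$ for adjacent $u,v$, so non-colourable blocks are regular.

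Your Step~2 never needs Step~1. The chain $|H(v)|\ge d_{G_1}(v)+d_{G_2}(v)\ge |T_1|+|T_2|\ge |T_1\cup T_2|=|H(v)|$ gives the partition directly, and that part is correct.

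Its last sentence, however, is wrong. Take a triangle $uvw$ with pendant edges $ua$ and $vb$. Use the non-colourable cover in which $u_3\in H(u)$ is adjacent to $a_1$, $v_3\in H(v)$ is adjacent to $b_1$, and the remaining nodes form two disjoint triangles. Adding the $H$-edge $u_3v_3$ keeps the cover valid and non-colourable, but now $u_3,v_3$ are non-isolated in $H[\{u,v,w\}]$. So $H_B$ must be read as the subgraph of $H$ induced by $\bigcup_{x\in V(B)}H_B(x)$. With that reading your induction does deliver the decomposition, since each $H_i$ is an induced subgraph of $H$.

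\emph{Step~3 is where the theorem lives, and it is only sketched.} Your reduction needs $w$ and $x,y\in N(w)$ with $G-\{x,y\}$ connected. No such triple exists in a regular block whose underlying graph is an even cycle with alternating multiplicities $a\ne b$, e.g.\ $C_4$ with multiplicities $1,2,1,2$: removing the two neighbours of any $w$ isolates $w$. Such blocks are colourable, but they need a separate argument, for instance colouring around the cycle.

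For an underlying complete graph, ``the multiplicities differ'' must be made precise. Take a triangle $wxy$ with $|E_G(x,y)|<\max\{|E_G(w,x)|,|E_G(w,y)|\}$; one exists unless all multiplicities are equal. Counting then yields $\gamma\in H(w)$ with non-adjacent neighbours $\alpha\in H(x)$ and $\beta\in H(y)$.

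More seriously, the ``moreover'' part, which is the Kim--Ozeki content, is only announced (``I would show $\ldots$''). For $B=qK_t$ your own slack trick does finish it. If some $\gamma\in H(w)$ had non-adjacent neighbours $\alpha\in H(x)$, $\beta\in H(y)$, colouring $x,y$ with $\alpha,\beta$ would leave $w$ with slack in the connected graph $G-x-y$. Hence $N_H(\gamma)\cap H(x)$ is complete to $N_H(\gamma)\cap H(y)$ for every such triple. This forces classes of size $q$ with identical neighbourhoods, and after contraction, $t-1$ disjoint copies of $K_t$.

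For $B=qC_n$ with $n\ge 4$ there are no triangles, so this trick is unavailable. Proving that $H_B$ is the $q$-blowup of $C_{2n}$ (for $n$ even) or of two disjoint copies of $C_n$ (for $n$ odd) needs a genuine propagation argument around the cycle, and you have not supplied one.
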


The description of connected multigraphs $G$ that have no 
 $(H, \LL)$-coloring for some variable degree-cover $(H, \LL)$ is more complicated for two reasons. First, any block in which $s(v)=1$ and $\ell(v)=d_G(v)$
 for each non-cut vertex $v$ could be in  $G$. 
  Second, the cover $H$ for such $G$ may have multiple edges. But 
 for $2$-bounded  variable covers, the description is somewhat easier to grasp
 { than for the general case}. Below is an adaptation to such covers of the result by Kostochka, Schweser and Stiebitz~\cite[Theorem 8]{KSSt}.

\begin{thm}[\cite{KSSt}]
\label{KSSt}
If a connected multigraph $G$ has no $(H,\LL)$-coloring for a 
$2$-bounded  variable
degree-cover $(H,\LL)$, then 
\begin{enumerate}[(i)]
    \item Each block of $G$ is a multiple of a complete graph or a cycle;
  \item   For each  block $B$ of $G$ and each component $F$ of $H_B$, either $\ell_B(\alpha)=1$ for each $\alpha\in V(F)$ or $\ell_B(\alpha)=2$ for each $\alpha\in V(F)$.
  \item For every $v\in V(G)$, if $ \mathcal B(v)$ is the set of blocks of $G$
containing $v$, then $H(v)$ can be written as the disjoint union $\bigcup_{B\in  \mathcal B(v)}H_B(v)$ so that $d_B(v)=\sum_{\alpha\in H_B(v)}\ell(\alpha)$
for all $B\in  \mathcal B(v)$.
  \item If $B= qK_{t}$, then either  for some nonnegative integers $a,b$
with
$ a+2b= t-1$, 
$H_B$ is the disjoint union of $a$ $q$-blowups of $K_t$
in which every node $(v,c)$ satisfies $\ell(v,c)=1$  and
of $b$ $q$-blowups of $K_t$ in which every node $(v,c)$ satisfies $\ell(v,c)=2$, or
$q$ is even and for some nonnegative integers $a,b_1,b_2$
with
$ a+2b_1+2b_2= t-1$,
$H_B$ is the disjoint union of $a$ $q$-blowups of $K_t$
in which every node $(v,c)$ satisfies $\ell(v,c)=1$, 
of $b_1$ $q$-blowups of $K_t$ in which every node $(v,c)$ satisfies $\ell(v,c)=2$ and of $b_2$ $(q/2)$-blowups of $2K_t$ in which every node $(v,c)$ satisfies $\ell(v,c)=2$.

 \item If $B= qC_{2t}$, then either  
$H_B$ is the  $q$-blowup of $C_{4t}$
in which every node $(v,c)$ satisfies $\ell(v,c)=1$  or
$q$ is even and $H_B$ is the 
 $(q/2)$-blowup of $2C_{2t}$ in which every node $(v,c)$ satisfies $\ell(v,c)=2$.
 
  \item If $B= qC_{2t+1}$, then either  
$H_B$ is the disjoint union of $2$ $q$-blowups of $C_{2t+1}$
in which every node $(v,c)$ satisfies $\ell(v,c)=1$,  or $H_B$ is 
the $q$-blowup of $C_{2t+1}$ in which every node $(v,c)$ satisfies $\ell(v,c)=2$, or
$q$ is even and  $H_B$ is the 
 $(q/2)$-blowup of $2C_{2t+1}$ in which every node $(v,c)$ satisfies $\ell(v,c)=2$.
 
\end{enumerate}
\end{thm}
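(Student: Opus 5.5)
The approach is to reduce the statement to the characterization of Kostochka, Schweser and Stiebitz for general variable degree-covers (their Theorem 8), and then use $2$-boundedness to pin down the block-level structure in (ii) and (iv)--(vi). I will not rederive the general theorem. Instead I first pass to a minimal structure: $G$ is connected, $(H,\LL)$ is a degree-cover, and $G$ has no $(H,\LL)$-coloring.

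By Lemma~\ref{lem:acyc-cor}, we must have $\ell(v)=d(v)$ for every $v$, since otherwise a coloring exists. The general result then says that $G$ is a "GDP-tree-like" structure. Every block is a multiple $qK_t$ or $qC_n$, and $H(v)$ splits as $\bigcup_{B\in\mathcal B(v)}H_B(v)$ with $\sum_{\alpha\in H_B(v)}\ell(\alpha)=d_B(v)$. Moreover, each $(H_B,\LL_B)$ is itself an uncolorable degree-cover of $B$.

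This gives items (i) and (iii) directly. It also reduces everything to analysing a single $2$-connected block $B$ with an uncolorable degree-cover. The reduction works by the usual leaf-block peeling argument: color the rest of $G$ greedily, which succeeds by Lemma~\ref{lem:acyclic}, and use the residual capacities $\ell_f$.

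For a block $B=qK_t$ or $qC_n$, I argue on $H_B$ component by component. Uncolorability forces every node of $H_B$ to have degree in $H_B$ exactly equal to its capacity times the appropriate multiplicity. Otherwise one could start an ordering at a node with slack and finish greedily along an ordering of $B$, as in Lemma~\ref{lem:acyc-cor}.

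Within one component $F$ of $H_B$, all capacities must agree. The reason is that an edge between nodes of capacities $1$ and $2$ lets us place the capacity-$2$ node late and gain slack. This gives (ii).

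Next, capacity-$1$ components behave exactly like ordinary DP-covers, so Theorem~\ref{Kim-O} applies. It yields $q$-blowups of $K_t$, of $C_{4t}$ for even cycles, and of two copies of $C_{2t+1}$ for odd cycles.

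For capacity-$2$ components, a node of capacity $2$ in a block of degree $d_B$ accounts for $2$ units. I would show that either the component is a $q$-blowup of a single copy of the block, where each node is adjacent with multiplicity $q$ to each neighbor class, or, when $q$ is even, a $(q/2)$-blowup of the doubled block $2K_t$, $2C_{2t}$ or $2C_{2t+1}$. The counting constraints are $a+2b=t-1$ for $K_t$, and a total capacity of $2$ per vertex for cycles.

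The main obstacle is this capacity-$2$ classification. One has to exclude "mixed" covers, where a component of $H_B$ covering $B=qK_t$ has matchings of different shapes on different edge copies. The plan is an exchange argument: if the matchings between $H(u)$ and $H(v)$ are not all identical, or not all of the uniform doubled type, pick two vertices $u,v$ in $B$ and a color choice at $u,v$ creating fewer than $\ell$ back-edges. Then extend along an ordering of $B-\{u,v\}$ that ends at a vertex retaining slack, and finish by Lemma~\ref{lem:acyclic}. The even-$q$ requirement arises because a capacity-$2$ node must receive back-degree $2$ from each neighbor class, which is possible only when the multiplicities pair up.

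The bipartite versus non-bipartite distinction for cycles (items (v) and (vi)) comes from parity. On an even cycle, a capacity-$2$ single blowup is $2$-colorable-degenerate and so gets slack, which leaves only $C_{4t}$ with capacity $1$ or the doubled version. On odd cycles all three options survive.
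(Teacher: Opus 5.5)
\textbf{Verdict: genuine gap.} Your overall route matches the paper's: it gives no argument beyond stating the result as the $2$-bounded adaptation of Theorem~8 of~\cite{KSSt}. The gap is in the specialization you sketch, which contains false steps.

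The clearest is the parity step for even cycles. Suppose $H_B$ is a $q$-blowup of $C_{2t}$ in which every node has capacity $2$. Then every transversal induces a simple copy of $C_{2t}$ in $H$, every node of which has degree $2=\ell$, so no ordering works. Bipartiteness plays no role for capacity-$2$ nodes joined by single edges. Concretely, take $G=C_4$ with $H$ a single copy of $C_4$ and all capacities $2$ (this is just the fact that a cycle has vertex arboricity $2$). It is an uncolorable $2$-bounded degree-cover, and with $q=1$ it is of neither form listed in (v). So this configuration cannot be excluded: your sketch would prove (v) exactly as printed, and this example contradicts it. The paper itself, applying the theorem in the proof of Lemma~\ref{lem:cycle+clique}, lists ``a $q$-blowup of $C_t$ with capacity $2$'' for every $t$. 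Relatedly, the uncolorable doubled configuration on an even cycle is the $(q/2)$-blowup of the doubled \emph{twisted} cover $2C_{4t}$, not of $2C_{2t}$. A $(q/2)$-blowup of $2C_{2t}$ with capacity $2$ supplies only $q$ of the $2q$ units at a vertex, and two such untwisted pieces are colorable because $C_{2t}$ is bipartite.

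Three further steps do not go through.

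First, the general theorem of~\cite{KSSt} does not say that every block is a multiple of $K_t$ or $C_n$. It also allows arbitrary monoblocks ($s(v)=1$, the unique node having capacity $d_B(v)$), as the paragraph before the statement warns. Item (i) needs the extra observation that $2$-boundedness forces $d_B(v)\le 2$ on such a block, so it is $K_2$, $2K_2$ or a cycle; the capacity-$2$ cycles among these are exactly the configurations above.

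Second, you cannot apply Theorem~\ref{Kim-O} to the capacity-$1$ components alone. When capacity-$2$ components are also present, the capacity-$1$ part of $H_B$ is not a degree-cover of $B$, so the hypothesis fails, and so can the conclusion. Take $B=qC_{2t+1}$ with $q$ even, and let $H_B$ be one capacity-$1$ $q$-blowup of $C_{2t+1}$ together with one capacity-$2$ $(q/2)$-blowup of $2C_{2t+1}$. This is uncolorable, since each piece can host only an independent set of the odd cycle. Yet its capacity-$1$ part is one copy, not two.

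Third, the same bookkeeping for $B=qK_t$ shows that a capacity-$2$ $(q/2)$-blowup of $2K_t$ supplies $q$ units per vertex and hosts at most one vertex of a transversal. The correct balance is therefore $a+2b_1+b_2=t-1$; for example, take $2K_2$ whose cover is one doubled edge between two capacity-$2$ nodes.

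What actually has to be done is to enumerate the admissible uniform-capacity components. For each piece, compare the capacity it supplies per vertex with the number of transversal vertices it can host, and require equality throughout. Your parity and Kim--Ozeki shortcuts bypass exactly this count.

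Two smaller corrections:
\begin{itemize}
\item The invariant you want is that every node of $H_B(v)$ has degree $d_B(v)$ in $H_B$, not ``capacity times multiplicity''.
\item Your one-line argument for (ii) gains no slack. A capacity-$1$ node placed before an adjacent capacity-$2$ node costs the latter exactly the one unit that edge accounts for.
\end{itemize}
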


To state a result on GDP-trees from~\cite{kgeq5}, we need some definitions.

Given $k$, a multigraph $T$ and a function $\ell: V(F) \to \N\cup\{0\}$ with $\ell(v)\geq d_T(v)$ for each $v\in V(T)$, 
let $F=F(T,\ell)$ be the set of vertices $v\in V(T)$ such that
$\ell(v)=k-1>d_T(v)$. 
Let $\overline{F}=V(T)\setminus F$. 
Furthermore, for $v\in V(T)$, let $\mu_{\ell}(v)=\ell(v)-d_T(v)$ and $\mu_{\ell}(T)=\sum_{v\in V(T)}\mu_{\ell}(v)$.

For  $u,v\in V(T)$, 
let $m(u,v) = \max\{0, |E_T(u,v)|-1\}$. For every  $v\in V(T)$, let $m(v)=\frac{1}{2}\sum_{u\in N(v)}m(u,v)$, and let

\begin{equation}\label{m(T)}
m(T)=\sum_{\substack{u,v\in V(T) \\ u\neq v}}m(u,v)=\sum_{v\in V(T)}m(v).
 \end{equation}

Let $\nu=\nu(k)=\frac{k-2}{2k-7}$. We are interested in the following parameter: 
\begin{equation}\label{Ph}
\Phi_k(T,\ell):= \nu \mu_\ell (T)+m(T)+|\overline{F}|-|{F}|.
 \end{equation}

{
\begin{lemma}[\cite{kgeq5}, Lemma 3.2]\label{GDP'} Let $k\geq 5$ and $\nu=\frac{k-2}{2k-7}$.
Suppose that $T$ is a GDP-tree and $\ell: V(T)\to \N\cup\{0\}$ satisfies

(i) $3\leq \ell(v)\leq k-1$ for each $v\in V(T)$,

(ii) $\ell(v)\geq d_T(v)$ for each $v\in V(T)$, and

(iii) $T$ has neither $(k-1)$-regular nor $(k-2)$-regular blocks.

 Then 
 \begin{equation}\label{s(T)'}
\Phi_k(T)>1+\nu.
 \end{equation}
\end{lemma}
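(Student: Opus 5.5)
The plan is induction on the number of blocks of $T$, using a per-block accounting of $\Phi_k$. Write each vertex's contribution as $\nu\mu_\ell(v)+m(v)+1$ if $v\in\overline F$ and $\nu\mu_\ell(v)+m(v)-1$ if $v\in F$. A vertex of $F$ has $\mu_\ell(v)\ge 1$, so it contributes at least $\nu-1$. Here $\nu-1\ge -\tfrac12$, with $\nu-1=0$ when $k=5$. Hence only a block containing many $F$-vertices, each with $\mu_\ell=1$, can be dangerous. Such a block is $(k-2)$-regular, or is close to it once one accounts for the cut vertices.

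\emph{Base case: $T$ is a single block $B$, either $tK_n$ or $tC_n$.}
\begin{itemize}
\item Suppose $B=tC_n$. Then $d=2t$ and $\ell\ge 3$. Either some vertex has $\ell=d$ and lies in $\overline F$, or every vertex has $\mu\ge 1$. In both subcases $m(B)=(t-1)n$ and the $\nu\mu$ terms should suffice. The simple case $t=1$ with all $\ell=k-1$ gives $\Phi=n(\nu(k-3)-1)$, and this exceeds $1+\nu$ since $\nu(k-3)\ge 1$ and $n\ge 3$.
\item Suppose $B=tK_n$, so $d=t(n-1)\le k-1$. Hypothesis (iii) rules out $d\in\{k-2,k-1\}$, so $d\le k-3$. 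Every vertex with $\ell=k-1$ then has $\mu\ge 2$ and contributes at least $2\nu-1>0$, using $\nu>\tfrac12$. Every vertex with $\ell<k-1$ lies in $\overline F$ and contributes at least $1$.
\end{itemize}
Summing over the $n\ge 2$ vertices and adding $m(B)=(t-1)\binom n2$ should give $\Phi>1+\nu$. This requires a short check of the smallest cases, such as $K_2$, where (i) forces $\mu\ge 2$.

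\emph{Induction step.} Take a leaf block $B$ with cut vertex $v$, and let $T'=T-(V(B)\setminus\{v\})$ with $\ell$ restricted to $V(T')$. Conditions (i)--(iii) persist in $T'$: $\ell$ is unchanged, $d_{T'}(v)\le d_T(v)$, and the blocks of $T'$ are blocks of $T$ with the same internal degrees. So $\Phi_k(T')>1+\nu$ by induction, and it suffices to show $\Phi_k(T)\ge\Phi_k(T')$. Compute the difference:
\begin{itemize}
\item the vertices of $B-v$ contribute their full terms;
\item $m$ gains $m(B)$;
\item $\mu_\ell(v)$ drops by $d_B(v)$ when passing from $T'$ to $T$, costing $\nu d_B(v)$;
\item if $\ell(v)=k-1=d_T(v)$, then $v\in\overline F$ in $T$ but $v\in F$ in $T'$, which is a further gain of $2$ in $T$. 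If $v\in F$ in $T$, it stays in $F$.
\end{itemize}
So we need
\[
\sum_{u\in V(B)\setminus\{v\}}\bigl(\nu\mu_\ell(u)\pm 1\bigr)+m(B)\ \ge\ \nu\, d_B(v)-2\epsilon,
\]
where $\epsilon=1$ in the status-change case and $\epsilon=0$ otherwise. The non-cut vertices $u$ of $B$ satisfy $d_T(u)=d_B(u)$. If $\ell(u)=d_B(u)$, then $u\in\overline F$ and contributes $+1$. Otherwise $\mu_\ell(u)\ge 1$, and by (iii) the block is not $(k-2)$-regular, so each such $u$ gets enough $\nu$-weight.

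I would verify this inequality case by case: $B=tK_n$ with $d_B=t(n-1)$, and $B=tC_n$ with $d_B=2t$. The term $m(B)=(t-1)\binom n2$ (respectively $(t-1)n$) absorbs the $\nu t$ factor in $\nu d_B(v)$ for multiple blocks. For simple blocks, $n-1$ non-cut vertices must beat $\nu(n-1)$, and this works because $\nu\le 1$.

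\emph{Main obstacle.} The hardest case is a simple leaf block $K_n$ with $n-1=k-3$ or thereabouts, whose non-cut vertices all have $\ell=k-1$ and hence $\mu=1$, each contributing only $\nu-1<0$. Here the leaf-block reduction can decrease $\Phi_k$. To handle it, I would first try to use that $B$ is not $(k-2)$-regular. This forces $n-1\le k-3$, hence $\mu\ge 2$ for the non-cut vertices, each contributing at least $2\nu-1$. The required inequality then reduces to $(n-1)(2\nu-1)\ge\nu(n-1)$, i.e.\ $\nu\ge1$. That holds only for $k=5$. For $k\ge6$ this is exactly where the constant $\nu=\frac{k-2}{2k-7}$ must be used sharply: $n-1\le k-3$ gives $\mu\ge k-1-(n-1)$, and the precise value of $\nu$ should make $(n-1)\bigl(\nu(k-1-(n-1))-1\bigr)\ge\nu(n-1)$ fail only in configurations that can be handled directly. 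If the direct check fails there, I would strengthen the induction hypothesis by proving a bound for trees with a designated root vertex $v$ that carries a slack term $\nu d(v)$.
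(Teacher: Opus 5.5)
\paragraph{Gap: the induction step is false as stated.}
Your argument rests on $\Phi_k(T)\ge\Phi_k(T')$ for $T'=T-(V(B)\setminus\{v\})$, and this inequality fails. Take $k=6$, so $\nu=4/5$. Let $T$ be a $K_4$ on $\{v,a,b,c\}$ plus a pendant edge $vw$, with $\ell\equiv 5$; hypotheses (i)--(iii) hold.
\begin{itemize}
\item Each of $a,b,c$ lies in $F$ with $\mu_\ell=2$ and contributes $2\nu-1=3/5$.
\item $v$ lies in $F$ with $\mu_\ell=1$ and contributes $\nu-1=-1/5$.
\item $w$ contributes $4\nu-1=11/5$.
\end{itemize}
So $\Phi_6(T)=19/5$, while $\Phi_6(T')=\Phi_6(\{vw\})=22/5$.

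In general, consider a simple leaf block $K_{k-2}$ whose non-cut vertices have $\ell=k-1$. These vertices have $\mu_\ell=2$, not $\mu_\ell=1$ as you first wrote. The change in $\Phi_k$ is
\[
(k-3)(2\nu-1)-\nu(k-3)+2\epsilon=2\epsilon-\frac{(k-3)(k-5)}{2k-7}.
\]
This is negative for every $k\ge 6$ when $\epsilon=0$, and even with $\epsilon=1$ once $k\ge 9$. Saying these cases ``can be handled directly'', or that you would ``strengthen the induction hypothesis'' without stating and proving the stronger statement, leaves the only nontrivial case open. The paper gives no proof to compare with; it imports the lemma from \cite{kgeq5}.

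\paragraph{One way to close it.}
Every block is some $tK_n$ or $tC_n$, hence regular. For $x$ in a block of degree $d'$, we have $d'\le d_T(x)\le \ell(x)\le k-1$, and (iii) excludes $d'\in\{k-2,k-1\}$. So every block has degree at most $k-3$. With this, the case check you outline does give $\Phi_k(T)\ge\Phi_k(T')$ for every leaf block except a simple $K_{k-2}$.

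For a leaf block $B=K_{k-2}$ with cut vertex $v$, delete all of $V(B)$ instead, including $v$.
\begin{itemize}
\item Since $d_B(v)=k-3$ and $d_T(v)\le k-1$, $v$ has $e\in\{1,2\}$ further edges.
\item $T-V(B)$ consists of at most $e$ GDP-trees $T_i$ that still satisfy (i)--(iii). The remnants of the other blocks at $v$ are $tK_{n-1}$ or chains of $tK_2$, of smaller degree.
\item Passing from $T_i$ back to $T$, a neighbour of $v$ loses at most $\nu$ per edge to $v$. It can only move from $F$ to $\overline F$, never back, and $m$ does not decrease.
\item Each non-cut vertex of $B$ contributes at least $2\nu-1$.
\end{itemize}
Writing $s(v)=1$ if $v\in\overline F$ and $s(v)=-1$ if $v\in F$, this gives
\[
\Phi_k(T)\ \ge\ \sum_i \Phi_k(T_i)+(k-3)(2\nu-1)+\nu\mu_\ell(v)+s(v)-\nu e .
\]
\begin{itemize}
\item If $e=1$, then $\nu\mu_\ell(v)+s(v)\ge \nu-1$, so the extra terms are at least $(k-3)(2\nu-1)-1=\frac{k-2}{2k-7}>0$.
\item If $e=2$, then $\ell(v)=d_T(v)=k-1$, so $v\in\overline F$ with $\mu_\ell(v)=0$, and the extra terms are at least $\frac{3k-12}{2k-7}>0$.
\end{itemize}
Hence $\Phi_k(T)>\Phi_k(T_1)>1+\nu$.

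This deletion can increase the number of blocks, since a $tC_n$ minus a vertex becomes a chain of $tK_2$'s. So run the induction on $|V(T)|$ rather than on the number of blocks. Include the one-vertex base case, where $\nu(k-1)-1>1+\nu$ and $3\nu+1>1+\nu$.
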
}

\section{Properties of a minimum counterexample $G$}\label{sec:propG}

For the rest of the paper, we fix a counterexample $(G,H,\LL)$ to Theorem~\ref{thm:v3}, where $G$ is a loopless multigraph 
and $H$ is a $2$-bounded $(k-1)^-$-cover for $G$
  such that (i) $G$ does not have an $(H,\LL)$-coloring, (ii) every proper subgraph $G'$ of $G$ for any $2$-bounded variable cover $(H',\LL')$ with $\ell'(v)=\ell(v)$ for every $v\in V(G')$ admits an
  $(H',\LL')$-coloring and (iii) each $(G'',H'',\LL'')$ with $|G''|<|G|$ satisfies the theorem.
  
This means that
\begin{equation}\label{minG}
\parbox{14cm}{ (a) $G$ is not $(H,\LL)$-colorable; (b) for every smaller multigraph $G'$ and every $(k-1)^-$-cover $H'$ and $2$-bounded list $\LL'$ of $G'$, the triple $(G',H',\LL')$ satisfies the theorem; (c) $\rho_{G,\ell}(V(G))\geq -1$; (d) $K_{k}\not\subseteq G$  and (e) when $k=5$, $G$ contains neither $4K_2$ nor a double cycle.  } 
\end{equation}

For every $uv\in E(G)$, we assume $E_H(H(u), H(v))$ is the union of $|E_G(u, v)|$ maximal matchings.

\subsection{General observations}

We make some observations about our minimum counterexample $(G,H)$ and the potentials of its subgraphs.

\begin{lemma}
\label{lem:partial}
If $W \subseteq V(G)$ and $f$ are defined as above, then $G[W]$ has no $(H,L_f)$-coloring.
\end{lemma}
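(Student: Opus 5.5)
We argue by contradiction. Suppose $G[W]$ has an $(H,\LL_f)$-coloring $(\psi,\sigma_W)$. We will glue it to the given coloring $f$ of $G-W$ to get an $(H,\LL)$-coloring of $G$, contradicting \eqref{minG}(a).

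Write $f=(\phi_0,\sigma_0)$, where $\phi_0$ chooses nodes on $V(G)\setminus W$ and $\sigma_0$ orders them so that every chosen node has fewer than $\ell$ earlier neighbors in $H$. Define $\phi$ on $V(G)$ by $\phi|_{V(G)\setminus W}=\phi_0$ and $\phi|_W=\psi$. Let $\sigma$ be the concatenation of $\sigma_0$ followed by $\sigma_W$. This gives an ordering of $\phi(V(G))$.

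We now check the degeneracy condition of Definition~\ref{def3}.

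\begin{itemize}
\item Take $v\notin W$. The nodes preceding $\phi(v)$ in $\sigma$ are exactly those preceding it in $\sigma_0$. So $d^-_\sigma(\phi(v))=d^-_{\sigma_0}(\phi(v))<\ell(\phi(v))$.
\item Take $v\in W$ with $\phi(v)=(v,c)$. The earlier neighbors of $(v,c)$ in $\sigma$ are of two kinds. First, all of its neighbors in $f(V(G)\setminus W)$; there are exactly $d_f(v,c)$ of them, counted with multiplicity. Second, its earlier neighbors inside $\sigma_W$. Therefore
\[
d^-_\sigma((v,c)) = d_f(v,c)+d^-_{\sigma_W}((v,c)) < d_f(v,c)+\ell_f(v,c)=\ell(v,c).
\]
\end{itemize}

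So $(\phi,\sigma)$ is an $(H,\LL)$-coloring of $G$, which is the desired contradiction.

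One point needs care. In Definition~\ref{def3} the degeneracy condition should be read with the capacity of the chosen node, $\ell(\phi(v))$, and $\LL_f$ is defined node-wise. With that reading the computation above goes through, and it is the only real content of the argument. If $\ell_f(v,c)$ could be negative, the node $(v,c)$ cannot be chosen by $\psi$ in any case, since $d^-\ge 0$. Edges of $H$ between $H(W)$ and unchosen nodes outside $W$ are irrelevant, because only the induced subgraph on the chosen nodes matters.
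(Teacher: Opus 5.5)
Your proof is correct and is the paper's argument. You take the given coloring $f$ of $G-W$ and append the $(H,\LL_f)$-coloring of $G[W]$ after it in the ordering, which yields an $(H,\LL)$-coloring of $G$ and contradicts that $G$ has none; your explicit check $d_f(v,c)+d^-_{\sigma_W}((v,c))<\ell(v,c)$ spells out the step the paper leaves implicit.
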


\begin{proof}
    Suppose that $G[W]$ has a $(H,L_f)$-coloring $f'=(\phi',\sigma')$ and $f=(\phi,\sigma)$.
    We may take $\psi=\phi\cup\phi'$ and $\tau=\sigma\sigma'$, then $(\psi,\tau)$ is an $(H,\LL)$-coloring of $G$, contradicting the minimality of $G$.
\end{proof}

Given an $(H,\LL)$-coloring $(\psi,\tau)$ of a multigraph $F$,
we say that a node $(v,c)$ is  $(\psi,\tau)$-{\em slack} (or {\em slack in $(\psi,\tau)$}, or
just {\em slack} if there is no ambiguity) if
 $\psi(v)=c$, 
$\ell(v,c)=2$ and $d^-_\tau(v,c)=0$. Similarly, we say that
$(v,c)$ is $(\psi,\tau)$-{\em tight} (or {\em tight in $(\psi,\tau)$}, or
just {\em tight})  if
 $\psi(v)=c$ and $(v,c)$ is not slack.

\begin{lemma}
\label{lem:cut-edge}
    Suppose that $S \subseteq V(G)$ satisfies $\|S ,\overline S \| = 1$, and let $v \in S$ be the unique vertex with a neighbor in $\overline S$. 
    Then, there exists a unique node $(v,c) \in H(v)$ such that
    $(v,c)$ is tight in
    every $(H,\LL)$-coloring of $G[S]$. 
\end{lemma}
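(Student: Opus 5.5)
Throughout, write $u$ for the unique neighbor of $v$ in $\overline S$, so that $uv$ is the single edge between $S$ and $\overline S$. The argument has two parts: first show that $G[S]$ has at least one $(H,\LL)$-coloring, then show that all such colorings share a common tight node.

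\textbf{Existence.} Since $\|S,\overline S\|=1$, the set $\overline S$ is nonempty, so $G[S]$ is a proper subgraph of $G$. By the minimality property~\eqref{minG}(ii), $G[S]$ is $(H,\LL)$-colorable. The same reasoning applies to $G[\overline S]$, and to any graph obtained from $G$ by deleting the edge $uv$.

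\textbf{Some node must be tight in every coloring.} Let $(\psi,\tau)$ be an $(H,\LL)$-coloring of $G[S]$ and suppose $\psi(v)$ is slack. Then $\psi(v)$ can be moved to the front of $\tau$ without violating any constraint. Next take an $(H,\LL)$-coloring $(\psi',\tau')$ of $G-S=G[\overline S]$. The only possible edge of $H$ between the two colorings is the one joining $\psi(v)$ and $\psi'(u)$, and it exists only if the matching for $uv$ covers this pair. Place the whole of $\tau'$ first and then $\tau$. Every node of $\overline S$ keeps its back-degree. Every node of $S$ other than $v$ gains nothing, since it has no neighbors in $\overline S$. The node $\psi(v)$ gains at most $1$; as it was slack, its back-degree is at most $1<2=\ell(v,\psi(v))$. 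This produces an $(H,\LL)$-coloring of $G$, contradicting~\eqref{minG}(a). Hence $\psi(v)$ is tight in every coloring of $G[S]$.

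\textbf{Uniqueness of the tight node.} Suppose there are two colorings of $G[S]$ with $\psi_1(v)=(v,c_1)$ and $\psi_2(v)=(v,c_2)$, where $c_1\neq c_2$. The plan is to find an $(H,\LL)$-coloring $(\psi',\tau')$ of $G[\overline S]$ in which $\psi'(u)$ is not adjacent to $(v,c_i)$ for some $i$. Then the concatenation $\tau'$ followed by $\tau_i$ colors $G$, since no edge crosses between the two colorings. The matching for $uv$ links $(v,c_1)$ to at most one node $\alpha_1\in H(u)$, and $(v,c_2)$ to at most one node $\alpha_2\neq\alpha_1$. It therefore suffices to produce a coloring of $G[\overline S]$ with $\psi'(u)\neq\alpha_1$.

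To find it, apply Lemma~\ref{lem:partial} with $W=\overline S$ and $f=(\psi_1,\tau_1)$. This shows that $G[\overline S]$ has no coloring under $\LL_f$, where $\LL_f$ equals $\LL$ except that $\ell_f(\alpha_1)=\ell(\alpha_1)-1$. Symmetrically, $G[\overline S]$ is not colorable under the cover in which $\alpha_2$ loses $1$. I would then use the minimality hypothesis (ii) once more, on a modified cover of $G[\overline S]$. The idea is to take the cover obtained by setting $\ell(\alpha_1)$ to $\ell(\alpha_1)-1$, restoring the total $\ell(u)$ by adding $1$ to $\alpha_2$ (capped at $2$), or alternatively by adding an extra node. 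This cover is $2$-bounded with the same $\ell$ values, so it is colorable. Its coloring either avoids $\alpha_1$, giving the contradiction, or uses $\alpha_1$ with reduced capacity, which is compatible with $(v,c_1)$ preceding it; either way we get a contradiction.

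The main obstacle is the capping at $2$. If $\ell(\alpha_2)=2$ already, the capacity cannot be transferred to $\alpha_2$. In that case I would introduce a fresh color node in $H(u)$ with no edges to receive the capacity. This keeps the cover $2$-bounded while preserving $\ell(u)$, and a coloring that uses the fresh node lifts immediately, because $u$ then has no constraints from its neighbors.
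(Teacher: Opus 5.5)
Your existence step and your argument that $\psi(v)$ can never be slack are correct, and they use the same concatenation idea as the paper. The gap is in the uniqueness step. You reduce it to finding a coloring of $G[\overline S]$ with $\psi'(u)\neq\alpha_1$, and the modified-cover argument you give for that does not work.

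\begin{itemize}
\item If the coloring of your modified cover uses $\alpha_2$ at its raised capacity, it may have $d^-_{\tau'}(\alpha_2)=\ell(\alpha_2)$. It is then not an $(H,\LL)$-coloring of $G[\overline S]$, and no contradiction follows. Raising some other node of $H(u)$ instead has the same defect.
\item If it uses the fresh isolated node, nothing ``lifts''. That node is not in $H$, and $u$ must eventually receive a real node of $H(u)$. You have effectively colored only $G[\overline S]-u$, with no guarantee that $u$ can be colored afterwards.
\end{itemize}

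The detour is unnecessary, and you already had the missing ingredient when you noted $\alpha_2\neq\alpha_1$. Since $uv$ is a single edge, $E_H(H(u),H(v))$ is one matching. So for \emph{any} $(H,\LL)$-coloring $(\psi',\tau')$ of $G[\overline S]$, the node $\psi'(u)$ is adjacent to at most one of $(v,c_1)$ and $(v,c_2)$. Taking $i$ to be the other index, $\tau'$ followed by $\tau_i$ has no $H$-edge between the two parts, so it colors $G$, which is a contradiction.

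This is how the paper argues, all in one step. It fixes a single coloring $(\psi,\tau)$ of $G[\overline S]$ and applies Lemma~\ref{lem:partial}. Then every coloring of $G[S]$ must use the unique neighbor of $(w,\psi(w))$ in $H(v)$, and that node must have back-degree exactly $\ell-1$. This gives existence, tightness and uniqueness together.
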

\begin{proof}
    By the $(H,\LL)$-minimality of $G$, 
    $G[S]$ has an $(H,\LL)$-coloring $(\phi,\sigma)$. 
    We write $\phi(v) = c$.
    Furthermore,
    $G[\overline S]$ has an $(H,\LL)$-coloring $(\psi,\tau)$. 
    We write $w$ for the unique neighbor of $v$ in $\overline S$.
    By Lemma \ref{lem:partial},
    $G[S]$ has no $(H,\LL_{\psi})$-coloring.
    In particular, for every $(H,\LL)$-coloring $(f,\kappa)$ of $G[S]$,
    $\ell(v,f(v)) - 1 \leq \ell_{\psi}(v,f(v)) \leq d_{\kappa}^-(f(v)) < \ell(v,f(v))$.
    As $\ell_{\psi}(v,f(v)) \neq \ell(v,f(v))$ for every choice of $(f,\kappa)$,
    $(v,f(v))$ is always the unique node of $H(v)$ that is adjacent to $(w,\psi(w))$.
    Furthermore, it follows from the inequality chain above that we always have $d^-_{\kappa}(v,f(v)) = \ell(v,f(v))-1$, so that $(v,f(v))$ is always a tight node.
    Since $(\phi,\sigma)$ assigns $\phi(v)=c$, it follows that $f(v) = c$ always, so that
    $(v,c)$ is tight in
    every $(H,\LL)$-coloring of $G$.
\end{proof}

\begin{obs}
\label{obs:zero}
    $G$ has no vertex $v$ for with $\ell (v) = 0$. 
\end{obs}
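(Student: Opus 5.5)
Suppose, for a contradiction, that some $v\in V(G)$ has $\ell(v)=0$. Since every capacity $\ell(v,c)$ is a nonnegative integer and $\ell(v)=\sum_c \ell(v,c)=0$, every node of $H(v)$ has capacity $0$. In particular, $v$ cannot be placed in any ordering $\sigma$ with $d^-_\sigma(\phi(v))<\ell(v,\phi(v))=0$. So the single-vertex multigraph $G[\{v\}]$ has no $(H,\LL)$-coloring; here capacities are read per node, as in the proof of Lemma~\ref{lem:acyclic}.

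We split into two cases. If $|G|\ge 2$, then $G[\{v\}]=G-(V(G)\setminus\{v\})$ is a proper subgraph of $G$ carrying the restricted $2$-bounded cover with the same $\ell(v)$. By the $(H,\LL)$-minimality of $G$ (property (ii) of the counterexample), it therefore has an $(H,\LL)$-coloring, which is a contradiction. Alternatively, Lemma~\ref{lem:partial} applied with $W=\{v\}$ shows only that no coloring exists; the contradiction again comes from minimality.

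If $|G|=1$, then $G$ is a single vertex with no edges and $\ell(v)=0$. We check the potential: $\rho_{G,\ell}(V(G))=\rho_{G,\ell}(v)=0\cdot\lambda-2=-2$, since $\ell(v)\in\{0,1\}$. Hence conclusion (c) of Theorem~\ref{thm:v3} holds, so $(G,H,\LL)$ is not a counterexample. This contradicts \eqref{minG}(c), which requires $\rho_{G,\ell}(V(G))\ge -1$.

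The only delicate point is making sure the minimality hypothesis really applies to the subgraph $G[\{v\}]$. The paper's definition of $(H,\LL)$-minimal quantifies over proper subgraphs with covers preserving $\ell$, and $G[\{v\}]$ qualifies whenever $|G|\ge2$. If one prefers to avoid that quantifier, an equivalent route is to take any proper subgraph $G'\ni v$, say $G-u$ for some $u\neq v$. Minimality colors $G'$, which is impossible because $v$ admits no node of positive capacity. In either case we obtain the contradiction, which proves the observation.
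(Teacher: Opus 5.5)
Your proof is correct and follows the paper's argument. If $|G|=1$, then $\rho_{G,\ell}(G)=-2$, so conclusion (c) holds and $G$ is not a counterexample. If $|G|\ge 2$, then $G[\{v\}]$ is a proper subgraph with no $(H,\LL)$-coloring, which contradicts the $(H,\LL)$-minimality of $G$.
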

\begin{proof}
    If $V(G) = \{v\}$, then $\rho_{G,\ell} (G) = \rho_{G,\ell} (v) = -2$, so $G$ is not a counterexample. If $|V(G)| \geq 2$, then $G[\{v\}]$ is a proper subgraph of $G$ that is not  $(H,\LL)$-colorable, contradicting the assumption that $G$ is $(H,\LL)$-minimal.
\end{proof}

\begin{obs}
\label{obs:list-degree}
    For each vertex $v \in V(G)$, $\ell (v) \leq d(v)$.
    
\end{obs}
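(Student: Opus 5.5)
We argue by contradiction, exploiting the minimality of $G$ in the same way as in Observation~\ref{obs:zero}. Suppose some vertex $v\in V(G)$ satisfies $\ell(v)>d(v)$. If $V(G)=\{v\}$, then $d(v)=0<\ell(v)$, so some node $(v,c)$ has $\ell(v,c)\geq 1$. Coloring $v$ with $c$ (with the trivial ordering) is then an $(H,\LL)$-coloring, contradicting~\eqref{minG}(a). So we may assume $|V(G)|\geq 2$.

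By the $(H,\LL)$-minimality of $G$, the proper subgraph $G-v$, with the restricted cover and the same capacities, has an $(H,\LL)$-coloring $(\phi,\sigma)$. We extend it to $v$ by putting $v$ last in the ordering. Each node $(v,c)$ receives exactly $d_\phi(v,c)$ edges from $\phi(V(G)\setminus\{v\})$. Since $E_H(H(u),H(v))$ is a union of $|E_G(u,v)|$ matchings, each neighbor $u$ contributes at most $|E_G(u,v)|$ such edges in total over all of $H(v)$. Summing over the neighbors gives
\[
\sum_{c} d_\phi(v,c)\leq d(v)<\ell(v)=\sum_c \ell(v,c).
\]
Hence some color $c$ satisfies $d_\phi(v,c)<\ell(v,c)$, and in particular $\ell(v,c)\ge 1$.

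Set $\phi(v)=c$ and append $(v,c)$ at the end of $\sigma$. Then $d^-(v,c)=d_\phi(v,c)<\ell(v,c)$. The back-degrees of all other chosen nodes are unchanged, since $(v,c)$ comes after them. This is precisely the statement that $G[\{v\}]$ is $(H,\LL_\phi)$-colorable, so it contradicts Lemma~\ref{lem:partial} applied with $W=\{v\}$. Hence $\ell(v)\le d(v)$ for every $v\in V(G)$.

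The only step requiring care is the counting inequality. It relies on the fact that each edge of $G$ incident to $v$ contributes one matching, so each edge contributes at most one edge of $H$ at the single chosen node of the other endpoint. There is no real obstacle.
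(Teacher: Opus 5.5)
Your proof is correct and follows essentially the same route as the paper. You color $G-v$ by minimality, bound $\sum_c d_\phi(v,c)\le d(v)$ (each edge of $G$ at $v$ contributes at most one $H$-edge at the chosen node of the neighbor), and conclude that some node of $H(v)$ keeps positive residual capacity, contradicting Lemma~\ref{lem:partial}. Your separate treatment of the case $V(G)=\{v\}$ and your direct choice of a $c$ with $d_\phi(v,c)<\ell(v,c)$ simply spell out the paper's one-line summation $0\ge \sum_c \ell_\phi(v,c)\ge \ell(v)-d(v)$ more explicitly.
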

\begin{proof}
By the $(H,\LL)$-minimality of $G$,
$G-v$ has an $(H,\LL)$-coloring $(\phi,\sigma)$.
By Lemma \ref{lem:partial},
$G[\{v\}]$ has no $(H,\LL_{\phi})$-coloring, so that $\ell_{\phi}(v) = 0$.
As 
\[0 = \ell_{\phi}(v) = \sum_{i=1}^{k-1} \ell_{\phi}(v,i) \geq \sum_{i=1}^{k-1} \left ( 
\ell(v,c) - d_{\phi}(v,c) \right ) \geq \ell(v)-d(v)
,\]
the observation follows.
\end{proof}

We say that a vertex $v \in V(G)$ is \emph{low} if $\ell (v) = d(v)$.

\begin{lemma}
\label{lem:ERT} Let $B$  be the set of low vertices in $G$ and
$B'\subset B$. For any coloring $\phi$ of $G'=G-B'$, any $u\in B'$ and any
$\ell _i$, let $\ell _i(u)-\ell'_i(u)$ be the number of edges connecting  $(u,i)$ to $\phi(G')$. Then 
$(G[B'],H[B'],\LL_{\phi})$ is constructible.
\end{lemma}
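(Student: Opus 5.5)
The plan is to reduce the statement to Theorem~\ref{KSSt} applied to the components of $G[B']$. I read ``constructible'' as the structure described there: each block is a multiple of a complete graph or a cycle, each component of each $H_B$ has uniform capacities $1$ or $2$, $H(v)$ splits into the $H_B(v)$ with $d_B(v)=\sum_{\alpha\in H_B(v)}\ell(\alpha)$, and each $H_B$ is one of the listed blowups. Since that definition is not stated before this point, I am assuming it; if the paper's ``constructible'' differs, the final step has to be adjusted to match it.

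\emph{Step 1: $\LL_\phi$ is a degree-cover of $G[B']$.} Fix the coloring $\phi$ of $G'=G-B'$ and a vertex $u\in B'$. Each edge of $G$ joining $u$ to $V(G')$ contributes at most one edge from $H(u)$ to the node $\phi(w)$ of the other endpoint $w$, because the edges between $H(u)$ and $H(w)$ form a union of matchings, one per edge of $G$. Hence
\[\ell_\phi(u)\ \geq\ \ell(u)-\|u,V(G')\| = d_G(u)-\|u,V(G')\| = d_{G[B']}(u),\]
where the middle equality holds because $u$ is low. Also, $\LL_\phi$ is still $2$-bounded, and $H[B']$ is a DP-cover of $G[B']$. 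So $(H[B'],\LL_\phi)$ is a $2$-bounded variable degree-cover of $G[B']$.

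\emph{Step 2: non-colorability.} By Lemma~\ref{lem:partial} with $W=B'$ and $f=\phi$, the multigraph $G[B']$ has no $(H[B'],\LL_\phi)$-coloring. A coloring of $G[B']$ is the same as independent colorings of its components, placed one after another in the ordering. So at least one component $C$ is not colorable. For such a $C$, Lemma~\ref{lem:acyc-cor} forces $\ell_\phi(u)=d_C(u)$ for every $u\in C$. Theorem~\ref{KSSt} then gives exactly the block and cover structure above for $(C,H[C],\LL_\phi)$.

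\emph{Step 3: every component, not just one.} This is the step I expect to be the main obstacle. For any other component $C'$, the cover $\LL_\phi$ restricted to $C'$ does not change when the other components are colored, since there are no edges between components. I would argue that constructibility is a component-wise notion closed under disjoint unions. For a component $C'$ that happens to be colorable, I would first try to show that it is still constructible in the sense of the definition, which presumably allows degree-covers that are already tight in the structured way. If the definition genuinely demands non-colorability of every component, I would try to prove this from the minimality in~\eqref{minG}: with $B''=B'\setminus C'$ uncolorable, apply Lemma~\ref{lem:partial} to $W=C'$ and the coloring obtained from $\phi$ together with a coloring of $G[B'']$. Such a coloring does not exist, so this is exactly where the equality $\ell_\phi(u)=d_{G[B']}(u)$ and the precise form of the definition have to be used. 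In any case, I would conclude by recording that the resulting description of $(G[B'],H[B'],\LL_\phi)$ coincides with the definition of constructible.
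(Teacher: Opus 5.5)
Your Steps 1 and 2 are the paper's argument. The proof there observes that $G[B']$ has no $(H,\LL_{\phi})$-coloring by Lemma~\ref{lem:partial}, notes that $(H[B'],\LL_{\phi})$ is a degree-cover, and invokes Theorem~\ref{KSSt}. Your check of the degree-cover inequality and your passage to a non-colorable component $C$ only make explicit what the paper leaves implicit; there, Lemma~\ref{lem:acyc-cor} forces $\ell_\phi(u)=d_C(u)$ on $C$. These additions are if anything more careful than the paper, since Theorem~\ref{KSSt} is stated for connected multigraphs. ``Constructible'' is never defined, and what the paper's proof actually delivers is the Theorem~\ref{KSSt} structure of the non-colorable part, which is what you obtain.

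Step 3 goes beyond what the paper's proof establishes and should be dropped, because the route you sketch cannot work. Suppose some component $C'$ of $G[B']$ is colorable. Extending $\phi$ by such a coloring merely leaves $G[B'\setminus V(C')]$ non-colorable, which is consistent with Lemma~\ref{lem:partial} and with the minimality of $G$, so minimality yields no contradiction.

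Nothing established at that point rules such a $C'$ out. If for some $u\in V(C')$ and some neighbor $w\notin B'$ the node $\phi(w)$ is adjacent to no positive-capacity node of $H(u)$, then, since $u$ is low, $\ell_{\phi}(u)\geq d_{C'}(u)+1$. Then $C'$ is colorable by Lemma~\ref{lem:acyc-cor}, so it cannot carry the tight structure of Theorem~\ref{KSSt}.

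So a reading of the statement that demands this structure on every component of $G[B']$ cannot be obtained this way. The lemma is not invoked later in the paper, so nothing depends on that stronger reading.
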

\begin{proof} 
By Lemma~\ref{lem:partial}, 
$G[B']$ is not $(H,\LL_{\phi})$-colorable.
Since $(H[B'],\LL_{\phi})$ is a degree cover,  Theorem~\ref{KSSt} yields that
$G[B']$ is $\LL_{\phi}$-constructible.
\end{proof}

\begin{lemma}
\label{lem:j(k-2)}
For each $j \geq 1$, if $S \subsetneq V(G)$ satisfies 
$|E_G(S,\overline S)| \geq j$, then
$\rho_{G,\ell }(S) \geq j(\lambda -2) + 1$. 
{ Furthermore, if $E_G(S,\overline S)$
contains a pair of parallel edges, then $\rho_{G,\ell}(S) \geq j(\lambda-2)+4$.
}
\end{lemma}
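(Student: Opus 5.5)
Among all counterexamples choose $S\subsetneq V(G)$ with $\|S,\overline S\|\ge j$ and $\rho_{G,\ell}(S)\le j(\lambda-2)$, and take $|S|$ maximal. The plan is to build a smaller multigraph $G'$ by replacing $G[S]$ with a tiny gadget. We then apply the minimality hypothesis \eqref{minG}(b) to $G'$. The resulting potential bound is transferred back to $G$, contradicting \eqref{minG}(c).

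\textbf{Step 1: a coloring of $G[S]$.} Since $S$ is a proper subset, $G[S]$ is a proper subgraph, so $(H,\LL)$-minimality gives an $(H,\LL)$-coloring $(\phi,\sigma)$ of $G[S]$.

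\textbf{Step 2: the gadget.} Replace $S$ by a new vertex $x$, or by a short path or edge $x_1x_2$, carrying a $2$-bounded cover whose capacity is concentrated on one node $(x,c)$. For each edge $uv\in E(S,\overline S)$ with $u\in S$, add an edge $xv$ whose matching sends $(x,c)$ to the node of $H(v)$ that was matched to $(u,\phi(u))$. If $(u,\phi(u))$ had no partner, we may delete that edge instead.

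We want the gadget's capacity to "simulate" $(\phi,\sigma)$. Concretely, we choose $\ell(x)=1$ (so $\rho(x)=\lambda-2$) when all boundary nodes are tight. When there is slack, or parallel edges, we use a vertex with $\ell(x)=2$ or a capacity-$2$ node.

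Any $(H',\LL')$-coloring of $G'$ then lifts to $G$. We color $S$ first by $(\phi,\sigma)$, then $\overline S$ in the order given by $G'$, with the gadget's node standing in for all of $\phi(S)$. The back-degrees of nodes in $\overline S$ coincide with those in $G'$. So $G'$ is not $(H',\LL')$-colorable, and it contains an $(H',\LL')$-minimal subgraph $G''$. Since $\|S,\overline S\|\ge j\ge 1$, we have $|S|\ge 2$ in the nontrivial case, so $|G'|<|G|$. By \eqref{minG}(b), $G''$ is exceptional or has $\rho(G'')\le -2$.

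\textbf{Step 3: the potential computation.} If $x\in V(G'')$, let $W=(V(G'')\setminus\{x\})\cup S$. The edges to $x$ in $G''$ number at most $j'$, the edges leaving $S$ into $W\setminus S$. Then
\[
\rho_G(W)\le \rho_{G''}(V(G''))-\rho(x)+\rho_G(S)+(\text{correction from multi-edges}).
\]
Using $\rho_G(S)\le j(\lambda-2)$ and a suitable count of edges, this gives $\rho_G(W)\le -2+\lambda-2-(\lambda-2)<\ldots$.

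If $W\ne V(G)$, the maximality of $|S|$, or a recursive application of the same bound to $W$, gives a contradiction. If $W=V(G)$, we obtain $\rho_G(V(G))\le -2$, contradicting \eqref{minG}(c). If $x\notin G''$, then $G''$ is a proper subgraph of $G-S$ that is not colorable, contradicting minimality.

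\textbf{Step 4: exceptional $G''$.} When $G''$ is $K_k$, $4K_2$ or a double cycle containing $x$, we compute its potential directly. Using $\ell(x)\le 2$, this contradicts $\ell(v)\in\{k-1,k-2\}$ in case (a), so only a direct computation is needed.

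\textbf{Main obstacle.} The delicate point is choosing the gadget so that its potential exactly balances $j(\lambda-2)$. This must hold including the additive constant $+1$, and the constant $+4$ when parallel boundary edges exist. In that case the two parallel edges from one $u$ should be redirected to $x$ as a double edge, which costs an extra $-1$ per pair in $\rho$ and yields the stronger bound. I would first try the single vertex $x$ with $\ell(x)=2$ and one capacity-$2$ node, and fall back to an edge gadget if the counting fails by a constant.
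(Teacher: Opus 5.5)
\textbf{The single-vertex gadget fails: its potential is fixed, so it cannot pay for several absorbed boundary edges.} Carry out your Step~3 with $\ell(x)=1$. Let $q$ be the number of edges of $E_G(S,\overline S)$ ending in $W\setminus S$. Then
\[
\rho_{G,\ell}(W)\le -2-\rho(x)+\rho_{G,\ell}(S)+\delta\le -2+(j-1)(\lambda-2)+\delta ,
\]
where $\delta=\sum_{v}(|N_G(v)\cap S|-1)\ge 0$, summed over $v\in W\setminus S$ adjacent to $S$. This correction works \emph{against} you: merging edges from distinct $u\in S$ into one multi-edge $xv$ lowers $\rho_{G'}$.

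For $j\ge 2$ and $W=V(G)$, this bound is not $\le -2$. When $W\ne V(G)$, $W$ has only $\ge j-q$ boundary edges, so for $q\ge 2$ nothing is contradicted. Maximality of $|S|$ does not help, since $W$ need not be a counterexample for the same $j$, and you have set up no induction on $j$. The deficit is $(q-1)(\lambda-2)$, not a constant, so an edge gadget will not fix it.

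There are further problems. A capacity-$2$ node does not lift correctly: it may be colored after one $H'$-neighbour, which in $G$ adds a back-edge to some node of $\phi(S)$ that need not be slack. Also, for $|S|=1$ your $G'$ is no smaller than $G$, so \eqref{minG}(b) is unavailable. Yet singletons are a genuinely nontrivial case: they give the degree bound of Observation~\ref{obs:6}.

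\textbf{What works.} You need a gadget worth $\lambda-2$ \emph{per absorbed edge}, i.e.\ one capacity-$1$ pendant per boundary edge. That is the same as deleting $S$ and lowering capacities, which is what the paper does.
\begin{enumerate}
\item Fix $j$ minimal, then $S$ maximal, and take an $(H,\LL)$-coloring $(\phi,\sigma)$ of $G[S]$.
\item By Lemma~\ref{lem:partial}, $G[\overline S]$ has no $(H,\LL_\phi)$-coloring, so some $U\subseteq\overline S$ spans an $(H,\LL_\phi)$-minimal subgraph.
\item Automatically $|U|<|G|$. Since $G$ contains no $K_k$ (and, for $k=5$, no $4K_2$ or double cycle), induction gives $\rho_{G,\ell_\phi}(U)\le -2$.
\item Each of the $q=\|S,U\|\ge 1$ edges lowers one capacity by at most $1$, which changes a vertex potential by at most $\lambda+2$. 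Hence $\rho_{G,\ell}(U)\le -2+q(\lambda+2)$, and so $\rho_{G,\ell}(S\cup U)\le \rho_{G,\ell}(S)-2-q(\lambda-2)\le -2+(j-q)(\lambda-2)$.
\item For $j=1$ this contradicts maximality of $S$ or \eqref{minG}(c).
\item For $j\ge 2$: if $S\cup U=V(G)$, then $q\ge j$ and $\rho(G)\le-2$. Otherwise the $j=1$ case forces $q<j$, and $S\cup U$, having at least $j-q$ boundary edges, contradicts minimality of $j$.
\end{enumerate}

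The extra $+3$ in the parallel-edge case comes from two sources: $\rho_\ell(v)-\rho_{\ell_\phi}(v)\le(\lambda+2)|E_G(v,S)|-2$, and the extra $-1$ in the pair potential. It does not come from redirecting the double edge to $x$; in your transfer that pair has the same potential in $G$ and $G'$, so it cancels.
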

\begin{proof}
    Suppose that the lemma is false, and let $j$ be the smallest value for which the lemma does not hold.
    Then, $G$ has a set $S \subsetneq V(G)$ for which $\rho_{G,\ell }(S) \leq j(\lambda -2)$ and
    $|E_G(S,\overline S)| \geq j$, or a set $S \subsetneq V(G)$ for which $\rho_{G,\ell}(S) \leq j(\lambda-2)+3$, $|E_G(S,\overline S)| \geq j$, and $E_G(S,\overline S)$ contains a pair of parallel edges.
    We choose $S$ to be a counterexample to the lemma with largest size.

Since $G$ is $(H,\LL)$-minimal,
$G[S]$ has an $(H,\LL)$-coloring $(\phi,\sigma)$. By Lemma \ref{lem:partial},
$G[\overline S]$ has no $(H,\LL_{\phi})$-coloring.
Therefore, there exists $U \subseteq \overline S$ for which $G[U]$ has a spanning $(H,\LL_{\phi})$-minimal subgraph. 
As $G$ is $(H,\LL)$-minimal, $\LL$ and $\LL_{\phi}$ do not agree on $U$; therefore, $U$ contains a neighbor $u$ of $S$, and $\ell_{\phi}(u) < \ell(u) \leq k-1$.
As $G$ contains no $K_k$ subgraph, $\rho_{G,\LL_{\phi}}(U) \leq -2$.

Now, consider the set $S':=U \cup S$, and write $ q  = |E_G(S,U)|$. As $U$ contains a neighbor of $S$, $ q  \geq 1$. 
 As $\rho_{G,\ell} (v)-\rho_{G',\ell_{\phi}}(v)\leq (\lambda+2)|E_G(v, S)|$ for each $v\in U$, 
$\rho_{G,\ell }(U) \leq -2 +  q  (\lambda + 2)$.
Furthermore, if $E_G(v,S)$ contains a pair of parallel edges, then $\rho_{\ell}(v)-\rho_{\ell_{\phi}}(v) \leq (\lambda+2)|E_G(v,S)|-2$.
We have 
\begin{equation}
\label{eqn:j(k-2)}
\rho_{G,\ell}(S') \leq 
\rho_{G,\ell}(U) + \rho_{G,\ell}(S) - 2\lambda  q  \leq -2 +  q (\lambda + 2) -  q (2\lambda) + \rho_{G,\ell}(S) = -2 -  q (\lambda - 2) + \rho_{G,\ell}(S).
\end{equation}
Furthermore, if $E_G(S,U)$ contains a pair of parallel edges, then
\begin{equation}
\label{eqn:j(k-2)-parallel}
\rho_{G,\ell}(S') \leq 
\rho_{G,\ell}(U) + \rho_{G,\ell }(S) - 2\lambda  q - 1  \leq -2 +  q (\lambda + 2) -2 -  q (2\lambda) - 1 + \rho_{G,\ell}(S) = -5 -  q (\lambda - 2) + \rho_{G,\ell}(S).
\end{equation}

If $j=1$, then $E_G(S,\overline S)$ has no pair of parallel edges;
then, $\rho_{G,\ell}(S) \leq \lambda -2$, so by \eqref{eqn:j(k-2)},
\[\rho_{G,\ell}(S') \leq -2 -  q (\lambda -2) + \lambda -2 = -2 + (1- q )(\lambda -2) \leq -2.\]

Thus, by the maximality of $S$, $S' = V(G)$, contradicting~\eqref{minG}(c). 
So  $j \geq 2$. 
If
$E_G(S,\overline S)$ has no pair of parallel edges,
then
    as $\rho_{G,\ell }(S) \leq j(\lambda -2)$,
    (\ref{eqn:j(k-2)}) implies 
    \begin{equation}
    \label{eqn:S'}
    \rho_{G,\ell}(S')     \leq -2  -  q  (\lambda -2) + j(\lambda -2) 
    =-2 + (j -  q )(\lambda -2).
    \end{equation}
On the other hand, if $E_G(S,\overline S)$ has a pair of parallel edges, then as $\rho_{G,\ell}(S) \leq j(\lambda-2)+3$,
     \eqref{eqn:j(k-2)-parallel}
     implies 
     \[
        \rho_{G,\ell}(S') \leq -5 + q(\lambda-2) + j(\lambda-2)+3=  -2  -  q  (\lambda -2) + j(\lambda -2) 
    =-2 + (j -  q )(\lambda -2),
     \]
     so that \eqref{eqn:S'} holds again.

    If $S' = V(G)$, then by~\eqref{eqn:S'}, $\rho_{G,\ell} (G) \leq -2$, 
 contradicting~\eqref{minG}(c).   
    Otherwise, $|E_G(S', \overline{S'})| \geq 1$, so the $j=1$ case implies that $\rho_{G,\ell}(S') \geq \lambda -1$. In both cases by~\eqref{eqn:S'}, $ q  < j$, so that there is at least one edge in $E_G(S, V(G') \setminus U)$.

    Now, since $|E_G(S,U)| =  q $ and $|E_G(S, \overline S)| \geq j$, we get  $|E_G(S', \overline{S'})| \geq |E_G(S, V(G') \setminus U)| \geq j -  q $.
    As $1 \leq j -  q  \leq j-1$,
     the minimality of $j$
     yields $\rho_{G,\ell}(S') > (j- q )(\lambda -2)$, contradicting (\ref{eqn:S'}). 
\end{proof}

We can make several observations from Lemma~\ref{lem:j(k-2)}.

\begin{obs}
\label{obs:geq-1}
Each vertex subset $U \subseteq V(G)$ satisfies $\rho_{G,\ell }(U) \geq -1$. 
\end{obs}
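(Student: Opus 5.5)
The plan is to split into two cases, $U=V(G)$ and $U$ a proper subset, and in each case reduce to an earlier result.

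\emph{Case $U=V(G)$.} Here the bound $\rho_{G,\ell}(V(G))\geq -1$ is exactly \eqref{minG}(c). This holds because $(G,H,\LL)$ is a counterexample to Theorem~\ref{thm:v3}, so alternative (c) of that theorem fails.

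\emph{Case $U$ a proper subset.}
\begin{itemize}
\item If $U=\emptyset$, the potential is $0\geq -1$.
\item Otherwise $U$ is a nonempty proper subset of $V(G)$. We may assume $|V(G)|\geq 2$, since if $G$ had a single vertex, Observation~\ref{obs:zero} and the computation given there already settle it.
\item Suppose first that $G$ is connected. Then $|E_G(U,\overline U)|\geq 1$. Applying Lemma~\ref{lem:j(k-2)} with $j=1$ gives $\rho_{G,\ell}(U)\geq \lambda-2+1=\lambda-1$.
\item It remains to note that $\lambda\geq 0$. Indeed $\lambda=\lceil (k^2-7)/(2k-7)\rceil$ with $k\geq 5$, so $\lambda\geq 6$. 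Hence $\rho_{G,\ell}(U)\geq \lambda-1>-1$.
\end{itemize}

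\emph{Connectivity of $G$.} This is the only genuine obstacle, since Lemma~\ref{lem:j(k-2)} needs an edge leaving $U$. I would argue that $G$ is connected directly from $(H,\LL)$-minimality.

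Suppose $G$ had components $G_1,\dots,G_r$ with $r\geq 2$. Each $G_i$ is a proper subgraph, so each has an $(H,\LL)$-coloring. Because there are no edges between components, these colorings can be concatenated into an $(H,\LL)$-coloring of $G$. This contradicts \eqref{minG}(a).

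Alternatively, the disconnected case can be handled by decomposing $U$ into its intersections with the components. If $U$ contains a whole component $C$, then $U\cap C=V(C)$ is a proper subset of $V(G)$ with $|E_G(V(C),\overline{V(C)})|=0$. This is exactly the configuration the connectivity argument above rules out. So connectivity is the key step, and it follows immediately from minimality.
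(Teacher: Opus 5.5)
Your proposal is correct and follows the paper's proof: Lemma~\ref{lem:j(k-2)} with $j=1$ gives $\rho_{G,\ell}(U)\geq\lambda-1$ for nonempty proper subsets, and \eqref{minG}(c) covers $U=V(G)$. The only additions are your explicit check that $G$ is connected, via concatenating colorings of components, and the trivial empty-set case. The paper uses both of these implicitly.
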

\begin{proof}
If $1 \leq |U| \leq |V(G) | - 1$, then $\rho_{G,\ell} (U) \geq \lambda -1$ by Lemma~\ref{lem:j(k-2)}. If $|U| = |V(G)|$, then $\rho_{G,\ell} (U) = \rho_{G,\ell} (G) \geq -1$ by~\eqref{minG}(c). 
\end{proof}

\begin{obs}
\label{obs:geqk-1}
    For every nonempty proper subset $S\subsetneq V(G)$, $\rho_{G,\ell}(S) \geq \lambda -1$.
\end{obs}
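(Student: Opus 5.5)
This follows from Lemma~\ref{lem:j(k-2)} with $j=1$, using that $G$ is connected. I will proceed in three steps.

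First, I check that $G$ is connected. If $G$ had at least two components, then by the $(H,\LL)$-minimality of $G$ each component, being a proper subgraph, would have an $(H,\LL)$-coloring. Since there are no edges of $H$ between the covers of different components, these colorings combine into an $(H,\LL)$-coloring of $G$: take the union of the maps and concatenate the orderings. This contradicts~\eqref{minG}(a).

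Second, I fix a nonempty proper subset $S\subsetneq V(G)$. Because $G$ is connected and $\emptyset\neq S\neq V(G)$, at least one edge joins $S$ to $\overline S$, so $|E_G(S,\overline S)|\geq 1$.

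Third, I apply Lemma~\ref{lem:j(k-2)} with $j=1$. It gives $\rho_{G,\ell}(S)\geq (\lambda-2)+1=\lambda-1$, which is the claim.

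There is essentially no obstacle here. The only point needing care is connectivity, which is settled by the minimality argument in the first step. I also note that the proof of Observation~\ref{obs:geq-1} already implicitly used this same bound for proper subsets, so the present observation just records it explicitly.
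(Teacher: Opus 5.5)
Your proof is correct and is exactly the paper's argument: connectivity gives $|E_G(S,\overline S)|\geq 1$, and Lemma~\ref{lem:j(k-2)} with $j=1$ yields $\rho_{G,\ell}(S)\geq \lambda-1$. The paper simply asserts that $G$ is connected, so your justification via $(H,\LL)$-minimality fills in that step; it works because there are no cover edges between components, so colorings of the components combine.
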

\begin{proof}
    As $S \subsetneq V(G)$ and $G$ is connected, $|E_G(S, \overline S)| \geq 1$, so the statement follows by setting $j=1$ in Lemma~\ref{lem:j(k-2)}.
\end{proof}

\begin{lemma}
\label{lem:k-1k}
    If $G$ has a nonempty proper subset $S\subsetneq V(G)$ for which $|E_G(S, \overline S)| = 1$, then $\rho_{G,\ell }(S) \in \{\lambda -1,\lambda \}$.
\end{lemma}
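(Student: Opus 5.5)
The lower bound $\rho_{G,\ell}(S)\ge \lambda-1$ is Observation~\ref{obs:geqk-1}, so the content of the lemma is the upper bound $\rho_{G,\ell}(S)\le\lambda$. I would get it by turning the forced tight node of Lemma~\ref{lem:cut-edge} into a non-colorable smaller instance, and then applying the induction hypothesis~\eqref{minG}(b) to it.

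\textbf{Step 1 (reduced cover on $G[S]$).} Let $v\in S$ be the endpoint of the unique edge $vw$ with $w\in\overline S$. The proof of Lemma~\ref{lem:cut-edge} shows more than its statement. There is a node $(v,c)$ such that every $(H,\LL)$-coloring $(f,\kappa)$ of $G[S]$ has $f(v)=c$ and $d^-_\kappa(v,c)=\ell(v,c)-1$.

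Define $\LL'$ on $G[S]$ by lowering $\ell(v,c)$ by one and keeping every other capacity. Then $\ell'(v)=\ell(v)-1$, and $(H[S],\LL')$ is still a $2$-bounded $(k-1)^-$-cover. Any $(H,\LL')$-coloring of $G[S]$ would also be an $(H,\LL)$-coloring. If it used $(v,c)$, that node would satisfy $d^-(v,c)<\ell(v,c)-1$, which is impossible. If it did not use $(v,c)$, then $f(v)\ne c$, which is also impossible. So $G[S]$ is not $(H,\LL')$-colorable.

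Choose $U\subseteq S$ such that $G[U]$ has a spanning $(H,\LL')$-minimal subgraph $G_U$. Since $G$ is $(H,\LL)$-minimal and $\LL'$ agrees with $\LL$ away from $v$, we must have $v\in U$.

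\textbf{Step 2 (induction).} Since $|U|\le|S|<|G|$, the triple $(G_U,H,\LL')$ satisfies Theorem~\ref{thm:v3}. Consider the two possible outcomes.
\begin{itemize}
\item Outcome (a) would put a $K_k$ inside $G$, contradicting~\eqref{minG}(d).
\item Outcome (b) would, for $k=5$, put a $4K_2$ or a double cycle inside $G$, contradicting~\eqref{minG}(e).
\end{itemize}
Hence $\rho_{G_U,\ell'}(U)\le -2$. Since $G_U\subseteq G[U]$ and adding edges only lowers potential, $\rho_{G,\ell'}(U)\le -2$ as well.

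Next compare $\rho_{G,\ell}(v)$ with $\rho_{G,\ell'}(v)$. By the definition of vertex potential, lowering $\ell(v)$ by one raises the potential of $v$ by $\lambda+2$ when $\ell(v)=k-1$. It raises it by $\lambda+1$ when $\ell(v)=2$, and by $\lambda$ otherwise. All other terms are unchanged, so
\[
\rho_{G,\ell}(U)\le \rho_{G,\ell'}(U)+\lambda+2\le \lambda .
\]

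\textbf{Step 3 ($U=S$).} Suppose instead that $U\subsetneq S$. The set $U$ is incident to the cut edge $vw$. Because $G$ is connected, $U$ also sends at least one edge into $S\setminus U$. Hence $|E_G(U,\overline U)|\ge 2$, and Lemma~\ref{lem:j(k-2)} with $j=2$ gives
\[
\rho_{G,\ell}(U)\ge 2\lambda-3 .
\]
This exceeds $\lambda$ once $\lambda\ge 4$. That holds because $\lambda=\lceil (k^2-7)/(2k-7)\rceil\ge 6$ for all $k\ge5$. This contradicts Step 2, so $U=S$ and $\rho_{G,\ell}(S)\le\lambda$.

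\textbf{Main obstacle.} The delicate point is Step 1. One must extract from the proof of Lemma~\ref{lem:cut-edge} the sharper statement that $d^-(v,c)=\ell(v,c)-1$ in every coloring, not just that $(v,c)$ is tight. Only this sharper statement makes lowering $\ell(v,c)$ by one genuinely destroy colorability. One must also check that the exceptional outcomes of the induction are excluded even though the capacities there are the modified ones $\LL'$; this works because (d) and (e) forbid those subgraphs outright.
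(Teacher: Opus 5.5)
Your proof is correct. The first half is the paper's argument in different packaging. The paper colors $G[\overline S]$ and invokes Lemma~\ref{lem:partial}, so $G[S]$ is not $(H,\LL_\phi)$-colorable. Here $\LL_\phi$ lowers the capacity of one node of $x_0$ by at most one, which is the same reduced instance as your $\LL'$. Since the cover is $2$-bounded, ``tight'' already means $d^-=\ell-1$, so the statement of Lemma~\ref{lem:cut-edge} suffices and the obstacle you flag is not really there.

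The genuine difference is how $U=S$ is forced:
\begin{itemize}
\item The paper repeats the argument on the other side of the cut. It gets $U'\subseteq\overline S$ containing $y_0$ with $\rho_{G,\ell}(U')\le\lambda$. Then $\rho_{G,\ell}(U\cup U')\le 0<\lambda-1$, and Observation~\ref{obs:geqk-1} forces $U\cup U'=V(G)$.
\item You stay on one side. A proper $U\subsetneq S$ would have at least two boundary edges, so the $j=2$ case of Lemma~\ref{lem:j(k-2)} gives $\rho_{G,\ell}(U)\ge 2\lambda-3>\lambda$.
\end{itemize}
Your route is shorter and needs no symmetric construction. The paper's route uses only the weaker $j=1$ bound, at the cost of working on both sides of the cut.

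One wording slip: lowering $\ell(v)$ by one lowers the potential of $v$, not raises it. Your displayed inequality $\rho_{G,\ell}(U)\le\rho_{G,\ell'}(U)+\lambda+2$ is nevertheless in the correct direction.
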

\begin{proof}
     Suppose that $E_G(S,\overline S) = \{x_0y_0\}$, where $x_0 \in S$ and $y_0 \in \overline S$.
    We let $(\phi,\sigma)$ be an $(H,\LL)$-coloring of $G[\overline S]$, and by Lemma \ref{lem:partial},
    $G[S]$ has no $(H,\LL_{\phi})$-coloring.
    Therefore, there exists a subset $U \subseteq S$ for which $G[U]$ is $(H,\LL_{\phi})$-minimal, and by the $(H,\LL)$-minimality of $G$, $U$ contains $x_0$.
    Therefore,
    since $G$ has no $K_k$ subgraph,
    $\rho_{G,\ell_{\phi}}(U) \leq -2$ , so that $\rho_{G,\ell}(U) \leq -2 + (\lambda+2) =  \lambda$.
    By a symmetric argument, $\overline S$ has a vertex subset $U'$ containing $y_0$ for which $\rho_{G,\ell}(U') \leq \lambda $.

    Now, $\rho_{G,\ell }(U \cup U') \leq \lambda + \lambda  -2\lambda  =0$. Therefore, Observation~\ref{obs:geqk-1} implies that $V(G) = X \cup Y$, further implying $U = S$ and $\rho_{G,\ell}(S) \leq \lambda $. As Observation~\ref{obs:geqk-1} implies that $\rho_{G,\ell}(S) \geq \lambda -1$, the proof is complete.
\end{proof}

\begin{lemma}
\label{lem:lambda-1}
    If $G$ has disjoint nonempty connected subsets $S_1, S_2 \subseteq V(G)$ for which $S_1 \cup S_2 \neq V(G)$, then $\rho_{G,\ell}(S_i) =  \lambda-1$ for $i \in \{1,2\}$.
\end{lemma}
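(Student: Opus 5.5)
Read literally, the hypothesis only says that $S_1,S_2$ are disjoint, nonempty, connected, and do not cover $V(G)$. That cannot force $\rho_{G,\ell}(S_i)=\lambda-1$: a single vertex with $\ell=k-1$ already has potential $(k-1)\lambda+1$. So I would prove the statement in the setting of Lemma~\ref{lem:k-1k}, which I take to be intended. There each $S_i$ is joined to $\overline{S_i}$ by exactly one edge $x_iy_i$, with $x_i\in S_i$ and $y_i\notin S_i$, and $R:=V(G)\setminus(S_1\cup S_2)\neq\emptyset$. I have not confirmed that this is the authors' reading. By Lemma~\ref{lem:k-1k} we already know $\rho_{G,\ell}(S_i)\in\{\lambda-1,\lambda\}$, so it suffices to rule out $\rho_{G,\ell}(S_1)=\lambda$; the case of $S_2$ is symmetric.

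\textbf{Step 1: bookkeeping.} I would first record the following.
\begin{itemize}
\item Lemma~\ref{lem:k-1k} applies to the sets $\overline{S_1}$ and $\overline{S_2}$, since each also sends exactly one edge across. Hence $\rho_{G,\ell}(\overline{S_i})\le\lambda$.
\item Each of $x_1y_1,x_2y_2$ is a single edge, so
\[
\rho_{G,\ell}(G)=\rho_{G,\ell}(S_1)+\rho_{G,\ell}(\overline{S_1})-2\lambda .
\]
\item If both $y_1,y_2\in R$, then also $\rho_{G,\ell}(G)=\rho_{G,\ell}(S_1)+\rho_{G,\ell}(S_2)+\rho_{G,\ell}(R)-4\lambda$. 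Otherwise $S_1$ and $S_2$ are joined by an edge, and a similar identity holds.
\end{itemize}
Combining these with $\rho_{G,\ell}(G)\ge -1$ and Observation~\ref{obs:geqk-1}, I would show that $\rho_{G,\ell}(S_1)=\lambda$ forces near-equality everywhere: $\rho_{G,\ell}(\overline{S_1})\in\{\lambda-1,\lambda\}$, $\rho_{G,\ell}(R\cup S_2)$ lies in a window of width one, and $\rho_{G,\ell}(G)\in\{-1,0\}$.

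\textbf{Step 2: a coloring argument that gains at least $2$.} Pure potential arithmetic stalls at $\rho_{G,\ell}(G)\le 0$, so an actual coloring argument is needed, in the spirit of Lemma~\ref{lem:j(k-2)}.
\begin{itemize}
\item By Lemma~\ref{lem:cut-edge}, there is a node $(x_1,c_1)$ that is tight in every $(H,\LL)$-coloring of $G[S_1]$. Similarly there is a node $(y_1,c_1')$ that is tight in every coloring of $G[\overline{S_1}]$, and these two nodes must be adjacent in $H$.
\item Color $G[S_1\cup R]$, which is a proper subgraph because $S_2\ne\emptyset$. By Lemma~\ref{lem:partial}, $G[S_2]$ is not colorable under the reduced capacities. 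This produces $U_2\subseteq S_2$ containing $x_2$ with $\rho_{G,\ell}(U_2)\le\lambda$, and by Lemma~\ref{lem:k-1k} this forces $U_2=S_2$.
\item Symmetrically, color $G[S_2\cup R]$ and obtain the corresponding statement for $S_1$.
\end{itemize}
The aim is to locate a set $W$ with $\rho_{G,\ell}(W)\le -2$ or below the bound of Lemma~\ref{lem:j(k-2)}. I would try two candidates:
\begin{itemize}
\item $W=S_1\cup R\cup U$, where $U\subseteq S_2$ is the minimal uncolorable set found above. The potential loss from reduced capacities is at most $\lambda+2$, while the edge contributes $-2\lambda$, for a net gain of $\lambda-2\ge 2$.
\item The set obtained by replacing $S_1$ with a single vertex of capacity $1$ carrying only the node $(x_1,c_1)$. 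This yields a smaller uncolorable multigraph $G'$, and then $\rho_{G',\ell}(G')\le -2$ by minimality~\eqref{minG}(b), once exceptional configurations are excluded. The difference $\rho_{G,\ell}(S_1)-(\lambda-2)$ then transfers back to $G$.
\end{itemize}

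\textbf{Main obstacle.} The difficulty is that the replacement by a capacity-$1$ vertex gains only $2$ per set, which exactly cancels the excess when $\rho_{G,\ell}(S_1)=\lambda$. The extra slack must come from $R\neq\emptyset$. Concretely, I would replace $S_1$ and $S_2$ simultaneously, so that the reduced graph has $R$ genuinely in the middle. I would then check that it is neither $K_k$ nor one of the $k=5$ exceptions, which I expect to follow from~\eqref{minG}(d),(e) together with the two pendant low-capacity vertices. Finally, I would compute that the two contributions $\rho_{G,\ell}(S_i)-(\lambda-2)$ plus the bound $\rho\le-2$ for the reduced graph yield $\rho_{G,\ell}(G)\le -2$ unless both $\rho_{G,\ell}(S_i)=\lambda-1$. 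If the arithmetic falls one unit short, I would fall back on the capacity-$0$ option in the definition of $\rho_{G,\ell}(v)$, or on the parallel-edge bonus of Lemma~\ref{lem:j(k-2)}.
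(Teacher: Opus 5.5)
Your reading of the missing hypothesis, that each $S_i$ is joined to $\overline{S_i}$ by exactly one edge, is the intended one; the paper's proof and its later use in Lemma~\ref{lem:cycle+clique} rely on exactly that. Also, $S_1$ and $S_2$ cannot be adjacent: that edge would be the only edge leaving $S_1\cup S_2$, which would disconnect $G$ from $R$.

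The gap is in Step~2: neither reduction you propose can separate $\lambda-1$ from $\lambda$. Replacing $S_i$ by a pendant vertex $z_i$ with a single capacity-$1$ node swaps a set of potential $\rho_{G,\ell}(S_i)\ge\lambda-1$ for one of potential $\lambda-2$. So when you transfer $\rho\le -2$ from the reduced graph back to $G$, each replaced set \emph{adds} $\rho_{G,\ell}(S_i)-(\lambda-2)\ge 1$. With both sets replaced you only get $\rho_{G,\ell}(W)\le \rho_{G,\ell}(S_1)+\rho_{G,\ell}(S_2)-2\lambda+2\in\{0,1,2\}$. Since $W$ can be all of $V(G)$, this contradicts nothing. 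Your expectation that it yields $\rho_{G,\ell}(G)\le -2$ unless both potentials equal $\lambda-1$ has the sign backwards. A single replacement gives nothing beyond Lemma~\ref{lem:k-1k}. Your fallbacks do not help either: a capacity-$0$ vertex is uncolorable by itself, and the boundaries here are single edges, so there is no parallel-edge bonus. In this scheme $R$ supplies no slack at all.

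The missing idea is to do the opposite: discard $R$ and keep $S_1,S_2$ intact.
Take the nodes $(v_i,c_i)$ from Lemma~\ref{lem:cut-edge}, which are tight in every $(H,\LL)$-coloring of $G[S_i]$. Let $G'=G[S_1]\cup G[S_2]+v_1v_2$ with cover $H[S_1\cup S_2]+(v_1,c_1)(v_2,c_2)$.

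Any coloring of $G'$ restricts to colorings of both $G[S_i]$. So each $v_i$ receives $c_i$ with left-degree $\ell(v_i,c_i)-1$, and whichever of $v_1,v_2$ comes later overflows. Hence $G'$ is uncolorable.

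The only role of $R\neq\emptyset$ is that $|G'|<|G|$, so \eqref{minG}(b) applies. An uncolorable subgraph avoiding the new edge would be an uncolorable proper subgraph of $G$, and exceptional graphs have no cut edges. So some $U\ni v_1,v_2$ has $\rho_{G',\ell}(U)\le-2$, i.e.\ $\rho_{G,\ell}(U\cap S_1)+\rho_{G,\ell}(U\cap S_2)\le 2\lambda-2$.

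Each summand is at least $\lambda-1$ by Observation~\ref{obs:geqk-1}, so both equal $\lambda-1$. If $U\cap S_i\subsetneq S_i$, connectivity of $G[S_i]$ gives it at least two boundary edges, and Lemma~\ref{lem:j(k-2)} would force its potential to be at least $2\lambda-3>\lambda-1$. Hence $U\cap S_i=S_i$ and $\rho_{G,\ell}(S_i)=\lambda-1$.
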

\begin{proof}
    By the minimality of $G$, for each $i \in \{1,2\}$,
    $G-S_i$ has an $(H,\LL)$-coloring $(\phi_i,\sigma_i)$.
    By Lemma \ref{lem:partial},
    $G-S_i$
    has no $(H,
    \LL_{\phi_{2-i}})$-coloring.
    Therefore, writing $v_i$ for the vertex incident to the boundary of $S_i$, 
    there is a unique node $(v_i,c_i) \in H(v_i)$ such that in every $(H,\LL)$-coloring of $G[S_i]$, $(v_i,c_i)$ is tight.

    Now, let $G'$ be obtained from $G[S_1]$ and $G[S_2]$ by adding the edge $v_1 v_2$.
    Let $H'$ be obtained from $H[S_1 \cup S_2]$ by adding an edge $(v_1,c_1)(v_2,c_2)$. By the discussion above, $G'$ has no $(H',\LL)$-coloring. Therefore, there is a set $U \subseteq V(G')$, containing $v_1$ and $v_2$, for which $\rho_{G',\ell}(U) \leq -2$ (recall that no exceptional graph has a cut-edge).
    Therefore, $\rho_{G,\ell}(U) \leq 2\lambda-2$.

    Now, write $U_i = S_i \cap U$ for $i \in \{1,2\}$.
    By Lemma \ref{lem:j(k-2)}, $\rho(U_i) \geq \lambda-1$.
    Furthermore, as $\rho(U) = \rho_{G,\ell}(U_1) + \rho_{G,\ell}(U_2) \leq 2\lambda-2$, it follows that $\rho_{G,\ell}(U_i) = \lambda-1$.
    As $U_i$ has the single edge of $E_G(S_i,\overline{S_i})$ on its boundary, 
    Lemma \ref{lem:j(k-2)} implies that $U_i = S_i$. Therefore, $\rho_{G,\ell}(S_i) = \lambda-1$.
\end{proof}

\begin{lemma}
\label{lem:no-Kk}
   For vertex subset $U\subseteq V(G)$, if $G[U]$ contains a spanning $K_k^-$, then $G[U]$ is isomorphic to $K_k^-$.
\end{lemma}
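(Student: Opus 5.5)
The plan is to show that $G[U]$ has no edges beyond those of the spanning $K_k^-$. The missing pair of $K_k^-$ cannot be an edge, since otherwise $K_k\subseteq G$, contradicting \eqref{minG}(d). So it suffices to rule out parallel edges. Let $p\ge 1$ be the number of extra parallel edges in $G[U]$, and suppose for contradiction that $p\ge1$. The tool is a potential count, combined with the gap Lemma~\ref{lem:j(k-2)}, with Observation~\ref{obs:geq-1}, and, when $U=V(G)$, with Theorem~\ref{KSSt}.

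\textbf{Step 1 (potential of $U$).} Let $r$ be the number of vertices $v\in U$ with $\ell(v)\le k-2$. Each such vertex has potential at least $\lambda+2$ below the maximum value $(k-1)\lambda+1$. Then
\[
\rho_{G,\ell}(U)\le k\bigl((k-1)\lambda+1\bigr)-2\lambda\Bigl(\tbinom{k}{2}-1\Bigr)-(2\lambda+1)p-(\lambda+2)r = k+2\lambda-(2\lambda+1)p-(\lambda+2)r .
\]
In particular $\rho_{G,\ell}(U)\le k-1-(\lambda+2)r$ once $p\ge1$.

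\textbf{Step 2 (the case $U\subsetneq V(G)$).} Let $j=|E_G(U,\overline U)|$. By Observation~\ref{obs:list-degree}, every $v\in U$ with $\ell(v)=k-1$ and $d_{G[U]}(v)<k-1$ sends at least $k-1-d_{G[U]}(v)$ edges out of $U$. The vertices of $U$ incident to parallel edges have large $d_{G[U]}$. Every other vertex with $\ell(v)=k-1$ has $d_{G[U]}(v)\le k-1$, with equality only if it is not one of the two ends of the missing pair. Hence each ``full'' vertex that is an end of the missing pair contributes at least one boundary edge, unless it carries a parallel edge.

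I will bound $j$ from below in terms of $r$ and $p$, and compare with Lemma~\ref{lem:j(k-2)}, which gives $\rho_{G,\ell}(U)\ge j(\lambda-2)+1$. When $j\ge1$ and $\lambda\ge k$ (true for $k\in\{5,6\}$), Observation~\ref{obs:geqk-1} alone already contradicts $\rho(U)\le k-1$. For larger $k$, note that $\lambda\approx k/2+2$. The useful inequality is then $j(\lambda-2)+1>k+2\lambda-(2\lambda+1)p-(\lambda+2)r$. This should follow because either $p\ge2$ or $r\ge1$ (each term costs about $2\lambda$ or $\lambda$), or else $j\ge 2$ (both ends of the missing pair are full, and a single parallel edge can cover at most its own endpoints). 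The delicate configuration is when the single parallel edge lies on an end of the missing pair. There I would use the ``parallel edges'' clause of Lemma~\ref{lem:j(k-2)} and the identity $2(\lambda-2)+1> k+2\lambda-(2\lambda+1)$, i.e.\ $k<2\lambda-2+\dots$, which I expect holds by the definition of $\lambda$.

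\textbf{Step 3 (the case $U=V(G)$).} If some vertex has $\ell(v)<d(v)$, I will argue through potentials. By \eqref{minG}(c), $-1\le\rho_{G,\ell}(G)\le k-1-(2\lambda+1)(p-1)-(\lambda+2)r$. This forces $p=1$ and $r=0$, so the degrees are pinned down exactly. I would then produce a coloring directly, by ordering the vertices so that Lemma~\ref{lem:acyclic} applies, with a vertex of surplus $\ell(v)>d(v)-\ldots$ placed last. If instead $\ell(v)=d(v)$ for all $v$, then $(H,\LL)$ is a degree-cover, and Theorem~\ref{KSSt}(i) says every block of $G$ is a multiple of a complete graph or of a cycle. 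But $K_k^-$ plus parallel edges is $2$-connected and is neither, which is a contradiction.

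The main obstacle is Step 2 for general $k$. There the crude bound from Observation~\ref{obs:geqk-1} fails, and the boundary edge count must be tracked carefully against the gap lemma, including its parallel-edge strengthening.
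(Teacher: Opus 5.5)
Your Step~1 bound is correct, and so is your treatment of $p\ge 2$, of $r\ge 1$ when $U\subsetneq V(G)$, and of $j\ge 2$. The case that actually remains is not handled.

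\textbf{The gap when $U\subsetneq V(G)$.} Take $p=1$ and $r=0$. Since $xy\notin E(G)$, the doubled edge covers at most one end of the missing pair, say it is $xz$. Then only $y$ is forced to send an edge out of $U$, so $j=1$ and $\rho_{G,\ell}(U)=k-1$. Your claim that Observation~\ref{obs:geqk-1} settles $k\in\{5,6\}$ fails for $k=6$, where $\lambda-1=5=k-1$. The parallel-edge clause of Lemma~\ref{lem:j(k-2)} does not help: it concerns parallel edges in $E_G(S,\overline S)$, whereas $xz$ lies inside $U$. The inequality $2(\lambda-2)+1>k-1$ only matters when $j\ge 2$. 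For $k\in\{6,7,8\}$ we have $k-1\in\{\lambda-1,\lambda\}$. So this configuration is compatible with Lemma~\ref{lem:j(k-2)} and even with the upper bound of Lemma~\ref{lem:k-1k}. That upper bound would kill it for $k\ge 9$, but you never use one. No boundary-edge count excludes it, so a coloring argument is required.

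\textbf{The same gap when $U=V(G)$.} In Step~3, $\rho_{G,\ell}(G)\ge -1$ does not force $r=0$. For $k\ge 9$, the case $r=1$ gives $\rho_{G,\ell}(G)=k-\lambda-3\ge -1$. Moreover, $r=0$ is impossible here, since by Observation~\ref{obs:list-degree} both $x$ and $y$ would have to lie on the doubled edge. The surviving case is $\ell(x)=d(x)=k-2$ with doubled edge $yz$. In it, $d(z)>\ell(z)$ and every other vertex is degree-tight. So there is no surplus vertex to place last, and Lemma~\ref{lem:acyclic} gives nothing.

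\textbf{The missing idea.} Color the vertex $z$ of the doubled edge outside $\{x,y\}$ early; this closes both cases.
\begin{enumerate}
\item If $U\neq V(G)$, take an $(H,\LL)$-coloring of $G-U$.
\item Color $z$ with any node of positive remaining capacity and place it first among $U$.
\item Count capacities. Coloring $z$ costs the end of the doubled edge in $\{x,y\}$ at most $2$ and every other vertex at most $1$. In the first case, $G-U$ costs $y$ at most $1$. Hence every vertex of $U\setminus\{z\}$ keeps capacity at least its degree in $G[U\setminus\{z\}]\cong K_{k-1}^-$.
\item $K_{k-1}^-$ is a $2$-connected block that is neither a multiple of a complete graph nor of a cycle. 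So Theorem~\ref{KSSt} extends the coloring, contradicting Lemma~\ref{lem:partial} or \eqref{minG}(a).
\end{enumerate}

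The paper's own proof takes the same purely potential-based route. It asserts $\rho\le 0$ for one extra parallel edge, hence $U=V(G)$, and then $\rho_{G,\ell}(G)\le-\lambda-2$. The correct values are $k-1$ and $k-\lambda-3$. So, as written, it too only goes through for $k=5$ and needs this coloring step otherwise.
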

\begin{proof}
    Let $K$ be a $K_k$-subgraph in $G[U]$ if any exists.
    If $K$ is not induced or $G[U]$ contains at least $\binom{k}{2}+1$ edges, then $\rho(U) \leq k-2\lambda < -2$, a contradiction.
    If some $v \in U$ satisfies $\ell(v) \leq k-3$, then $\rho(U) \leq k - (2\lambda+2) < -2$, a contradiction. If two vertices $u,v \in U$ satisfy $\ell(u) \leq k-2$ and $\ell(v) \leq k-2$, then $\rho(U) \leq k - 2(\lambda+2) < -2$. 
    Therefore, there is a vertex $u \in U$ such that all $v \in U \setminus\{u\}$ satisfy $\ell(v) = k-1$, and $\ell(u) \geq k-2$.
    Therefore, if $K$ exists, then $K$ is exceptional, a contradiction.

    Now let $x,y\in U$ be the only pair of vertices such that $xy\not\in E(G)$.
    If $G[U]$ contains exactly $\binom{k}{2}$ edges, then $\rho_{H,\ell}\le 0$, so $V(G)=U$.
    Then by Observation~\ref{obs:list-degree}, we may assume that $x$ is the unique vertex such that $\ell(v)=k-1$ for all $v\in U\setminus\{x\}$ and $\ell(x)=d(x)=k-2$.
    In particular $d(y)=\ell(y)=k-1$, so there is some vertex $z\in U\setminus\{x,y\}$ such that $yz$ is the unique edge with multiplicity $2$.
    But then $\rho_\ell(G)\le -\lambda-2<-2$, a contradiction.
    Thus, if $G[U]$ contains a spanning $K_k^-$, then $G[U]$ must be an induced $K_k^-$. 
\end{proof}

\begin{lemma}\label{lem:no1}
    $G$ has no vertex $v$ satisfying $\ell (v) = 1$.
\end{lemma}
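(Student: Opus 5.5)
Suppose for contradiction that $v\in V(G)$ has $\ell(v)=1$. The plan is to remove $v$, lower the capacities of its neighbours' nodes, and show that the resulting set has potential so negative that it contradicts Lemma~\ref{lem:j(k-2)} or Observation~\ref{obs:geq-1}.

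First, $V(G)\neq\{v\}$, since $\rho_{G,\ell}(v)=\lambda-2\ge -1$ and $G[\{v\}]$ is colorable whenever $\ell(v)=1$. So $G-v$ is nonempty. Since $\ell(v)=1$, there is a unique color $c$ with $\ell(v,c)=1$, and any $(H,\LL)$-coloring must use $\phi(v)=c$. Place $(v,c)$ first in the ordering. This gives a partial coloring $f$ of $G[\{v\}]$. By Lemma~\ref{lem:partial}, $G-v$ has no $(H,\LL_f)$-coloring, where $\LL_f$ decreases by one the capacity of each node $(u,c')$ matched to $(v,c)$, once for each such edge. The cover $(H,\LL_f)$ of $G-v$ is still $2$-bounded and $(k-1)^-$. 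Hence some $U\subseteq V(G)\setminus\{v\}$ has $G[U]$ with a spanning $(H,\LL_f)$-minimal subgraph $G'$. Because $G$ is $(H,\LL)$-minimal, $\LL_f$ and $\LL$ must differ on $U$, so $U$ contains a neighbour of $v$.

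Next, apply minimality~\eqref{minG}(b) to $G'$. Either $G'$ is exceptional, or $\rho_{G',\ell_f}(U)\le -2$.

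\textbf{Non-exceptional case.} As in the proof of Lemma~\ref{lem:j(k-2)}, each vertex $u\in U$ satisfies $\rho_\ell(u)-\rho_{\ell_f}(u)\le(\lambda+2)|E_G(u,v)|$. Passing from $G'$ to $G[U]$ only adds edges, which lowers potential, so $\rho_{G,\ell}(U)\le -2+q(\lambda+2)$ with $q=\|v,U\|\ge1$. Then
\[
\rho_{G,\ell}(U\cup\{v\})\le -2+q(\lambda+2)-2\lambda q+(\lambda-2)=-4-(q-1)(\lambda-2)\le -4.
\]
This contradicts Observation~\ref{obs:geq-1}, whether or not $U\cup\{v\}=V(G)$.

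\textbf{Exceptional case.} Suppose $G'=K_k$ with $\ell_f\in\{k-1,k-2\}$. Since $\ell_f(u)<\ell(u)\le k-1$ for some $u\in U$, that $u$ has $\ell(u)=k-1$ and $\ell_f(u)=k-2$, and all other vertices have $\ell=\ell_f=k-1$. So $q=1$ and $G[U]$ contains $K_k$, contradicting~\eqref{minG}(d). The case $k=5$ with $G'$ equal to $4K_2$ or a double cycle is handled in the same way: $\ell_f\equiv4$ forces $\ell=\ell_f$ on $U$, which is impossible since some vertex of $U$ has $\ell_f(u)<\ell(u)$; alternatively it contradicts~\eqref{minG}(e).

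The main obstacle is the careful bookkeeping of how vertex potentials change. When $\ell$ drops from $k-1$ to $k-2$, the potential falls by $\lambda+2$, which is the worst case. The bound $(\lambda+2)|E_G(u,v)|$ must be verified across the threshold cases, including drops to $\ell\in\{0,1\}$.
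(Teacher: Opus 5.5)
Your argument is correct, but it takes a much longer route than the paper, and it contains a harmless arithmetic slip.

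The paper's proof is one line. You have $\rho_{G,\ell}(v)=1\cdot\lambda-2=\lambda-2$. Observation~\ref{obs:geqk-1}, which is the $j=1$ case of Lemma~\ref{lem:j(k-2)}, says every nonempty proper subset $S\subsetneq V(G)$ has $\rho_{G,\ell}(S)\ge\lambda-1$. Applied to $S=\{v\}$, this is a contradiction. The degenerate case $V(G)=\{v\}$ is trivial, since a single vertex with $\ell(v)=1$ is colorable; you handle it explicitly, while the paper leaves it implicit.

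What you do instead is re-run, for $S=\{v\}$, the argument that proves Lemma~\ref{lem:j(k-2)}. You color $v$ first, extract an $(H,\LL_f)$-minimal subgraph of $G-v$, transfer its potential bound back to $\ell$ via the $(\lambda+2)$-per-edge drop, and merge with $v$. But you then close with Observation~\ref{obs:geq-1}, which is itself a consequence of Lemma~\ref{lem:j(k-2)}. So the detour does not make the proof independent of the gap lemma. Once that lemma is available, citing Observation~\ref{obs:geqk-1} for $S=\{v\}$ directly makes unnecessary the potential-drop bookkeeping you flag as the ``main obstacle''.

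The slip: $-2+q(\lambda+2)-2\lambda q+(\lambda-2)$ equals $-2-(q-1)(\lambda-2)$, not $-4-(q-1)(\lambda-2)$; at $q=1$ it is exactly $-2$. This does not affect the conclusion, since $-2<-1$ still contradicts Observation~\ref{obs:geq-1}.
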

\begin{proof}
        If $\ell (v) = 1$, then $\rho_\ell (v) = \lambda -2$, contradicting Observation~\ref{obs:geqk-1}.
\end{proof}

\begin{obs}  
\label{obs:6}
 For every   $v\in V(G)$, $d(v)\leq (k-1)\frac{\lambda}{\lambda-2}$, and if $v$ is incident to  multiple edges, then $d(v)< (k-1)\frac{\lambda}{\lambda-2}$. In particular, if $k=5$ and $v$ is incident to  multiple edges, then $d(v)\leq 5$.
\end{obs}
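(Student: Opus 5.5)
The plan is to compare the potential of the single vertex $v$ with the potential of the set $V(G)\setminus\{v\}$. By Lemma~\ref{lem:no1} and Observation~\ref{obs:zero}, $\ell(v)\geq 2$, and $G$ has more than one vertex; otherwise $\rho_{G,\ell}(G)=\rho(v)\geq 2\lambda-1$, and $G$ would have no $(H,\LL)$-coloring only if $\ell(v)=0$. Set $S=V(G)\setminus\{v\}$, which is a nonempty proper subset. Observation~\ref{obs:geqk-1} gives $\rho_{G,\ell}(S)\geq \lambda-1$.

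The central identity is
\[
\rho_{G,\ell}(V(G))=\rho_{G,\ell}(S)+\rho_{G,\ell}(v)+\sum_{u\in N(v)}\rho_{G,\ell}(uv).
\]
Each neighbor $u$ contributes $\rho(uv)=1-(2\lambda+1)|E_G(u,v)|$. So if $v$ has $r$ distinct neighbors and degree $d(v)$, the edge terms sum to $r-(2\lambda+1)d(v)$, which is at most $-2\lambda d(v)$, with equality exactly when no edges at $v$ are parallel. Using $\rho(v)\leq (k-1)\lambda+1$ and $\rho(V(G))\geq -1$ from \eqref{minG}(c), we get
\[
-1\leq \rho(S)+(k-1)\lambda+1-2\lambda d(v).
\]
This alone does not bound $d(v)$, because $\rho(S)$ has no upper bound yet. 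So the better route is to bound $\rho(S)$ from below and $\rho(V(G))$ from above using all $d(v)$ edges leaving $S$. Lemma~\ref{lem:j(k-2)} with $j=d(v)$ gives $\rho(S)\geq d(v)(\lambda-2)+1$. Since $\rho(V(G))$ is itself well controlled, I would instead apply Lemma~\ref{lem:j(k-2)} to $S=\{v\}$ directly. The cut $E_G(\{v\},\overline{\{v\}})$ has exactly $d(v)$ edges, so
\[
\rho(v)\geq d(v)(\lambda-2)+1,
\]
and this is $+4$ if some pair of edges at $v$ is parallel.

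Since $\rho(v)\leq (k-1)\lambda+1$, we get $d(v)(\lambda-2)\leq (k-1)\lambda$, that is, $d(v)\leq (k-1)\frac{\lambda}{\lambda-2}$. In the parallel case we get $d(v)(\lambda-2)+4\leq (k-1)\lambda+1$, so the inequality is strict. This requires $\{v\}\neq V(G)$, which was handled above.

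For $k=5$, compute $\lambda=\lceil 18/3\rceil=6$. Then the bound is $d(v)<4\cdot\frac{6}{4}=6$, so $d(v)\leq 5$ when $v$ is incident to multiple edges. The only point needing care is checking that the potential case $\ell(v)=k-1$ gives the maximum of $\rho(v)$; the other cases give smaller values, which only strengthen the bound.
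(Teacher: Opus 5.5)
Your final argument, applying Lemma~\ref{lem:j(k-2)} to $S=\{v\}$ with $j=d(v)$ (using the $+4$ version when $v$ is incident to parallel edges), comparing with $\rho_{G,\ell}(v)\leq (k-1)\lambda+1$, and plugging in $\lambda=6$ for $k=5$, is correct and is exactly the paper's proof. The detour through $V(G)\setminus\{v\}$ is unnecessary. Your check that $|V(G)|\geq 2$, so that $\{v\}$ is a proper subset, is a detail the paper leaves implicit.
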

\begin{proof} Since $\rho_{G,\ell}(v)\leq 1+(k-1)\lambda$, this follows from
  Lemma~\ref{lem:j(k-2)}. The ``in particular" part holds since $\lambda=6$ when $k=5$.    
\end{proof}

\subsection{Properties of $K^-_k$-subgraphs in $G$}

In this subsection, we establish some structural properties of certain $K_k^-$ subgraphs that appear in $G$. We first need some lemmas. The first is an immediate corollary of Theorem~\ref{KSSt}.

\begin{lemma}
\label{lem:clique-crit}
    Let $F$ be a clique, and let $H$ be a cover of $F$. Suppose that $\ell(v) \geq d_F(v)$ for each $v \in V(F)$.
    If $F$ has no $(H,\LL)$-coloring,
    then each component of $H$ is a clique. Furthermore, for each component $J$ of $H$ and color pair $c,c' \in J$, $\ell(c) = \ell(c')$. 
\end{lemma}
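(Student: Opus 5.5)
The plan is to obtain the lemma directly from Theorem~\ref{KSSt}, which the paper flags as the source. Let $F=K_t$ for some $t$; if $F$ may be a multiple $qK_t$, the same argument works with blowups in place of cliques. First I reduce to the case $\ell(v)=d_F(v)$ for every $v$. If some vertex $w$ had $\ell(w)>d_F(w)$, then Lemma~\ref{lem:acyc-cor} (a clique is connected) would give an $(H,\LL)$-coloring of $F$, which is a contradiction. So $(H,\LL)$ is a variable degree-cover with equality everywhere. I may assume it is $2$-bounded, since this is the only setting in which the lemma is used. With this, Theorem~\ref{KSSt} applies to the connected multigraph $F$.

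Since $F$ is $2$-connected, it is a single block $B=F$. Part (iii) of Theorem~\ref{KSSt} then says $H_B$ is all of $H$ once isolated nodes are ignored. An isolated node has capacity that contributes nothing to $d_B(v)$ in (iii), so it must have capacity $0$. Such nodes form trivial one-node cliques, and the capacity statement holds for them vacuously.

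Next I apply part (iv) with $B=qK_t$. It describes $H_B$ as a disjoint union of three kinds of pieces: $q$-blowups of $K_t$ whose nodes all have capacity $1$, $q$-blowups of $K_t$ whose nodes all have capacity $2$, and (when $q$ is even) $(q/2)$-blowups of $2K_t$ whose nodes all have capacity $2$. Hence each component $J$ of $H$ is one of these pieces. Every node in $J$ has the same capacity, which is the ``furthermore'' claim. In particular, in the simple case $q=1$ the $1$-blowup of $K_t$ is $K_t$ itself, so each component is a clique. For general $q$, a $(q,a)$-blowup of $K_t$ replaces each node by an independent set $I_u$ and joins distinct $I_u,I_w$ completely, so its underlying simple graph is complete multipartite. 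I will read ``clique'' as referring to the cover graph up to this blowup, and I expect the paper intends $F$ to be simple. Part (ii) of Theorem~\ref{KSSt} independently gives the constant capacity on each component, which is a useful cross-check.

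The main obstacle is bookkeeping rather than mathematics. One step is to justify that the hypotheses of Theorem~\ref{KSSt} (connected, $2$-bounded, degree-cover) hold after the reduction to equality. The other is to make sure the decomposition in (iv) really yields components: the pieces are pairwise disjoint with no edges between them, because the matchings between $H(u)$ and $H(v)$ are exhausted by these pieces. Nothing beyond citing Theorem~\ref{KSSt} and Lemma~\ref{lem:acyc-cor} should be needed.
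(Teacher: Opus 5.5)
Your proposal is correct and follows the paper's route. The paper gives no separate argument and simply calls the lemma an immediate corollary of Theorem~\ref{KSSt}; applying parts (ii)--(iv) to the single block $B=F$ with $q=1$, under the paper's standing $2$-boundedness assumption, is exactly that deduction. The preliminary reduction via Lemma~\ref{lem:acyc-cor} is harmless but unnecessary, since Theorem~\ref{KSSt} already applies to degree-covers with $\ell(v)\ge d_F(v)$ and part (iii) forces equality.
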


Next, we show that under certain conditions, a clique minus an edge has a variable coloring.

{
\begin{lemma}\label{K5-}
    Let $p\geq 5$ and
    $F$ be a $K_p^-$ graph with missing edge $xx'$ and a $2$-bounded  variable cover $(H,\LL)$ such that the following hold:
    \begin{enumerate}[(i)]
        \item $\ell(v)\leq d_F(v)$  for all $v \in V(F)$.
           \item $\sum_{v\in V(F)}(d_F(v)-\ell(v))\leq 1$ or $p\geq 7$ and
           $\sum_{v\in V(F)}(d_F(v)-\ell(v)) \leq 2$.
    \end{enumerate}
    Then, $F$ has an $(H,\LL)$-coloring.
\end{lemma}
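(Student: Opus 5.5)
We color the non-adjacent pair first and then reduce to a degree-cover problem on the clique $K_{p-2}=F-\{x,x'\}$, where Lemma~\ref{lem:acyc-cor} and Lemma~\ref{lem:clique-crit} apply. Since $F=K_p^-$, we have $d_F(x)=d_F(x')=p-2$ and $d_F(v)=p-1$ for every other $v$. Let $D=\sum_{v}(d_F(v)-\ell(v))\in\{0,1,2\}$ be the total deficiency. Because $x$ and $x'$ are non-adjacent, their nodes lie in no common matching. We choose nodes $(x,a)\in H(x)$ and $(x',a')\in H(x')$ of positive capacity and place them first in the ordering; the order of these two nodes is irrelevant. For every $v\in W:=V(F)\setminus\{x,x'\}$, this lowers the capacity available to $v$ by at most $2$. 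It lowers it by exactly $2$ only when $(x,a)$ and $(x',a')$ are both adjacent to nodes of $H(v)$.

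For the residual cover $\LL_f$ on the clique $F[W]\cong K_{p-2}$ we have $d_{F[W]}(v)=p-3$. The inequality $\ell_f(v)\ge \ell(v)-2\ge d_F(v)-2-\text{def}(v)=p-3-\text{def}(v)$ therefore holds for each $v\in W$. The goal is to choose $a,a'$ so that $\LL_f$ is a degree-cover of $K_{p-2}$ with strict surplus at some vertex, and then apply Lemma~\ref{lem:acyc-cor}. Since $p\ge 5$, the clique $K_{p-2}$ has at least $3$ vertices.

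Two freedoms are available for making this choice. First, because $\ell(x)$ is large (at least $p-4\ge 1$), we can often pick $(x,a)$ so that its edges miss a prescribed vertex of $W$. Second, if a chosen node $(x,a)$ has capacity $2$, it can be placed so that it imposes only one unit of reduction per neighbor. In that case the node is slack, and we could instead put it last, after its neighbors; that alternative ordering costs nothing to the neighbors.

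The cleanest route is an ordering argument. We place $x$ and $x'$ at the end of the ordering rather than the beginning. First we color $F[W]$ using Lemma~\ref{lem:acyc-cor} or Lemma~\ref{lem:clique-crit} with the original capacities, where each $v\in W$ has surplus $\ell(v)-(p-3)\ge 2-\text{def}(v)$. Then $x$ receives at most $p-2$ back-edges against capacity $\ell(x)\ge p-2-\text{def}(x)$. We handle the resulting case analysis as follows.

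If $D=0$, we color $W\cup\{x\}$ as a $K_{p-1}$ in which every vertex has surplus. The vertex $x$ has surplus $0$ relative to $K_{p-1}$, while the $W$-vertices have surplus $1$. Lemma~\ref{lem:acyc-cor} then gives a coloring, and $x'$ needs a node whose back-degree is less than its capacity. A counting argument supplies this: the chosen colors on its $p-2$ neighbors hit at most $p-2$ units, and this is strict unless every neighbor hits exactly. We then exploit the freedom in the coloring of the $K_{p-1}$, using Lemma~\ref{lem:clique-crit} to see that a forced configuration would make $H$ a union of cliques with equal capacities. Such a configuration allows recoloring.

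If $D\in\{1,2\}$, the deficient vertex is placed where it receives the fewest back-edges. Near its slack vertices, a $K_{p-2}$ still has enough surplus when $p\ge 7$ even with $D=2$, because $p-3\ge 4$ leaves room to absorb two deficiencies. This explains the hypothesis.

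The main obstacle is the case where every choice of the last vertex is tight. Here we must show that the structure forced by Lemma~\ref{lem:clique-crit} on $F-x'$ (or $F-x$), namely all components of $H$ being cliques with constant capacity, is incompatible with the missing edge $xx'$. Our approach is to compare the two reductions, $F-x$ versus $F-x'$. In a fully rigid configuration, each node of $x$ would have to lie in a clique component that also meets $x'$. This is impossible since $E_H(H(x),H(x'))=\emptyset$, and making that comparison rigorous is the crux of the argument.
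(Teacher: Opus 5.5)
Your proposal is a strategy outline rather than a proof. The decisive step is explicitly left open, and the way you intend to close it would not work.

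\textbf{None of the cases is completed.} For $D=0$, and more generally whenever all deficiency lies in $W$, you appeal to ``a forced configuration \dots\ allows recoloring'' without an argument. You also do not show how Lemmas~\ref{lem:acyc-cor} and~\ref{lem:clique-crit} would supply one: $F$ is not a clique, and an exact degree-cover of it has no surplus vertex. The tool you are missing is Theorem~\ref{KSSt}. Color the (at most two) deficient vertices of $W$ first. What remains is $K_{p-1}^-$, or $K_{p-2}^-$ when $p\ge 7$, with a degree-cover. This graph is $2$-connected and neither a multiple of a complete graph nor of a cycle, so it is colorable. This is how the paper disposes of all these cases in two lines.

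Your paragraph on $D\in\{1,2\}$ contains no argument. It also overlooks the case $p=5$, $\ell(x)=2$ carried by a single node of capacity $2$. There your ``freedom'' to pick $(x,a)$ avoiding a prescribed vertex does not exist. The paper needs a separate argument for it: $H$ must contain a $K_5^-$ component with all capacities $2$, and one colors an induced $K_{1,2}$ of $F$ from that component.

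\textbf{The crux you describe aims at a contradiction that does not exist.} Lemma~\ref{lem:clique-crit} applied to $F-x'$ or to $F-x$ only yields clique components that never involve both $H(x)$ and $H(x')$. In the genuinely hard case ($\ell(x)=\ell(x')=p-3$, $p\ge 7$), the rigid configuration is perfectly consistent. Every component of $H[W]$ is a $K_{p-2}$. Nodes of $H(x)$ and of $H(x')$ can be complete to the same such component, forming a non-clique component of $H$ while $E_H(H(x),H(x'))=\emptyset$.

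The paper does not refute this configuration; it \emph{uses} it, in four steps:
\begin{enumerate}
\item Via Theorem~\ref{KSSt}, it shows each $\gamma\in H(x)\cup H(x')$ is complete to one component $M(\gamma)$ of $H[W]$ and anticomplete to the others.
\item If $\ell(\gamma)$ exceeds the capacities on $M(\gamma)$, it colors $x$ last with $\gamma$.
\item Otherwise, the count $\ell(x)+\ell(x')=2p-6\ge p>p-1$ forces some $\gamma\in H(x)$ and $\gamma'\in H(x')$ with $M(\gamma)=M(\gamma')$.
\item It gives the non-adjacent pair $x,x'$ this common color. When $\ell(\gamma)=2$, it first colors one vertex $y\in W$ from $M(\gamma)$. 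The rest of $W$ then finishes by degeneracy.
\end{enumerate}
Your first route (``choose $a,a'$ so that $\LL_f$ has strict surplus'') points in this direction. However, you never identify which choice works or prove that one exists.
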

\begin{proof} Let $W=V(F)\setminus\{x,x'\}$ and
$Y=\{v\in V(F): d_F(v)>\ell(v)\}$. By (i) and (ii), $|Y|\leq 2$. If $Y\subset W$, then we color the vertices of $Y$ from the lists, and obtain $K_{p-1}^-$ or  $K_{p-2}^-$ with a degree-cover. Since $K_{p-2}^-$ can appear only if $p\geq 7$,
the resulting graph is $2$-connected and not exceptional. So by Theorem~\ref{KSSt}, we can extend the coloring to $F$.

Suppose now one vertex of $Y$ is in $\{x,x'\}$ (say, $x$) and the other in $W$ (say, $y$). Then $p\geq 7$, $\ell(x)=p-3$ and $\ell(y)=p-2$. Since $(H,\LL)$ is $2$-bounded and
$p-3\geq 4,$  $s(x)\geq 2$ and $s(y)\geq 2$. So, we can choose $\phi(y)\in H(y)$ so that
$s_\phi(x)\geq 2$. Using this, we can now choose $\phi(x)$ so that $H_\phi[F-y-x]$ is not the disjoint union  of copies of $K_{p-2}$, and then finish a coloring using Theorem~\ref{KSSt}.

The next possibility is that $Y=\{x\}$ and $p-4\leq \ell(x)\leq p-2$. If $s(x)\geq 2$, then we do as in previous paragraph, so suppose $s(x)=1$ and $H(x)=\{(x,c)\}$. To have $s(x)=1$, we need $\ell(x)\leq 2$, which is possible only if $k=5$ and $\ell(x,c)=\ell(x)=2$.
 Then   $(x,c) $ is adjacent to nodes that induce a triangle in $H$, as when 
        we define the partial coloring $\phi:\{x\} \rightarrow \{c\}$,
        there must be some component $J$ of $H[F-x]$
        such that $\ell_{\phi}(\gamma) = 1$ for every node $\gamma \in J$.
        Therefore, $H$ has a component $J^*$ isomorphic to $K_5^-$
        such that $\ell(\gamma) = 2$
        for every node $\gamma \in J^*$. We choose an induced 
        $K_{1,2}$-subgraph of $F$, which we call $K$,
        and we let $\psi:V(K) \rightarrow J^*$
        be a partial $(H,\LL)$-coloring of $H$.
        Note that $\psi$ exists by Lemma \ref{lem:acyclic} and the fact that $\ell(\gamma) = 2$ for each $\gamma \in J^*$. 
        Then each of the two remaining vertices
        $v \in V(F-K)$
        satisfies $L_{\psi}(v) = [2,0]$ without loss of generality.
        Thus, $F-K$ has an $(H,\LL_{\psi})$-coloring by Lemma \ref{lem:acyclic},
        and so $F$ has an $(H,\LL)$-coloring.

The last possibility is that $Y=\{x,x'\}$, $p\geq 7$ and $\ell(x)=\ell(x')=p-3$.
In particular, $s(x),s(x')\geq 2$. This helps us to show
\begin{equation}\label{515}
   \mbox{\it every component of $H[W]$ is a $K_{p-2}$.} 
\end{equation}
Indeed, if some component $M$ of $H[W]$ is not a $K_{p-2}$, then either it has a node $\gamma$ of degree less than $p-3$, or it is a $(p-3)$-regular graph with more than $p-2$ vertices. In the latter case, choose any node $\gamma$ in this component. In both cases,
 we can choose  $\phi(x)$ and $\phi(x')$ so that $\gamma$ is still in the list $\LL_\phi$, and so is not in a $K_{p-2}$, contradicting Theorem~\ref{KSSt}.
 Thus,~\eqref{515} holds.

 For the same reasons,
\begin{equation}\label{515'}
   \parbox{14.5cm}{\it for every node  $\gamma\in H(x)\cup H(x')$ and every component $M$ of $H[W]$, either $\gamma$ has no neighbors in $M$ or is adjacent to each node in $M$.} 
\end{equation}
By~\eqref{515} and~\eqref{515'}, 
\begin{equation}\label{515''}
   \parbox{14.5cm}{\it for every node  $\gamma\in H(x)\cup H(x')$, there is a component $M(\gamma)$ of $H[W]$ such that $\gamma$ has no neighbors in all other components of $H[W]$.} 
\end{equation}
So, if, say $\gamma\in H(x)$ and the capacities of the nodes in $M(\gamma)$ are smaller than $\ell(\gamma)$, then we first color $F-x$ by degeneracy, and then color $x$ with $\gamma$. Therefore, since $\ell(x)+\ell(x')=2p-6\geq p$, we may assume that there are $\gamma\in H(x)$
and $\gamma'\in H(x')$ with $M(\gamma)=M(\gamma')$. We can rename the colors so that 
the color of each node in $M(\gamma)\cup \{\gamma,\gamma'\}$ is $c$.
Now, if $\ell(\gamma)=1$, then we let $\phi(x)=\phi(x')=c$, and then extend $\phi$ by degeneracy. If $\ell(\gamma)=2$, then we choose   any node $(y,c)\in M(\gamma)$, color
$y,x,x'$ (in this order) with $c$ and again extend the coloring by degeneracy.
\end{proof}}

\begin{lemma}
\label{lem:bd-UB}
For each $S \subsetneq V(G)$, 
if $j = \|S,\overline S\|$, then
$\rho(S) \leq j(\lambda+2)-2$. 
\end{lemma}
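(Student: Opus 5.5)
We argue by contradiction, in the same spirit as the proofs of Lemma~\ref{lem:k-1k} and Lemma~\ref{lem:lambda-1}. Suppose some $S\subsetneq V(G)$ with $j=\|S,\overline S\|$ has $\rho(S)\geq j(\lambda+2)-1$. The idea is to color $\overline S$ first and then show that the potential loss on $S$ is at most $j(\lambda+2)$. By the $(H,\LL)$-minimality of $G$, the subgraph $G[\overline S]$ has an $(H,\LL)$-coloring $(\phi,\sigma)$. By Lemma~\ref{lem:partial}, $G[S]$ has no $(H,\LL_\phi)$-coloring. Hence there is a set $U\subseteq S$ such that $G[U]$ has a spanning $(H,\LL_\phi)$-minimal subgraph $G'$. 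The cover $(H,\LL_\phi)$ restricted to $U$ is still a $2$-bounded variable $(k-1)^-$-cover, since capacities only decrease. Also $|U|<|G|$, so by \eqref{minG}(b) the triple satisfies Theorem~\ref{thm:v3}.

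\textbf{Step 1: rule out the exceptional outcomes.} Suppose $G'$ is exceptional. If $G'=K_k$, then $G[U]\supseteq K_k$, which contradicts \eqref{minG}(d). If $k=5$ and $G'$ is $4K_2$ or a double cycle, then $G$ contains such a subgraph, which contradicts \eqref{minG}(e). So $\rho_{G',\ell_\phi}(U)\leq -2$. Since $G[U]$ has at least the edges of $G'$ and extra edges only lower the potential, we get $\rho_{G,\ell_\phi}(U)\leq -2$.

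\textbf{Step 2: compare $\rho_{G,\ell}$ and $\rho_{G,\ell_\phi}$ on $U$.} For each $v\in U$ we have $\ell(v)-\ell_\phi(v)\leq \|v,\overline S\|$. We check that $\rho$ as a function of $\ell(v)$ increases by at most $\lambda+2$ per unit of $\ell$. The worst jumps are from $1$ to $2$ (a gain of $\lambda+1$) and from $k-2$ to $k-1$ (a gain of $\lambda+2$), and every other step gains exactly $\lambda$. Therefore $\rho_{G,\ell}(v)-\rho_{G,\ell_\phi}(v)\leq (\lambda+2)\|v,\overline S\|$, exactly as used in the proof of Lemma~\ref{lem:j(k-2)}. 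Summing over $U$ gives
\[
\rho_{G,\ell}(U)\leq -2+(\lambda+2)\|U,\overline S\|\leq -2+(\lambda+2)j .
\]

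\textbf{Step 3: pass from $U$ to $S$.} This is the main obstacle, since we must show $U=S$ or otherwise bound $\rho(S)$. Suppose $U\subsetneq S$. We apply the same argument symmetrically and combine with Lemma~\ref{lem:j(k-2)}. Since $G[S]$ is connected to the rest, the set $S\setminus U$ sends edges to $U$ or to $\overline S$. Consider $U\cup\overline S$. Its potential is at most $\rho(U)+\rho(\overline S)-2\lambda\|U,\overline S\|$, and this is small enough that maximality-type arguments (as in Lemma~\ref{lem:j(k-2)}) give a contradiction unless $U=S$. If this route fails, I would instead strengthen the statement to "$\rho(U)\leq j(\lambda+2)-2$ for some nonempty $U\subseteq S$ meeting $N(\overline S)$". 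That version is what the later gap lemmas actually use.

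With $U=S$, Step 2 gives $\rho(S)\leq j(\lambda+2)-2$, which is the desired bound.
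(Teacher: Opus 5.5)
Your Steps 1 and 2 are correct and match the paper: you color $G[\overline S]$, extract an $(H,\LL_\phi)$-minimal $U\subseteq S$, and rule out exceptional outcomes via \eqref{minG}(d),(e). You also use the fact that $\rho$ jumps by at most $\lambda+2$ per unit of $\ell$.

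\textbf{The gap is Step 3.} This is where the content of the lemma lies, and your proposal does not establish it. Nothing forces $U=S$: the $(H,\LL_\phi)$-minimal subgraph can be a small piece of $S$ near the boundary.

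The route through $U\cup\overline S$ cannot close this, because Lemma~\ref{lem:j(k-2)} and the maximality arguments only give \emph{lower} bounds on potentials. Combining $\rho(U\cup\overline S)\le\rho(U)+\rho(\overline S)-2\lambda\|U,\overline S\|$ with Lemma~\ref{lem:j(k-2)} only gives a lower bound on $\rho(\overline S)$ and says nothing about $\rho(S)$. What you need is an \emph{upper} bound on $\rho(S\setminus U)$, and the only source for that is the statement itself applied to a smaller set.

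Your fallback does not rescue this either. It proves a weaker existence statement. The lemma is used in Lemma~\ref{lem:KK-4}(iii) as the bound $\rho(S)\le 2\lambda+2$ for a \emph{specific} component $S$ of $G-K$, which the weakened version does not give.

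\textbf{The missing idea is induction on $|S|$.} Take $S$ to be a smallest counterexample. In Step 2, keep $j_U:=\|U,\overline S\|$ instead of replacing it by $j$, so that $\rho(U)\le(\lambda+2)j_U-2$.

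If $U=S$, you are done. Otherwise $S\setminus U$ is a nonempty proper subset of $V(G)$. Writing $q=\|U,S\setminus U\|$, its boundary has $q+j-j_U$ edges, so minimality gives $\rho(S\setminus U)\le(\lambda+2)(q+j-j_U)-2$. Each pair joined by $t\ge1$ edges between $U$ and $S\setminus U$ contributes at most $-2\lambda t$. Hence
\[
\rho(S)\le\rho(S\setminus U)+\rho(U)-2\lambda q\le(\lambda+2)j-4-(\lambda-2)q\le(\lambda+2)j-4,
\]
which is a contradiction.

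The mechanism is that each edge inside $S$ costs $2\lambda\ge\lambda+2$ in potential but only raises the allowed bound for $S\setminus U$ by $\lambda+2$, while the two $-2$ constants add up to $-4$. The paper first handles disconnected $G[S]$ separately by the same additivity, but the computation above works for $q\ge0$ anyway.
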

\begin{proof}
    Let $S$ be a minimum counterexample to the lemma. 
    If $G[S]$ is disconnected, then $S$ has two proper subsets $S_1$ and $S_2$ for which $\rho(S) = \rho(S_1)+ \rho(S_2)$. 
    By the minimality of $S$,
    $\rho(S_i) \leq (\lambda+2)\|S_i,\overline S\| - 2$ for each $i \in \{1,2\}$, so that 
    $\rho(S) \leq j(\lambda+2)-4$, contradicting the assumption that $S$ is a counterexample. Therefore, we assume that $G[S]$ is connected.

    By the criticality of $G$, $G[\overline S]$ has an $(H,\LL)$-coloring $\phi$. Since $G[S]$ has no $(H,\LL_{\phi})$-coloring, there is $U \subseteq S$, containing at least one vertex with a neighbor in $\overline S$, for which $\rho_{G,\ell_{\phi}}(U) \leq -2$.
    Let $\ell$ be the number of edges of $E_G(S,\overline S)$ incident to $U$. Then,
    $\rho_{G,\ell}(U) \leq -2 + \ell(\lambda+2)$. 
    As $S$ is a counterexample and $\ell \leq j$, $U \subsetneq S$.

    Now, write $U' = U \setminus S$. Let $q = \|U,S \setminus U\|$, and observe that $\|U',\overline U'\| = q + j-\ell$.
    As $G[S]$ is connected, $q \geq 1$. 
    By the minimality of $S$, we know that $\rho(U') \leq (\lambda+2)(q+j-\ell)-2$. Therefore,
    \[
        \rho(S) = \rho(U') + \rho(U) -2q\lambda \leq (\lambda+2)(q+j-\ell)-2 + \ell(\lambda+2) - 2 - 2q\lambda.
    \]
    As $\lambda \geq 2$, this is at most than $j(\lambda+2)-4$, contradicting the assumption that $S$ is a counterexample.
\end{proof}

\begin{lemma}
\label{lem:KK-4}
    Suppose $G$ has a vertex subset $K$ inducing $K_k^-$ with non-adjacent vertices $x,x'$
such that $\ell(v)=k-1$ for each $v \in K$. Then, (i) $\|K,\overline K\| = 4$,  
(ii) $5\leq k\leq 6$, and (iii)
$G-K$ is connected.
    \end{lemma}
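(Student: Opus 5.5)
The plan is to get an upper bound on $j:=\|K,\overline K\|$ from potentials, a lower bound on $j$ from a coloring argument via Lemma~\ref{K5-}, and then to derive (iii) from potentials again.

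\textbf{Potential of $K$.} By Lemma~\ref{lem:no-Kk}, $G[K]$ is exactly $K_k^-$ with no multiple edges. Hence
\[\rho(K)=k\bigl((k-1)\lambda+1\bigr)-2\lambda\Bigl(\tbinom k2-1\Bigr)=k+2\lambda.\]
Since $K\ne V(G)$ (otherwise $d(x)=k-2<\ell(x)$, contradicting Observation~\ref{obs:list-degree}), Lemma~\ref{lem:j(k-2)} gives $k+2\lambda\ge j(\lambda-2)+1$. That is,
\[j\le 2+\frac{k+3}{\lambda-2}.\]
Using $\lambda\ge \frac{k^2-7}{2k-7}$, I will check that $\frac{k+3}{\lambda-2}<3$ for all $k\ge5$. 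For example, $\lambda=6$ for $k=5,6,7$, which gives $9/4$ or less. So $j\le 4$ always.

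\textbf{Lower bound on $j$ by coloring.} Since $d(x),d(x')\ge k-1$ and $d_{G[K]}(x)=d_{G[K]}(x')=k-2$, both $x$ and $x'$ send edges out of $K$. Take an $(H,\LL)$-coloring $\phi$ of $G-K$, which exists by minimality. By Lemma~\ref{lem:partial}, $G[K]$ has no $(H,\LL_\phi)$-coloring. Each external edge lowers $\ell$ by at most one, and $\sum_{v\in K}(d_K(v)-\ell(v))=-2$. Therefore
\[\sum_{v\in K}\bigl(d_K(v)-\ell_\phi(v)\bigr)\le j-2.\]
To meet hypothesis (i) of Lemma~\ref{K5-}, I first lower capacities at $x$ and $x'$ wherever $\ell_\phi>d_K$. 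This keeps the cover $2$-bounded, only makes coloring harder, and does not increase the deficit sum. If $j\le3$, Lemma~\ref{K5-} (with $p=k$) then colors $K$, a contradiction. If $k\ge7$ and $j\le4$, the $p\ge7$ clause of Lemma~\ref{K5-} gives the same contradiction. Combined with $j\le4$, this yields $j=4$ and $k\le6$, which are (i) and (ii).

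\textbf{Connectivity of $G-K$.} Suppose $G-K$ is disconnected, and split it into nonempty parts $C_1,C_2$ with no edges between them. Let $e_i=\|C_i,K\|$, so $e_i\ge1$ and $e_1+e_2=4$. Lemma~\ref{lem:bd-UB} gives $\rho(C_1)\le e_1(\lambda+2)-2$, hence
\[\rho(K\cup C_1)\le k+2\lambda+e_1(\lambda+2)-2-2\lambda e_1=k+2\lambda-2-e_1(\lambda-2).\]
On the other hand, $K\cup C_1$ is a proper subset with exactly $e_2$ boundary edges, so Lemma~\ref{lem:j(k-2)} gives $\rho(K\cup C_1)\ge e_2(\lambda-2)+1$. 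Combining the two bounds yields $k+2\lambda-3\ge 4(\lambda-2)$, that is, $2\lambda\le k+5$. But $\lambda=6$ for $k\in\{5,6\}$, and $12>k+5$ in both cases, a contradiction. Hence $G-K$ is connected.

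\textbf{Main obstacle.} The delicate step is the reduction to Lemma~\ref{K5-}. Capacities at $x$ and $x'$ must be lowered to $d_K$ in a way that keeps the cover $2$-bounded and the deficit sum controlled. I also need that an external edge to a vertex of $K$ genuinely costs at most one unit of $\ell$ there; this is the reason for counting $j$ with multiplicity.
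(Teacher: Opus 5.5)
Your proposal is correct. Parts (i) and (ii) follow the paper's argument:
\begin{itemize}
\item $\rho(K)=k+2\lambda<5(\lambda-2)+1$ together with Lemma~\ref{lem:j(k-2)} gives $\|K,\overline K\|\le 4$;
\item extending a coloring of $G-K$ to $K$ by Lemma~\ref{K5-} excludes $\|K,\overline K\|\le 3$ in general, and excludes $\|K,\overline K\|\le 4$ when $k\ge 7$.
\end{itemize}
Your truncation step makes explicit something the paper passes over. This is hypothesis (i) of Lemma~\ref{K5-} at $x,x'$, where $\ell_\phi$ may still equal $k-1>d_K$.

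Strictly speaking, lowering capacities can \emph{increase} the deficit sum, so your claim that it does not is inaccurate. What you need, and have, is a per-vertex bound. After truncation the term at $x$ is $\max(d_K(x)-\ell_\phi(x),0)\le \tau(x)-1$, where $\tau(x)=d_G(x)-d_K(x)\ge 1$, and likewise at $x'$. So the bound $j-2$ survives, and this is exactly where you use that $x,x'$ send edges out of $K$. Also, for $k=7$ the ratio $(k+3)/(\lambda-2)$ equals $5/2$, not at most $9/4$; it is still below $3$.

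For (iii) you take a different and cleaner route. The paper splits into cases:
\begin{itemize}
\item If some component $S$ of $G-K$ is attached by one edge, it uses $\rho(S)\le\lambda$ from Lemma~\ref{lem:k-1k} and then Lemma~\ref{lem:j(k-2)} on $K\cup S$.
\item Otherwise there are exactly two components, each attached by two edges. The global bound $\rho(G)\ge -1$ then forces one of them to have potential above $2\lambda+2$, contradicting Lemma~\ref{lem:bd-UB}.
\end{itemize}
You instead take an arbitrary split $C_1,C_2$. You bound $\rho(C_1)$ above by Lemma~\ref{lem:bd-UB} and $\rho(K\cup C_1)$ below by Lemma~\ref{lem:j(k-2)}. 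This yields the single condition $2\lambda\le k+5$ regardless of $(e_1,e_2)$.

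Both of the paper's cases reduce to exactly this inequality. Your version removes the case analysis, including the argument that without a one-edge component there are exactly two components with two edges each. The price is invoking Lemma~\ref{lem:bd-UB} for every $e_1\in\{1,2,3\}$ rather than only for boundary size $2$. Since that lemma is stated for all proper subsets, this is legitimate.
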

\begin{proof}  Let $F=G[K]$.
    For each vertex $v \in K$, write $\tau(v) = d_G(v) - d_F(v)$. 
    Observe $\|K,\overline K\|=\sum \tau(v)$.
    By Observation 
    \ref{obs:list-degree},  
    $d_G(v) \geq \ell(v)= k-1$ for each $v \in K$.
    Therefore, $\tau(x)\geq 1$ and $\tau(x')\geq 1$.

First, since $\rho(K) = k+2 \lambda < 5(\lambda-2)+1$, $\|K,\overline K\| \leq 4$.
     Suppose
    $\sum_{v\in V(K)} \tau(v) { = \|K,\overline K\| }\leq 3$ { or $k\geq 7$.} Let $(\phi,\sigma)$ be an $(H,\LL)$-coloring of $G-K$.
     For each $v \in K$,
    $\ell_{\phi}(v) \geq \ell(v) - (d_G(v) - d_F(v))$, so that 
    \[d_F(v) - \ell_{\phi}(v) \leq d_G(v) - \ell(v) = \tau(v) + d_F(v) - \ell(v),\]
    which is $\tau(v)$ for $v \not \in \{x,x'\}$ and $\tau(v) - 1$ for $v \in \{x,x'\}$.
    Therefore, 
    $$\sum_{v \in K} (d_F(v) - \ell_{\phi}(v)) \leq \sum_{v \in K} \tau(v) - 2 = \|K,\overline K\| - 2\leq \begin{cases}
        1 & \text{ if } 5\leq k \leq  6; \\
        2 &  \text{ if } k\geq 7.
    \end{cases}$$
    Thus,
    the triple $(K,H_K,\LL_\phi)$ 
    satisfies the conditions of Lemma~\ref{K5-}; thus we can extend $(\phi,\sigma)$
    to $K$, contradicting the choice of $(G,H,\LL)$. Otherwise, 
    {  $5\leq k\leq 6$ and  }
    $\sum_{v\in V(K)} \tau(v) { = \|K,\overline K\| = }
     4$.

     This proves (i) and (ii). 

    Now, suppose that $G - K$ is disconnected. Then, there is a subset $S \subseteq V(G) \setminus V(K)$ such that $G[S]$ is a component of $G-K$ for which $\|S,K\| \leq 2$.
    If $\|S, K\| = 1$, then the single edge joining $S$ to $K$ is a cut-edge, so that $\rho_{G,\ell}(S) \leq \lambda$. Then, $\rho_{G,\ell}(K \cup S) \leq k + \lambda < 3(\lambda-2) +1$. Hence, $\|K \cup S, \overline{K \cup S}\| \leq 2$, contradicting the fact that $\|K, \overline K\| = 4$.
    Otherwise, without loss of generality, $V(G)$ can be partitioned into subsets $K,S,T$ where $\|S,K\| = \|T,K\| = 2$.
    Then,
    \[
        -1 \leq \rho_{G,\ell}(V(G)) \leq \rho_{G,\ell}(S) + \rho_{G,\ell}(T) + \rho_{G,\ell}(K) - 8 \lambda.
    \]
    Rearranging, 
    \[
        \rho_{G,\ell}(S) + \rho_{G,\ell}(T) \geq 8\lambda-1 - (k+2\lambda) = 6 \lambda - k - 1.
    \]
    Thus, without loss of generality, 
    \[
        \rho_{G,\ell}(S) \geq  3 \lambda - \frac{k+1}{2}  > 2\lambda + 2,
    \]
where the last inequality follows from the fact that $\lambda=6$ for $k \in \{5,6\}$.
This contradicts Lemma \ref{lem:bd-UB}. 
Thus, (iii) holds.
\end{proof}

\subsection{Other exceptional subgraphs}

\begin{lemma}\label{l42-2}
    If $5\leq k\leq 6$, then $G$ has no $(k-2)K_2$.
\end{lemma}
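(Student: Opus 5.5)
The plan is to pin down the local structure by potential counting, and then get the contradiction by a reduction gadget.

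\textbf{Step 1: potential of $S$.} Suppose $x,y$ are joined by $k-2$ parallel edges and put $S=\{x,y\}$. For $k\in\{5,6\}$ we have $\lambda=6$. Since $\rho_{G,\ell}(v)\le (k-1)\lambda+1$, we get
\[
\rho_{G,\ell}(S)\le 2((k-1)\lambda+1)+1-(2\lambda+1)(k-2)=2\lambda+5-k\le 12 .
\]

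\textbf{Step 2: $S\neq V(G)$.} If $S=V(G)$, then $d(x)=d(y)=k-2$. By Observation~\ref{obs:list-degree}, $\ell(x),\ell(y)\le k-2$, so
\[
\rho_{G,\ell}(G)\le 2((k-2)\lambda-1)+1-(2\lambda+1)(k-2)=-k+1<-2,
\]
which contradicts~\eqref{minG}(c).

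\textbf{Step 3: bounding $j=\|S,\overline S\|$.} Since $S\ne V(G)$, Lemma~\ref{lem:j(k-2)} gives $\rho(S)\ge j(\lambda-2)+1=4j+1$. Hence $1\le j\le 2$.

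\textbf{Step 4: both list sizes are $k-1$.} If $\ell(x)\le k-2$ (or $\ell(y)\le k-2$), the potential drops by at least $\lambda+2$, giving $\rho(S)\le \lambda+3-k\le 4$. This contradicts Observation~\ref{obs:geqk-1}, which requires $\rho(S)\ge\lambda-1=5$. So $\ell(x)=\ell(y)=k-1$. Observation~\ref{obs:list-degree} then gives $d(x),d(y)\ge k-1$, so $j=2$, and each of $x$ and $y$ has exactly one neighbor outside $S$. Call these neighbors $u$ (for $x$) and $v$ (for $y$).

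\textbf{Step 5: the case $u=v$.} Then $S\cup\{u\}$ has $\rho\le 2\lambda+5-k+(k-1)\lambda+1-4\lambda$, which is small. Its boundary is $d(u)-2$ edges, and Lemma~\ref{lem:j(k-2)} together with Observation~\ref{obs:6} should rule this out. The degenerate subcase $S\cup\{u\}=V(G)$ would be checked directly against~\eqref{minG}(c).

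\textbf{Step 6: structure of $H$ on $S$.} Now assume $u\ne v$. For every coloring $\phi$ of $G-S$, the graph $G[S]=(k-2)K_2$ receives a degree-cover $\LL_\phi$ with $\ell_\phi(x),\ell_\phi(y)\ge k-2$. By Lemma~\ref{lem:partial} it is not colorable. Theorem~\ref{KSSt}(iv), with $t=2$, then forces $\ell_\phi(x)=\ell_\phi(y)=k-2$ and a rigid structure on $H_S$: components that are blowups of $K_2$ with uniform capacities. In particular, $(u,\phi(u))$ hits some node of $H(x)$ and $(v,\phi(v))$ hits some node of $H(y)$. Moreover, the reduced capacity vectors must "match" across the parallel class. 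Since $\ell(x)=k-1\ge 4$ and the cover is $2$-bounded, $s(x),s(y)\ge 2$.

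\textbf{Step 7: the gadget (main obstacle).} The key step is to show that non-extendability depends only on the pair $(\phi(u),\phi(v))$, and does so in a matching-like way. Concretely, I would aim to show that for each node $(u,c)$ there is at most one node $(v,c')$ such that the pair $(c,c')$ blocks the extension to $S$. Then I form $G'=G-S+uv$ and add to $H$ the edges $(u,c)(v,c')$ for exactly these blocking pairs, keeping $\ell$ unchanged on $G-S$. This $(H',\LL)$ is a $2$-bounded variable cover of $G'$. An $(H',\LL)$-coloring of $G'$ would extend to $G$, so $G'$ is not colorable. Since $|G'|<|G|$, the triple $(G',H',\LL)$ satisfies the theorem, so there is $W\subseteq V(G')$ with $\rho_{G',\ell}(W)\le -2$. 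Here I use that $G'$ has no $K_k$ by an argument like Lemma~\ref{lem:no-Kk}, and that the case $k=5$ exceptions are excluded similarly. The set $W$ must contain the edge $uv$, otherwise it contradicts Observation~\ref{obs:geq-1} in $G$. Then
\[
\rho_{G,\ell}(W\cup S)\le -2+2\lambda+\rho(S)-4\lambda=\rho(S)-2\lambda-2\le 3-k<-1,
\]
which contradicts Observation~\ref{obs:geq-1}.

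Verifying that the blocking relation is indeed a matching is where I expect the real work. I would do it by exploiting $s(x)\ge 2$ and the uniform-capacity components given by Theorem~\ref{KSSt}(ii),(iv). If this relation fails to be a matching, the fallback is to show directly that some coloring of $G-S$ leaves $\ell_\phi(x)=k-1$, and hence extends by Lemma~\ref{lem:acyc-cor}.
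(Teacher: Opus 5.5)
Your skeleton (replace $S$ by an edge $uv$ carrying a suitable matching in the cover, then count potentials) is the same as the paper's, but as written the argument does not close. The missing ingredient is the claim that $xu$ and $yv$ are cut edges. Four places fail or are unjustified.

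\textbf{Step 5.} For $u=v$ and $\ell(u)=k-1$ you get $\rho(S\cup\{u\})=(k-3)\lambda+6-k\in\{13,18\}$. Lemma~\ref{lem:j(k-2)} then only gives $d(u)-2\le 3$ (resp.\ $\le 4$), which is compatible with $d(u)\ge k-1$ and with Observation~\ref{obs:6}. If $S\cup\{u\}=V(G)$, then $\ell(u)=2$ and $\rho(G)=4-k=-1$ for $k=5$, so \eqref{minG}(c) gives nothing.

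\textbf{The final count.} You charge $2\lambda$ for the new pair, which is correct only if $uv\notin E(G)$. If $uv\in E(G)$, the extra parallel edge costs $2\lambda+1$ and you only get $\rho(W\cup S)\le 4-k$. For $k=5$ this is $-1$, which gives no contradiction when $W\cup S=V(G)$.

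\textbf{Exceptional minimal subgraphs of $G-S+uv$.} These are not excluded by ``an argument like Lemma~\ref{lem:no-Kk}''. For $k\in\{5,6\}$, Lemma~\ref{lem:KK-4} only restricts a $K_k^-$ with missing edge $uv$; it does not forbid it, so a separate count is needed. For $k=5$, a $4K_2$ through the new edge is a $3K_2$ on $\{u,v\}$ in $G$, which is an instance of the very statement you are proving.

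\textbf{Step 7.} In the variable setting an $(H',\LL)$-coloring may use both ends of an added $H'$-edge, so ``it would extend to $G$'' needs an argument. It is fixable: insert $x,y$ just before the later of $u,v$, spend the unit of slack there, and color $S$ by Lemma~\ref{lem:acyc-cor}. The matching property you flag as the real work is left unproved, although it is actually the easy part.

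The paper's proof supplies what is missing. From Theorem~\ref{KSSt} it shows that a single node of $H(x)$ loses capacity under \emph{every} coloring of $G-S$. Hence every coloring of $G-S$ uses one fixed node $(u,c_u)$ at $u$ and one fixed node $(v,c_v)$ at $v$; this is stronger than your matching claim, and your gadget collapses to a single edge. The paper then lowers $\ell(u,c_u)$ by one. $G-S$ stays uncolorable, since a coloring would extend to $G$ by using the slack at $u$. Minimality then yields $U\ni u$ with $\rho_{G,\ell}(U)\le\lambda<2(\lambda-2)+1$, so $\|U,\overline U\|=1$ and $xu$ is a cut edge; likewise $yv$. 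This single claim does four things:
\begin{itemize}
\item it rules out $u=v$;
\item it rules out $uv\in E(G)$;
\item it makes the added edge a cut edge of $G-S+uv$, so it lies in no exceptional graph (none has a cut edge);
\item it gives non-colorability of $G-S+uv$ directly from Lemma~\ref{lem:cut-edge}, because $(u,c_u)$ and $(v,c_v)$ are tight in every coloring of their respective sides.
\end{itemize}
After that, $\rho(W\cup S)\le 3-k\le -2$ exactly as in your final display.
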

\begin{proof} Let $5\leq k\leq 6$.
Suppose that $\{u,v\} \subseteq V(G)$ induces a $tK_2$ subgraph with $t \geq k-2$.
If $t \geq k$, then $\rho_{G,\ell}(\{u,v\}) \leq -2$, a contradiction. If $t=4$ and $k=5$,
this contradicts~\eqref{minG}. If $t=5$ and  $k=6$,  then $\rho_{G,\ell}(\{u,v\}) \leq -2$, a contradiction. 
Therefore, $t=k-2$.

{

If $\ell(u)\leq k-2$,
then $\rho_{G,\ell}(\{u,v\})\leq \lambda-k+3$. 
Then, by Lemma \ref{lem:j(k-2)},
$V(G) = \{u,v\}$.
So, by Observation \ref{obs:list-degree},
$\ell(v) \leq k-2$, so that $\rho_{G,\ell}(\{u,v\}) \leq 2((k-2)\lambda-1)-2(k-2)\lambda - (k-3)\leq-4$, a contradiction.
}

Thus $\ell(u)=k-1$, and by symmetry, $\ell(v)=k-1$.
By Observation \ref{obs:list-degree}, $d(u) \geq k-1$ and $d(v) \geq k-1$.
Also, as $\rho_{G,\ell}(\{u,v\})=2\lambda+5-k < 3(\lambda-2)+1$, it holds that
$d(u)=d(v)=k-1$.
In particular, writing $S = \{u,v\}$, we have $\|S, \overline S\| = 2$.

We write $G' = G-u-v$.
By the minimality of $G$,  $G'$ has an $(H,\LL)$-coloring $(\phi,\sigma)$.
By Lemma \ref{lem:partial}, $G[\{u,v\}]$ has no $(H,\LL_{\phi})$-coloring.
 Therefore, by part (iii) of Theorem \ref{KSSt},
one of the following holds:
\begin{equation}\label{418}
\parbox{15cm}{\begin{enumerate}[(a)]
    \item The nodes of $\LL_{\phi}(u) \cup \LL_{\phi}(v)$ 
all have capacity at most $1$, and the nodes of capacity $1$ induce a $K_{k-2,k-2}$ in $H$, or
    \item  $k=6$,
    all nodes of $\LL_{\phi}(u) \cup \LL_{\phi}(v)$ have capacity $0$ or $2$, and the nodes of capacity $2$ induce a $2K_{2,2}$ in $H$.
\end{enumerate}}
\end{equation}

In both cases, the nodes in $\LL_{\phi}(u) \cup \LL_{\phi}(v)$ are not adjacent to the other nodes in $\LL(u) \cup \LL(v)$.
Thus, 
\begin{equation}\label{Lphi-2}
   \parbox{14.5cm}{there is a unique node $(u,c_u)\in H(u)$ such that
$\ell_{\psi}(u,c_u) < \ell(u,c_u)$  for every $(H,\LL)$-coloring $(\psi,\tau)$ of $G'$.} 
\end{equation}

Write $u'$ for the unique neighbor of $u $ in $G-u-v$, and write $(u',c_{u'})$ for the unique neighbor of $(u,c_u)$ in $H(u')$. We define $(v,c_v)$, $v'$, and $(v',c_{v'})$ similarly. Note that we may have $u'=v'$. 
By~\eqref{Lphi-2},
\begin{equation}\label{ucu-2}
  \mbox{$\psi(u')=c_{u'}$ and $\psi(v')=c_{v'}$ for every $(H,\LL)$-coloring $(\psi,\sigma)$ of $G'$.}  
\end{equation}

\begin{claim}
\label{claim:u'v''} { Both $uu'$ and $vv'$ are cut edges.}
\end{claim}
\begin{proof}
    Suppose that $uu'$ is not a cut edge.
    Let $\LL'$ be obtained from $\LL$ by reducing $\ell(u',c_{u'})$ by $1$.
    We claim that $G'$ has no $(H,\LL')$-coloring.
    Indeed, if $G'$ has an $(H,\LL')$-coloring $(\psi,\tau)$,
    then by \eqref{ucu-2},
    $\psi(u') = c_{u'}$ and $\psi(v') = c_{v'}$.
    Then, we can extend $(\psi,\tau)$ to an $(H,\LL)$-coloring of $G$ as follows.
    First,
    extend $(\psi,\tau)$ by choosing a node $(v,c'') \in H(v)  \setminus N_H(v',c_{v'})$ for which $\ell_{\psi}(v,c'') > 0$, and then placing
$(v,c'')$ and $(u,c_u)$ (in this order) before $(\psi,\tau)$.
We show below that this gives us an $(H,\LL)$-coloring of $G$.

Write $H''$ for the subgraph of $H[\{u,v\}]$ induced by those nodes $\alpha$ satisfying $\ell_{\psi}(\alpha) > 0$.
By \eqref{418}, $H''$ is $(k-2)$-regular.
Note also that $\ell(u,c_u) = \ell_{\psi}(u,c_u)+1$.
Therefore,
if $\ell(u,c_u) = 1$, 
then $(u,c_u)$ is not adjacent to $(v,c'')$,
and hence $d^-_{\psi,\tau}(\alpha) < \ell(\alpha)$
for each $\alpha \in \psi$.
If $\ell(u,c_u) = 2$, then
as $\ell_{\psi}(u,c_u) = 1$,
 \eqref{418} implies that no two nodes of $H''$ are joined in $H$ by parallel edges.
 As $(u,c_u) \in H''$,
 this implies that $d^-_{\psi,\tau} (u,c_u) \leq 1 < \ell(u,c_u)$,
 which implies again that 
 $d^-_{\psi,\tau}(\alpha) < \ell(\alpha)$
for each $\alpha \in \psi$.
Therefore, $(\psi,\tau)$ is an $(H,\LL)$-coloring of $G$, a contradiction. Therefore, $G'$ has no $(H,\LL')$-coloring.

    Hence, there exists a subset $U \subseteq V(G')$ for  which 
    $G'[U]$ has a spanning $(H,\LL' )$-minimal subgraph. 
    As $G$ is $(H,\LL)$-minimal and is a minimum counterexample, $U$ contains $u'$, and $\rho_{G',\ell'}(U) \leq -2$.
    Therefore, $\rho_{G',\ell}(U) = \rho_{G,\ell}(U) \leq \lambda < 2(\lambda-2)+1$.
    Thus, by Lemma \ref{lem:j(k-2)},
    $\|U,\overline U\| = 1$.
    Since $uu' \in E_G(U,\overline U)$, it is a cut edge, contradicting our assumption. Similarly, $vv'$ is  a cut edge.
\end{proof}

By Claim~\ref{claim:u'v''}, $G\setminus\{uu',vv'\}$ has three components with vertex sets  $U_1, U_2$ and $\{u,v\}$, where $u'\in U_1$ and $v'\in U_2$. 
Let $G'' = G' + u'v'$, and let $H' = H + (u',c_{u'}) (v',c_{v'})$.
{ Note that $u'v'$ is a cut-edge in $G''$.}
By Lemma \ref{lem:cut-edge} and~\eqref{ucu-2},
$(u',c_{u'})$ is tight in 
every $(H,\LL)$-coloring of $G[U_1]$ and 
$(v',c_{v'})$ is tight in 
every $(H,\LL)$-coloring of $G[U_2]$.
Thus,
$G''$ has no $(H',L)$-coloring.
Therefore, there is a subset $S \subseteq V(G'')$ containing $u'$ and $v'$ such that  $\rho_{G'',\ell}(S) \leq -2$
or $(G''[S],\ell)$ is exceptional.
Since no exceptional graph has a cut-edge,
$\rho_{G'',\ell}(S) \leq -2$. Therefore,
$\rho_{G,\ell}(S) \leq 2\lambda-2$.
Then, $\rho_{G,\ell}(S \cup \{u,v\}) \leq -2$, a contradiction.
\end{proof}

\begin{lemma}
\label{lem:U-2Ct-subgraph}
Suppose $k=5$.
    If $G[U]$ has a spanning $2C_t^-$ subgraph, then (i) $\ell(u) = 4$ for each $u \in U$,  (ii) $G[U]$ is isomorphic to $2C_t^-$,
(iii) $\|U, \overline U\| = 3$, and (iv) $G-U$ is connected.
\end{lemma}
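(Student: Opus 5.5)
We mirror the proof of Lemma~\ref{lem:KK-4}, with the double cycle playing the role of $K_k$; here $k=5$ and $\lambda=6$. Let $2C_t^-$ denote $2C_t$ with one of the $2t$ edges removed. Its $t$ vertices then carry $2t-1$ edges, with two endpoints $x,x'$ of degree $3$ and all other vertices of degree $4$.

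\textbf{Potential bound.} We first compute an upper bound on $\rho(U)$. Each vertex has potential at most $4\lambda+1$. Each doubled pair contributes $1-2(2\lambda+1)=-4\lambda-1$, and the single edge contributes $-2\lambda$. Hence, for a spanning $2C_t^-$,
\[\rho(U)\le t(4\lambda+1)-(t-1)(4\lambda+1)-2\lambda=2\lambda+1=13.\]
Any extra edge inside $U$ decreases this by at least $2\lambda$, giving $\rho(U)\le 1<\lambda-1$. Any vertex with $\ell(u)\le 3$ decreases it by at least $\lambda+2$, giving $\rho(U)\le \lambda-1$.

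\textbf{Parts (i) and (ii).} The case of an extra edge contradicts Observation~\ref{obs:geqk-1} when $U\neq V(G)$. When $U=V(G)$, an extra edge makes $G\supseteq 2C_t$ (if it restores the missing parallel edge), contradicting \eqref{minG}(e). Otherwise $\rho\le 1-2\lambda<-2$, a contradiction; this is where the parallel-edge refinement of Lemma~\ref{lem:j(k-2)} is needed. This gives (ii).

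For (i), suppose some $\ell(u)\le 3$, so $\rho(U)\le\lambda-1$. Lemma~\ref{lem:j(k-2)} then forces $\|U,\overline U\|\le 1$ or $U=V(G)$. If $U=V(G)$, Observation~\ref{obs:list-degree} forces every vertex other than $x,x'$ to be low, and a count as in Lemma~\ref{lem:no-Kk} gives a potential below $-2$. If instead there is a single boundary edge, we color $G-U$ and reduce to a degree cover on $2C_t^-$. Since $2C_t^-$ is not a GDP-tree of the forbidden form (its blocks are not multiples of a cycle or clique once $t\ge3$), Theorem~\ref{KSSt} extends the coloring, a contradiction. With (i) in hand, $\rho(U)=2\lambda+1=13$.

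\textbf{Part (iii).} Since $13<3(\lambda-2)+1=13$ fails only at equality, Lemma~\ref{lem:j(k-2)} gives $\|U,\overline U\|\le 3$. On the other hand, $x$ and $x'$ each have $d\ge\ell=4$ by Observation~\ref{obs:list-degree}, so each sends at least one edge out of $U$.

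Suppose $\|U,\overline U\|=2$, so only $x$ and $x'$ have outside neighbors, one edge each. Color $G-U$ by minimality, obtaining $\phi$. Then $(U,\LL_\phi)$ is a degree cover of $2C_t^-$, which is $2$-connected for $t\ge3$ and is not a multiple of a cycle or clique. For $t=2$ it is a $3K_2$, and then Lemma~\ref{l42-2} applies. So Theorem~\ref{KSSt} gives a coloring extending $\phi$, contradicting Lemma~\ref{lem:partial}. Hence $\|U,\overline U\|=3$.

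To also exclude $\|U,\overline U\|=3$ with one node over-capacitated, we would need an analogue of Lemma~\ref{K5-} for $2C_t^-$: if $\sum(d-\ell_\phi)\le1$, then $2C_t^-$ is colorable. The route is to color the deficient vertex first by choosing, via $2$-boundedness ($s\ge2$), a node that breaks the Theorem~\ref{KSSt} structure. This is the main obstacle, but it is needed only if the count ever yields $\le 2$ after adjustment. With three boundary edges, the deficiency sum is $3-2=1$, so this analogue would rule out (iii) entirely. That cannot be right, so the correct accounting must instead be that the lemma only permits deficiency $0$. Hence I expect (iii) to follow from the bound $\le3$ together with the exclusion of $\le 2$.

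\textbf{Part (iv).} As in Lemma~\ref{lem:KK-4}, suppose $G-U$ is disconnected and take a component $S$ with $\|S,U\|=1$. A cut edge gives $\rho(S)\le\lambda$ by Lemma~\ref{lem:k-1k}, so
\[\rho(U\cup S)\le 13+\lambda-2\lambda=7<2(\lambda-2)+1,\]
forcing $\|U\cup S,\overline{U\cup S}\|\le1$. But $U\cup S$ still has two boundary edges, a contradiction.
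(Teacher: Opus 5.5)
Your skeleton matches the paper's. It uses $\rho(U)\le 2\lambda+1$, Lemma~\ref{lem:j(k-2)} for the boundary, Theorem~\ref{KSSt} to force a deficient vertex, and Lemma~\ref{lem:k-1k} for (iv). Your arguments for (iii), once (i) and (ii) are known, and for (iv) are correct, and disposing of $t=2$ via Lemma~\ref{l42-2} is a good addition. However, two steps in (i) and (ii) fail as written.

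\textbf{Part (ii), case $U=V(G)$.} Suppose $G[U]$ has exactly $2t$ edges and the extra edge is not a second copy of $xx'$. Then you only get $\rho(U)\le 2\lambda+1-2\lambda=1$, not $1-2\lambda$; the latter needs two extra edges. So nothing is contradicted. The parallel-edge refinement of Lemma~\ref{lem:j(k-2)} is also irrelevant here, since $U$ has no boundary. The missing observation is that one of $x,x'$ still has degree $3$ in $G=G[U]$. Observation~\ref{obs:list-degree} then gives it $\ell\le 3$, which costs a further $\lambda+2$ and yields $\rho(G)\le -1-\lambda<-2$. The paper phrases this as: any $\ell(u)\le 3$ gives $\rho\le -1-\lambda$, so all capacities are $4$, and then a degree-$3$ vertex contradicts Observation~\ref{obs:list-degree}. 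Similarly, in the $U=V(G)$ case of (i), what Observation~\ref{obs:list-degree} forces is $\ell(x),\ell(x')\le 3$, giving $\rho\le -3$. It does not force the other vertices to be low.

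\textbf{Part (i), one boundary edge.} Coloring $G-U$ does not automatically give a degree cover of $G[U]$. If the vertex $u$ with $\ell(u)\le 3$ is an inner vertex, then $\ell_\phi(u)\le 3<4=d_{G[U]}(u)$, and Theorem~\ref{KSSt} says nothing. You must first force the configuration.
\begin{enumerate}[(1)]
\item The end of the missing edge with no outside neighbor, say $x'$, has $d_G(x')=3$, hence $\ell(x')\le 3$.
\item If $u\neq x'$, then $\rho(U)\le 2\lambda+1-2(\lambda+2)<\lambda-1$, contradicting Observation~\ref{obs:geqk-1}.
\item So $u=x'$, and $\ell(x')=3$, since $\ell(x')\le 2$ would give $\rho(U)\le -1$. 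All other capacities are $4$.
\end{enumerate}
Only now is $(H,\LL_\phi)$ a degree cover.

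A cleaner fix is to reuse the argument you already give in (iii), which is the paper's route. Theorem~\ref{KSSt}, applied after coloring $G-U$, forces some $z\in U$ with $d_G(z)>\ell(z)$. If $j$ vertices of $U$ have $\ell\le 3$, this gives $\|U,\overline U\|\ge 3-j$, while $\rho(U)\le 2\lambda+1-j(\lambda+2)$. So $j\ge 2$ is impossible, and $j=1$ contradicts Lemma~\ref{lem:j(k-2)} directly.

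\textbf{Part (iii), presentation.} The relevant threshold is $\rho(U)=13<4(\lambda-2)+1=17$, not $3(\lambda-2)+1$. The paragraph speculating about a $2C_t^-$ analogue of Lemma~\ref{K5-} is not a proof step and should be removed. Part (iii) is exactly ``$\le 3$'' from Lemma~\ref{lem:j(k-2)} plus ``$\neq 2$'' from Theorem~\ref{KSSt}, as you eventually conclude.
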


\begin{proof}
    If $G[U]$ has at least $2t+1$ edges, then $\rho(U) \leq 1-2\lambda < -2$, 
    a contradiction. If $G[U]$ has exactly $2t$ edges, then $\rho(U) \leq 1$, 
    so that $V(G) = U$. If $\ell(u)\leq 3$ for some $u \in U$, then $\rho(U) \leq -1-\lambda < -2$, a contradiction; therefore, 
     (i) holds. Since $G$ contains $2C_t^-$, has $2t$ edges and is not $2C_t$, $G$ contains a degree $3$ vertex, contradicting Observation \ref{obs:list-degree}.

    Therefore, $G[U]$ has exactly $2t-1$ edges, so (ii) holds.
    We write $x$ and $y$ for the two vertices of degree $3$ in $G[U]$.
    If  exactly $j$ vertices $u\in U$ satisfy
    $\ell(u) 
    \leq 3$, then 
    \begin{equation}\label{xyz}
   \mbox{     $\rho(U) \leq 2\lambda + 1 - j(\lambda+2)$, so $j\leq 1$. }
    \end{equation}
  
  Also, since $2C_t^-$ is a degree-choosable reducible subgraph,
  there is $z\in U$ with $d_G(z)>\ell(z)$ (possibly $z\in \{x,y\}$).
Then by Observation~\ref{obs:list-degree}, 
 \begin{equation}\label{xyz'}
  |E(\{x,y,z\},\overline{U})|\geq 3-j.   
    \end{equation}
 If $j=1$, then~\eqref{xyz'} together with~\eqref{xyz} contradicts 
 Lemma~\ref{lem:j(k-2)}. Hence $j=0$ and (i) holds.
 
Now by~\eqref{xyz}, $\rho_{G,\ell}(U) = 2\lambda+1 < 4(\lambda-2)+1$.
So,  by Lemma~\ref{lem:j(k-2)},
    $\|U, \overline U\| \leq 3$ and  by~\eqref{xyz'},  $\|U, \overline U\| \geq 3$. Thus (iii) holds.

Finally, suppose that $G-U$ is disconnected.
Let $S$ be the vertex set of a component of $G-U$ that is joined to $G[U]$ by $\lfloor \frac 32 \rfloor = 1$ edge.
By Lemma \ref{lem:k-1k}, $\rho_{G,\ell}(S) \leq \lambda$. Therefore, $\rho_{G,\ell}(S \cup U) \leq 2\lambda + 1 + \lambda - 2\lambda = \lambda + 1 < 2(\lambda-2)+1$.
Therefore,
by Lemma \ref{lem:j(k-2)},
$\|S\cup U, \overline{S \cup U}\| \leq 1$,
which contradicts the fact that $G[S]$ is joined to $U$ by a single edge and $\|U, \overline U\| = 3$. Thus, (iv) holds.
\end{proof}

\subsection{Boundaries of size $2$}

\begin{lemma}
\label{lem:reduce-one}
    Let $S \subseteq V(G)$ be a connected set satisfying $\|S, \overline S\| = 2$.
    For each $v \in S$ with a neighbor in $\overline S$,
    and for each node $(v,c) \in H(v)$,
   for some $(H,\LL)$-coloring $(\phi,\sigma)$ of $G[S]$ either $\phi(v) \neq c$ or
  $(v,c)$ is slack in $(\phi,\sigma)$.
\end{lemma}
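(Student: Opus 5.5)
We argue by contradiction, along the lines of the proofs of Claim~\ref{claim:u'v''} and Lemma~\ref{lem:j(k-2)}. Suppose that for some $v\in S$ with a neighbor in $\overline S$ and some node $(v,c)\in H(v)$, every $(H,\LL)$-coloring $(\phi,\sigma)$ of $G[S]$ has $\phi(v)=c$ with $(v,c)$ tight.

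The plan is to build a smaller graph in which this forced tightness is used to "save" an edge. We form a new instance $(G',H',\LL')$ from $G[S]$ by lowering $\ell(v,c)$ by one, so that $\ell'(v)=\ell(v)-1$ and $\LL'$ is still $2$-bounded and $(k-1)^-$. We first claim that $G[S]$ has no $(H,\LL')$-coloring. If $\ell(v,c)=1$, any $(H,\LL')$-coloring avoids $c$ at $v$ and is therefore an $(H,\LL)$-coloring with $\phi(v)\ne c$, which contradicts the assumption. If $\ell(v,c)=2$, an $(H,\LL')$-coloring using $(v,c)$ has $d^-(v,c)=0$, so as an $(H,\LL)$-coloring the node $(v,c)$ is slack, again a contradiction.

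Since $|S|<|G|$, minimality~\eqref{minG}(b) applies. We obtain $U\subseteq S$ such that $G[U]$ has a spanning $(H,\LL')$-minimal subgraph satisfying Theorem~\ref{thm:v3}. Also $v\in U$, since otherwise $\LL'=\LL$ on $U$ and this contradicts the $(H,\LL)$-minimality of $G$. Hence either $\rho_{G,\ell'}(U)\le -2$, or $(G[U],\ell')$ is exceptional. In the first case, lowering $\ell(v)$ by one changes $\rho(v)$ by at most $\lambda+2$, so $\rho_{G,\ell}(U)\le \lambda$. By Lemma~\ref{lem:j(k-2)} with $j=2$ we have $\rho\ge 2\lambda-3>\lambda$, so $\|U,\overline U\|\le 1$. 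But $G$ is connected, and $U\ne V(G)$ because $\rho_{G,\ell}(U)\ge \lambda-1$ is impossible when $U=V(G)$ only if $\rho\le -2$; more precisely, Observation~\ref{obs:geqk-1} forces $\|U,\overline U\|=1$. Then $U$ is separated by a cut-edge. We then combine this with $\|S,\overline S\|=2$. The set $U$ contains $v$ and sits inside $S$, and $S$ is connected. The boundary of $U$ consists of the edges from $U$ to $S\setminus U$ together with the edges from $U$ to $\overline S$ at $v$ (and at the other boundary vertex, if it lies in $U$). We compare $\rho(S)$ with $\rho(U)+\rho(S\setminus U)-2\lambda\|U,S\setminus U\|$, using Lemma~\ref{lem:j(k-2)} on $S\setminus U$ to get an upper bound $\rho_{G,\ell}(S)\le \lambda+\rho(S\setminus U)-2\lambda q$. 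We then check this against the lower bound $2(\lambda-2)+1$ given by Lemma~\ref{lem:j(k-2)} for $S$, and against Lemma~\ref{lem:bd-UB}, to reach a contradiction. The cleanest route is probably the case $U=S$: then $\rho_{G,\ell}(S)\le\lambda<2\lambda-3$, which contradicts Lemma~\ref{lem:j(k-2)} directly. If $U\subsetneq S$, then $\|U,\overline U\|\ge 2$ unless the only boundary edge of $U$ is the $S$-exit edge at $v$ and $S\setminus U$ hangs off by one edge; in that case we iterate, or argue as in Lemma~\ref{lem:lambda-1}.

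The exceptional case is where the main obstacle lies. Here $G[U]$ is $K_k$ with at most one vertex of list size $k-2$, or (when $k=5$) $4K_2$ or a double cycle with all $\ell'=4$. We would handle it by noting that $\ell'(v)=\ell(v)-1\le k-2$, so $v$ is the deficient vertex. Then $\ell=k-1$ everywhere on $U$, and $G[U]\cong K_k$ is excluded by~\eqref{minG}(d). The $k=5$ exceptions would force $\ell(v)=5>k-1$, which is impossible. Thus only the potential case remains, and the delicate part is the bookkeeping when $U\subsetneq S$.
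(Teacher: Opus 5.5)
Your setup matches the paper's. You lower $\ell(v,c)$ by one and note that $G[S]$ then has no $(H,\LL')$-coloring; your case check for $\ell(v,c)\in\{1,2\}$ is fine. You take the resulting set $U\subseteq S$ with $v\in U$ and dispose of the exceptional outcomes correctly. You then get $\rho_{G,\ell}(U)\le\lambda<2(\lambda-2)+1$, hence $\|U,\overline U\|\le 1$ by Lemma~\ref{lem:j(k-2)}.

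The gap is in the last step: you never finish. You leave the case $U\subsetneq S$ as ``delicate bookkeeping'', to be handled by comparing $\rho(S)$ with $\rho(S\setminus U)$, by iterating, or by arguing as in Lemma~\ref{lem:lambda-1}. As written this is not an argument, and none of it is needed.

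The missing observation is that $v\in U$ has a neighbor $w\in\overline S\subseteq\overline U$, so the edge $vw$ already lies in $E_G(U,\overline U)$. Since $\|U,\overline U\|\le 1$, it is the only boundary edge of $U$, so $\|U,S\setminus U\|=0$. As $G[S]$ is connected and $U\neq\emptyset$, this forces $U=S$. Then $\|S,\overline S\|=\|U,\overline U\|\le 1$, contradicting $\|S,\overline S\|=2$.

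In particular, the scenario you single out is self-contradictory: the exit edge at $v$ cannot be the only boundary edge of $U$ while $S\setminus U$ hangs off $U$ by an edge. So the case $U\subsetneq S$ is vacuous, not delicate. Also, $U\neq V(G)$ needs no potential argument, since $U\subseteq S\subsetneq V(G)$. With this one line added, your proof coincides with the paper's.
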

\begin{proof}
Suppose for the sake of contradiction that
$(v,c)$ is tight
for every $(H,\LL)$-coloring
$(\phi,\sigma)$ of $G[S]$.
In this case, if $\LL'$ is obtained from $\LL$ by reducing $\ell(v,c)$ by $1$, then 
$G[S]$ has no $(H,\LL')$-coloring.
So, since $G$ is $(H,\LL)$-minimal, by~\eqref{minG}
there exists a subset $U \subseteq S$ satisfying $\rho_{G,\ell'}(U) \leq -2$ and hence $\rho_{G,\ell}(U) \leq \lambda < 2(\lambda-2)+1$.
Thus, by Lemma \ref{lem:j(k-2)},
$\|U,\overline U\| = 1$. 
Since $U$ has a neighbor in $\overline S$,
$\|U, S\setminus U\| = 0$.
As $G[S]$ is connected,  
$U=S$.
Since $\|S, \overline S\| = 2$, we have a contradiction.
\end{proof}

\begin{lemma}
\label{lem:2-bd-exception}
    Suppose that $S \subseteq V(G)$ satisfies $\|S, \overline S\| = 2$ and that there are two vertices $u,v \in S$ with neighbors in $\overline S$. Then, 
    \begin{enumerate}[(i)]
        \item $G[S]$ has no $2C_t^-$-subgraph containing $u$ and $v$;
        \item $G[S]$ has no $K_k^-$-subgraph containing $u$ and $v$; 
        \item If $(u,a)$ is slack in some $(H,\LL)$-coloring $(\psi,\tau)$ of $G[S]$, then $u$ precedes $v$ in $\tau$.
        \item For every node $\alpha \in H(u)$, there is at most one node $\beta \in H(v)$ for which an $(H,\LL)$-coloring of $G[S]$ contains both $\alpha$ and $\beta$.
        \item $\rho_{G,\ell}(S) \leq 2 \lambda-1$, and $\rho_{G,\ell}(S) \leq \lambda-2$ if $uv \not \in E(G)$.
    \end{enumerate}
\end{lemma}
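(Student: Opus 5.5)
The plan is to prove the five parts in the order (iii), (iv), (v), (i), (ii), since the later parts use the earlier ones. Throughout, $u'$ and $v'$ denote the neighbors of $u$ and $v$ in $\overline S$. Because $\|S,\overline S\|=2$, each of $u,v$ is incident to exactly one boundary edge. I will also use that $G[S]$ is connected; if it were not, a component would have boundary $1$, and Lemma~\ref{lem:k-1k} together with Lemma~\ref{lem:j(k-2)} gives a contradiction.

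\textbf{Part (iii).} Suppose $(u,a)$ is slack in $(\psi,\tau)$ and $v$ precedes $u$. Fix an $(H,\LL)$-coloring $(\phi,\sigma)$ of $G[\overline S]$ (it exists by minimality). I would build a coloring of $G$ by placing $\sigma$ first, then $\tau$, except that the boundary edge at $u$ is handled by moving $(u,a)$. The point is that a slack node has $d^-=0$ and capacity $2$, so one extra back-neighbor $\phi(u')$ is harmless. The only other boundary edge is at $v$, and it must be absorbed. For this I would reverse the roles: order $\tau$ restricted to the vertices up to $v$ first, then $\sigma$, then the rest. A vertex in $\overline S$ then sees at most the node $\psi(v)$ from $S$, or, symmetrically, one uses an ordering of $\overline S$ in which $\phi(v')$ can tolerate it. If this direct approach stalls, I would fall back on Lemma~\ref{lem:reduce-one} applied to $\overline S$ to choose $\phi$ with $\phi(v')$ either avoiding $\psi(v)$ or slack. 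This contradicts \eqref{minG}(a).

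\textbf{Part (iv).} Suppose $\alpha\in H(u)$ and $\beta_1\neq\beta_2\in H(v)$ all occur in colorings of $G[S]$. Since $H(v)$ has only a matching to $H(v')$, at most one $\beta_i$ is adjacent to $\phi(v')$. Using Lemma~\ref{lem:reduce-one} on $\overline S$, I would choose the coloring of $\overline S$ so that $\phi(u')$ is either nonadjacent to $\alpha$ or slack, and then pick the compatible $\beta_i$. This gives a coloring of $G$, a contradiction.

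\textbf{Part (v).} Form $G'=G[S]+uv$, or add one parallel edge if $uv\in E(G)$. Using (iii) and (iv), define $H'$ by adding edges between each $\alpha\in H(u)$ and its unique partner $\beta$ of (iv), so that $G'$ has no $(H',\LL)$-coloring. The added matching is valid because (iv) makes it a matching. Minimality then gives $U\subseteq S$ that contains $u,v$ with $\rho_{G',\ell}(U)\le -2$ or $G'[U]$ exceptional. If $G'[U]$ is not exceptional, then $\rho_{G,\ell}(U)\le -2+2\lambda$ in the non-adjacent case. Here I must recheck the constant: the claimed bound is $\lambda-2$, which suggests the added edge costs $2\lambda$ and an additional $\lambda$ is gained from Lemma~\ref{lem:j(k-2)} on $S\setminus U$. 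In the parallel case the added edge contributes $2\lambda+1$, giving $2\lambda-1$. Finally, Lemma~\ref{lem:j(k-2)} and Observation~\ref{obs:geqk-1} force $U=S$.

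\textbf{Parts (i) and (ii).} The exceptional outcomes in (v), namely $G'[U]=K_k$, $4K_2$, or a double cycle, are exactly what (i) and (ii) exclude. I would handle them directly. If $G[S]$ had a $K_k^-$ or $2C_t^-$ through $u,v$, then Lemmas~\ref{lem:KK-4} and~\ref{lem:U-2Ct-subgraph} give boundary $4$ or $3$ for that subgraph. Since $\|S,\overline S\|=2$, the rest of $S$ must attach along the remaining boundary edges. The potential count combined with Lemma~\ref{lem:j(k-2)} then yields a contradiction.

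\textbf{Main obstacle.} The hardest step is (iii)/(iv): choosing the ordering and the coloring of $\overline S$ so that both boundary edges are absorbed simultaneously.
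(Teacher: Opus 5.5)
The step that fails is your route to the bound $\lambda-2$ in (v). Your computation is the right one: when $uv\notin E(G)$ the added edge costs exactly $2\lambda$, so $\rho_{G,\ell}(U)\le 2\lambda-2$. Lemma~\ref{lem:j(k-2)}, using connectivity of $G[S]$, then forces $U=S$. There is no ``additional $\lambda$ gained on $S\setminus U$'', because once $U=S$ that set is empty. In fact no argument of this local kind can give $\lambda-2$: Lemma~\ref{lem:j(k-2)} with $j=2$ gives $\rho_{G,\ell}(S)\ge 2\lambda-3>\lambda-2$. Proving $\lambda-2$ would therefore amount to showing that the case $uv\notin E(G)$ never occurs, and nothing here does that.

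The $\lambda-2$ in the statement is a misprint for $2\lambda-2$. That is what the paper's proof establishes, and it is the form used later: Case~2 of Lemma~\ref{lem:boundary-2} needs only that equality in $\rho_{G,\ell}(S)\le 2\lambda-1$ forces $uv\in E(G)$. You should have stopped at $2\lambda-2$ and flagged the discrepancy rather than inventing a gain.

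Separately, ``using (iii) and (iv)'' hides the actual point of (v). In any $(H',\LL)$-coloring of $G'[S]$, (iv) forces the nodes chosen at $u$ and $v$ to be joined by the added matching. Whichever of the two comes later then has $d^-\le \ell-2$ in $G[S]$, so it is slack while coming second. This contradicts (iii), with $u,v$ swapped if necessary.

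Your (iii) and (iv) are otherwise the paper's arguments once the hedging is removed. The ``direct approach'' in (iii) does not work on its own; applying Lemma~\ref{lem:reduce-one} to $\overline S$ is what absorbs the second boundary edge.

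For (i) and (ii) you take a different route. The paper argues in one line. If $U\subseteq S$ contains $u,v$ and spans a $2C_t^-$ or $K_k^-$, then $\|U,\overline U\|\in\{3,4\}$ gives $U\subsetneq S$. Since both boundary edges of $S$ leave from $U$, $G-U$ separates $S\setminus U$ from $\overline S$, contradicting Lemma~\ref{lem:U-2Ct-subgraph}(iv) or Lemma~\ref{lem:KK-4}(iii).

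A potential count also works, but not with Lemma~\ref{lem:j(k-2)} alone, since that lemma only bounds potentials from below. You need an upper bound on the leftover piece $S\setminus U$:
\begin{itemize}
\item For $2C_t^-$, $S\setminus U$ hangs on one edge, so Lemma~\ref{lem:k-1k} gives $\rho(S)\le(2\lambda+1)+\lambda-2\lambda=\lambda+1$.
\item For $K_k^-$, it hangs on two edges, so Lemma~\ref{lem:bd-UB} gives $\rho(S)\le(k+2\lambda)+(2\lambda+2)-4\lambda=k+2$.
\end{itemize}
Both are below $2\lambda-3=9$, since $\lambda=6$ for $k\in\{5,6\}$. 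This just re-derives the connectivity parts of those two lemmas, so citing them is shorter.

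Finally, your argument that $G[S]$ is connected is invalid. Two components of $G[S]$, each with one boundary edge, have potentials in $\{\lambda-1,\lambda\}$, for a total of at least $2\lambda-2$, which is consistent with Lemma~\ref{lem:j(k-2)}. In the paper, connectivity of $G[S]$, and of $G[\overline S]$ (needed to apply Lemma~\ref{lem:reduce-one} to $\overline S$), is supplied by the context in which the lemma is applied.
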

\begin{proof}
    Proof of (i):     Suppose $U \subseteq \overline S$ contains $w$ and $x$ and induces a $2C_t^-$ subgraph of $G[\overline S]$.
     Then, by Lemma \ref{lem:U-2Ct-subgraph} (iii), $\|U, \overline U\| = 3$, so that $U \subsetneq \overline S$.
        Then, 
        no path in $G-U$ connects a vertex of 
        $G[\overline S \setminus U]$
        with a vertex of $G[S]$, 
        contradicting  Lemma \ref{lem:U-2Ct-subgraph} (iv).

     Proof of (ii): if $U \subseteq \overline S$
        contains $w$ and $x$ and induces a $K_k^-$,
        then by Lemma \ref{lem:KK-4}(i,ii),
        $\|U, \overline U\| = 4$,
        and $5\leq k\leq 6$. In particular, $U \subsetneq \overline S$.
        Then, 
        no path in $G-U$ connects a vertex of 
        $G[\overline S \setminus U]$
        with a vertex of $G[S]$, 
        contradicting  Lemma \ref{lem:KK-4}(iii).

        Proof of (iii): Suppose  that $(\phi',\sigma')$ is an $(H,\LL)$-coloring of $G[S]$ in which $(u,a)$ is slack and $u$ appears after $v$.
        Write $x$ for the neighbor of $v$ in $\overline S$ and $(x,c) \in H(x)$ for the neighbor of $(v,\psi(v))$.
    By Lemma \ref{lem:reduce-one}, there is an $(H,\LL)$-coloring
    $(\psi',\tau')$
    of $G[\overline S]$ in which $(x,c)$ is not tight.
    Then, if we take the union of $(\phi',\sigma')$ and $(\psi',\tau')$ with 
     $(\psi',\tau')$ placed between $v$ and $u$, then we have an $(H,\LL)$-coloring of $G$, a contradiction. 

     Proof of (iv): Let $\alpha = (u,a)$, and let $(\phi,\sigma)$ be an $(H,\LL)$-coloring of $G[S]$. Write $\phi(v) = b$.
     Suppose that there is an $(H,\LL)$-coloring $(\phi',\sigma')$ of $G[S]$ that includes both $(u,a)$ and some node $\beta' \in H(v) \setminus \{(v,b)\}$.

     Write $w \in \overline S$ for the neighbor of $u$, and write $(w,c) \in H(w)$ for the neighbor of $(u,a)$.
     Write $x \in \overline S$ for the neighbor of $v$ and write $(x,d) \in H(x)$ for the neighbor of $(v,b)$.
     Using Lemma \ref{lem:reduce-one}, let $(\psi,\tau)$ be an $(H,\LL)$-coloring of $G[\overline S]$ in which $(w,c)$ is not tight.
     If $\psi(x) = d$, then the union of $(\phi',\sigma')$ and $(\psi,\tau)$ in this order is an $(H,\LL)$-coloring of $G$, a contradiction. Otherwise, the union of $(\phi,\sigma)$ and $(\psi,\tau)$ is an $(H,\LL)$-coloring of $G$, again giving a contradiction. Thus, (iv) holds. 

     Proof of (v):
     Let $G' = G+uv$. Let $M$ be a matching joining $H(u)$ and $H(v)$ such that each $\alpha \in H(u)$ is joined to the unique $\beta \in H(v)$ specified in (iv), if $\beta$ exists.
     Then, let $H' = H\cup M$.

     Now, suppose that $G'[S]$ has an $(H,\LL)$-coloring $(\phi,\sigma)$.
     Then, $(\phi,\sigma)$ is also an $(H,\LL)$-coloring of $G[S]$, so by (iv), $(\phi,\sigma)$ contains two nodes $\alpha \in H(u)$ and $\beta \in H(v)$ that are joined by $M$.
     Without loss of generality, $v$ appears before $u$ in $\sigma$, so that $\beta$ is a left-neighbor of $\alpha$ in $(\phi,\sigma)$ applied to $G'$. Thus,
     \[
        d^-_{G,\phi,\sigma}(\alpha) < d^-_{G',\phi,\sigma}(\alpha) \leq \ell(\alpha) - 1,
     \]
     and so $\alpha$ is slack with respect to $(\phi,\sigma)$ on $G$.
     However, since $v$ precedes $u$ in $\sigma$, (iii) is contradicted. 
     Thus, $G$ has no $(H',\LL)$-coloring.

     By Lemma \ref{l42-2}
     and (i) and (ii), it follows that for some $U \subseteq S$ containing $u$ and $v$,
     $\rho_{G',\ell}(U) \leq -2$.
     Thus, $\rho_{G,\ell}(U) \leq 2\lambda-1 < 3(\lambda-2)+1$.
     As $\|U,\overline S\| = 2$, Lemma \ref{lem:j(k-2)} and the connectivity of $G[S]$ imply that $U = S$.
     Thus, $\rho_{G,\ell}(S) 
\leq 2\lambda-1$.
Finally, we note that $\rho_{G,\ell}(U) \leq 2\lambda-2$ whenever $uv \not \in E(G)$,
and then the same argument shows that $\rho_{G,\ell}(S) \leq 2\lambda-2$. Thus,
(v) holds.
\end{proof}

Now, we obtain the following structural lemma.

\begin{lemma}
\label{lem:boundary-2}
    If $S \subseteq V(G)$ 
    satisfies
    $\|S, \overline S\| = 2$, then:
    \begin{enumerate}
        \item \label{item:C2C3} The two edges of 
    $E_G(S,\overline S)$ 
    belong to a $C_2$ or $C_3$, or they are both cut-edges. 
    \item \label{item:2l-12l}
    If $G[S]$ and $G[\overline S]$ are connected, then
    $\rho_{G,\ell}(S), \rho_{G,\ell}(\overline S) \in \{2\lambda-1,2\lambda\}$.
    \item \label{item:single-bd} If $S$ has a single vertex on its boundary, then $\rho_{G,\ell}(S) = 2\lambda$.
    \end{enumerate}
\end{lemma}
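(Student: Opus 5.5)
The three items will be handled separately, all driven by Lemma~\ref{lem:j(k-2)}, Lemma~\ref{lem:bd-UB}, Lemma~\ref{lem:2-bd-exception} and Lemma~\ref{lem:k-1k}.

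\emph{Item (\ref{item:C2C3}).} Write the two boundary edges as $uw$ and $vx$ with $u,v\in S$ and $w,x\in\overline S$.
\begin{itemize}
\item If $u=v$ and $w=x$, the two edges form a $C_2$.
\item If some component of $G[S]$ or of $G[\overline S]$ sends exactly one edge across, apply Lemma~\ref{lem:k-1k} to that component. Since $G$ is connected, both edges are then cut edges.
\item Otherwise $G[S]$ and $G[\overline S]$ are connected, and in at least one of them the two boundary edges have distinct endpoints; say $u\ne v$. Lemma~\ref{lem:2-bd-exception}(v) then gives $\rho(S)\le 2\lambda-2$ if $uv\notin E(G)$.
\end{itemize}
In the last case, symmetrically, if $w\ne x$ and $wx\notin E(G)$, then $\rho(\overline S)\le 2\lambda-2$. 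Hence $\rho(G)=\rho(S)+\rho(\overline S)-4\lambda\le -4$, a contradiction. Therefore $w=x$ or $wx\in E(G)$. If also $uv\in E(G)$ with $w\ne x$, we get a $C_4$, not a $C_3$. I would try to refine (v) in that case, bounding $\rho(S)\le 2\lambda-1$ on one side and using the equality cases, to force a contradiction. The cases $u=v$ with $w\ne x$ give a $C_3$ exactly when $wx\in E(G)$.

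\emph{Item (\ref{item:2l-12l}).} For the lower bound, Lemma~\ref{lem:j(k-2)} with $j=2$ gives $\rho(S)\ge 2\lambda-3$. To push it up, use $\rho(S)+\rho(\overline S)-4\lambda\ge -1$, so $\rho(S)\ge 4\lambda-1-\rho(\overline S)$. It therefore suffices to bound $\rho(\overline S)\le 2\lambda$. When the two boundary vertices on that side are distinct, this follows from Lemma~\ref{lem:2-bd-exception}(v). When there is a single boundary vertex $y$, I would run the argument of Lemma~\ref{lem:2-bd-exception}(v) directly. Colour $G[S]$, observe that $G[\overline S]$ is not colourable with the reduced capacities (at most $2$ units lost at $y$), and obtain a $U\ni y$ with $\rho_{\ell_\phi}(U)\le -2$. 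This gives $\rho(U)\le 2\lambda+2$. The degree-to-potential accounting must be sharpened to $2\lambda$, using that a node losing $2$ capacity changes potential by at most $2\lambda$ plus a correction. Lemma~\ref{lem:j(k-2)} then forces $U=\overline S$.

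For the upper bound, apply the same bound to $S$. Then $\rho(S)\le 2\lambda$, and combining with $\rho(S)\ge 4\lambda-1-2\lambda=2\lambda-1$ gives $\{2\lambda-1,2\lambda\}$.

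\emph{Item (\ref{item:single-bd}).} Here the two boundary edges share the endpoint $v\in S$. Item 2 gives $\rho(S)\ge 2\lambda-1$, or item 1 reduces to the cut-edge situation. To exclude $2\lambda-1$, I would show that some node of $H(v)$ must lose capacity $2$ in every colouring extension. Concretely, using Lemma~\ref{lem:reduce-one} on $\overline S$, take a coloring of $\overline S$ whose neighbour nodes of $v$ both hit the same node $(v,c)$. Then $\rho_{\ell_\phi}(v)$ drops by exactly $2\lambda$ (or $2\lambda-1$, or $2\lambda+1$ depending on the threshold $k-1$). Combined with criticality, this yields a set of potential at most $2\lambda-2$ unless equality holds at $2\lambda$.

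\textbf{Main obstacle.} I expect the hardest part to be the exact parity of the potential jumps at thresholds $\ell=k-1$ versus $k-2$. These jumps decide between $2\lambda-1$ and $2\lambda$, and the $C_4$-type configuration in item 1 is the part I am least sure how to rule out.
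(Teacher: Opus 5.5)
Your overall framework matches the paper's: bound each side from above, using Lemma~\ref{lem:2-bd-exception}(v) or the single-boundary-vertex colouring argument, then invoke $\rho(G)\ge -1$. However, the proposal does not close, and the points where it stalls are in fact easy.

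First, the $C_4$-type configuration ($u\ne v$ and $w\ne x$) needs no refinement. Lemma~\ref{lem:2-bd-exception}(v) gives $\rho(S)\le 2\lambda-1$ and $\rho(\ol S)\le 2\lambda-1$ \emph{regardless} of whether $uv$ or $wx$ is an edge. Hence $\rho(G)\le -2$, so this configuration never occurs. Your inference ``therefore $w=x$ or $wx\in E(G)$'' does not follow from what you proved, since you only excluded the case where both $uv,wx\notin E(G)$.

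Second, your bound $\rho(U)\le 2\lambda+2$ is an accounting slip. The single boundary vertex loses at most two units of capacity, which lowers its potential by at most $(\lambda+2)+\lambda=2\lambda+2$ (worst case $k-1\to k-3\ge 2$). So $\rho(U)\le -2+2\lambda+2=2\lambda<3(\lambda-2)+1$, and Lemma~\ref{lem:j(k-2)} together with connectivity gives $U=\ol S$, i.e.\ $\rho(\ol S)\le 2\lambda$. With this bound, the case $u\ne v$, $w=x$ yields $uv\in E(G)$: otherwise $\rho(S)\le 2\lambda-2$ and $\rho(G)\le -2$. This is the triangle you asserted but did not prove.

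Item~(\ref{item:single-bd}) has a genuine gap. You propose choosing a colouring of $\ol S$ in which both neighbour nodes of $v$ hit the same node of $H(v)$. Lemma~\ref{lem:reduce-one} does not provide this: it only gives, for one prescribed node, a colouring in which that node is not tight. The mechanism is also unnecessary, because in the connected case the item is pure arithmetic:
\begin{itemize}
\item If $\ol S$ also has a single boundary vertex, the boundary is a double edge with pair-potential $1-2(2\lambda+1)=-4\lambda-1$, not $-4\lambda$. Hence $\rho(S)\ge 4\lambda-\rho(\ol S)\ge 2\lambda$. Your inequality $\rho(S)+\rho(\ol S)-4\lambda\ge -1$ loses exactly this unit.
\item If $\ol S$ has two boundary vertices, Lemma~\ref{lem:2-bd-exception}(v) gives $\rho(\ol S)\le 2\lambda-1$, hence $\rho(S)\ge 2\lambda$.
\end{itemize}
Together with $\rho(S)\le 2\lambda$, this gives equality.

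More seriously, you never treat the subcase where both boundary edges are cut edges, where item~(\ref{item:2l-12l}) is unavailable. There the configuration must be shown impossible, and Lemma~\ref{lem:k-1k} is too weak. With only $\rho(S_1),\rho(S_2)\le\lambda$ for the two pendant components of $G[\ol S]$, you get $\rho(G)\le 0$, which is no contradiction. You need $\rho(S_1)=\rho(S_2)=\lambda-1$. The paper obtains this as follows: join the cut-edge endpoints $v_1,v_2$ by an auxiliary edge between their forced tight nodes from Lemma~\ref{lem:cut-edge}. The resulting graph on $\ol S$ is not colourable, and minimality plus Lemma~\ref{lem:j(k-2)} give the equalities; this is the argument of Lemma~\ref{lem:lambda-1}. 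Then $\rho(G)\le 2\lambda+2(\lambda-1)-4\lambda=-2$, a contradiction.
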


 \begin{proof}
     Suppose that $S \subseteq V(G)$ satisfies $\|S,\overline S\|=2$.
     If $E_G(S,\overline S)$ consists of two cut-edges, then (\ref{item:C2C3}) holds.
     In this case,
     without loss of generality $G[\overline S]$ is
     not connected, so (\ref{item:2l-12l}) holds vacuously.
     
     Furthermore, we claim that $S$ has at least two vertices on its boundary. Indeed, if $S$ has only one vertex $u$ on its boundary, then letting $\phi$ be an $(H,\LL)$-coloring of $G[\overline S]$, Lemma \ref{lem:partial} implies that $G[S]$ has no $(H,\LL_{\phi})$-coloring. 
     Therefore, for some $U \subseteq S$ containing $u$, $\rho_{G,\ell_{\phi}}(U) \leq -2$, so that $\rho_{G,\ell}(U) \leq -2 + \lambda + (\lambda+2) = 2\lambda$.

     Now, there are disjoint subsets $S_1,S_2 \subseteq \overline S$
     such that $G[\overline S]$ has components $G[S_1]$ and $G[S_2]$.
     For each $i \in \{1,2\}$,
     there is a vertex $v_i \in S_i$ incident to a cut-edge.
     Given an $(H,\LL)$-coloring $(\psi,\tau)$ of $G[S]$, Lemma \ref{lem:partial} implies that $G[\overline S]$ has no $(H,\LL_{\psi})$-coloring, which means that for each $i \in \{1,2\}$,
     there is a unique node $(v_i,c_i) \in H(v_i)$ that is tight for every $(H,\LL)$-coloring of $G[S_i]$.
     Hence, if we define $G' = G[\overline S] + v_1v_2$ and define $H' = H[\overline S] + (v_1,c_1)(v_2,c_2)$,
     $G'$ has no $(H',\LL)$-coloring.
     Therefore, there is a set $W \subseteq \overline S$ containing $v_1$ and $v_2$ for which $\rho_{G',\ell}(W) \leq -2$ (note that $v_1v_2$ cannot belong to an exceptional subgraph of $G$, as no exceptional graph contains a cut-edge).
     Therefore, $\rho_{G,\ell}(W) \leq 2\lambda-2$.
     As $G[W]$ has two components, it follows without loss of generality that $\rho_{G,\ell}(W \cap S_1) \leq \lambda-1 < 2(\lambda-2)+1$.
     Therefore, by Lemma \ref{lem:j(k-2)}, $W \cap S_1) = S_1$ and  $\rho_{G,\ell}(S_1) = \lambda-1$.
     Hence, $\rho(W \cap S_2) \leq \lambda-1 < 2(\lambda-2)$, and hence again by Lemma \ref{lem:j(k-2)}, $W \cap S_2 = S_2$ and $\rho_{G,\ell}(S_2) = \lambda-1$.
     Then, 
     \[\rho_{G,\ell}(V(G)) = \rho_{G,\ell}(S) + \rho_{G,\ell}(S_1) + \rho_{G,\ell}(S_2) - 4\lambda \leq -2,\] 
     a contradiction.
     Therefore, $S$ has two vertices on its boundary, so (\ref{item:single-bd}) holds vacuously.

    For the rest of the proof, we assume that $E_G(S,\overline S)$ does not consist of two cut-edges, so that $G[S]$ and $G[\overline S]$ are connected.

     We consider three cases.

         {\bf Case 1:} There is a single vertex $v \in S$, and $v$ has a unique neighbor $w$ in $\overline S$.

         As
         $E_G(S,\overline S) = E_G(v,w)$,
         we see that
            $|E_G(v,w)|=2$, so (\ref{item:C2C3}) holds.
         By the minimality of $G$, we may choose an $(H,\LL)$-coloring $(\phi,\sigma)$ of $G[S]$.
        By Lemma \ref{lem:partial}, $G[\overline S]$ has no $(H,\LL_{\phi})$-coloring.
         Note also that $\ell_{\phi}(w) \geq \ell(w)-2$.
         Therefore, by the minimality of $G$, there is a set $U \subseteq \overline S$ containing $w$ and satisfying $\rho_{G,\ell_{\phi}}(U) \leq -2$.
         Thus, $\rho_{G, \ell}(U) \leq -2 + \lambda + (\lambda+2) = 2\lambda < 3(\lambda-2)+1$.
         Thus,
         it follows from Lemma \ref{lem:j(k-2)} that $U = \overline S$, so that $\rho_{G, \ell}(\overline S) \leq 2\lambda$.
         Therefore,
         \[
            -1 \leq \rho_{G, \ell}(G) = \rho_{G, \ell}(S) + \rho_{G, \ell}(\overline S) - (4\lambda+1) \leq \rho_{G, \ell}(S) - (2\lambda+1),
         \]
         implying that $\rho_{G, \ell}(S) \geq 2\lambda$.
         By applying the argument with $v$ and $w$ reversed,
         $\rho_{G, \ell}(S) \leq 2\lambda$, and $\rho_{G, \ell}(\overline S) \geq 2\lambda$. Therefore, $\rho_{G, \ell}(S) = \rho_{G, \ell}(\overline S) = 2\lambda$, so (\ref{item:2l-12l}) and (\ref{item:single-bd}) hold.

       {\bf Case 2: }
        There are two distinct vertices $u,v \in S$, each of which has a neighbor in $\overline S$.
        Note that (\ref{item:single-bd}) holds vacuously.
        We write $w \in \overline S$ for the neighbor of $u$, and we write $x \in \overline S$ for the neighbor of $v$.
        If $w=x$, then 
        letting $(\phi,\sigma)$ be an $(H,\LL)$-coloring
        of $G[\overline S]$, $G[\overline S]$ has no $(H,\LL_{\phi})$-coloring by Lemma \ref{lem:partial}.
        Thus, $\rho_{G,\LL_{\phi}}(U) \leq -2$ for some $U \subseteq \overline S$ containing $w$, so that $\rho_{G,\ell}(U) \leq 2\lambda < 3(\lambda-2)+1$.
        As $\|U,S\| =2$,
        Lemma \ref{lem:j(k-2)} implies that $U = \overline S$, and hence $\rho_{G,\ell}(\overline S) \leq 2\lambda$.

        Now,
        \begin{equation}
        \label{eqn:cherry}
-1 \leq \rho_{G,\ell}(V(G)) = \rho_{G,\ell}(S) + \rho_{G,\ell}(\overline S) - 4\lambda \leq \rho_{G,\ell}(S) - 2\lambda.
\end{equation}
        Thus, $\rho_{G,\ell}(S) 
        \geq 2\lambda-1$.
        Furthermore, by Lemma \ref{lem:2-bd-exception},
        $\rho_{G,\ell}(S) \leq 2\lambda-1$, with equality holding only if $uv \in E(G)$. Thus, $\rho_{G,\ell}(S)= 2\lambda-1$, so (\ref{item:2l-12l}) holds, and $uv \in E(G)$, so that $uw$ and $vw$ belong to a triangle, and thus (\ref{item:C2C3}) holds.
        Also, \eqref{eqn:cherry} must be tight, so $\rho_{G,\ell}(\overline S) = 2\lambda$.

        On the other hand, suppose that $w \neq x$.
        Then, applying Lemma \ref{lem:2-bd-exception} to both $S$ and $\overline S$, we see that $\rho_{G,\ell}(S) \leq 2\lambda-1$ and $\rho_{G,\ell}(\overline S) \leq 2\lambda-1$.
        Then,
        \[
        \rho_{G,\ell}(V(G)) = \rho_{G,\ell}(S) + \rho_{G,\ell}(\overline S) - 4\lambda \leq -2,
        \]
        a contradiction. 
        This completes the proof.

        \textbf{Case 3:} There is a single vertex $w \in S$ with two distinct neighbors $u,v \in \overline S$.
        Then, we apply the previous case with $S$ and $\overline S$ reversed, and we conclude that $u,v,w$ form a triangle, and that $\rho_{G,\ell}(S) = 2\lambda$. Therefore, (\ref{item:C2C3}), (\ref{item:2l-12l}), and (\ref{item:single-bd}) hold.
  \end{proof}

\begin{lemma}
\label{lem:no2}
    $G$ has no vertex $v$ satisfying $\ell (v) = 2$.
\end{lemma}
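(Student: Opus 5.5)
Suppose $\ell(v)=2$ for some $v\in V(G)$. Then $\rho_{G,\ell}(v)=2\lambda-1$ because $2\le k-2$. The plan is to use the gap lemmas to force $d(v)$ to be small, and then to reduce to a smaller multigraph. By Observation~\ref{obs:geqk-1} and Lemma~\ref{lem:j(k-2)} applied to $S=\{v\}$, every $j\le d(v)$ satisfies $j(\lambda-2)+1\le \rho(\{v\})=2\lambda-1$. This gives $d(v)\le 2$ once $\lambda\ge 6$, which holds for all $k\ge 5$ since $\lambda\ge 6$. If $v$ is incident to a pair of parallel edges, we get $d(v)(\lambda-2)+4\le 2\lambda-1$, which forces $d(v)\le 1$; that is impossible for a parallel pair. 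Since $\ell(v)\le d(v)$ by Observation~\ref{obs:list-degree}, we conclude that $d(v)=2$, $v$ is low, and $v$ has two distinct neighbours $u,w$, each joined to $v$ by a single edge. Note $|V(G)|\ge 3$, since otherwise $G$ would contain a double edge.

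Now put $S=\{v\}$, so that $\|S,\overline S\|=2$ and $S$ has a single boundary vertex. Lemma~\ref{lem:boundary-2}(\ref{item:single-bd}) then gives $\rho_{G,\ell}(S)=2\lambda$. This contradicts $\rho(\{v\})=2\lambda-1$, and the lemma follows.

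The only delicate point is applying Lemma~\ref{lem:boundary-2}(\ref{item:single-bd}) when $G-v$ is disconnected, i.e., when $uv$ and $vw$ are both cut-edges. In that case the proof of Lemma~\ref{lem:boundary-2} shows directly that $S$ cannot have a single boundary vertex. Either way we reach a contradiction, and I would just check that the case analysis in that proof covers $|S|=1$. As a fallback, I would compute $\rho(V(G))=2\lambda-1+\rho(\overline S)-4\lambda\ge -1$, which gives $\rho(\overline S)\ge 2\lambda$. I would then use Lemma~\ref{lem:2-bd-exception}(v) on $\overline S$, which has two boundary vertices $u\ne w$, to get $\rho(\overline S)\le 2\lambda-1$, a contradiction.
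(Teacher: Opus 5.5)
Your proof is correct and follows the paper's argument: Lemma~\ref{lem:j(k-2)} with $j=3$ and Observation~\ref{obs:list-degree} force $d(v)=2$, and Lemma~\ref{lem:boundary-2}(\ref{item:single-bd}) applied to $\{v\}$ then gives $\rho(\{v\})=2\lambda\neq 2\lambda-1$. The parallel-edge exclusion and the cut-edge fallback are not needed, because part (\ref{item:single-bd}) is stated unconditionally and its proof already rules out the case where both boundary edges are cut-edges.
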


\begin{proof}
    Suppose that $v \in V(G)$ satisfies $\ell(v) = 2$.
    By Observation \ref{obs:list-degree},
    $d(v) \geq 2$.
    Since $\rho_{\ell}(v) = 2\lambda-1 < 3(\lambda-2) + 1$, Lemma \ref{lem:j(k-2)} implies that $d(v) = 2$.
    Hence, the set $\{v\}$ has exactly one vertex and two edges on its boundary, so Lemma \ref{lem:boundary-2} (iii) implies that $\rho_{\ell}(v) = 2\lambda$, a contradiction.
\end{proof}

\begin{cor}\label{cor:h-geq3}
    Every vertex $v$ in $G$ satisfies $\ell (v)\geq 3$.
\end{cor}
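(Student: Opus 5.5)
This is an immediate consequence of the three preceding results, so the plan is simply to assemble them. Each vertex $v$ of our minimum counterexample has capacity $\ell(v)$, a non-negative integer, since $\LL(v)$ takes values in $\N_0$. So it suffices to rule out the values $0$, $1$ and $2$.

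Step 1. Observation~\ref{obs:zero} excludes $\ell(v)=0$. There, $G$ being a single vertex gives potential $-2$. Otherwise $G[\{v\}]$ would be a proper non-colorable subgraph, contradicting $(H,\LL)$-minimality.

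Step 2. Lemma~\ref{lem:no1} excludes $\ell(v)=1$. Its potential $\lambda-2$ is below the bound $\lambda-1$ of Observation~\ref{obs:geqk-1}. The case $V(G)=\{v\}$ is harmless, since then $\rho_{G,\ell}(G)=\lambda-2$ would be at least $-1$, while a single vertex with positive capacity is trivially colorable.

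Step 3. Lemma~\ref{lem:no2} excludes $\ell(v)=2$. By Lemma~\ref{lem:j(k-2)}, such a vertex would have degree exactly $2$. Then Lemma~\ref{lem:boundary-2}(iii) forces its potential to be $2\lambda$ rather than $2\lambda-1$.

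Together these give $\ell(v)\ge 3$ for every $v\in V(G)$. The only point needing any care is confirming that these lemmas apply to every vertex, including degenerate small cases such as $|V(G)|=1$. In that case $G$ is colorable whenever $\ell(v)\geq 1$, so it cannot be a counterexample, and the cited results hold without extra hypotheses. I expect no real obstacle here.
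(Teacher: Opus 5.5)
Your proposal is correct and matches the paper's proof, which simply combines Observation~\ref{obs:zero} and Lemmas~\ref{lem:no1} and~\ref{lem:no2} to exclude $\ell(v)\in\{0,1,2\}$. Your extra check of the one-vertex case is harmless: the paper does not need it, since a single vertex with positive capacity is colorable and so $G$ is not a counterexample.
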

\begin{proof}
    This follows from Observation~\ref{obs:zero} and Lemmas~\ref{lem:no1} and~\ref{lem:no2}.
\end{proof}

\begin{lemma}
\label{lem:k-1-reg}
     Let $k\geq 5$.
    If $G$ is a block and $\ell (v) = k-1$ for each $v \in V(G)$, then $G$ is not $(k-1)$-regular.
\end{lemma}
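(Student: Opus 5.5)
Suppose, for contradiction, that $G$ is a block, that $\ell(v)=k-1$ for every $v\in V(G)$, and that $G$ is $(k-1)$-regular. Then $(H,\LL)$ is a degree-cover of $G$ and $G$ has no $(H,\LL)$-coloring. Since $G$ is $2$-connected, Theorem~\ref{KSSt}(i) implies that $G$ itself is a multiple of a complete graph or of a cycle, say $G=qK_t$ or $G=qC_t$. The plan is to run through these shapes using the regularity $d(v)=k-1$ and to show that each one contradicts either \eqref{minG}(c)--(e) or one of the lemmas already proved.

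\emph{Case $G=qK_t$.} Regularity gives $q(t-1)=k-1$.
\begin{itemize}
\item If $q=1$, then $t=k$, so $G=K_k$, which is excluded by \eqref{minG}(d).
\item If $q\ge 2$, I compute the potential directly:
\[
\rho(G)=t\bigl((k-1)\lambda+1\bigr)+\binom t2\bigl(1-(2\lambda+1)q\bigr).
\]
Substituting $q(t-1)=k-1$, this becomes
\[
\rho(G)=t+\binom t2\,(1-q).
\]
For $q\ge2$ and $t\ge2$ this is at most $t-\binom t2\le 1$. In fact it is at most $-2$ except in a few small cases.
  \begin{itemize}
  \item $t=2$: here $\rho(G)=3-q$ with $q=k-1\ge4$, so $\rho(G)\le -1$. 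Also $\rho\le -2$ fails only when $q=4$, i.e.\ $k=5$ and $G=4K_2$. That multigraph is excluded by \eqref{minG}(e), and more generally Lemma~\ref{l42-2} forbids $(k-2)K_2$ when $k\le 6$. For $k\ge 6$ we get $q=k-1\ge5$ and hence $\rho(G)\le -2$, contradicting \eqref{minG}(c).
  \item $t=3$: here $\rho(G)=6-3q$, which is at most $-2$ once $q\ge 3$. The value $q=2$ would need $k-1=4$, i.e.\ $k=5$ and $G=2K_3=2C_3$, a double cycle, excluded by \eqref{minG}(e).
  \item $t\ge4$: already $q=2$ gives $\rho(G)\le t-\binom t2\le -2$.
  \end{itemize}
\end{itemize}

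\emph{Case $G=qC_t$ with $t\ge 4$ (the case $t=3$ is a multiple of $K_3$ and was handled above).} Regularity gives $2q=k-1$. In the same way,
\[
\rho(G)=t\bigl((k-1)\lambda+1\bigr)+t\bigl(1-(2\lambda+1)q\bigr)=t(2-q).
\]
\begin{itemize}
\item If $q\ge3$, then $\rho(G)\le -t\le -2$, contradicting \eqref{minG}(c).
\item If $q=2$, then $k=5$ and $G$ is a double cycle, excluded by \eqref{minG}(e).
\item $q=1$ is impossible, since it would force $k=3$.
\end{itemize}
The degenerate block consisting of a single vertex is ruled out because $\ell(v)=k-1>0=d(v)$ contradicts Observation~\ref{obs:list-degree}. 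This covers every shape allowed by Theorem~\ref{KSSt}(i).

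The main point needing care is the bookkeeping for multiple edges. The pair potential is $1-(2\lambda+1)|E(x,y)|$, so each adjacent pair contributes a $+1$ that must be tracked. The substitution $\lambda(k-1)=\lambda q(t-1)$ makes the $\lambda$-terms cancel exactly, and that cancellation is what leaves the small residues $t+\binom t2(1-q)$ and $t(2-q)$. I would double-check this cancellation, and then verify that the only borderline cases are $4K_2$ and $2C_t$ (including $2K_3$) at $k=5$, both already excluded by \eqref{minG}(e).
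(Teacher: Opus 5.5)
Your proposal is correct and takes essentially the paper's route. You apply Theorem~\ref{KSSt} to get $G=qK_t$ or $qC_t$, then compute the potential, which is $|V(G)|-(q-1)|E(G')|$ for $G=qG'$. The surviving cases $4K_2$, $2C_t$ and $K_k$ are then ruled out by \eqref{minG}(c)--(e); your separate handling of $qK_t$ and $qC_t$ simply replaces the paper's single inequality forcing $q=2$ and $|E(G')|\le |V(G)|+1$.
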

\begin{proof}
 Suppose $G$ is $(k-1)$-regular. Then $|V(G)|\geq 2$ and by 
 Theorem~\ref{KSSt}, $G$ is $k$-constructible.

 If $|V(G)|=2$, then $G=(k-1)K_2$ and $\rho(V(G))=2-m(G)=4-k$. When $k\geq 6$, this is at most $-2$ contradicting~\eqref{minG}(c), and when $k=5$, this contradicts~\eqref{minG}(e).

  Suppose $G$ has at least $3$ vertices.
  Since $H$ is $2$-bounded, $(G,H)$ is not a monoblock.
 So, by~\eqref{minG}(d)
$G=q G'$ for some $q\geq 2$ and a block $G'$. 
Then $m(G)=(q-1)|E(G')|$, and so  
$$\rho(V(G)\leq |V(G)|-
 (q-1)|E(G')|=(1-q)(|E(G')|-|V(G)|)-
(q-2)|V(G)|.$$
By~\eqref{minG}(c), this at least $-1$, and so
 $q=2$  and $|E(G')|-|V(G)|\leq 1$.
Then $\delta(G')=2$. But then $k-1=\delta(G)=2\delta(G')=4$. Thus $k=5$
and $G$ is $4$-regular.
So, $G$ is a double cycle, contradicting~\eqref{minG}(e).
\end{proof}

\section{Eliminating  low ${(k-2)}$-regular blocks}
\label{sec:eliminating}

\subsection{On almost $(k-1)$-regular subgraphs of $G$}

By $K_t^-$ (respectively, $2C_t^-$) we denote the (multi)graph obtained from $K_t$ (respectively, from $2C_t$) by deleting one edge.

For $u,v\in V(G)$, let $K_k^-(u,v)$ denote a $K^-_k$-subgraph of $G$ in which $u$ and $v$ are the non-adjacent vertices and let $C_t^-(u,v)$ denote a $2C_t^-$-subgraph of $G$ in which $uv$ is the edge of multiplicity $1$.

\begin{lemma}\label{l32}
Suppose $k=5$.
If $U_1$ and $U_2$ are distinct vertex sets for which $G[U_1]$ has a spanning $2C_t^-$ subgraph and $G[U_2]$ has a spanning $2C_r^-$ subgraph,
then $U_1$ and $U_2$ are disjoint.
\end{lemma}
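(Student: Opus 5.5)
We argue by contradiction using potentials and submodularity. Suppose $U_1\neq U_2$ and $U_1\cap U_2\neq\emptyset$. By Lemma~\ref{lem:U-2Ct-subgraph}, each $G[U_i]$ is isomorphic to $2C^-_{|U_i|}$ and every vertex of $U_i$ has $\ell=4$. Hence
$$\rho(U_i)=|U_i|(4\lambda+1)-2\lambda(|U_i|)-(2\lambda+1)(|U_i|-1)=2\lambda+1.$$
Also $\|U_i,\overline{U_i}\|=3$, and $G-U_i$ is connected. Let $I=U_1\cap U_2$.

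First we compute $\rho(U_1\cup U_2)$ through the usual identity
$$\rho(U_1\cup U_2)+\rho(U_1\cap U_2)\le \rho(U_1)+\rho(U_2)-\sum\nolimits'\rho(xy),$$
where the sum runs over pairs with $x\in U_1\setminus U_2$ and $y\in U_2\setminus U_1$. These pair terms are nonpositive, so $\rho(U_1\cup U_2)\le 4\lambda+2-\rho(I)$. Next we bound $\rho(I)$ from below using the structure of $G[I]$. Since $G[U_1]$ is a double cycle minus one edge, i.e. a double path closed by a single edge, $G[I]$ is an induced subgraph of it: a union of double paths, possibly together with the single edge $xy$.

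If $I$ is a single vertex, then $\rho(I)=4\lambda+1$. Otherwise $I$ contains a double edge. Generally, for a component $P$ of $G[I]$ that is a double path on $p$ vertices, $\rho(P)=p(4\lambda+1)-(p-1)(4\lambda+1)=4\lambda+1$. A component containing the single edge gives at least $2\lambda+1$, and that case arises only if $I=U_1$, which is impossible unless $U_1\subseteq U_2$. In that case $G[U_2]\cong 2C^-$ contains a spanning $2C^-_t$ on a proper subset $U_1$, but an induced double path plus a single edge does not occur inside $2C^-_r$ on fewer vertices, since the unique single edge of $G[U_2]$ would then close a cycle of length $<r$. So $\rho(I)\ge 4\lambda+1$ whenever $I$ is a nonempty proper subset of each $U_i$, and each component contributes $4\lambda+1$.

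Therefore $\rho(U_1\cup U_2)\le 4\lambda+2-(4\lambda+1)=1$. If $U_1\cup U_2\ne V(G)$, this contradicts Observation~\ref{obs:geqk-1}, since $1<\lambda-1$. If $U_1\cup U_2=V(G)$, we need a sharper count. Each $U_i$ meets $\overline{U_i}=U_{3-i}\setminus I$ in exactly $3$ edges. Moreover, $G[I]$ is a double path whose endpoints are attached inside $U_1$ to vertices of $U_1\setminus I$ by double edges. The two double edges leaving $I$ within $U_2$ also lead into $U_2\setminus U_1\subseteq\overline{U_1}$, which already gives $4$ edges from $U_1$ to $\overline{U_1}$, contradicting $\|U_1,\overline{U_1}\|=3$. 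The exception is when the single edge of $G[U_2]$ is one of these, which gives $3$. In that case we also count the edges between $U_1\setminus U_2$ and $U_2\setminus U_1$, which are negative pair terms, and obtain $\rho(G)\le -2$.

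The main obstacle is this degree/boundary bookkeeping, when the intersection is disconnected or touches the single edges. We handle it uniformly by counting the double edges leaving each component of $G[I]$. Each component of $I$ that is a proper double subpath of both cycles sends at least $2$ edges into $U_1\setminus I$ and at least $2$ into $U_2\setminus I$, with one fewer for each single edge involved. Comparing this with $\|U_i,\overline{U_i}\|=3$ forces $I$ to be a single component with the single edges placed at its ends, and that leftover configuration is excluded by the potential computation above.
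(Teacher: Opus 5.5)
Your proof has a genuine gap in the case $U_1\cup U_2=V(G)$. Your boundary counting correctly forces $G[I]$ to be a single double path. One end is joined to $U_1\setminus U_2$ by a double edge and the other by the single edge of $G[U_1]$, and symmetrically on the $U_2$ side. But then the three edges from $I$ to $U_2\setminus U_1$ already use up all of $\|U_1,\overline{U_1}\|=3$. So there are \emph{no} edges between $U_1\setminus U_2$ and $U_2\setminus U_1$, and the pair terms you want to use vanish. This gives $\rho(G)=(2\lambda+1)+(2\lambda+1)-(4\lambda+1)=1$, which is consistent with $\rho(G)\ge -1$.

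For a concrete instance, take $I=\{a_1,a_2\}$, $U_1\setminus U_2=\{b_1,b_2\}$ and $U_2\setminus U_1=\{c_1,c_2\}$. Use double edges $a_1a_2,a_2b_1,b_1b_2,c_1c_2,c_2a_1$ and single edges $b_2a_1,a_2c_1$, with $\ell\equiv 4$. Then both $U_i$ induce $2C_4^-$, both boundaries have size $3$, both complements are connected, and $\rho(G)=1$. So the assertion that this configuration is excluded by the potential computation is false. The same happens when $I$ is a single vertex.

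The missing idea is the parallel-edge clause of Lemma~\ref{lem:j(k-2)}, which is what the paper uses; it also makes your submodularity computation unnecessary. Suppose, say, $U_1\setminus U_2\neq\emptyset$. The edges of $G[U_1]$ between $U_1\setminus U_2$ and $U_1\cap U_2$ form an edge cut of $2C_t^-$, so there are at least $3$ of them and they include a parallel pair. All of them leave $U_2$, so $\rho(U_2)\ge 3(\lambda-2)+4=3\lambda-2>2\lambda+1=\rho(U_2)$, a contradiction. The case $U_1\setminus U_2=\emptyset$ is symmetric.

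Alternatively, you can repair your own route with a degree argument. In the leftover configuration, the endpoint of the single edge of $G[U_1]$ lying in $U_1\setminus U_2$ (it is $b_2$ in the example) has degree $3<4=\ell$ in $G$. This contradicts Observation~\ref{obs:list-degree}.

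Separately, your submodular identity has a sign slip. It should read $\rho(U_1\cup U_2)+\rho(U_1\cap U_2)=\rho(U_1)+\rho(U_2)+\sum'\rho(xy)$; your conclusion $\rho(U_1\cup U_2)\le\rho(U_1)+\rho(U_2)-\rho(I)$ is nevertheless correct.
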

\begin{proof}
    Suppose that $U_1$ intersects $U_2$. Let $S = U_1 \setminus U_2$.
    As $2C_t^-$ is $3$-edge-connected,
    $\|S, U_2\| \geq 3$. 
    Furthermore, every edge cut of $2C_t^-$
    has a pair of parallel edges.
    Therefore, as $3 \leq \|S,U_2\| \leq \|U_2, \overline{U_2}\|$, $\rho(U_2) \geq 3(\lambda-2)+4 > 2\lambda+1 = \rho(U_2)$, a contradiction.
\end{proof}

\begin{lemma}\label{nl'}
Suppose $k=5$.
For any $v_1,v_2,w\in V(G)$, $G$ cannot contain a $2C_t^-(v_1,v_2)$
and a $K^-_5(v_1,w)$ at the same time. 
\end{lemma}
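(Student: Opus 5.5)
The plan is to argue by potential counting, in the same spirit as Lemma~\ref{l32}. Suppose $U_1$ induces a spanning $2C_t^-(v_1,v_2)$ and $U_2$ induces $K_5^-(v_1,w)$.

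\textbf{Step 1: what we know about each piece.} By Lemma~\ref{lem:U-2Ct-subgraph}:
\begin{itemize}
\item[] $G[U_1]\cong 2C_t^-$, $\ell\equiv 4$ on $U_1$, $\rho(U_1)=2\lambda+1$, and $\|U_1,\overline{U_1}\|=3$.
\end{itemize}
The three boundary edges leave $v_1$ (one edge), $v_2$ (one edge), and one further vertex $z$. By Lemma~\ref{lem:no-Kk} and Lemma~\ref{lem:KK-4} (assuming $\ell\equiv 4$ on $U_2$; otherwise handle it by the potential bound of Lemma~\ref{lem:no-Kk}):
\begin{itemize}
\item[] $G[U_2]\cong K_5^-$, $\rho(U_2)=5+2\lambda$ with $\lambda=6$, and $\|U_2,\overline{U_2}\|=4$.
\end{itemize}

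\textbf{Step 2: locate $v_1$.} In $G[U_1]$, $v_1$ has degree $3$: a double edge to one neighbor and a single edge to $v_2$. It has $\ell(v_1)=4$ and therefore at most one edge leaving $U_1$, so $d(v_1)\le 4$. In $K_5^-(v_1,w)$, however, $v_1$ has degree $3$ inside $U_2$, and these three neighbors are pairwise adjacent. Since $U_1\cap U_2\ni v_1$, some of these neighbors lie in $U_1$.

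\textbf{Step 3: bound the overlap.} The neighbors of $v_1$ inside $U_1$ are only its double-edge partner $a$ and $v_2$, and $v_1$ has at most one neighbor outside $U_1$. So $N_{U_2}(v_1)$ consists of at most $a$, $v_2$, and one outside vertex.

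The double edge $v_1a$ cannot lie inside $U_2$, because $G[U_2]$ is simple. Hence $a\notin U_2$. Then $N_{U_2}(v_1)\subseteq\{v_2,y\}$, where $y$ is the outside neighbor. This gives $d_{U_2}(v_1)\le 2<3$, a contradiction.

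\textbf{Main obstacle.} The only delicate point is confirming that $v_1$ has exactly one edge out of $U_1$ and that $d_G(v_1)=4$. Once that is established, the count in Step 3 is immediate. If the degree count needs more care, for instance because multiple edges could go to $z=v_1$, the fallback is to apply Lemma~\ref{lem:j(k-2)} with its parallel-edge bonus to $S=U_2\setminus U_1$ inside $U_1$, exactly as in Lemma~\ref{l32}. That yields $\rho(U_1)\ge 3(\lambda-2)+1>2\lambda+1$, again a contradiction.
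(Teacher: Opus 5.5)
There is a genuine gap in Step~2. From $\ell(v_1)=4$ and Observation~\ref{obs:list-degree} you only get the \emph{lower} bound $d(v_1)\ge 4$, not ``at most one edge leaving $U_1$''. What Lemma~\ref{lem:U-2Ct-subgraph} gives is that each of $v_1,v_2$ sends at least one of the three edges of $E_G(U_1,\overline{U_1})$, so one of them may send two. The vertex $z$ in that proof is explicitly allowed to be one of the two degree-$3$ vertices, and then $d(v_1)=5$, which Observation~\ref{obs:6} permits. Let $y_1\ne y_2$ be the two outside neighbors of $v_1$ (if $y_1=y_2$, your Step~3 still works). Then Step~3 only yields $N_{U_2}(v_1)\subseteq\{v_2,y_1,y_2\}$. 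This set can be exactly the triangle $U_2\setminus\{v_1,w\}$, so no contradiction arises.

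Your fallback does not cover this case either. For $k=5$ we have $\lambda=6$, so $3(\lambda-2)+1=13=2\lambda+1$, and the claimed strict inequality fails. The parallel-edge bonus of Lemma~\ref{lem:j(k-2)} would need a parallel pair in $E_G(U_1,\overline{U_1})$. But $K_5^-$ is $3$-edge-connected and $\|U_1,\overline{U_1}\|=3$, so all three boundary edges of $U_1$ lie in the simple graph $G[U_2]$, and no such pair exists.

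The missing case closes easily within your framework. If $v_1$ sends two boundary edges, then $v_2$ sends exactly one, so $N(v_2)\subseteq\{v_1,v_3,y'\}$, where $y'$ is its outside neighbor. But $U_2\setminus\{v_1,w\}=\{v_2,y_1,y_2\}$ forces $v_2$ to be adjacent to all four other vertices of $U_2$, a contradiction.

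With this case added, your argument is a valid alternative to the paper's. You use inducedness of $G[U_2]$ from Lemma~\ref{lem:no-Kk} plus the exact boundary count of Lemma~\ref{lem:U-2Ct-subgraph}. The paper uses only the bound $d\le 5$ from Observation~\ref{obs:6} to force $v_2$ or $v_t$ into the triangle. It then follows the double cycle until $G[U]$ contains two multiple edges, giving $\rho(U)\le -7$.

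A Lemma~\ref{l32}-style count also works, but run on $U_2$ rather than $U_1$. The cut of $G[U_1]$ between $U_1\cap U_2$ and $U_1\setminus U_2$ has at least $3$ edges, including a parallel pair. If $\ell\equiv 4$ on $U_2$, then $\|U_2,\overline{U_2}\|=4$ by Lemma~\ref{lem:KK-4}, so $\rho(U_2)\ge 4(\lambda-2)+4=20>17=\rho(U_2)$. If some vertex of $U_2$ has $\ell\le 3$, then $\rho(U_2)\le 9<16$.
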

\begin{proof} 
Suppose $G$ has a copy $F$ of a $2C_t^-(v_1,v_2)$ is obtained from double cycle $v_1v_2\ldots v_tv_1$ by deleting one edge joining $v_1$ with $v_2$.
Then, suppose that $G$ also has a copy of $K^-_5(v_1,w)$ with a vertex set $U$, and let $U'=U\setminus \{v_1,w\}$.
Since $v_1$ is incident to a double
edge $v_1v_t$, by Observation~\ref{obs:6}, $d(v_1)\leq 5$.  
Since $v_1$ has $3$ neighbors in $F$ and three neighbors in $U'$,
it follows that $v_1$ has a neighbor $v \in V(F) \cap U'$, and hence $v_2 \in U'$ or $v_t \in U'$.
If $v_t\in U'$, then 
as $d(v_t) \leq 5$ by Observation \ref{obs:6},
$v_{t-1}\in U$. So $G[U]$ has at least two multiple edges, and $\rho_{G,\ell}(U)\leq 5(4\lambda+1)-11(2\lambda) = 5(25)-11(12)=-7$, a contradiction. 
Thus, $v_2\in U'$, and as $d(v_2)\leq 5$ by Observation \ref{obs:6}, $v_3\in U$. Again, this yields $v_4\in U$ and $G[U]$ has at least two multiple edges, a contradiction.
\end{proof}

\begin{lemma}\label{nl}
For any $u,v,w\in V(G)$, $G$ cannot at the same time contain a $K_k^-(u,v)$
and a $K_k^-(v,w)$ such that at least $3$ edges connect their union with the rest of $G$.
\end{lemma}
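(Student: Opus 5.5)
The plan is a potential computation on the union, contradicted by Lemma~\ref{lem:j(k-2)}. Let $K_1$ be the vertex set of the $K_k^-(u,v)$ and $K_2$ that of the $K_k^-(v,w)$. By Lemma~\ref{lem:no-Kk}, each $G[K_i]$ is isomorphic to $K_k^-$. Set $I=K_1\cap K_2$ and $s=|I|$.

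\emph{Overlap and the union formula.} Clearly $v\in I$. Also $K_1\neq K_2$, since otherwise $v$ would have two non-neighbours in one $K_k^-$; so $1\le s\le k-1$. Potentials of vertices are additive and pair potentials are nonpositive, so
\[
\rho(K_1\cup K_2)\;\le\;\rho(K_1)+\rho(K_2)-\rho(I).
\]
Here the pairs between $K_1\setminus I$ and $K_2\setminus I$ only contribute further nonpositive terms.

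\emph{Deficits.} For $x\in K_1\cup K_2$ write $\delta(x)=\rho(x)-((k-1)\lambda+1)\le 0$. A direct count gives $\rho(K_i)=k+2\lambda+\sum_{x\in K_i}\delta(x)$. For $\rho(I)$, note that $G[I]$ is a clique on $s$ vertices, except when $u=w\in I$, in which case the pair $uv$ is missing. A missing pair only raises $\rho(I)$ by $2\lambda$, which helps. Hence
\[
\rho(I)\;\ge\; s\bigl(1+\lambda(k-s)\bigr)+\sum_{x\in I}\delta(x).
\]
The deficits of vertices in $I$ appear twice in $\rho(K_1)+\rho(K_2)$ and once in $\rho(I)$, so in total they are subtracted once. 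Since every $\delta(x)\le 0$, all deficits can be dropped, giving
\[
\rho(K_1\cup K_2)\le 2k+4\lambda-s\bigl(1+\lambda(k-s)\bigr).
\]

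\emph{Numerical check.} The function $s\mapsto s(1+\lambda(k-s))$ is concave, so on $1\le s\le k-1$ its minimum is at an endpoint.
\begin{itemize}
\item At $s=1$ the value is $1+\lambda(k-1)$.
\item At $s=k-1$ the value is $(k-1)(1+\lambda)$.
\end{itemize}
We need $\rho(K_1\cup K_2)\le 3(\lambda-2)$, i.e.\ the subtracted term must exceed $2k+\lambda+5$. At $s=1$ this reduces to $\lambda(k-2)>2k+4$. At $s=k-1$ it reduces to $(k-2)\lambda>k+6$. Both hold because $\lambda=\lceil (k^2-7)/(2k-7)\rceil\ge 6$ for $k=5$ and $\lambda\ge k/2+1$ in general; for $k=5$ they read $18>14$ and $18>11$.

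\emph{Conclusion.} By hypothesis at least $3$ edges leave $K_1\cup K_2$, so it is a proper subset of $V(G)$. Lemma~\ref{lem:j(k-2)} with $j=3$ then forces $\rho(K_1\cup K_2)\ge 3(\lambda-2)+1$, contradicting the bound above.

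The main obstacle I expect is the bookkeeping in the second step. One must check carefully that the deficits and multi-edges inside $K_1$, $K_2$ and $I$ are counted consistently; Lemma~\ref{lem:no-Kk} is needed here to rule out extra or parallel edges inside $I$. One must also handle the degenerate case $u=w$, where $I$ is not a clique, separately but routinely.
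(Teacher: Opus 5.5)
Your proof is correct and is essentially the paper's argument: your bound $2k+4\lambda-s\bigl(1+\lambda(k-s)\bigr)$ is exactly the paper's $g(x)=\lambda x^2-(1+k\lambda)x+2k+4\lambda$, which the paper gets by a direct edge count of $G[U\cup W]$ instead of your inclusion--exclusion with deficits, and both proofs finish by checking the endpoints against Lemma~\ref{lem:j(k-2)} with $j=3$. Two small fixes: at $k=6$ the bound $\lambda\ge k/2+1$ only gives $\lambda(k-2)\ge 16=2k+4$, not a strict inequality, so use $\lambda=6$ there directly; and your argument for $K_1\neq K_2$ needs $u\neq w$ (or the two subgraphs to be distinct), which the paper also assumes tacitly and which holds in its later application.
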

\begin{proof} Suppose $G$ contains a $K_k^-(u,v)$ with vertex set $U$ and
a $K_k^-(v,w)$ with vertex set $W$ such that $|E(U\cup W,V(G)\setminus (U\cup W)|\geq 3$. Let $X=U\cap W$ and $x=|X|$. Then $|U\cup W|=2k-x$,
and $|E(G[U\cup W])|\geq 2\left({k\choose 2}-1\right)-{x\choose 2}=
k(k-1)-2-\frac{x^2-x}{2}$. So
$$\rho_{G, \ell}(U\cup W)\leq (2k-x)(1+\lambda(k-1))-2\lambda\left(
k(k-1)-2-\frac{x^2-x}{2}\right)
 $$   
 $$=\lambda x^2-(1+k\lambda)x+2k+4\lambda=: g(x).
 $$
Since function $g(x)$ is quadratic in $x$ with a positive coefficient at $x^2$,
the maximum of $g(x)$ is achieved at $x=1$ or at $x=k-1$.
Calculating, we get $g(1)=(5-k)\lambda+2k-1$ and $g(k-1)=(5-k)\lambda+k+1<g(1)$.

On the other hand, since $|E(U\cup W,V(G)\setminus (U\cup W)|\geq 3$, by 
Lemma~\ref{lem:j(k-2)}, $\rho_{G, \ell}(U\cup W)\geq 3\lambda-5$. Let us show that $g(1)=(5-k)\lambda+2k-1<3\lambda-5$, which would imply the lemma.

Indeed, if $5\leq k\leq 6$, then $\lambda\geq k$, and $g(1)\leq 2\lambda-1<3\lambda-5$. If $k\geq 7$, then since $k<2\lambda$, we have
$g(1)\leq -2\lambda+(2k-1)<2\lambda-1<3\lambda-5
$, again.
\end{proof}

\subsection{Eliminating  low $(k-2)$-regular blocks}
We need a couple of definitions.

An \emph{edge-block} in a multigraph $G$ is an inclusion maximal connected subgraph $G'$ of $G$ such that
either $|V(G')|=2$ or $G'$ has no cut edges. In particular, every cut edge forms an edge-block, and each connected graph decomposes into edge-blocks.

Define a special subset $S^* \subseteq V(G)$ as follows. If $G$ has no cut edges, then $S^*=V(G)$. Otherwise, we fix a smallest pendent edge-block $B^*$ distinct from $K_2$
  and let $S^*=V(B^*)$. By Lemma~\ref{lem:no1} and Observation~\ref{obs:list-degree}, a cut edge cannot be a pendent edge-block, so $S^*$ is  well-defined.

    If $B^* \subsetneq G$, then
    since $B^*$ is pendent, there are $x^*\in S^*$ and $y^*\in V(G)-S^*$ such that $x^*y^*$ is the unique edge connecting $B^*$ with the rest of $G$. Fix these $B^*$, $S^*$, $x^*,y^*$.
  By definition, $B^*$ is $2$-edge-connected. 

Let $ \Lambda$ be the subgraph of $G$ induced by low vertices in $G$ and let ${ \Lambda}_0$ be the subgraph of $ \Lambda$ induced by the vertices of $ \Lambda$ in $S^*$.

Suppose $G$ has a $K_{k-1}$-block $K \subseteq  \Lambda$ with vertex set
$U=\{u_1,\ldots,u_{k-1}\}$ such that for some $0\leq j\leq k-1$,
$d(u_i)=k-2$ for  $1\leq i\leq j$ and
$d(u_i)=k-1$ for  $1+j\leq i\leq k-1$. For all $1+j\leq i\leq k-1$, let $x_i$ be the unique neighbor of $u_i$ outside of $U$, and let 
$X=\{x_{j+1},\ldots,x_{k-1}\}$.
Fix an $(H,\LL)$-coloring $(\phi,\sigma)$ of $G-U$.

\begin{obs}
\label{obs:extend22}
    We can rename the colors in $\LL_\phi[U]$ so that \\
    (i) for some $t\in [k-1]$ 
     and all $ i\in [k-1]$, ${\rm Supp}(u_i)=[t]$ and\\
    (ii) for all $c\in [t]$ and distinct $1\leq i,i'\leq k-1$, $(u_i,c)\sim (u_{i'},c)$ and $\ell_\phi(u_i,c)=\ell_\phi(u_{i'},c)$. 
\end{obs}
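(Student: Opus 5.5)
The plan is to apply Theorem~\ref{KSSt} to the residual instance $(K,H[U],\LL_\phi)$ and read off the structure of $H_K$. First I check that $(H[U],\LL_\phi)$ is a degree-cover of $K=G[U]$ with equality everywhere. For $i\le j$, $u_i$ has no neighbors outside $U$, so $\ell_\phi(u_i)=\ell(u_i)=d(u_i)=k-2=d_K(u_i)$. For $i>j$, $u_i$ has exactly one edge to $x_i$. Since $\phi$ colors $x_i$ with a single node and $E_H(H(u_i),H(x_i))$ is a matching, $\ell_\phi(u_i)\ge \ell(u_i)-1=k-2=d_K(u_i)$. By Lemma~\ref{lem:partial}, $K$ has no $(H,\LL_\phi)$-coloring. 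Lemma~\ref{lem:acyc-cor} then forces $\ell_\phi(u_i)=k-2=d_K(u_i)$ for every $i$, since $K$ is connected.

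Next I invoke Theorem~\ref{KSSt} with $K$ itself as the single block $1\cdot K_{k-1}$, so $q=1$. By part (iv) with $t=k-1$, the subgraph of $H[U]$ induced by the nodes of positive $\LL_\phi$-capacity is a disjoint union of $a+b$ copies of $K_{k-1}$, where $a+2b=k-2$. Each copy meets every $H(u_i)$ in exactly one node, and within one copy all nodes have the same capacity, $1$ or $2$. The $q$-even alternative cannot occur because $q=1$. By part (ii), all nodes in one component share the same capacity.

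Now set $t=a+b$ and rename colors independently at each $u_i$. The node of $H(u_i)$ lying in the $c$-th copy gets color $c$ for $c\in[t]$, and the remaining zero-capacity nodes get colors $t+1,\dots,k-1$. Renaming colors vertex by vertex only relabels nodes and does not change the cover. After renaming, $\supp_{\LL_\phi}(u_i)=[t]$ for all $i$, which gives (i). Since each copy is a clique $K_{k-1}$ in $H$ and has constant capacity, we get $(u_i,c)\sim(u_{i'},c)$ and $\ell_\phi(u_i,c)=\ell_\phi(u_{i'},c)$, which gives (ii).

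The main point needing care is that the renaming is consistent with the paper's convention that colors index the nodes. I also need the positive-capacity nodes in Theorem~\ref{KSSt}(iv) to cover exactly the support, with no stray capacity-positive node outside the cliques. This follows from part (iii): the capacities of the clique nodes at $u_i$ already sum to $d_K(u_i)=\ell_\phi(u_i)$, so every other node of $H(u_i)$ has capacity zero.
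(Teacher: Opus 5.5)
Your proposal is correct and follows the paper's route. The paper's proof is the one-line contrapositive: otherwise Theorem~\ref{KSSt} would let $(\phi,\sigma)$ be extended to $U$ by coloring $U$ last. You fill in the details that line leaves implicit: Lemmas~\ref{lem:partial} and~\ref{lem:acyc-cor} show $(H[U],\LL_\phi)$ is an exact degree-cover of $K$ with no coloring, Theorem~\ref{KSSt}(iv) with $q=1$ gives the clique-copy structure, and the colors are then relabeled vertex by vertex.
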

\begin{proof}
    Otherwise by  Theorem~\ref{KSSt}, $(\phi,\sigma)$ can be extended to $U$ by coloring $U$ last.
\end{proof}

 For $j+1\leq i\leq k-1$, let $(u_i,\gamma_i)$ be the node
  for which $\ell(u_i,\gamma_i)>\ell_\phi(u_i,\gamma_i)$,
 and let $(x_i,\mu_i)$ be the node such that $(u_i,\gamma_i)\sim (x_i,\mu_i)$. 

Given a $(k-2)$-regular block $B \subseteq  \Lambda$  and a vertex $u \in N(B) \setminus V(B)$ and $(u,\alpha)\in \LL(u)$, we write $F(B,u,\alpha)$ for the triple $(G_u,H_u,\LL_u)$, where $G_u=G\setminus B$, $H_u=H[G_u]$ and $\LL_u$ is obtained from $\LL$ by  decreasing
$\ell(u,\alpha)$
by $1$. 

Given a $(k-2)$-regular block $B \subseteq  \Lambda$  along with distinct vertices $x_i,x_{i'}\in N(B) \setminus V(B)$, we write $F(B,x_i,x_{i'})$ for the multigraph obtained from $G - V(B)$ by adding an edge $x_ix_{i'}$.

\begin{lemma}\label{nl3}  
{ Each vertex $u_i \in U$ satisfies $\ell(u_i) = k-1$.}
\end{lemma}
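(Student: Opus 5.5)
The plan is to suppose that some $u_i$ has $\ell(u_i)=d(u_i)=k-2$, i.e.\ $j\ge 1$, and derive a contradiction. Two tools will be combined: the rigid structure of the cover on $U$ forced by Observation~\ref{obs:extend22}, and the potential bounds of Lemma~\ref{lem:j(k-2)}. I will aim for a contradiction with minimality or with \eqref{minG}(c) through a reduction in the graph $G-U$.

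First, I dispose of the extreme case $j=k-1$. Then $U$ has no outside neighbours, so $V(G)=U$ and
\[
\rho_{G,\ell}(G)=(k-1)\bigl((k-2)\lambda-1\bigr)-2\lambda\binom{k-1}{2}=-(k-1)\le -2,
\]
which contradicts \eqref{minG}(c). In general the potential of $U$ is
\[
\rho_{G,\ell}(U)=(k-1)\lambda+(k-1)-j(\lambda+2),
\]
and $\|U,\overline U\|=k-1-j$. For $1\le j\le k-2$ we fix the coloring $(\phi,\sigma)$ of $G-U$ and normalise colors as in Observation~\ref{obs:extend22}. For $i\le j$ there are no outside edges, so $\ell_\phi(u_i,c)=\ell(u_i,c)$ for all $c$. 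For $i>j$ exactly one node $(u_i,\gamma_i)$ loses capacity $1$.

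Second, I show that for every $i>j$ the node $(x_i,\mu_i)$ must be tight in every $(H,\LL)$-coloring $(\psi,\tau)$ of $G-U$. Otherwise $\ell_\psi(u_i)=k-1>d_K(u_i)$, and Lemma~\ref{lem:acyc-cor} (or Theorem~\ref{KSSt}) extends the coloring to $U$ by placing $U$ last. Note also that all the $(u_i,\gamma_i)$ are forced to have the same color class and capacity, because of Observation~\ref{obs:extend22}(ii) combined with the fact that the $u_i$ with $i\le j$ keep their full capacities. Using $j\ge1$, I expect that this symmetry pins down a single color $\gamma$ with $\gamma_i=\gamma$ for all $i>j$. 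Moreover $\ell(u_1,\gamma)=\ell_\phi(u_{i},\gamma)$ for every $i>j$, while $\ell(u_i,\gamma)=\ell_\phi(u_i,\gamma)+1$. This asymmetry between $u_1$ and the $u_i$ with outside neighbours is what should be exploited.

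Third, I form the triple in which every $\ell(x_i,\mu_i)$, $i>j$, is decreased by one; this is the simultaneous version of $F(K,x_i,\mu_i)$. By the previous step, $G-U$ has no coloring for this cover. By minimality there is a set $W\subseteq V(G)-U$, meeting $X$ in $q\ge1$ edges' worth, with
\[
\rho_{G,\ell}(W)\le -2+q(\lambda+2).
\]
Exceptional configurations must be excluded here using Lemmas~\ref{lem:no-Kk}, \ref{lem:KK-4} and~\ref{l42-2}. Then
\[
\rho_{G,\ell}(W\cup U)\le -2-q(\lambda-2)+\rho_{G,\ell}(U),
\]
and I compare this with Lemma~\ref{lem:j(k-2)} applied to the $k-1-j-q$ remaining boundary edges, or with \eqref{minG}(c) if $W\cup U=V(G)$.

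The main obstacle is that this raw potential count only closes when $4j>3k-6$. So for small $j$ one needs more than counting. What I would try first is to exploit the low vertex $u_1$ of degree $k-2$ directly. I would recolor so that some $u_{i}$ with $i>j$ is placed before $x_i$ in the ordering, in the spirit of Lemma~\ref{lem:2-bd-exception}(iii). Alternatively, I would color $u_1$ first with a node of capacity $2$, which exists when $\ell(u_1)=k-2\ge3$ since $\LL$ is $2$-bounded, and then use Lemma~\ref{lem:acyclic} on $K-u_1$, which gains slack. This should show that the extension to $U$ can be made even when every $(x_i,\mu_i)$ is tight, contradicting Lemma~\ref{lem:partial}.
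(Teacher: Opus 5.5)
There is a genuine gap, and it sits where the paper's real work is done.

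\textbf{Step 2 is not justified.} A node $(x_i,\mu_i)$ can fail to be tight while $\psi(x_i)=\mu_i$, namely when it is slack. In that case $(u_i,\gamma_i)$ still loses a unit, $\LL_\psi(u_i)=\LL_\phi(u_i)$ and $\ell_\psi(u_i)=k-2=d_K(u_i)$. So appending $U$ still fails, and Lemma~\ref{lem:acyc-cor} gives nothing. (When $\psi(x_i)\ne\mu_i$, a degree count is also not enough; you need Observation~\ref{obs:extend22} together with $\LL_\psi(u_1)=\LL(u_1)$.)

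The slack case is the heart of the proof. You must insert $u_i$ \emph{before} $x_i$, using the spare unit of $(x_i,\mu_i)$, and exploit the low vertex $u_1$:
\begin{itemize}
\item If $(u_i,\gamma_i)$ is not adjacent to a supported node of $u_1$, put $u_i$ (colored $\gamma_i$) just before $x_i$. Then color $U-u_i$ greedily at the end, with $u_1$ last.
\item Otherwise $\ell(u_i,\gamma_i)=2$. Put $u_1,u_i$, both colored $\gamma_i$, before $x_i$. Each of the remaining $k-3$ vertices of $U$ keeps capacity at least $k-3$, so greedy coloring finishes.
\end{itemize}

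Your fallback ideas do not supply this. Coloring $u_1$ first changes nothing, since $u_1$ has no neighbor outside $U$; it is still an appended extension, which Lemma~\ref{lem:partial} forbids. Moreover, if every $(x_i,\mu_i)$ is tight in every coloring of $G-U$, then no $u_i$ colored $\gamma_i$ can precede $x_i$ in any $(H,\LL)$-coloring of $G$. Every such coloring is then effectively ``$G-U$ first, then $U$'', so an extension ``even when every $(x_i,\mu_i)$ is tight'' is impossible. The contradiction has to come from producing a coloring of $G-U$ in which some $(x_i,\mu_i)$ has a spare unit. (Your expectation that all $\gamma_i$ coincide is also unfounded: Observation~\ref{obs:extend22} only forces the vectors $\LL_\phi(u_i)$ to agree. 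It is not needed, though.)

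\textbf{Step 3 uses the wrong reduction.} Reducing all the $(x_i,\mu_i)$ simultaneously costs $q(\lambda+2)$ against only $q$ boundary edges. As you found, the count then closes only when $4j>3k-6$.

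The paper instead decreases a \emph{single} capacity $\ell(x_{k-1},\mu_{k-1})$. When $j\le k-3$, $u_{k-1}$ is chosen so that $u_{k-1}x_{k-1}$ is not a cut edge. There are two cases:
\begin{itemize}
\item If $G-U$ is colorable for the reduced cover, the coloring uses $\mu_{k-1}$ at $x_{k-1}$ with a spare unit, and the construction above colors $G$.
\item If not, minimality gives $W\ni x_{k-1}$ with $\rho_{G,\ell}(W)\le\lambda<2(\lambda-2)+1$. Exceptional outcomes are ruled out because $K_k\not\subseteq G$ and the reduced capacity of $x_{k-1}$ is below $k-1$. Lemma~\ref{lem:j(k-2)} then gives $\|W,\overline W\|\le 1$, so $u_{k-1}x_{k-1}$ is a cut edge, which forces $j=k-2$. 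But then $\rho_{G,\ell}(W\cup U)\le\lambda+(\lambda-k+3)-2\lambda=3-k\le-2$, a contradiction.
\end{itemize}
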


\begin{proof}
 Recall that $d(u_i)=k-2$ for  $1\leq i\leq j$ and
$d(u_i)=k-1$ for  $1+j\leq i\leq k-1$.
Suppose $1\leq j\leq k-1$. Since $G\neq K_{k-1}$ and is connected, $j\leq k-2$.
Let $G'=G - U$, and recall that $(\phi,\sigma)$ is an $(H,\LL)$-coloring of $G'$. 
Since at most one cut edge is incident to $U$, if $j\leq k-3$, we can rename the vertices in $U$ so that $u_{k-1}x_{k-1}$ is not a cut edge.
Let $\LL'$ be obtained from $\LL\vert_{G-U}$ by decreasing  $\ell(x_{k-1},\mu_{k-1})$ by $1$.

{\bf Case 1:}  $G'$ has an $(H',\LL')$-coloring, say
$(\phi',\sigma')$. 
 As $d(u_1) = \ell(u_1) = k-2$,  $\LL_{\phi'}(u_1)=\LL(u_1)$, so by Observation~\ref{obs:extend22}, $\phi'(x_{k-1})=\mu_{k-1}$.

If $(u_{k-1},\gamma_{k-1})$ has no neighbor
in ${\rm Supp}(u_1)$, then we place 
$u_{k-1}$ into $\sigma'$ before
$x_{k-1}$ and append $U-u_{k-1}$
at the end of $\sigma'$ in the order $u_{k-2},u_{k-3},\ldots,u_1$. Now, we let $\phi'(u_{k-1})=\gamma_{k-1}$ and color $U-u_{k-1}$ greedily from $\LL_{\phi'}$ following the order in $\sigma'$. Since 
$(u_{k-1},\gamma_{k-1})$ does not forbid any node in $\LL(u_1)$, we can finish the coloring, a contradiction.

{ On the other hand, suppose that
$(u_{k-1},\gamma_{k-1})$ has a neighbor in $\supp(u_1)$.
Then, by Observation \ref{obs:extend22},
$(u_{k-1},\gamma_{k-1})\sim (u_{1},\gamma_{k-1})$,
and $\ell(u_1,\gamma_{k-1})>0$.}
This means that $\ell(u_{k-1},\gamma_{k-1}) = 2$,
but 
$\ell_{\phi'}(u_i,\gamma_{k-1})=1$
for every  $1\leq i\leq k-1$.
 Then we place 
$u_1$ and $u_{k-1}$ (in this order)
into $\sigma'$ before $x_{k-1}$ and append $U\setminus\{u_1,u_{k-1}\}$
at the end of $\sigma'$ in any order. After that, we let $\phi'(u_{1})=\phi'(u_{k-1})=\gamma_{k-1}$ and color $U\setminus\{u_1,u_{k-1}\}$ greedily from $\LL_{\phi'}$. Since after coloring $G'$, $u_1$, and $u_{k-1}$,
the total remaining capacity of
all nodes in $H(u_i)$ is at least $k-3$
for each $2\leq i\leq k-2$, we can extend $\phi'$ to an $(H,\LL)$-coloring of $G$, a contradiction.

{\bf Case 2:}  $G'$ has no $(H',\LL')$-coloring. Let $G''$ be an $(H',\LL')$-minimal subgraph of $G'$, and let  $W=V(G')$. Since $G$ itself is $(H,\LL)$-minimal, $x_{k-1}\in W$. Since $G$ does not contain $K_k$ and   $\ell'(x_{k-1}) < k-1$, by the minimality of $G$, $\rho_{G',\ell'}(W)\leq -2$. Hence $\rho_{G,\ell}(W)\leq \lambda$.
Then  by Lemma~\ref{lem:j(k-2)}, $|E(W,V(G)\setminus W)|\leq 1$, and
$x_{k-1}u_{k-1}\in E(W,V(G)\setminus W)$. So, $x_{k-1}u_{k-1}$ is a cut edge; thus by the choice of $u_{k-1}$, $j=k-2$. In this case,
$$\rho_{G, \ell}(W\cup U)\leq \lambda+(k-2)((k-2)\lambda-1)+((k-1)\lambda+1)-\left ( \frac{k^2-3k+4}{2} \right ) 2 \lambda=3-k\leq -2,
$$
a contradiction.
\end{proof}

\begin{lemma}
     Suppose that $\ell_{\phi}(u_j,\gamma_j) = 0$ for some $j \in [k-1]$.
    Then, for all $i \in [k-1] \setminus \{j\}$, $(u_j,\gamma_j) \sim (u_i,\gamma_i)$, and in particular, $\ell_{\phi}(u_i,\gamma_i) = 0$.
\end{lemma}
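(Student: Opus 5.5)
Throughout, Lemma~\ref{nl3} lets me assume $\ell(u_i)=k-1$ for every $i$. Since the vertices of $U$ are low, this gives $d(u_i)=k-1$, so $j=0$ and every $u_i$ has exactly one neighbor $x_i$ outside $U$. Hence $\ell_\phi(u_i)=k-2$ for every $i$, and the only node of $H(u_i)$ whose capacity drops is $(u_i,\gamma_i)$, which drops by exactly one.

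\emph{Step 1 (bookkeeping).} If $\ell_\phi(u_j,\gamma_j)=0$, then $\ell(u_j,\gamma_j)=1$ and $\gamma_j\notin[t]$. By Observation~\ref{obs:extend22}, for $c\in[t]$ the $H$-edges between $U$-nodes of color $c$ form the cliques described there, and the edges between $H(u_j)$ and $H(u_i)$ form matchings. So any neighbor of $(u_j,\gamma_j)$ in $H(u_i)$ is a node outside ${\rm Supp}_{\LL_\phi}(u_i)$. The remaining capacity of such a node is $0$, so it is either $(u_i,\gamma_i)$ with $\ell_\phi(u_i,\gamma_i)=0$, or a node with $\ell=0$. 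Consequently, if $(u_j,\gamma_j)\sim(u_i,\gamma_i)$ then $\ell_\phi(u_i,\gamma_i)=0$. This gives the ``in particular'' part once adjacency is proved.

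\emph{Step 2 (reduction trick, as in Lemma~\ref{nl3}).} Suppose $(u_j,\gamma_j)\not\sim(u_i,\gamma_i)$ for some $i\neq j$. Let $\LL'$ be obtained from $\LL|_{G-U}$ by lowering $\ell(x_j,\mu_j)$ by one, and suppose $G-U$ has an $(H,\LL')$-coloring $(\phi',\sigma')$.
\begin{itemize}
\item If $\phi'(x_j)\neq\mu_j$, then $u_j$ keeps total capacity $k-1>d_{G[U]}(u_j)=k-2$. Lemma~\ref{lem:acyc-cor} applied to $G[U]$ with $\LL_{\phi'}$ then extends the coloring, a contradiction.
\item If $\phi'(x_j)=\mu_j$, place $u_j$, colored $\gamma_j$, immediately before $x_j$ in $\sigma'$. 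The node $(u_j,\gamma_j)$ has capacity $1$ and no earlier neighbors. The node $(x_j,\mu_j)$ can absorb the extra back-edge because its capacity was lowered. By Step 1, $(u_j,\gamma_j)$ is not adjacent to any node of positive remaining capacity, so the vertices of $U-u_j$ lose nothing. Each of them has remaining capacity $k-2$ against degree $k-3$ in $G[U-u_j]=K_{k-2}$, so Lemma~\ref{lem:acyc-cor} finishes the coloring, a contradiction.
\end{itemize}
So $G-U$ has no $(H,\LL')$-coloring. As in Case~2 of Lemma~\ref{nl3}, minimality and Lemma~\ref{lem:j(k-2)} (via $\rho_{G,\ell}(W)\le\lambda$) force $u_jx_j$ to be a cut edge.

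\emph{Step 3 (the cut-edge case; main obstacle).} Now $u_jx_j$ is the unique cut edge at $U$, so $u_ix_i$ is not a cut edge. By Lemma~\ref{lem:cut-edge}, $(x_j,\mu_j)$ is tight in every coloring of its side, so $\phi'(x_j)=\mu_j$ is forced for any coloring of $G-U$. I would repeat Step 2 with the roles switched: lower $\ell(x_i,\mu_i)$ by one. Since $u_ix_i$ is not a cut edge, the same potential argument yields an $(H,\LL')$-coloring $(\phi',\sigma')$.
\begin{itemize}
\item If $\phi'(x_i)\neq\mu_i$, then $u_i$ has excess capacity and we finish as before.
\item If $\phi'(x_i)=\mu_i$, insert $u_i$ with color $\gamma_i$ just before $x_i$, using also the other $U$-vertices of color $\gamma_i$ if $\ell(u_i,\gamma_i)=2$, exactly as in Case~1 of Lemma~\ref{nl3}. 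Then color the rest of $U$ last.
\end{itemize}
The point to verify carefully is that the non-adjacency $(u_j,\gamma_j)\not\sim(u_i,\gamma_i)$ leaves enough capacity. Since $(u_j,\gamma_j)$ is already dead, $u_j$ must not lose any further capacity from $(u_i,\gamma_i)$, so after placing $u_i$ every remaining vertex of $U$ has a degree-cover with at least one strict surplus, or a non-Gallai configuration. Lemma~\ref{lem:acyc-cor} (or Theorem~\ref{KSSt}) then completes an $(H,\LL)$-coloring of $G$, contradicting minimality. Handling $\ell(u_i,\gamma_i)=2$ is where I expect the most care to be needed.
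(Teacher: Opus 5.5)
There is a genuine gap in the first bullet of your Step 2. (Step 1 is fine and matches the paper's closing remark.) You claim that if $\phi'(x_j)\neq\mu_j$, then $u_j$ keeps total capacity $k-1$. This does not follow. $E_H(H(x_j),H(u_j))$ is a (maximal) matching, so $(x_j,\phi'(x_j))$ may be matched to another node $(u_j,c)$ with $\ell(u_j,c)>0$. In that case $\ell_{\phi'}(u_j)=k-2$ and Lemma~\ref{lem:acyc-cor} does not apply. Observation~\ref{obs:extend22} gives no contradiction either, because the supports under $\phi'$ need not be those under $\phi$.

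A warning sign is that Step 2 never uses the hypothesis $(u_j,\gamma_j)\not\sim(u_i,\gamma_i)$, and this is exactly where it is needed. Used there, it removes the case split. By Step 1, every neighbor of $(u_j,\gamma_j)$ in $H(u_i)$ other than $(u_i,\gamma_i)$ has $\ell=0$. So under non-adjacency, coloring $u_j$ by $\gamma_j$ takes no capacity from $u_i$, whatever $\phi'$ is. Insert $u_j$ with color $\gamma_j$ immediately before $x_j$. This is legal because $\ell(u_j,\gamma_j)=1$: if $\phi'(x_j)=\mu_j$, the lowered capacity of $(x_j,\mu_j)$ absorbs the new back-edge, and otherwise no new edge arises. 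Then $u_i$ keeps at least $k-2>k-3=d_{G[U-u_j]}(u_i)$, and every other vertex of $U-u_j$ keeps at least $k-3$. Lemma~\ref{lem:acyc-cor} then colors $U-u_j$ last. This is the paper's argument for the non-cut-edge case.

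Step 3 repeats the same faulty inference in its first bullet ($\phi'(x_i)\neq\mu_i$ does not give $u_i$ excess capacity). Here the correct reason is pinning. Since $\phi'(x_j)=\mu_j$ is forced, $\LL_{\phi'}(u_j)=\LL_\phi(u_j)$. By Observation~\ref{obs:extend22}, either the coloring extends to $U$ directly, or $\LL_{\phi'}(u)=\LL_\phi(u)$ for all $u\in U$, which forces $\phi'(x_i)=\mu_i$.

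Your second bullet is left unverified, and it misplaces the role of non-adjacency: the paper finishes this case without it.
\begin{itemize}
\item If $\ell(u_i,\gamma_i)=1$, color $u_i$ alone by $\gamma_i$. By the Step 1 argument it meets only nodes of zero remaining capacity, so $U-u_i$ has $k-2$ against degree $k-3$.
\item If $\ell(u_i,\gamma_i)=2$, then $\gamma_i\in[t]$ and $\ell(u_j,\gamma_i)=\ell_\phi(u_j,\gamma_i)=1$. Place $u_j$ and then $u_i$, both colored $\gamma_i$, just before $x_i$. The node $(u_j,\gamma_i)$ has no back-neighbor, since $(x_j,\mu_j)$ is matched to $(u_j,\gamma_j)\neq(u_j,\gamma_i)$. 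The remaining $K_{k-3}$ then keeps capacity at least $k-3$ against degree $k-4$.
\end{itemize}
Only one extra vertex can share the color $\gamma_i$. A third node of that $H$-clique would have two back-neighbors, while all capacities are at most $2$.
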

 
\begin{proof}
Suppose
that $\ell_{\phi}(u_j,\gamma_j) = 0$.
As $\ell(u_j) = k-1$ for all $j$, we may assume that $j=1$.
 If $(u_1,\gamma_1)\not\sim (u_i,\gamma_i)$
 for some $i$, say 
 $(u_1,\gamma_1)\not\sim (u_{k-1},\gamma_{k-1})$, then we
 consider $(G',H_1,\LL_1)$, 
 where $G' = G-U$,
 $H_1=H[G']$ and $\LL_1$ is obtained from
 $\LL\vert_{G'}$ by decreasing   $\ell(x_1,\mu_1)$  by $1$.

 If $G'$ has  an 
$(H_1,\LL_1)$-coloring $(\phi_1,\sigma_1)$, then 
Claims (i) and (ii) of  Observation~\ref{obs:extend22} hold for  $(\phi_1,\sigma_1)$. 
In this case, we insert $u_1$ before
$x_1$ in $\sigma_1$,  vertices $u_2,\ldots,u_{k-1}$ at the end of 
$\sigma_1$ (in this order), and we 
let
 $\phi_1(u_1)=\gamma_1$. Then,
by (ii) and   our assumption, $\phi_1(u_1)$
 does not
 reduce the capacity of any node in ${\rm Supp}_{\LL_{\phi}}(u_{k-1})$.
 So, we can color
$u_2,\ldots,u_{k-1}$ (in this order) greedily from their lists. This produces an 
$(H,\LL)$-coloring of $G$, a contradiction.

 Therefore, $G'$ has no $(H_1,\LL_1)$-coloring. Thus,
$G'$ contains an $(H_1,\LL_1 )$-minimal subgraph, say with vertex set $W$. 
 Since $G$ does not contain $K_k$ and 
  $\ell_1(x_1) < k-1$, by the minimality of $G$,
$\rho_{G',\ell_1}(W)\leq -2$. Then,
by Observation~\ref{obs:geq-1}, $x_1\in W$ and  $\rho_{G,\ell}(W)\leq -2 + (\lambda+2) = \lambda < 2(\lambda-2)+1$.
Now,
by Lemma~\ref{lem:j(k-2)}, $\|W,\overline W \| \leq 1$, and $x_1u_1\in E_G(W,\overline W)$. Thus, $u_1=x^*$ and $x_1=y^*$.

Now, let $G_y$  denote the component of $G-U$ containing $y^*$,
and let $G_x=(G-U)\setminus G_y$. Let 
$(\phi_y,\sigma_y)$ denote the restriction of $(\phi,\sigma)$ to  $G_y$. Let $H_2=H[G_x]$ and $\LL_2$ be obtained from 
 $\LL[{G_x}] $ by decreasing the capacity of $(x_2,\mu_2)$ by $1$. Since $u_2x_2$ is not a cut edge, the argument of the previous paragraph implies that $G_x$ has an $(H_2,\LL_2)$-coloring $(\phi_2,\sigma_2)$.

If $\LL_{\phi_2}(u)\neq \LL_\phi(u)$ 
for at least one $u\in U-u_1$, then we can extend
$\phi\vert_{G_y}\cup \phi_2$ to $U$ by placing $U$ after $\sigma_y\sigma_2$ and using Observation~\ref{obs:extend22} together with the fact that the list of $u_1$ after coloring $x_1$ with $\mu_1$ is $\LL_\phi(u_1)$. So, suppose $\LL_{\phi_2}(u)= \LL_\phi(u)$ for all $u\in U-u_1$. Then, $\phi_2(x_2)=\mu_2$.
We now construct an $(H,\LL)$-coloring of $G$ as follows.
Consider the vertex ordering obtained from $\sigma_y\sigma_2$ by placing $(u_1,u_2)$ in front of $x_2$ and appending $U\setminus \{u_1,u_2\}$ at the end. Use $\phi_y$ on   $G_y$ and $\phi_2$ on  $G_x$  to produce a partial $(H,\LL)$-coloring $\phi'$.
If  $\ell(u_2,\gamma_2)=2$, then $\ell_{\phi'}(u_1,\gamma_2)\geq 1$,
and we let $\phi'(u_1)=\phi'(u_2)=\gamma_2$, and greedily color $U\setminus \{u_1,u_2\}$, since 
 $\ell_{\phi'}(u) \geq k-3$ for each $u \in U \setminus \{u_1,u_2\}$.
 On the other hand,
if  $\ell(u_2,\gamma_2)=1$, then  we let  $\phi'(u_2)=\gamma_2$, and greedily color $U\setminus \{u_2\}$, since
 $\ell_{\phi'}(u) \geq k-2$ for each $u \in U \setminus \{u_2\}$.
 In both cases, we find an $(H,\LL)$-coloring of $G$, giving a contradiction that proves the first part of the claim.

Finally, by Observation \ref{obs:extend22},
if $\ell_{\phi}(u_1,\gamma_1) = 0$ and $(u_i,\gamma_i) \sim (u_1,\gamma_1)$, 
then $\ell_{\phi}(u_i,\gamma_i) = 0$.
This completes the proof of the lemma.
 \end{proof}

Thus,
\begin{equation}\label{gamma5}
 \parbox{14.5cm}{if 
  $\ell_{\phi}(u_i,\gamma_i) = 0$
 for some $i\in [k-1]$, then 
 for all $j\in [k-1]\setminus \{i\}$,   $(u_j,\gamma_j)\sim (u_i,\gamma_i)$, and in particular, 
  $\ell_{\phi}(u_j,\gamma_j) = 0$}
\end{equation}

Together with Observation \ref{obs:extend22}, this yields that

 \begin{equation}
 \label{nnl}
 \parbox{14.5cm}{ $s(u_1)=s(u_2)=\ldots =s(u_{k-1})=:s$,
 and
we can rename the colors in the lists so that for all $i\in [k-1]$,
\begin{enumerate}[(a)]
\item 
${\rm Supp}(u_i)=[s]$;
\item 
$(u_i,\alpha)\sim (u_{i'},\alpha)$ for all $i'\in [k-1]\setminus \{i\}$ and $\alpha\in [s]$;
\item  if 
 $\ell_{\phi}(u_i,\gamma_i) = 0$
for some $i\in [k-1]$, then 
for each $\alpha\in [s]$,  
the values
$\ell(u_i,\alpha)$ are the same for all $i\in [k-1]$.
\end{enumerate}
}
\end{equation}

The following two lemmas  contradict to each other and hence contradict the existence of $U$.

\begin{lemma}\label{2a} 
 For each $1\leq i<i'\leq k-1$ with $x_{i'}\neq x_i$,
$F(B,x_i,x_{i'})$ is not  $(H,\LL) $-colorable.
\end{lemma}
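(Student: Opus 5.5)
We argue by contradiction. Here $F(B,x_i,x_{i'})$ carries the cover $H'$ obtained from $H[G-U]$ by adding the edge $(x_i,\mu_i)(x_{i'},\mu_{i'})$, with capacities $\LL$ unchanged. Suppose $F(B,x_i,x_{i'})$ has an $(H',\LL)$-coloring $(\psi,\tau)$; we will turn it into an $(H,\LL)$-coloring of $G$. First, by Lemma~\ref{nl3} every $u_m\in U$ has $\ell(u_m)=k-1$. Since the vertices of $U$ are low, $d(u_m)=k-1$ for all $m$. So $j=0$, and every $u_m$ has exactly one outside neighbor $x_m$.

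\textbf{Case 1: $\psi(x_i)\neq\mu_i$ (or symmetrically $\psi(x_{i'})\neq\mu_{i'}$).} Removing the added edge only lowers left-degrees, so $(\psi,\tau)$ restricts to an $(H,\LL)$-coloring of $G-U$. Under this coloring $\LL_\psi(u_i)=\LL(u_i)$, so $\ell_\psi(u_i)=k-1>k-2=d_{G[U]}(u_i)$. Every other $u_m$ has $\ell_\psi(u_m)\ge k-2=d_{G[U]}(u_m)$. Since $G[U]=K_{k-1}$ is connected, Lemma~\ref{lem:acyc-cor} extends the coloring to $U$ by appending $U$ at the end of $\tau$. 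This contradicts the $(H,\LL)$-minimality of $G$.

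\textbf{Case 2: $\psi(x_i)=\mu_i$ and $\psi(x_{i'})=\mu_{i'}$.} Without loss of generality $x_i$ precedes $x_{i'}$ in $\tau$. Then $(x_{i'},\mu_{i'})$ has the left neighbor $(x_i,\mu_i)$ through the new edge, so in $G-U$ we get $d^-(x_{i'},\mu_{i'})\le \ell(x_{i'},\mu_{i'})-2$.
\begin{enumerate}[(1)]
\item Insert $u_{i'}$ into $\tau$ immediately before $x_{i'}$ and set $\psi(u_{i'})=\gamma_{i'}$. This is legal because $u_{i'}$ has no earlier neighbor and $\ell(u_{i'},\gamma_{i'})\ge1$.
\item The node $(x_{i'},\mu_{i'})$ now gains the left neighbor $(u_{i'},\gamma_{i'})$. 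Its left-degree in $G$ is still below its capacity, since it was at most $\ell-2$ in $G-U$. Also, $x_{i'}$ no longer reduces the capacity of $u_{i'}$, because $u_{i'}$ now precedes it.
\item Append $U\setminus\{u_{i'}\}$ at the end. Each $u_m$ there loses at most one unit from $x_m$ and at most one from $u_{i'}$, so it keeps total capacity at least $k-3=d_{K_{k-2}}(u_m)$. Thus we have a degree-cover of the clique on $U\setminus\{u_{i'}\}$.
\end{enumerate}

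The main obstacle is finishing step (3), because this degree-cover could be tight everywhere. For that I would compare with $\LL_\phi$ and use Observation~\ref{obs:extend22} and \eqref{nnl}. The coloring $\psi|_{G-U}$ plays the role of $\phi$, so all $H(u_m)$ have the same support $[s]$, with clique components of equal capacities. Within this normalized structure, the node $(u_{i'},\gamma_{i'})$ was the only node of $H(u_{i'})$ reduced by $x_{i'}$. Now it has been used with full capacity and zero left-degree, so it is slack.

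The plan is then to apply Theorem~\ref{KSSt}. If the extension to $U\setminus\{u_{i'}\}$ failed, the remaining nodes would have to form disjoint $K_{k-2}$'s with uniform capacities. Tracing back through \eqref{nnl}(b),(c), this would force the capacity of color $\gamma_{i'}$ on some $u_m$ to be decreased by both $u_{i'}$ and $x_m$ in an incompatible way. To break the tie I would use the slackness of $(u_{i'},\gamma_{i'})$. If some $u_m$ has $\ell(u_m,\gamma_{i'})=2$, I would move that $u_m$ right after $u_{i'}$ and color it $\gamma_{i'}$ as well, as in the proof of Lemma~\ref{nl3}. If no such $u_m$ exists, color $\gamma_{i'}$ forbids nothing extra, which leaves a surplus vertex, and Lemma~\ref{lem:acyc-cor} applies. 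Either way we obtain an $(H,\LL)$-coloring of $G$, a contradiction.
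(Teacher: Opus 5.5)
\textbf{The gap is in your Case~1, and your one added DP-edge gives you nothing to repair it with.}

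The claim $\LL_\psi(u_i)=\LL(u_i)$ confuses ``$\psi(x_i)\neq\mu_i$'' with ``$x_i$ does not reduce $u_i$''. The node $(x_i,\psi(x_i))$ is matched in $H$ to some other node $(u_i,\alpha)$ with $\alpha\neq\gamma_i$, and that node in general has positive capacity. So $\ell_\psi(u_i)=k-2$, there is no surplus vertex, and Lemma~\ref{lem:acyc-cor} does not apply.

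More fundamentally, once $\psi$ avoids $(x_i,\mu_i)$ or $(x_{i'},\mu_{i'})$, your added edge is inactive. Then $\psi$ is just an arbitrary $(H,\LL)$-coloring of $G-U$, and that yields no contradiction. For $K_{k-1}$ nothing proved forces every coloring of $G-U$ to use $\mu_i$ at $x_i$; contrast \eqref{ucu-2} in Lemma~\ref{l42-2}. Instead, \eqref{nnl} holds relative to each coloring separately, and the reduced class may change from one coloring to another (e.g.\ when all classes have capacity~$1$).

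This is why the paper does not add a single edge. After renaming colors as in \eqref{nnl}, it adds the whole matching $\{(x_1,\alpha')(x_2,\alpha'):\alpha\in[s]\}$, where $(x_m,\alpha')$ is the neighbor of $(u_m,\alpha)$. Then, for any coloring $\phi_F$ of $F$, whenever the reduced nodes of $u_1,u_2$ lie in the same class, the added edge joins $\phi_F(x_1)$ and $\phi_F(x_2)$. This gives slack to whichever of $x_1,x_2$ comes later. Moreover, \eqref{nnl}(b),(c) force the same class whenever one of these reduced nodes has capacity~$1$.

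\textbf{Your Case~2 (edge active) is salvageable, but the finish is wrong.}

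Suppose $\ell(u_{i'},\gamma_{i'})=2$. By the uniformity in Observation~\ref{obs:extend22} applied to $\psi$, we have $\ell_\psi(u_m,\gamma_{i'})=1$ for every $m$. So coloring $u_{i'}$ alone removes one unit at every other $u_m$ and leaves exactly the Theorem~\ref{KSSt} obstruction on $K_{k-2}$. Your claim that $\gamma_{i'}$ then ``forbids nothing extra'' is false.

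Inserting a $u_m$ with $\ell(u_m,\gamma_{i'})=2$ \emph{after} $u_{i'}$ also fails. In that case $(x_m,\psi(x_m))\sim(u_m,\gamma_{i'})$. If $x_m$ precedes $u_m$, then $(u_m,\gamma_{i'})$ has two left neighbours; otherwise $x_m$ may be overloaded.

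The right move, as in the paper and in Lemma~\ref{nl3}, is to put $u_i$ and then $u_{i'}$ immediately before $x_{i'}$, both colored $\gamma_{i'}$:
\begin{itemize}
\item If $\gamma_i=\gamma_{i'}$, then $(u_i,\gamma_{i'})$ has capacity $2$ and its only left neighbour is $(x_i,\mu_i)$.
\item If $\gamma_i\neq\gamma_{i'}$, then $(u_i,\gamma_{i'})$ has capacity $1$ and no left neighbour.
\item The node $(u_{i'},\gamma_{i'})$ has one left neighbour, and $x_{i'}$ absorbs $u_{i'}$ thanks to its slack.
\item Each remaining $u_m$ keeps capacity $k-3$ on a $K_{k-3}$, so greedy coloring finishes.
\end{itemize}
In the capacity-$1$ case, coloring $u_{i'}$ alone already leaves a surplus.

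Note also that the paper's own capacity-$2$ step (coloring $u_1,u_2$ with $\gamma_2$ before $x_2$) needs the added edge to be active, i.e.\ $\gamma_1=\gamma_2$. So the subcase where $\phi_F$ reduces $u_1$ and $u_2$ in two different capacity-$2$ classes is the same inactive-edge problem. It deserves separate attention if you rework the argument.
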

\begin{proof} Suppose $F:=F(B,x_1,x_2)$ is   $(H,\LL)$-colorable. Rename the nodes in $\LL(U)$ so that 
the claims (a)-(c) in~\eqref{nnl} hold. 
For every $i\in [k-1]$ and $\alpha\in [s]$, denote by
$(x_i,\alpha')$ the node in $\LL(x_i)$ adjacent to 
$(u_i,\alpha)$ (if such a node exists).
Let the cover $H_F$ of $F$ be obtained from $H\vert_{G-U}$ by adding the matching $\{(x_1,\alpha')(x_2, \alpha'): \alpha\in [s]\}$, where 
the 
edge $(x_1,\alpha')(x_2,\alpha')$ exists if
and only if 
both $(x_1,\alpha') $ and $ (x_2,\alpha')$ exist.

By assumption, $F$ has an $H_F$-coloring $(\phi_F,\sigma_F)$.
  Then,
  Claims (i) and (ii) of Observation~\ref{obs:extend22} hold for $\LL_{\phi_F}[U]$. 
    For every $i\in [k-1]$, let $(u_i,\gamma_i)$ be the node in ${\rm Supp}(u_i)$ such that $\ell_{\phi_F}(u_i,\gamma_i)<\ell(u_i,\gamma_i)$.

By symmetry, we may assume that $x_2$ appears after $x_1$ in  $\sigma_F$. In this case, we put $u_1$ and $u_2$ (in this order) before
$x_2$ in $\sigma_F$
and 
 append
all vertices in $U\setminus\{u_1,u_2\}$ at the end of $\sigma_F$.
If $\ell(u_1,\gamma_1)=1$ or $\ell(u_2,\gamma_2)=1$, then  by~\eqref{nnl}(b) and (c),
 $\ell(u_1,\gamma_1) = \ell(u_2,\gamma_2)=1$  and 
$\gamma_1=\gamma_2$. 
 In this case, 
 { we assign $\phi_F(u_2) = \gamma_2$}
 and { then color} 
 vertices in $U \setminus \{u_2\}$ greedily from $\LL_{\phi_F}[U]$.
 On the other hand, if  $\ell(u_1,\gamma_1)=\ell(u_2,\gamma_2)=2$, then
  we color $u_1$ and
  $u_2$
 with $\gamma_2$,  and vertices in $U\setminus\{u_1,u_2\}$ greedily  from $[s]\setminus \{\gamma_2\}$.
 In both cases, we obtain an $(H,\LL)$-coloring of $G$, a contradiction.
\end{proof}

\begin{lemma}\label{nl2} 
There are distinct $x_i,x_{i'}\in X$ such that $F(B,x_i,x_{i'})$ is   $(H,\LL)$-colorable.
\end{lemma}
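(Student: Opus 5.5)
The plan is to argue by contradiction. Suppose that for every pair of distinct $x_i,x_{i'}\in X$ the multigraph $F(B,x_i,x_{i'})$, with the cover $H_F$ built as in the proof of Lemma~\ref{2a}, is not $(H,\LL)$-colorable. Such a pair exists. By Lemma~\ref{nl3} every $u_i$ satisfies $\ell(u_i)=k-1$, and since $u_i$ is low, $d(u_i)=k-1$. Hence $j=0$, each $u_i$ has exactly one outside neighbor $x_i$, and $\|U,\overline U\|=k-1\ge 4$. If all the $x_i$ coincided, we would have $U\cup\{x_1\}\supseteq K_k$, which contradicts~\eqref{minG}(d). So $|X|\ge 2$.

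Fix distinct $x_1,x_2\in X$. Since $F:=F(B,x_1,x_2)$ is smaller than $G$ and not colorable, the minimality in~\eqref{minG}(b) gives some $W\subseteq V(G)\setminus U$ with $\{x_1,x_2\}\subseteq W$ such that either $\rho_{F,\ell}(W)\le -2$ or $(F[W],\ell)$ is exceptional. Here $W$ must contain both $x_1$ and $x_2$, because otherwise $G[W]$ would already be a non-colorable proper subgraph of $G$.

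\textbf{Step 1: rule out the exceptional case.} If $F[W]$ is a $K_k$ using the new edge, then $G[W]$ contains $K_k^-(x_1,x_2)$ with all capacities $k-1$ (or with one vertex of capacity $k-2$). Lemma~\ref{lem:KK-4} then forces $\|W,\overline W\|=4$ and $5\le k\le 6$. Now $U$ sends at least two edges into $W$ and $G-W$ is connected, and I would derive a contradiction by counting potential, using Lemma~\ref{nl} with the $K_{k-1}$ on $U$. For $k=5$ the double-cycle and $4K_2$ cases are handled the same way via Lemmas~\ref{lem:U-2Ct-subgraph}, \ref{l32} and~\ref{l42-2}.

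\textbf{Step 2: the potential case.} Here $\rho_{G,\ell}(W)\le 2\lambda-2$, since deleting the added edge raises the potential by $2\lambda$. Let $q=\|U,W\|\ge 2$. Then
\[
\rho(W\cup U)\le 2\lambda-2+(k-1)((k-1)\lambda+1)-2\lambda\binom{k-1}{2}-2\lambda q = 2\lambda-2+(k-1)(\lambda+1)-2\lambda q .
\]
With $q=k-1$ this is at most $2\lambda-2+(k-1)(1-\lambda)<-2$. If $q<k-1$, then $\|W\cup U,\overline{W\cup U}\|\ge k-1-q$, and Lemma~\ref{lem:j(k-2)} gives $\rho(W\cup U)\ge (k-1-q)(\lambda-2)+1$. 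Comparing these bounds, we need
\[
2\lambda-3+(k-1)(\lambda+1)-2\lambda q<(k-1-q)(\lambda-2),
\]
which rearranges to $\lambda(q-1)\ge\dots$; it holds once $q$ is at least about $2$ to $3$.

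\textbf{The main obstacle} is the case where $q$ is small, for instance $q=2$ with $X$ containing only two vertices each of large multiplicity. For this case I would choose the pair $x_i,x_{i'}$ cleverly, taking the two most popular vertices of $X$, so that $q=\|U,\{x_i,x_{i'}\}\|$ is as large as possible. If $|X|\ge 3$ and every pair fails, I would apply the bound to several pairs, take the union of the resulting sets $W$, and use submodularity of $\rho$ to get a set $W'\supseteq X$ that captures all $k-1$ edges. That returns us to the $q=k-1$ contradiction.
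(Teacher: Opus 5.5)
Your reduction to $j=0$, $\|U,\overline U\|=k-1$, $|X|\ge 2$ and $\{x_i,x_{i'}\}\subseteq W$ is fine, and so is the computation for $q=k-1$. There are, however, two genuine gaps, and the ``small $q$'' case you defer is actually the typical case whenever $|X|\ge 3$.

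\textbf{The exceptional case (Step 1).} It cannot be handled for an arbitrary fixed pair. Lemma~\ref{lem:KK-4} needs all capacities equal to $k-1$. Lemma~\ref{nl} concerns two $K_k^-$'s sharing a non-adjacent vertex, not the $K_{k-1}$ on $U$. Counting alone does not work either. For $k=5$ ($\lambda=6$), if $G[W]=K_5^-$ with $\|W,\overline W\|=4$ and $q=2$, then $\rho_{G,\ell}(U\cup W)=17+28-24=21$. Lemma~\ref{lem:j(k-2)} only demands at least $4(\lambda-2)+1=17$, so there is no contradiction.

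The paper avoids this by splitting on $|X|$. If $|X|=2$, then $q=k-1$ automatically. One can then use the uniform bound $\rho_{F,\ell}(W)\le k$, which also covers every exceptional configuration, to get $\rho_{G,\ell}(U\cup W)\le 2k-1-(k-3)\lambda\le -3$. If $|X|\ge 3$, Lemmas~\ref{nl}, \ref{l42-2}, \ref{l32} and~\ref{nl'} show that among three vertices of $X$ at most one pair can complete an exceptional subgraph. So the pair is \emph{chosen} so that $F'$ cannot be exceptional.

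\textbf{The potential case (Step 2).} Your comparison is equivalent to $q(\lambda+2)>2\lambda+3k-6$. This fails for $q=2$ for every $k\ge 5$, already needs $q\ge 4$ when $k=6$, and the threshold tends to $8$ as $k$ grows. Picking the most popular vertices does not help: when $|X|=k-1$, every pair has $q=2$.

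The submodular union is not substantiated. Chain sets $W_a$ with $\rho(W_a)\le 2\lambda-2$ and use $\rho\ge\lambda-1$ on the intersections (Observation~\ref{obs:geqk-1}). This yields only $\rho_{G,\ell}(U\cup W')\le(\lambda-1)(|X|-k+1)$. That is $0$ for $|X|=k-1$, which does not contradict $\rho_{G,\ell}(G)\ge -1$ when $U\cup W'=V(G)$. The argument also presupposes that no pair is exceptional.

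\textbf{The missing idea.} Apply Lemma~\ref{lem:j(k-2)} to $W$ itself rather than to $U\cup W$. Since $\rho_{G,\ell}(W)\le 2\lambda-1<3(\lambda-2)+1$, you get $\|W,\overline W\|\le 2$. So the boundary of $W$ is exactly $\{x_iu_i,x_{i'}u_{i'}\}$, and in particular $q=2$ is forced. These two edges form a matching, so they lie in no $C_2$ or $C_3$. They are also not both cut edges, since at most one cut edge is incident to $U$. This contradicts Lemma~\ref{lem:boundary-2}.
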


\begin{proof} Since $K_k\not\subset G$, $|X|\geq 2$.
Suppose $|X|=2$, say $X=\{x_1,x_2\}$. Consider $F:=F(B,x_1,x_{2})$.
If $F$ is not  $(H,\LL) $-colorable, then it contains an $(H,\LL)$-minimal subgraph $F'$. 
Since $G$ is $(H,\LL)$-minimal itself, $X\subset V(F')$.
By the minimality of $G$, $\rho_{F,\ell}(V(F'))\leq k$. 
So,
$\rho_{G, \ell}(V(F'))\leq k+2\lambda$. Then
$$\rho_{G,\ell}(U\cup V(F'))\leq k+2\lambda+ (k-1)(1+(k-1)\lambda)-\frac{k(k-1)}{2}2\lambda=2k-1-(k-3)\lambda.$$
Since $\lambda\geq 6$ and $k\geq 5$, this is at most
$2k-1-6(k-3)=17-4k\leq -3$, a contradiction.

Suppose now $|X|\geq 3$. By Lemma~\ref{nl}, we can choose distinct $x_i,x_{i'}\in X$ 
 such that $G\setminus U$ does not contain $K_k^-$ in which edge $x_ix_{i'}$ is missing. Moreover, if $k=5$ then by Lemmas~\ref{l42-2},~\ref{l32} and~\ref{nl'}, we can choose this pair so that in addition $G[\{x_i,x_{i'}\}]\neq 3K_2$, and for any $t$,
$G\setminus U$ does not contain a $2C_t^-$ in which the  edge $x_ix_{i'}$ has multiplicity $1$.

Again, consider $F:=F(B,x_i,x_{i'})$.
If $F$ is not  $(H,\LL) $-colorable, then it contains an $(H,\LL)$-minimal subgraph $F'$.
Since $G$ is $(H,\LL)$-minimal itself, $\{x_i,x_{i'}\}\subset V(F')$. Now, since $F'\neq K_k$ and for $k=5$, $F'$ is neither $4K_2$ nor a double cycle,
 $\rho_{F',\ell'}(V(F'))\leq -2$.
 So,
$\rho_{G,\ell}(V(F'))\leq 2\lambda-1$. Then
by Lemma~\ref{lem:j(k-2)}, $|E_G(V(F'),V(G)\setminus V(F'))|\leq 2$,
and $x_iu_i,x_{i'}u_{i'}\in E_G(V(F'),\overline{V(F')})$. Since the edges
$x_iu_i$ and $x_{i'}u_{i'}$ form a matching, this contradicts Lemma \ref{lem:boundary-2}.
\end{proof}

Lemmas~\ref{nl3},~\ref{2a} and~\ref{nl2} together prove that 
\begin{equation}\label{nok-1}
 \mbox{\it $G$ does not contain
a vertex subset $U\subseteq \Lambda$
with $G[U]=K_{k-1}$.}   
\end{equation}

The proof of the next lemma uses the ideas of the proof of Lemma~\ref{l42-2}.

\begin{lemma}\label{lem:cycle+clique}
    $G$ has  
    \begin{enumerate}[(a)]
        \item  no low $(k-2)$-regular blocks isomorphic to $qC_t$ for $q \in \{2,3\}$ and $t\geq 3$, and
        \item  no low $(k-2)$-regular blocks isomorphic to $2K_t$ for $t \in \{4,5,6\}$.
    \end{enumerate}
\end{lemma}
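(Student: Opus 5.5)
Let $B$ be a low $(k-2)$-regular block of one of the listed types, with vertex set $U$, and suppose it exists. First I would pin down $k$ and the boundary. For $B=qC_t$ every vertex of $B$ has degree $2q$ in $B$, so $2q=k-2$ when $B$ is all of the relevant low structure; thus $q=2$ gives $k=6$ and $q=3$ gives $k=8$. For $B=2K_t$ the degree is $2(t-1)=k-2$, so $t=4,5,6$ give $k=8,10,12$. Since $B$ is a block of $\Lambda$ and $\ell(u)=d(u)$ for $u\in U$, each $u\in U$ satisfies $d(u)=\ell(u)\ge d_B(u)=k-2$. So $\ell(u)\in\{k-2,k-1\}$, and $u$ has at most one edge leaving $U$. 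Let $X$ be the set of outside neighbors. Next I would compute $\rho_{G,\ell}(U)$ directly. For $qC_t$ it is $t\cdot(\ell\lambda\pm1)-2\lambda t-(2\lambda+1)(q-1)t$ summed appropriately, and for $2K_t$ there is a similar expression. Using $\lambda=\lceil (k^2-7)/(2k-7)\rceil$, one checks that if too many vertices have $\ell=k-2$ (no outside edge), then $\rho(U)\le -2$ or $\rho(U)<j(\lambda-2)+1$ against the boundary size, contradicting Lemma~\ref{lem:j(k-2)}. This is the analogue of Lemma~\ref{nl3}, and it should force $|X|$-edges to be large, in particular at least $2$, and should settle which vertices have $\ell=k-1$.

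Then I would mirror the argument of Lemma~\ref{l42-2} and of Lemmas~\ref{2a} and~\ref{nl2}. Fix an $(H,\LL)$-coloring $(\phi,\sigma)$ of $G-U$. By Lemma~\ref{lem:partial}, $U$ has no $(H,\LL_\phi)$-coloring. By Theorem~\ref{KSSt}(v),(vi),(iv), the residual cover on $B$ then has the rigid forms: blowups of $C_{4t}$ or of two $C_t$'s, $2C_t$ with capacity $2$, or disjoint $2K_t$/blowups. In each case every vertex $u_i$ with an outside edge has a unique node $(u_i,\gamma_i)$ whose capacity is reduced, and that node is adjacent to a node $(x_i,\mu_i)$. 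I would then derive the analogue of \eqref{nnl}: the reduced nodes are either all exhausted simultaneously or behave uniformly along components of $H_B$. Here I would use the same trick as before, reducing the capacity of $(x_i,\mu_i)$ by one and applying minimality. If $G-U$ remains colorable, we insert $u_i$ before $x_i$ and greedily color the rest of $U$. If it does not, we get a set $W$ with $\rho(W)\le\lambda$, so $u_ix_i$ is a cut edge, and we handle that exactly as in Lemma~\ref{nl3}.

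Finally, I would choose two vertices $x_i\ne x_{i'}$ in $X$ and form $F(B,x_i,x_{i'})$, joining the matched nodes $(x_i,\alpha')(x_{i'},\alpha')$. First I would show that a coloring of $F$ extends to $G$: put $u_i,u_{i'}$ just before the later of $x_i,x_{i'}$ and use the rigid structure of $H_B$ to finish greedily, as in Lemma~\ref{2a}. Conversely, if $F$ is not colorable, minimality gives $W\supseteq\{x_i,x_{i'}\}$ with $\rho_{G,\ell}(W)\le 2\lambda-1$. Then Lemma~\ref{lem:j(k-2)} forces $\|W,\overline W\|\le2$ with a matching boundary, contradicting Lemma~\ref{lem:boundary-2}. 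When $|X|=2$, the alternative is $\rho(W\cup U)\le -2$. The exceptional outcomes of Theorem~\ref{thm:v3} for $F'$ are excluded because $k\ge6$ here, and because of Lemma~\ref{nl}.

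I expect the main obstacle to be the extension step for cycles. In a $C_{4t}$-blowup, unlike the clique case, coloring $u_i$ with $\gamma_i$ affects only its two cycle-neighbors' nodes. The greedy completion therefore needs a careful ordering around the cycle, starting from $u_i$ and proceeding in both directions towards $u_{i'}$. For $2C_t$ with capacity-$2$ nodes one must additionally track slackness. I would handle this by treating $B$ minus the already colored vertices as a path-like GDP-tree and applying Lemma~\ref{lem:acyc-cor}, since one vertex then has strict surplus.
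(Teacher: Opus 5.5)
There is a genuine gap in how you close the argument.

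Your capacity-reduction step is the right tool: lower $\ell(x_i,\mu_i)$ by one; either $G-U$ stays colorable and the coloring extends, or minimality gives $W$ with $\rho_{G,\ell}(W)\le\lambda$, so $u_ix_i$ is a cut edge. Applied to every boundary edge, it shows that \emph{every} edge of $E_G(U,\overline U)$ is a cut edge.

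The trouble starts there, because ``handle that exactly as in Lemma~\ref{nl3}'' does not transfer. In Lemma~\ref{nl3} the cut-edge case closed because the clique then had a single outside edge ($j=k-2$). Here $b=\|U,\overline U\|\ge 2$ is forced. Using only $\rho_{G,\ell}(W)\le\lambda$ for each pendant piece gives $\rho_{G,\ell}(V(G))\le\rho_{G,\ell}(U)-b\lambda=-qt+2b$, which is $0$ when $q=2$ and $b=t$, so there is no contradiction.

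Your detour through $F(B,x_i,x_{i'})$ does not close it either. Its last step invokes Lemma~\ref{lem:boundary-2} against a two-edge matching boundary. But part~(\ref{item:C2C3}) of that lemma explicitly allows both boundary edges to be cut edges, which is exactly your situation: the union of two pendant pieces is an admissible $W$.

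The missing idea is Lemma~\ref{lem:lambda-1}. Since $b\ge 2$, the pendant components are disjoint and miss $U$, so each has potential exactly $\lambda-1$. Hence $\rho_{G,\ell}(V(G))=\rho_{G,\ell}(U)-b(\lambda+1)$. This equals $-qt+b\le-(q-1)t\le-3$ for $qC_t$, and $-\frac{t(t+1)}2+b\le-\frac{t(t-1)}2\le-6$ for $2K_t$, contradicting \eqref{minG}(c). With this, no analogue of \eqref{nnl}, Lemma~\ref{2a} or Lemma~\ref{nl2} is needed.

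The extension step is also not secured, for two reasons.

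First, you need that the reduced node $(u_i,\gamma_i)$ is the same for \emph{every} coloring of $G-U$. Only then must a coloring with the lowered capacity use $\mu_i$ at $x_i$, and every other $x_j$ be forced to $\mu_j$. This follows from Theorem~\ref{KSSt}(iv)--(vi), but you never state it.

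Second, ``color $u_i$ just before $x_i$, then color $U-u_i$ last via Lemma~\ref{lem:acyc-cor}'' fails when $\ell(u_i,\gamma_i)=2$. In that case $(u_i,\gamma_i)$ keeps residual capacity $1$ inside a positive residual component, for example a $2$-blowup of $C_{4s}$ when $U$ induces $2C_{2s}$. Coloring $u_i$ then removes $q$ units of capacity at each cycle-neighbour, and no vertex of $U-u_i$ has strict surplus.

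The clean fix is to color $U$ \emph{before} $G-U$:
\begin{enumerate}
\item For each boundary vertex $u_j\ne u_i$, set $\ell'(u_j,\gamma_j)=0$.
\item Order $U$ along the cycle with $u_i$ last. In the cycle case, left-degrees elsewhere are at most $q<2q-1\le\ell'(v)$, and at $u_i$ it is $2q<k-1$. In the clique case the bounds are $2(t-2)<k-3$ and $2(t-1)<k-1$.
\item Apply Lemma~\ref{lem:acyclic} to color $U$.
\item Let the spare unit at $(x_i,\mu_i)$ absorb the single edge $u_ix_i$. The forced colors at the other $x_j$ then meet no conflict, since each $u_j$ avoided $\gamma_j$.
\end{enumerate}
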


\begin{proof} We prove (a) and (b) simultaneously and call them \emph{the cycle version} and \emph{the clique version}, respectively. 

  Suppose that some $B\subseteq V(G)$ induces a $qC_t$ block for some $t\ge 3$ and $q \in \{2,3\}$ or a $2K_t$ block for some $t \in \{4,5,6\}$.
In the cycle version, $G[B]$ is $2q$-regular, and $k=2q+2$; in the clique version,
  $G[B]$ is $2(t-1)$-regular, and $k=2t$. 
    Let $b=\|B,\ol{B}\|$.
In the cycle version,   $\rho_{G,\ell}(B)=-qt+(\lambda+2)b$, and by Lemma~\ref{lem:j(k-2)},    
    $\rho_{G,\ell}(B)\geq b(\lambda-2)+1$, implying   $4b\ge qt+1\geq 7$, and hence
    $b\geq 2$.
In the clique version, $
        \rho_{G,\ell}(V(B)) = -t(1+\frac{t-1}{2}) + (\lambda+2) b$,
        and  by Lemma \ref{lem:j(k-2)}, 
        $\rho_{G,\ell}(V(B))\geq b(\lambda-2)+1,$ yielding
    $4b \geq \frac{t(t+1)}{2} + 1\geq \frac{5t}{2}+1 \geq 11$,
    and hence $b\geq 3$.

    We write $G' = G-B$.
    By the minimality of $G$,  $G'$ has an $(H,\LL)$-coloring $(\phi,\sigma)$.
    By Lemma \ref{lem:partial}, $G[B]$ has no $(H,\LL_{\phi})$-coloring.
Let $H'$ be the subgraph of $H[B]$ 
induced by the nodes $\alpha \in V(H[B])$ for which $\ell_{\phi}(\alpha) > 0$.

In the cycle version, 
  by Parts (v) and (vi) of Theorem \ref{KSSt},
every component $J$ of $H'$ is one of the following:
\begin{itemize}
    \item A $q$-blowup of $C_{2t}$ in which all nodes $\alpha$ satisfy $\ell_{\phi}(\alpha)=1$,
    \item A $q$-blowup of $C_t$ in which all nodes  $\alpha$ satisfy $\ell_{\phi}(\alpha)=1$,
    \item A $q$-blowup of $C_t$ in which all nodes $\alpha$ satisfy $\ell_{\phi}(\alpha)=2$,
    \item A $qC_t$ in which all nodes $\alpha$ satisfy $\ell_{\phi}(\alpha)=q$,
    \item A $qC_{2t}$ in which all nodes  $\alpha$ satisfy $\ell_{\phi}(\alpha)=q$.
\end{itemize}
In all of these cases, $J$ is $2q$-regular,
so $J$ has no neighbors in $H[B] \setminus J$.
Also, $\sum_{\alpha \in J}\ell_{\phi}(\alpha) \geq qt  \geq 2t$.

In the clique version, by Part (iv)  of Theorem \ref{KSSt},
every component $J$ of $H'$ is one of the following:
\begin{itemize}
    \item A $2$-blowup of $K_t$ in which all nodes $\alpha$ satisfy $\ell_{\phi}(\alpha)=1$,
    \item A $2$-blowup of $K_t$ in which all nodes $\alpha$ satisfy $\ell_{\phi}(\alpha)=2$,
    \item A $2K_t$ in which all nodes $\alpha$ satisfy $\ell_{\phi}(\alpha)=2$.
\end{itemize}
In all of these cases, $J$ is $2(t-1)$-regular,
so $J$ has no neighbors in $H[B] \setminus J$.
Also, $\sum_{\alpha \in J}\ell_{\phi}(\alpha) \geq 2t$.

Furthermore, for each $u \in B$ with $\ell(u)=k-1$, $\LL(u)$ is obtained from $\LL_{\phi}(u)$ by increasing the capacity of some node $(u,c_u)$ by $1$.
We consider cases based on $\ell(u,c_u)$:
\begin{itemize}
    \item If $\ell(u,c_u) = 1$, then as $\ell_{\phi}(u,c_u)=0$, $(u,c_u)$ does not belong to a component of $H'$.
    Therefore, $(u,c_u)$ is the unique node of $H(u)$ satisfying $\ell(u,c_u)>0$ and belonging to a component $J'$ of $H[B]$ for which $\sum_{\beta \in J'}\ell(\beta) \leq |B|=t <2 t$.
    \item If $\ell(u,c_u)=2$, then $(u,c_u)$ belongs to a component in $H[B]$ containing a second node $(u,c') \in H(u)$ satisfying $\ell(u,c') = 1$.
\end{itemize}
In both cases, Parts (iv)--(vi) of Theorem \ref{KSSt} imply that $\ell_{\psi}(u,c_u) < \ell(u,c_u)$ for every $(H,\LL)$-coloring $(\psi,\tau)$ of $G'$.
Thus, for every $u \in B$ with a neighbor $u' \in \overline B$,
 \begin{equation}\label{Lphi-5}
       \parbox{14.5cm}{there is a unique node $(u,c_u)\in H(u)$ such that
    $\ell_{\psi}(u,c_u) < \ell(u,c_u)$  for every $(H,\LL)$-coloring $(\psi,\tau)$ of $G'$.} 
    \end{equation}

We fix $u \in B$ with a neighbor $u' \in \overline B$, and we write $(u',c_{u'})$ for the unique neighbor of $(u,c_u)$ in $H(u')$.  By~\eqref{Lphi-5},
\begin{equation}\label{eqn:unique-color-5}
  \mbox{$\phi(u')=c_{u'}$  for every $(H,\LL)$-coloring $(\phi,\sigma)$ of $G'$.}  
\end{equation}

\begin{claim}
\label{claim:cut} {  $uu'$ is a cut edge.}
\end{claim}
\begin{proof}
    Let $\LL'$ be obtained from $\LL$ by reducing $\ell(u',c_{u'})$ by $1$.
    We claim that $G'$ has no $(H,\LL')$-coloring.
    Indeed, if $G'$ has an $(H,\LL')$-coloring $(\psi,\tau)$,
    then by \eqref{eqn:unique-color-5},
    $\psi(u') = c_{u'}$.
    We can extend $(\psi,\tau)$ to an $(H,\LL)$-coloring of $G$ as follows:

For each $v \in V(B) \setminus \{u\}$ with $\ell(v)=k-1$,
obtain $\LL'(v)$
from $\LL(v)$ 
by updating $\ell'(v,c_v) = 0$,
where $(v,c_v)$ is adjacent to the node $(v',c_{v'})$  defined in \eqref{eqn:unique-color-5}.
In the cycle version, $\ell'(v)\geq  2q-1>q$, and in the clique version, $\ell'(v)\geq k-3 > 2(t-2)$.
Now, we order $V(B)$ with $u$ on the right and in the cycle version let every other vertex have at most one left-neighbor. Observe that in the cycle version,
$u$ has left-degree $2q = k-2 < \ell(u) = \ell'(u)$ in $B$,
and each $v \in V(B) \setminus \{u\}$ has left-degree at most $q < 2q-1 \leq \ell'(v)$ in $B$.
Similarly, 
in the clique version, $u$ has left-degree $2(t-1) < k-1 = \ell'(u)$ in $B$,
and  each $v \in V(B) \setminus \{u\}$ has left-degree at most $2(t-2) < k-3 \leq \ell'(v)$.
Therefore $B$ has an $(H,\LL')$-coloring $(\psi',\tau')$.
We place $(\psi',\tau')$ before $(\psi,\tau)$ and claim that this new coloring $(\phi,\sigma)$ 
is an $(H,\LL)$-coloring of $G$.

Indeed, as $(u',c_{u'})$ was slack in $(\psi,\tau)$ and gained at most one left-neighbor, $d^-_{\phi,\sigma}(u',c_{u'}) < \ell(u',c_{u'})$.
For each $w \in V(G') \setminus \{u'\}$,
$d^-_{\phi,\sigma}(w,\psi(w)) =  d^-_{(\psi,\tau) }(w,\psi(w)) < \ell(w,\psi(w))$.
 In particular, for each $v' \in \overline B$ with a neighbor $v \in B$,
\eqref{eqn:unique-color-5}
and the fact that $\psi'(v) \neq c_v$ together imply that $(v,\psi'(v))$ and $(v',\psi(v'))$ are not adjacent.
Finally, as $(\psi',\tau')$ is an $(H,\LL)$-coloring of $B$, for each $v \in V(B)$, $d^-_{\phi,\sigma}(v,\phi(v)) < \ell(v,c)$.
Therefore, $(\phi,\sigma)$ is an $(H,\LL)$-coloring of $G$, a contradiction.
Therefore, $G'$ has no $(H,\LL')$-coloring.

    Hence, there exists a subset $U \subseteq V(G')$ for  which 
    $G'[U]$ has a spanning $(H,\LL' )$-minimal subgraph. 
    As $G$ is $(H,\LL)$-minimal and is a minimum counterexample, $U$ contains $u'$, and $\rho_{G',\ell'}(U) \leq -2$.
    Therefore, $\rho_{G',\ell}(U) = \rho_{G,\ell}(U) \leq -2+(\lambda+2) < 2(\lambda-2)+1$.
    Thus, by Lemma \ref{lem:j(k-2)},
    $\|U,\overline U\| = 1$.
    Since $uu' \in E_G(U,\overline U)$, it is a cut edge, as claimed.
\end{proof}
 By Claim~\ref{claim:cut}, every edge from $B$ to $\ol{B}$ 
     is a cut edge.
    As $b \geq 2$, we
    apply Lemma \ref{lem:lambda-1} to 
    every component attached to $B$ by a cut-edge, so that each such component has potential at most $\lambda-1$.
    Then in the  cycle version we have
    \[
        \rho_{G,\ell}(V(G)) \leq \rho_{G,\ell}(B) - b(\lambda+1) =-qt+(\lambda+2)b-(\lambda+1)=-qt+ b  \leq -t \leq -3,
    \]
    and in the cliques case
 \[
        \rho_{G,\ell}(V(G)) \leq \rho_{G,\ell}(B) - b(\lambda+1) = -t\left (1+\frac{t-1}{2}\right ) +  b      
        \leq - \frac 52 t + b \leq -\frac 32 t \leq -6.
    \]
In both cases we come to    
    a contradiction.
\end{proof}

\begin{lemma}\label{all}
    $G$ has no low $(k-2)$-regular blocks.
\end{lemma}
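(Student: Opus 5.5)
Suppose $B\subseteq\Lambda$ is a low $(k-2)$-regular block. The plan is to classify $B$ with Theorem~\ref{KSSt} and then remove each possible type using the results already proved, mainly~\eqref{nok-1} and Lemma~\ref{lem:cycle+clique}. Every vertex of $B$ is low, and by Corollary~\ref{cor:h-geq3} it satisfies $\ell\geq 3$.

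Fix an $(H,\LL)$-coloring $(\phi,\sigma)$ of $G-V(B)$. By Lemma~\ref{lem:partial}, $G[V(B)]$ has no $(H,\LL_\phi)$-coloring. We need $(H[B],\LL_\phi)$ to be a degree-cover of $G[V(B)]$. For each $v\in V(B)$ with $d_G(v)=\ell(v)$, each edge leaving $B$ lowers the capacity by at most $1$, so $\ell_\phi(v)\geq d_B(v)$. Hence Lemma~\ref{lem:ERT} and Theorem~\ref{KSSt} apply to the block $B$ itself, and $B$ is a multiple $qK_t$ or $qC_t$ with $d_B\equiv k-2$. The possible shapes are:
\begin{itemize}
\item[(1)] $q=1$: $B=K_{k-1}$, or $B=C_t$ with $k-2=2$, which is impossible since $k\geq5$;
\item[(2)] $B=qK_2$ with $q=k-2$;
\item[(3)] $B=qC_t$ with $t\geq3$ and $2q=k-2$;
\item[(4)] $B=qK_t$ with $q\geq 2$, $t\geq3$ and $q(t-1)=k-2$.
\end{itemize}
Case (1) with $B=K_{k-1}$ is exactly what~\eqref{nok-1} forbids.

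For case (2), we first use the constraint that the cover is $2$-bounded. Theorem~\ref{KSSt}(iv) forces $H_B$ to be a blowup of multiplicity $q$ or $q/2$. On the other hand, Observation~\ref{obs:6} bounds degrees at vertices incident to multiple edges, and Lemma~\ref{lem:j(k-2)} bounds $\rho(V(B))$ from below. Compute $\rho(V(B))=2((k-1)\lambda+1)-2\lambda-(2\lambda+1)(k-3)$, or the analogous expression with $\ell(v)<k-1$. Comparing this with the lower bound $b(\lambda-2)+1$ from Lemma~\ref{lem:j(k-2)}, where $b=\|V(B),\overline{V(B)}\|\leq 2$, pins down $k$. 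For $5\leq k\leq6$ this is exactly $(k-2)K_2$, which Lemma~\ref{l42-2} rules out. For $k\geq7$ the potential becomes too negative and contradicts Observation~\ref{obs:geq-1}.

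Cases (3) and (4) use the same approach. Compute $\rho(V(B))$ from the number of vertices and the multiplicities, and apply Lemma~\ref{lem:j(k-2)} with $b$ the number of boundary edges. Each vertex $v$ of $B$ has $d_G(v)-(k-2)=\ell(v)-(k-2)\leq 1$, so $b\leq |B|$. The potential of a $q$-fold multiple loses $(2\lambda+1)$ per extra parallel edge, so this inequality is violated unless $q$ and $t$ are small. For the cycles, $q\geq 4$ (that is, $k\geq 10$) is killed by the potential count, leaving $q\in\{2,3\}$. For the cliques with $q\geq 3$, or with $q=2$ and $t\geq7$, the count is likewise negative. That leaves $2K_3$ and $2K_t$ with $t\in\{4,5,6\}$, together with $2C_t$ and $3C_t$, which are handled by Lemma~\ref{lem:cycle+clique}. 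The shape $2K_3$ is $2C_3$, so it falls under the cycle version of Lemma~\ref{lem:cycle+clique}.

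The main obstacle is the arithmetic in these potential estimates. One must check, using $\lambda=\lceil (k^2-7)/(2k-7)\rceil$, that $b(\lambda-2)+1>\rho(V(B))$ in every non-listed case. I would write $\rho(V(B))=-|B|+(\lambda+2)b-m$, where $m$ is the total extra multiplicity weighted by $2\lambda+1$. The required inequality then reduces to $4b<|B|+m+1$ for the remaining values of $q$ and $t$. This can be verified with $b\leq|B|$ and $m\geq (q-1)\|B/q\|$, using $\lambda\geq 6$ in the small cases.
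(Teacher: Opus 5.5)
There is a genuine gap in your case (2), $B=(k-2)K_2$ on $\{u,v\}$. The subcases with $\ell(u)=k-2$ or $\ell(v)=k-2$ are fine: they are excluded by Lemma~\ref{lem:j(k-2)}, or by \eqref{minG}(c) if $B=G$. The problem is the main subcase $\ell(u)=\ell(v)=k-1$. There you have $b=2$ and $\rho(\{u,v\})=2\lambda+5-k$. Since $\lambda\ge\frac{k^2-7}{2k-7}>\frac k2+\frac74$, this is at least $9$ for every $k$. So it is never ``too negative'': Observation~\ref{obs:geq-1} gives nothing, and neither does Observation~\ref{obs:6}. Lemma~\ref{lem:j(k-2)} with $b=2$ requires $\rho\ge 2\lambda-3$, and $2\lambda+5-k$ violates this only when $k\ge 9$. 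For $k=7$ ($\lambda=6$) you get $10\ge 9$, and for $k=8$ ($\lambda=7$) you get $11\ge 11$. Lemma~\ref{l42-2} is stated only for $k\le 6$. So $5K_2$ and $6K_2$ are not excluded by anything you cite.

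The paper's own proof goes from the cycle case straight to $tK_r$ with $r\ge 4$ and does not treat this shape either, so an extra step is genuinely needed here. One way to close it: $S=\{u,v\}$ has $\|S,\overline S\|=2$ and $G[S]$ connected. If $G-S$ is connected, part (2) of Lemma~\ref{lem:boundary-2} forces $\rho(S)\in\{2\lambda-1,2\lambda\}$. This is impossible, since $2\lambda+5-k\le 2\lambda-2$ for $k\ge 7$. Otherwise both boundary edges are cut edges, and each of the two sides has potential at most $\lambda$ by Lemma~\ref{lem:k-1k}. Then $\rho(G)\le(2\lambda+5-k)+2\lambda-4\lambda=5-k\le -2$, contradicting \eqref{minG}(c).

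Your cases (1), (3) and (4) follow the paper's argument. The paper's $\sum_{v\in V(B)}\Phi(v)$ is exactly $\rho(V(B))-(\lambda-2)b=4b-|B|-m$, so your inequality $4b<|B|+m+1$ with $b\le|B|$ is the same count. Two small corrections are needed there. First, $m$ must be the \emph{unweighted} excess multiplicity: the factor $2\lambda+1$ cancels against the vertex potentials because $(k-2)|B|=2\|B\|$. Second, your claim that the count kills all cliques with $q\ge 3$ fails for $3K_3$, where $m=6<3|B|$. That block is $3C_3$, so it is covered by Lemma~\ref{lem:cycle+clique}(a).
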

\begin{proof} Suppose $G$ has a low $(k-2)$-regular block $B$. By Lemma~\ref{KSSt},
   $B$ is a multiple of a complete graph or a cycle. By~\eqref{nok-1}, $B\neq K_{k-1}$.
Since $k\geq 5$, $B$ is not a cycle. Thus, $B=t B'$, where $t\geq 2$ and $B'$ is a complete graph or a cycle. In particular, $k\geq 6$ or $B=3K_2$ and $k=5$. The latter is not true by Lemma~\ref{l42-2}.

 Suppose that $B = qC_t$.
By Lemma \ref{lem:cycle+clique}, $q \geq 4$.
In particular, $k =2q+2$.
For each vertex $v \in V(B)$, write $\Phi(v) = \rho_{\ell}(v) - 2q\lambda - (q-1) - (\lambda-2)(d(v) - k+2)$.
Note that $\Phi(v) = 3 - (q-1) \leq 0$ when $\ell(v) = k-1$ and $\Phi(v) < -1$ when $\ell(v)=k-2$.
However, by Lemma \ref{lem:j(k-2)},
we have 
\[
1 \leq \rho_{G,\ell}(B) - (\lambda-2) \|V(B),\overline{V(B)}\| = \sum_{v 
\in B} \Phi(v) \leq 0,
\]
which is a contradiction.

Therefore, $B = tK_r$, where $t(r-1) = k-2$, $r \geq 4$, and $t \geq 2$.
In particular, $k\geq 8$.
For each vertex $v \in V(B)$, write 
\[\Phi(v) = \rho_{\ell}(v) - t(r-1)\lambda - \frac 12 (r-1)(t-1) - (\lambda-2)(d(v) - k+2).\]
Note that $\Phi(v) = 3 - \frac 12 (r-1)(t-1)$ when $\ell(v) = k-1$ and $\Phi(v) < -1$ when $\ell(v)=k-2$.

We claim that $\frac 12 (r-1)(t-1) = \frac 12 (k-2)(1- \frac 1t) \geq 3$. 
Indeed, 
if $k=8$, then as $r\geq 4$ and $t \geq 2$ and $t(r-1)=k-2$, we have $r=4$ and $t=2$.
Then $G$ has a $(k-2)$-regular $2K_4$ in its low set, contradicting Lemma \ref{lem:cycle+clique}.
If $k=9$, then $t(r-1)=7$, 
contradicting the assumption that $t\geq 2$ and $r \geq 4$.
If $k=10$, then as $t(r-1)=8$, we have $t=2$ and $r=4$.
Then, $B$ is a $(k-2)$-regular $2K_4$, contradicting Lemma \ref{lem:cycle+clique}.
If $k=11$, then as $t(r-1)=9$, we have $t=3$ and $r=4$.
Then, $\frac 12 (k-2)(1 - \frac 13) = \frac 13 \cdot 9 = 3$.
If $k=12$, then as $t(r-1)=10$, we have $t=2$ and $r = 5$.
Then, $B$ is a $(k-2)$-regular $2K_5$, contradicting Lemma \ref{lem:cycle+clique}.
If $k=13$, then as $t(r-1)=11$, the lower bounds on $t$ and $r$ give a contradiction.
If $k \geq 14$, then as $t \geq 2$, we have $\frac 12 (k-2)(1-\frac 1t) \geq \frac 14 (k-2) \geq 3$.
Therefore, for every vertex $v \in V(B)$, we have $\Phi(v) \leq 0$.

Now, by Lemma \ref{lem:j(k-2)},
we have 
\[
1 \leq \rho_{G,\ell}(B) - (\lambda-2) \|V(B),\overline{V(B)}\| = \sum_{v 
\in B} \Phi(v) \leq 0.
\]
This final contradiction completes the proof.
\end{proof}

\section{Discharging}\label{sec:discharging}

For a multigraph $F$, let $\wt{F}$ denote the \emph{underlying graph of $F$}, i.e. the graph from which $F$ is obtained by multiplying some edges.
For a vertex $v\in V(F)$, let $\wt{d}(v)$ denote $d_{\wt F}(v)$, the degree of $v$ in $\wt{F}$, which is equal to $|N_F(v)|$.
We will use the symbols $\Phi_k$, $F$, $\mu_\ell(T)$, and $m(T)$ from Lemma~\ref{GDP'}. 
By Corollary~\ref{cor:h-geq3}, each vertex $v \in V(G)$ satisfies $\ell(v) \geq 3$.

We show that $\rho_{G,\ell}(G) \leq -2$, proving that $G$ in fact is not a counterexample to Theorem~\ref{thm:v3}.
We use the following discharging procedure.
Recall that $\nu = \frac{k-2}{2k-7}$. 

\begin{enumerate}
    \item  For each $v \in V(G)$, the initial charge of $v$ is $\rho_{G,\ell}(v)$. 
    For each pair $uv$, where $u,v\in V(G)$ are joined by $t \geq 1$ edges, the initial charge of the pair $uv$ is
    $-t(2\lambda +1) + 1$. For each 
    pair $uv$ of non-adjacent  $u,v \in V(G)$, the initial charge of $uv$ is $0$.
    \item For each pair $uv$ of adjacent vertices in $G$, if $ t\geq 1$ edges connect $u$ with $v$, 
    the pair $uv$ receives charge
    $(t(2\lambda +1)-1)/2$ from each of $u$ and $v$. 
    \item Each non-low vertex $u \in S^*$ takes charge $\nu$ along each edge $e$ that joins $u$ to a low vertex $v\in S^*$.
\end{enumerate}

For each $v \in  V(G)$, we write $ch^*(v)$ for the final charge of $v$.
Observe that the total charge in $G$ is $\rho_{G,\ell}(G)$. Additionally, the final charge of each vertex pair is $0$, so $\rho_{G,\ell}(G) =\sum_{v\in V(G)}ch^*(v)$.
Finally,
if $S^* \neq V(G)$, then the total charge in $G[\overline {S^*}]$ is at most 
$\rho_{G,\ell}(\overline {S^*}) - \lambda  \leq 0$ by Lemma~\ref{lem:k-1k}.
Therefore,
\begin{equation}
\label{eqn:ch-sum}
\rho_{G,\ell}(G) = \sum_{v \in V(G)} ch^*(v) \leq \sum_{v \in S^*} ch^*(v).
\end{equation}

For each $v \in S^*$, if $ d(v) \geq  1+\ell(v)$, then
we consider several cases.

\begin{enumerate}
    \item[(N1)] \label{item:small-nonlow} 
    If $d(v) \leq k-1$, then $\ell(v) \leq k-2$, so 
    \[ch^*(v) \leq (\ell(v)\lambda  - 1) - d(v) \lambda  + \nu d(v) = \lambda (\ell(v) - d(v)) - 1 + \nu d(v) \leq -\lambda -1 + \nu d(v) < -1.\]
    \item[(N2)] If $d(v) = k$, then
    \[ch^*(v) \leq ((k-1)\lambda +1) - k\lambda  + \nu k = -\lambda +1+\nu k \leq 0.\]

    \item[(N3)] If $d(v) \geq  k+1$, then 
    \begin{eqnarray*}
    ch^*(v) &\leq&  ((k-1)\lambda +1) - d(v) \lambda  + \nu d(v) \\ &\leq &
    ((k-1)\lambda +1) - (k+1) \lambda  + \nu (k+1)
    = 
    (-\lambda +1+\nu k) - \lambda  + \nu
    < -1. 
    \end{eqnarray*}
\end{enumerate}

For each $v \in S^*$, if $d(v)=\ell(v)=j$ for some fixed $j \in [3,\ldots, k-1]$, then we consider two cases:

\begin{enumerate}
\item[(L1)] If $j \leq k-2$, then
\begin{equation*}
\label{eqn:phi-neg}
ch^*(v)\leq (j\lambda -1)-j\lambda -
 m(v)- \nu ( d_{B^*}(v)-d_{{ \Lambda}_0}(v))
= -1-m(v)- \nu  d_{B^*}(v)+\nu d_{{ \Lambda}_0}(v).
\end{equation*}
\item[(L2)]  If $j=k-1$, then 
$$ch^*(v) \leq (k-1)\lambda +1-(k-1)\lambda -
 m(v)-\nu(d_{B^*}(v)-d_{{ \Lambda}_0}(v))=1-
 m(v)-\nu d_{B^*}(v)+ 
\nu d_{{ \Lambda}_0}(v).
$$
\end{enumerate}

We claim that ${ \Lambda}_0$ is nonempty. Indeed, if ${ \Lambda}_0$ is empty, then
as $d(v) \geq 1+\ell(v)$ for each $v \in S_0^*$, instead of  (N1)--(N3) we have the following inequality: 
\[ch^*(v) \leq ( \ell(v)\lambda  + 1) - d(v) \lambda  = (\ell(v) - d(v))\lambda  + 1 \leq -\lambda +1 < -2.\]
Then, by (\ref{eqn:ch-sum}), 
$\rho_{G,\ell}(G) < -2 |S^*| \leq   -2$, so $G$ is not a counterexample.
Thus, we assume that ${ \Lambda}_0 \neq \emptyset$, or equivalently, that $S_0^*$ contains at least one low vertex.

\begin{lemma}\label{charg2}
If $B$ is a component of ${ \Lambda}_0$,
then $\sum_{v\in V(B)}ch^*(v) < -1$.
\end{lemma}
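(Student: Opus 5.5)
The plan is to reduce the lemma to Lemma~\ref{GDP'} applied to the component $B$ itself, viewed as a GDP-tree. First I would verify the hypotheses of that lemma for $T=B$ with the function $\ell$ restricted to $V(B)$. By Lemma~\ref{lem:ERT}, taking $B'=V(B)$ and coloring $G-V(B)$, the multigraph $G[V(B)]$ is constructible for the reduced cover. By Theorem~\ref{KSSt}, $B$ is then a GDP-tree. Corollary~\ref{cor:h-geq3} gives $3\le \ell(v)\le k-1$. Since every $v\in V(B)$ is low, $\ell(v)=d_G(v)\ge d_B(v)$. The blocks of $B$ are neither $(k-1)$-regular nor $(k-2)$-regular:
\begin{itemize}
\item a $(k-2)$-regular block would be a low $(k-2)$-regular block, which Lemma~\ref{all} excludes;
\item a $(k-1)$-regular block $D$ inside $\Lambda$ would consist of vertices of degree $k-1$ all of whose edges lie in $D$, so $D$ would be a whole component of $G$, that is $G=D$, and Lemma~\ref{lem:k-1-reg} excludes this.
\end{itemize}
Lemma~\ref{GDP'} then yields $\Phi_k(B)>1+\nu$.

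Next I would sum (L1) and (L2) over $V(B)$. Write $d_{B^*}(v)=d_G(v)$ for $v\in S^*$, noting that if $v=x^*$ the cut edge only helps the inequality. Also $d_{\Lambda_0}(v)=d_B(v)$, and $\mu_\ell(v)=\ell(v)-d_B(v)=d_G(v)-d_B(v)$. Vertices in $F$ (those with $\ell(v)=k-1>d_B(v)$) and vertices with $\ell(v)=k-1=d_B(v)$ fall under (L2), and vertices with $\ell(v)\le k-2$ fall under (L1). This gives
\[
\sum_{v\in V(B)} ch^*(v)\le \sum_{v} \bigl(\pm 1\bigr) - m(B) - \nu\mu_\ell(B),
\]
where the sign is $+1$ for $\ell(v)=k-1$ and $-1$ otherwise. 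The term $m(v)$ in (L1) and (L2) counts only multiplicities inside $B$, or at least that much, so it dominates $m(B)$ after summing.

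The main obstacle is the vertices with $\ell(v)=k-1=d_B(v)$. Such a vertex lies in $\overline F$ yet contributes $+1$, so the sum is not directly $-\Phi_k(B)$. I would handle this by charging the discrepancy: each such vertex has all its edges inside $B$. I would try to show either that they are absent, or that their number is controlled by the multiplicity term. My guess is that the argument in \cite{kgeq5} defines $F$ precisely to absorb them, and one checks that in Lemma~\ref{GDP'} the quantity $|\overline F|-|F|$ could be replaced by $\sum(\pm1)$ with only an adjustment of $2$ per such vertex. A cleaner route is to define $F$ as all vertices with $\ell=k-1$ and re-verify the bound.

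Granting this, $\sum_{v\in V(B)} ch^*(v)\le -\Phi_k(B)+2\nu\cdot(\text{boundary corrections})$. With the correct bookkeeping this is at most $-(1+\nu)+\nu<-1$, where the extra $\nu$ accounts for the single edge $x^*y^*$ if $x^*\in V(B)$.
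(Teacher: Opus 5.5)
Your route is the paper's: you check the hypotheses of Lemma~\ref{GDP'} for $T=B$, sum (L1)--(L2) over $V(B)$, and compare with $-\Phi_k(B)+\nu$. Your hypothesis check is fine. Citing Lemma~\ref{all} for $(k-2)$-regular blocks, and observing that a $(k-1)$-regular block of $B$ would have to be all of $G$, is in fact more precise than the paper's citation.

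The gap is the step you flag yourself and then leave as ``granting this''. You never establish $\sum_{v\in V(B)}ch^*(v)\le -\Phi_k(B)+\nu$, and the final bound ``$-\Phi_k(B)+2\nu\cdot(\text{boundary corrections})$'' is not an inequality you have proved. Your diagnosis is correct relative to the literal definition $F=\{v:\ell(v)=k-1>d_T(v)\}$. A vertex $v\in V(B)$ with $\ell(v)=d_B(v)=k-1$ (necessarily $v\neq x^*$, since $y^*\notin S^*$) has (L2)-charge $1-m(v)$ but contributes $-1$ to $-\Phi_k(B)$. So the charge sum exceeds $-\Phi_k(B)+\nu$ by $2$ for each such vertex. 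Two of your proposed remedies cannot work:
\begin{itemize}
\item The multiplicity term cannot absorb a deficit of $2$ at such a vertex whose edges are all simple, since then $m(v)=0$.
\item Nothing proved in the paper rules these vertices out. They are also compatible with every hypothesis of Lemma~\ref{GDP'}: for example, take a triangle with a pendant $K_{k-2}$ at each corner and all $\ell=k-1$; the three corners are such vertices.
\end{itemize}

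The paper closes this step without comment. It writes the sum as $|V_{k-1}(B)|-|V^-_{k-1}(B)|-m(B)-\nu\mu_\ell(B)+\nu$ and equates it with $-\Phi_k(B)+\nu$. That is, it takes $F$ to be all vertices of $B$ with $\ell(v)=k-1$ (the strict inequality in the definition of $F$ must be read as $\ge$ for this equality to hold). It then applies Lemma~\ref{GDP'} from \cite{kgeq5} in that form as a black box. So the missing ingredient is exactly your ``cleaner route'': Lemma~\ref{GDP'} with $F=V_{k-1}(B)$, either cited in that form or proved. Once you have it, no further bookkeeping is needed:
\[
\sum_{v\in V(B)}ch^*(v)\le -\Phi_k(B)+\nu<-(1+\nu)+\nu=-1,
\]
using $m_G(v)\ge m_B(v)$ as you note.

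One more correction: the cut edge $x^*y^*$ does not help the inequality. Since $y^*\notin S^*$ takes no charge from $x^*$, this edge raises the upper bound by $\nu$. That is precisely the $+\nu$ in the final estimate, so your later addition of $\nu$ is right and the earlier remark is backwards.
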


\begin{proof} 
{ Let $V_{k-1}(B)$ be the set of vertices $v\in V(B)$ with $\ell(v)=k-1$ and $V_{k-1}^-(B)=V(B)\setminus V_{k-1}(B)$.
}Note that  $d_{B^*}(v)=\ell(v)$ for $v\in V(B) \setminus \{x^* \}$, and $d_{B^*}(x^*) = \ell(x^*) - 1$ whenever $x^* \in V({ \Lambda}_0)$.
Hence, by (L1)--(L2),
$$\sum_{v\in V(B)}ch^*(v)\leq |V_{k-1}(B)|-|V_{k-1}^-(B)|-\sum_{v\in V(B)}
m(v)- \nu \sum_{v \in V(B)} \left (\ell(v)-d_{{ \Lambda}_0}(v) \right)
 + \nu $$
$$
= |V_{k-1}(B)|-|V_{k-1}^-(B)|-m(B)- \nu \mu_\ell(B) + \nu  = -\Phi_k(B) + \nu.$$
The $\nu$ term accounts for the possibility that $x^* \in V(B)$, in which case $-d_{B^*}(x^*) = -\ell(x^*) + 1$.

By Corollary~\ref{cor:h-geq3}, Condition (i) of 
Lemma~\ref{GDP'} holds for $T=B$. Condition (ii) holds because the vertices in $B$ are low.
   By the minimality of $G$, Lemmas~\ref{lem:k-1-reg}
  and~\ref{l42-2} and by \eqref{nok-1}, Condition (iii) holds.
 Thus, by Lemma~\ref{GDP'},
$\sum_{v\in V(B)}ch^*(v)\leq -\Phi_k(B) + \nu < -1$.
\end{proof}

Now, by (N1)--(N3), the vertices of $S_0^* \setminus { \Lambda}_0$ have total nonpositive charge. 
Therefore,
\[\rho_{G,\ell}(G) =  \sum_{v \in V(G)} ch^*(v) \leq \sum_{v \in S^*} ch^*(v) \leq \sum_{v \in { \Lambda}_0} ch^*(v).\]
As ${\Lambda}_0 \neq \emptyset$, Lemma~\ref{charg2} implies that 
$\sum_{v \in { \Lambda}_0} ch^*(v) < -1$. 
Therefore, $\rho_{G,\ell}(G) < -1$, and as $\rho_{G,\ell}(G)$ is integral, $\rho_{G,\ell}(G) \leq -2$.
This completes the proof.

\section{List vertex arboricity and ordinary arboricity}
\label{sec:list-arboricity}
 For convenience, let us restate Theorem~\ref{t-lva}:

\begin{thm}\label{t-lva'}
    Let $k' \geq 3$, and let $G$ be an $\lva$-$k'$-critical graph. Then, either $G$ is a $K_{2k'-1}$, or 
   $\quad |E(G)| \geq (2k'-1+\frac{1}{\lambda'} ) \frac{|V(G)|}{2} + \frac 1{\lambda'},\quad$
    where $\lambda' = \left \lceil \frac{(2k'-1)^2 - 7}{4k'-9} \right \rceil $.
\end{thm}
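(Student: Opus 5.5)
The plan is to realize an lva-$k'$-critical graph as an $(H,\LL)$-minimal instance of Theorem~\ref{thm:v3}, exactly as Theorem~\ref{maint} was derived from Theorem~\ref{thm:v2}, and then convert the potential bound into an edge bound. Set $k=2k'-1\ge 5$, so $\lambda=\lambda'$.

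\textbf{Reduction.} Since $\lva(G)=k'$, there is a list assignment $L$ with $|L(u)|=k'-1$ for all $u$ admitting no forest-partition, while every proper subgraph admits one. Build the DP-cover $H$ with $H(u)=\{(u,c):c\in L(u)\}$, joining $(u,c)(v,c)$ for every $uv\in E(G)$. Give every node capacity $2$: a set of vertices induces a forest exactly when it is strictly $2$-degenerate. Then $(H,\LL)$ is a $2$-bounded variable $(k-1)$-cover, and $G$ is $(H,\LL)$-minimal with respect to list-type covers.

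The first point to check is that the proof of Theorem~\ref{thm:v3} only ever invokes minimality on covers that arise from subgraphs and from modified capacities. If it instead needs minimality for arbitrary $2$-bounded covers, I would run the whole argument inside the class of covers that are ``list-like up to the modifications used'', and verify each lemma stays in that class.

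\textbf{Potential bound.} Since $G$ is simple, case (b) of Theorem~\ref{thm:v3} is impossible, and case (a) gives $G=K_{2k'-1}$. Otherwise $\rho_{G,\ell}(G)\le -2$, which reads
\[
n\bigl((2k'-2)\lambda+1\bigr)-2\lambda|E(G)|\le -2 ,
\]
that is,
\[
|E(G)|\ge \Bigl(2k'-2+\tfrac1\lambda\Bigr)\tfrac n2+\tfrac1\lambda .
\]

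\textbf{Closing the gap to the stated coefficient.} The stated coefficient $2k'-1$ needs an extra $n/2$ edges. I would attempt this through the minimum degree. Criticality gives $\delta(G)\ge 2k'-2$: if some $v$ had $d(v)\le 2k'-3$, colour $G-v$ first; by pigeonhole some colour of $L(v)$ appears at most once in $N(v)$, and using it for $v$ keeps that class a forest. So the task is to show that the \emph{low} vertices, those of degree exactly $2k'-2$, are rare enough.

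The route I would take is to rerun the discharging of Section~\ref{sec:discharging} with initial charge $\rho$ shifted by $\lambda$ per vertex, which is equivalent to targeting the $2k'-1$ coefficient. The ingredients are Lemma~\ref{GDP'} applied to the GDP-forest formed by the low vertices (via Theorem~\ref{KSSt}), together with \eqref{nok-1} and Lemma~\ref{all}, which exclude $K_{k-1}$ blocks and $(k-2)$-regular blocks among low vertices. Every low vertex must then send roughly $\lambda$ units to high neighbours.

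\textbf{Main obstacle.} This last step is where I expect the argument to break. A high vertex of degree $k$ has only surplus $\lambda-1-\nu k$ in (N2). Whether enough charge is available must be checked against candidate extremal examples with many degree-$(2k'-2)$ vertices, such as Dirac-type joins. If such examples exist, the stated coefficient cannot be obtained, and only the $2k'-2$ version above is provable.
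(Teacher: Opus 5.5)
Your suspicion about the last step is correct, and that step should be dropped, not repaired. The coefficient $2k'-1$ in the statement is a misprint for $2k'-2$. The paper's own proof ends with $\rho_{G,\ell}(G)\le -2$ for $\ell\equiv 2k'-2$, which is exactly the bound you derived; it is \eqref{t11}, and it matches the remark $f'_\ell(n,3)\ge(2+\frac1{12})n+\frac16$. The $2k'-1$ version is in fact false. Let $J$ be obtained by deleting an edge $x_iy_i$ from each of two copies of $K_{2k'-1}$, identifying $x_1$ and $x_2$ into $x$, and adding $y_1y_2$.
\begin{itemize}
\item In any partition of $K_{2k'-1}-x_iy_i$ into $k'-1$ induced forests, every part has at most two vertices except one, which is $\{x_i,z_i,y_i\}$. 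So $y_1z_1xz_2y_2y_1$ is a monochromatic cycle, and $\lva(J)\ge\va(J)\ge k'$.
\item Every vertex other than $x$ has degree $2k'-2$, and $J-x$ is connected. Hence every subgraph of $J$ with minimum degree at least $2k'-2$ is all of $J$, and by your minimum-degree argument every proper subgraph has $\lva\le k'-1$.
\item An ordering starting at $x$ gives $\lva(J)\le k'$.
\end{itemize}
So $J$ is $\lva$-$k'$-critical and $K_{2k'-1}$-free, yet it has $4k'-3$ vertices and only $(2k'-1)(2k'-2)-1$ edges. For $k'=3$ this is $19$ edges on $9$ vertices. The displayed bound demands at least $24$, while your $2k'-2$ bound demands exactly $19$.

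The genuine gap is in your reduction. You flag it, but the fix you propose is in the wrong place. Criticality only makes proper subgraphs colorable from list-type covers with all capacities $2$. The $(H,\LL)$-minimality hypothesis of Theorem~\ref{thm:v3} requires this for \emph{every} $2$-bounded cover with the same $\ell$, and its proof really uses that strong form. For example, in Lemma~\ref{nl2} the added edge is forced into the minimal subgraph ``since $G$ is $(H,\LL)$-minimal itself''. Also, the matching $M$ in Lemma~\ref{lem:2-bd-exception}(v) is built from coloring behaviour, not from colors, so it is not a list cover. Your proposed check therefore fails, and restricting to ``list-like'' covers would be an unverified re-audit of the whole paper.

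The paper needs none of this: it applies Theorem~\ref{thm:v3} only as a black box to genuinely minimal pieces.
\begin{itemize}
\item Take $U\subseteq V(G)$ maximal with $\rho_{G,\ell}(U)\le -2$, or $U=\emptyset$ if there is none.
\item If $U\neq V(G)$, criticality gives an $L$-coloring $(\phi,\sigma)$ of $G[U]$. Then $G[\ol U]$ has no $(H,\LL_\phi)$-coloring.
\item Take a subgraph of $G[\ol U]$ that is minimal among those non-colorable for some $2$-bounded cover with capacities $\ell_\phi$. It satisfies the hypotheses of Theorem~\ref{thm:v3} and is not exceptional, since $G$ is simple and $K_{2k'-1}$-free. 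So there is $U'\subseteq\ol U$ with $\rho_{G,\ell_\phi}(U')\le -2$.
\item With $j=\|U,U'\|$ this gives $\rho_{G,\ell}(U')\le -2+j(\lambda+2)$, and hence $\rho_{G,\ell}(U\cup U')\le \rho_{G,\ell}(U)+\rho_{G,\ell}(U')-2\lambda j\le -2$, contradicting the maximality of $U$.
\end{itemize}
Hence $U=V(G)$, which is exactly your $2k'-2$ bound.
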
   
\begin{proof}  If $G$ has  a $K_{2k'-1}$ subgraph, then by criticality, $G= K_{2k'-1}$,
and the theorem holds.
Suppose $G$ has no $K_{2k'-1}$ subgraph. Since $G$ is simple, $G$ contains neither a $4K_2$ nor a double cycle.

    Let $L'$ be a $(k'-1)$-list assignment
    such that every assignment  of a color $\phi(v) \in L'(v)$ to each vertex of $G$ creates a monochromatic cycle.
    Write $L'(v) = (c_{v,1}, \dots, c_{v,k'-1})$ for each $v \in V(G)$.

 Construct a variable DP-cover $(H,\LL)$ for $G$ as follows.
    For each $v \in V(G)$, let  $\LL(v) = [2, \dots, 2]$ with $k'-1$ entries,  and 
    let $H$ be the cover for $G$ such that for each edge $uv \in E(G)$, the nodes $(u,i)$ and $(v,j)$ are adjacent in $H$ if and only if $c_{u,i} = c_{v,j}$.

    Let $k = 2k'-1$. Observe that $\lambda  = \left \lceil \frac{k^2 - 7}{2k-7}\right \rceil= \left \lceil \frac{(2k'-1)^2 - 7}{2(2k'-1)-7}\right \rceil=\lambda'$ and that
    $\ell(v) = k-1$ for each $v \in V(G)$. 
    Furthermore, $G$ has no $(H,\LL)$-coloring.  
    Choose $U$ to be a largest subset of $V(G)$ satisfying 
    $\rho_{G,\ell}(U) \leq -2$ if such a subset exists, and let $U = \emptyset$ otherwise.
    
    If $U = V(G)$, then the theorem holds. Therefore, suppose that $U \subsetneq V(G)$.
    Since $G$ is $\lva$-$k'$-critical, there exists an $L'$-coloring of $G[U]$, which yields an $(H,\LL)$-coloring $(\phi,\sigma)$ of $G[U]$. 
    By Lemma \ref{lem:partial},
    $G[\overline U]$ has no $(H,\LL_{\phi})$-coloring.
    So, by Theorem \ref{thm:v3},
    there exists a subset $U' \subseteq \overline U$ such that $\rho_{G,\ell_{\phi}}(U') \leq -2$.
    Letting $j = \|U',U\|$, it follows that $\rho_{G,\ell}(U') \leq -2 + j(\lambda+2)$.
    Then,
    \[
        \rho_{G,\ell}(U \cup U') \leq \rho_{G,\ell}(U) - 2j\lambda - 2 + j(\lambda+2) \leq -2 +j(2-\lambda) \leq -2,
    \]
    contradicting the maximality of $U$. 
    Therefore, $U = V(G)$, and the theorem holds.  \end{proof}

 The proof of  Theorem~\ref{t-va} repeats the proof of Theorem~\ref{t-lva'}
with $L'(v)=\{1,\ldots,k'-1\}$ for every $v\in V(G)$, so we omit it.

\paragraph{\bf Acknowledgment.} We thank Dan Cranston and Thomas Schweser for helpful comments.

\bibliographystyle{abbrv}
{\small
\bibliography{ref,ref2}}
  \end{document}